\documentclass[a4paper,12pt]{article}
\usepackage{amsmath,amssymb,amsthm,cite,verbatim,xcolor,tikz-cd}
\usepackage[bookmarks=true]{hyperref}
\usepackage{graphicx}
\topmargin -10truemm \setlength{\textheight}{250mm} \textwidth 170mm
\oddsidemargin +5truemm

\title{Bielliptic quotient modular curves of $X_0(N)$}

\author{ \ Francesc Bars\footnote{First author is supported by PID2020-116542GB-I00.}, Mohamed Kamel
and Andreas Schweizer}
\newtheorem{prop}{Proposition}[section]
\newtheorem{lema}[prop]{Lemma}
\newtheorem{teo}[prop]{Theorem}
\newtheorem{cor}[prop]{Corollary}
\newtheorem{rem}[prop]{Remark}

\theoremstyle{definition}

\theoremstyle{remark}

\numberwithin{equation}{section}

\newcommand{\Q}{\mathbb{Q}}
\newcommand{\QQ}{\mathbb{Q}}

\newcommand{\Z}{\mathbb{Z}}

\newcommand{\F}{\mathbb{F}}
\newcommand{\FF}{\mathbb{F}}

\newcommand{\C}{\mathbb{C}}

\newcommand{\Gal}{\mathrm{Gal}}
\newcommand{\GL}{\operatorname{GL}}

\newcommand{\Aut}{\operatorname{Aut}}

\newcommand{\Jac}{\operatorname{Jac}}

\newcommand{\New}{\operatorname{New}}

\newcommand{\cA}{{\mathcal A}}
\newcommand{\cB}{{\mathcal B}}

\newcommand{\cS}{{\mathcal S}}

\newcommand{\cL}{{\mathcal L}}

\usepackage{xcolor}
\newcommand{\colorredbold}[1]{\bold{{\color{red}{#1}}}}

\date{}
\begin{document}
\maketitle

\begin{abstract}
\noindent Let $N\geq 1$ be a non-square free integer and let $W_N$
be a non-trivial subgroup of the group of the  Atkin-Lehner
involutions of $X_0(N)$ such that the modular curve $X_0(N)/W_N$ has
genus at least two. We determine all pairs $(N,W_N)$ such that
$X_0(N)/W_N$ is a bielliptic curve and the pairs $(N,W_N)$ such that
$X_0(N)/W_N$ has an infinite number of quadratic points over
$\mathbb{Q}$.

\end{abstract}

\renewcommand{\thefootnote}{\fnsymbol{footnote}}
\footnotetext{\emph{Key words:} modular curve; Atkin-Lehner
involution; bielliptic curve; quadratic points.}
\renewcommand{\thefootnote}{\arabic{footnote}}

\renewcommand{\thefootnote}{\fnsymbol{footnote}}
\footnotetext{\emph{2020 Mathematics Subject Classification:} 11G18;
14H45.}
\renewcommand{\thefootnote}{\arabic{footnote}}

\section{Introduction}

Let $X$ be a smooth projective curve of genus $g_X\geq 2$ defined
over a number field $K$. We also assume that $X$ has at least one
$K$-rational point. (All curves we investigate in this paper have
this property.)
\par
The curve $X$ is called  {\bf bielliptic} if it has an involution
$v$ such that $X/v$ has genus $1$. More precisely, $X$ is called
bielliptic over $L$ for an extension $L$ of $K$ if $v$ is defined
over $L$. In this case $X/v$ inherits an $L$-rational point from $X$
and hence is an elliptic curve over $L$.
\par
Although for every finite extension $L$ of $K$ the set $X(L)$ of
$L$-rational points of $X$ is finite by a famous theorem by
Faltings, the set
$$\Gamma_2(X,K):=\bigcup_{[L:K]\leq 2}X(L)$$
of quadratic points over $K$ can be infinite. This happens for
example if $X$ is bielliptic over $K$ and the elliptic curve $X/v$
has positive rank over $K$, because then over each of the infinitely
many $K$-rational points of $X/v$ there is at least one quadratic
point on $X$. By a similar argument $\Gamma_2(X,K)$ is infinite if
$X$ is hyperelliptic, because then the hyperelliptic involution $u$
is defined over $K$ and $X/u\cong\mathbb{P}^1$.
\par
Less obvious is that the converse also holds, i.e. $\Gamma_2(X,K)$
is infinite if and only if $X$ is hyperelliptic or $X$ is bielliptic
over $K$ with elliptic quotient curve of positive $K$-rank. For
details see \cite{BaMom}, also for the more complicated situation
where $X(K)=\emptyset$. Note that the similar statement in
\cite{SiHa} is slightly weaker in that (at least for $g_X\leq 5$) it
is not stated over which field the bielliptic involution is defined.
\par
Therefore an investigation which curves of a certain type or in a
certain family have infinitely many quadratic points often starts
with determining the bielliptic ones, the determination of the
hyperelliptic ones usually having been done decades earlier.
\par
This paper is the final installment in a series that investigates
the two problems, biellipticity and infinitely many quadratic points
over $\QQ$, for curves $X_0(N)/W_N$ where $X_0(N)$ is a modular
curve of Hecke type and $W_N$ is a subgroup of the group $B(N)$ of
Atkin-Lehner involutions of $X_0(N)$. Note that the AL-involutions
are defined over $\QQ$, and so $X_0(N)/W_N$ inherits a
$\QQ$-rational cusp from $X_0(N)$.
\par
For the trivial subgroup $W_N =\{1\}$ it was already \cite{Ba1}
which determined all bielliptic curves $X_0(N)$ and those with
infinite $\Gamma_2(X_0(N),\QQ)$. Later \cite{Jeon} solved the same
problem for the curves $X_0^+(N)=X_0(N)/w_N$ where $w_N$ is the
Fricke involution. Then \cite{BaGon} and \cite{BaGon2} treated the
curves $X_0^*(N)=X_0(N)/B(N)$ for square-free resp. non-square-free
$N$.
\par
After the case of proper subgroups $W_N$ was settled in
\cite{BaGonKa} for square-free $N$, we now deal with proper
subgroups $W_N$ when $N$ is not square-free.
\par
In Section 3, specializing a result by Harris and Silverman we note
that $X_0(N)/W_N$ only has a chance of being bielliptic when
$X_0^*(N)$ is bielliptic, hyperelliptic or of genus at most $1$. So
with \cite{BaGon2}, \cite{Ha97} and \cite{GL} we can restrict the
candidates to a finite, explicit set of $N$, albeit with usually $7$
curves for each such $N$.
\par
Gonz\'alez and the first author in \cite{BaGon2} provided a
computational method by use of a theorem of Petri to decide whether
a concrete curve is bielliptic or not, for modular curves after
computing the Jacobian and the Galois conjugation basis. But the
computations are tedious and not transparent to the reader.
Therefore we first use several old and new criteria (see Section
4.1) to quickly discard as many candidates as possible.
\par
A bielliptic curve of genus smaller than $6$ might have several
bielliptic involutions, and, worse, it can happen that none of these
is defined over $\QQ$. But if $N$ is square-free, then all
automorphisms of $X_0(N)/W_N$ are known to be defined over $\QQ$,
which is quite helpful. For non-square-free $N$ this does not
necessarily hold.
\par
On the other hand, most non-square-free candidates $N$ are divisible
by $4$ or $9$, and in these cases there exist additional involutions
of $X_0(N)$ coming from the normalizer of $\Gamma_0(N)$ in
$GL_2(\mathbb{R})$. This gives us more candidates for bielliptic
involutions. Indeed, for practically all bielliptic $X_0(N)/W_N$
with $N$ not square-free we can write down explicit bielliptic
involutions, as matrices if one wants to.
\par

Our first main result is
\begin{teo}\label{mainpq} Let $N>1$ be a non square-free integer not a power of a prime.
Assume that the genus of the modular curve $X_0(N)/W_N$ is at least
$2$ for a  non-trivial subgroup $W_N$ of $B(N)$ different from  $
\langle w_N\rangle$. The curve  $X_0(N)/W_N$, denoted by the pair
$(N,W_N)$ is a bielliptic curve if and only if it appears below:
\begin{enumerate}
\item It is a pair $(N,W_N)$ such that $|W_N|=2$ and $N$ is in the
set
$$\{40,48,52,63,68,72,75,76,80,96,98,99,100,108,124,188\},$$
or it is a pair  $(N,W_N)$ such that $|W_N|=4$ and $N$ is in the
seet
$$\{84,90,120,126,132,140,150,156,220.\},$$
 All these quotient modular curves are bielliptic over $\Q$ with
an elliptic quotient given by $X_0^*(N)$ of genus 1,
\item  or it is one of the following $29$ pairs, ordered by genus
$$\begin{array}{|c|l|}
\hline Genus&(N,W_N)\\ \hline \hline
2&(44,\langle w_4\rangle),(60,\langle w_{20}\rangle),(60,\langle w_4,w_3\rangle)\\
\hline 3&(56,\langle w_8\rangle),(60,\langle w_4\rangle)\\
\hline
4&(60,\langle w_3\rangle),(60,\langle w_5\rangle),(112,\langle w_7\rangle),(168,\langle w_3,w_{56}\rangle)\\
\hline
5&(84,\langle w_4\rangle),(88,\langle w_{11}\rangle),(90,\langle w_9\rangle)\\
&(117,\langle w_9\rangle),(120,\langle w_{15}\rangle ),(126,\langle w_{63}\rangle),(168,\langle w_8,w_7\rangle),\\
&(168,\langle w_7,w_{24}\rangle),(180,\langle w_4,w_9\rangle),(184,\langle w_{23}\rangle),(252,\langle w_4,w_{63}\rangle)\\
\hline
6&(104,\langle w_8\rangle),(168,\langle w_8,w_3\rangle)\\
\hline
 7&(120,\langle w_{24}\rangle),(136,\langle w_8\rangle),(252,\langle w_9,w_7\rangle)\\
 \hline\hline
 9&(126,\langle w_9\rangle),(171,\langle w_9\rangle),(252,\langle w_4,w_9\rangle)\\
 \hline
 10&(176,\langle w_{16}\rangle)\\ \hline

\end{array}$$
\end{enumerate}
\end{teo}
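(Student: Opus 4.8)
The plan is to carry out a finite classification in four stages: reduce to a finite explicit list of candidate levels $N$; enumerate the relevant subgroups for each level; eliminate the non-bielliptic candidates by cheap criteria; and settle the survivors by exhibiting bielliptic involutions or by an explicit canonical-model computation. To begin, I would invoke the specialization of the Harris--Silverman theorem from Section 3: if $X_0(N)/W_N$ is bielliptic, then $X_0^*(N)$ must be bielliptic, hyperelliptic, or of genus at most $1$. Feeding in the classifications of such levels from \cite{BaGon2} (the genus of $X_0^*(N)$), \cite{Ha97} (hyperelliptic $X_0^*(N)$) and \cite{GL} (bielliptic $X_0^*(N)$), and intersecting with the hypotheses that $N$ be non-square-free and not a prime power, produces a finite explicit set of candidate levels. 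For each such $N$ I would list every proper non-trivial subgroup $W_N\ne\langle w_N\rangle$ of $B(N)\cong(\Z/2\Z)^{t}$, where $t$ is the number of distinct primes dividing $N$, for which $g(X_0(N)/W_N)\ge 2$, reading off the genus from the standard formula for quotients of $X_0(N)$ together with Ogg's fixed-point count for the Atkin--Lehner involutions. This leaves on the order of seven candidate curves per level.

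Next I would discard as many candidates as possible using the criteria of Section 4.1 before any heavy computation. Since a bielliptic involution $v$ on a curve of genus $g$ has quotient of genus $1$, Riemann--Hurwitz forces $v$ to have exactly $2g-2$ fixed points, and the Castelnuovo--Severi inequality constrains the simultaneous existence of the degree-$2$ map to an elliptic curve and the natural maps onto the various $X_0(N)/W'$; this alone removes many of the higher-genus candidates. The sharpest tool is reduction modulo small primes $p\nmid N$: a curve that is bielliptic over $\F_{p^2}$ has at most $2(p+1)^2$ points over $\F_{p^2}$, being a double cover of an elliptic curve, so a candidate whose reduction exceeds this bound---a point count obtained from the Frobenius eigenvalues on the isogeny factors of $J_0(N)$ cut out by $W_N$---cannot be bielliptic with an involution defined over $\F_{p^2}$. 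Applying these bounds for a few small $p$ should clear out the bulk of the list.

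For the survivors I would separate the two families of the statement. The index-$2$ subgroups for which $X_0^*(N)$ has genus $1$ are bielliptic over $\Q$ for a uniform reason: the residual Atkin--Lehner involution on $X_0(N)/W_N$, which is defined over $\Q$, has quotient $X_0^*(N)$, an elliptic curve by the genus formula; this yields item (i). For the sporadic pairs of item (ii) I would exhibit an explicit bielliptic involution and confirm via its fixed-point count that the quotient has genus $1$. Here the decisive point, flagged in the introduction, is that when $4\mid N$ or $9\mid N$ the normalizer of $\Gamma_0(N)$ in $\GL_2(\R)$ supplies involutions outside $B(N)$, writable as explicit matrices, and these frequently furnish the required bielliptic involution when no Atkin--Lehner involution does.

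The main obstacle will be proving \emph{non}-biellipticity for the few borderline candidates---chiefly in genus $5$---that survive the point-count bounds, because for non-square-free $N$ the full automorphism group of $X_0(N)/W_N$ need not be defined over $\Q$ and a bielliptic involution may lie neither in $B(N)$ nor in any prescribed rational form, so the reduction criterion of the previous paragraph does not directly apply. For these I would fall back on the computational method of \cite{BaGon2}: build the canonical model from a basis of cusp forms by Petri's theorem, determine the action of Galois conjugation on that basis, and verify directly that the curve carries no involution with genus-$1$ quotient. Running this for each remaining curve, and confirming that the involutions found for the listed pairs are genuinely defined over the claimed fields, completes the classification.
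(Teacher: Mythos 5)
Your proposal follows essentially the same route as the paper: reduce to a finite list of levels via the Harris--Silverman specialization and the known classifications of $X_0^*(N)$, eliminate most candidates with Castelnuovo and point counts modulo small primes, obtain item (i) from the degree-$2$ map to the elliptic curve $X_0^*(N)$, exhibit explicit bielliptic involutions (including the extra ones coming from the normalizer when $4\mid N$ or $9\mid N$) for the sporadic pairs, and settle the remaining genus-$\le 5$ cases with Petri's theorem while tracking the field of definition via the Jacobian decomposition. The only slip is cosmetic: you swapped the roles of \cite{GL} (genus $\le 1$) and \cite{BaGon2} (bielliptic $X_0^*(N)$) in the reduction step.
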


\begin{rem}
It turns out that almost all of the curves listed in Theorem
\ref{mainpq} are bielliptic over $\QQ$. The only exceptions are the
isomorphic curves $X_0(126)/w_{63}$ and $X_0(252)/\langle w_4,
w_{63}\rangle$ which are bielliptic over $\QQ(\sqrt{-3})$.
\par
For more information see the tables in Appendix B where we list
bielliptic involutions and the conductor of the corresponding
elliptic quotient curve as well as the splitting of the Jacobian of
$X_0(N)/W_N$.

\end{rem}
The hyperelliptic $X_0(N)/W_N$ are already known (see \cite{FM}).
Checking which of the non-hyperelliptic ones satisfy the condition
from the theorem discussed at the beginning of this Introduction we
obtain our second main result.

\begin{teo}\label{main2pq}Let $N>1$ be a non square-free integer not a power of a prime.
Assume that the genus of the modular curve $X_0(N)/W_N$ is at least
$2$ for a non-trivial subgroup $W_N$ of $B(N)$ different from
$\langle w_N\rangle$. Then the set
$$\Gamma_2(X_0(N)/W_N,\mathbb{Q})$$ is infinite if and only if
$(N,W_N)$ appears in the following list:
\begin{enumerate}

\item It is a pair $(N,W_N)$ that is an hyperelliptic curve, determined in
\cite{FM} that we reproduce for the convenience of the readers:

$$\begin{array}{|c|l|}
\hline Genus& (N,W_N)\\
\hline\hline 2& (40,\langle w_8\rangle),(40,\langle
w_5\rangle),(44,\langle w_4\rangle),(48,\langle
w_{16}\rangle),(48,\langle w_3\rangle),(52,\langle
w_4\rangle),(54,\langle w_2\rangle),
\\
&(60,\langle w_{20}\rangle),(60,\langle w_{4},w_3 \rangle),(60,\langle w_5,w_{12}\rangle),(72,\langle w_8\rangle),(84,\langle w_4,w_3\rangle ),(84,\langle w_4,w_{21}\rangle)\\
&(84,\langle w_3,w_{28}\rangle),(84,\langle w_7,w_{12}\rangle),(90,\langle w_9,w_5\rangle),(90,\langle w_9,w_{10}\rangle),(90,\langle w_2,w_{45}\rangle)\\
&(90,\langle w_5,w_{18}\rangle),(100,w_4),(120,\langle w_8,w_{15}\rangle),(120,\langle w_{24},w_{40}\rangle),(126,\langle w_2,w_{63}\rangle)\\
&(126,\langle w_{18},w_{14}\rangle),(132,\langle w_4,w_{11}\rangle),(140,\langle w_4,w_{35}\rangle),(150,\langle w_6,w_{50}\rangle)\\
&(156,\langle w_4,w_{39}\rangle)\\
\hline
3&(56,\langle w_8\rangle ),(60,\langle w_4\rangle),(60,\langle w_{60}\rangle),(63,\langle w_9\rangle),(72,\langle w_9\rangle),(120,\langle w_5,w_{24}\rangle),\\
&(126,\langle w_9,w_7\rangle),(126,\langle w_9,w_{14}\rangle),\\
\hline 4&(60,\langle w_{12}\rangle),(168,\langle w_{24},w_{56}\rangle),\\
\hline
5&(92,\langle w_4\rangle).\\
\hline
\end{array}$$

\item or for bielliptic curves that are not hyperelliptic
corresponding to a pair $(99,W_N)$ with $|W_N|=2$.

\end{enumerate}
\end{teo}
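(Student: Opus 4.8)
The plan is to apply the dichotomy recalled in the Introduction and established in \cite{BaMom}: for a curve $X/\QQ$ of genus $\geq 2$ possessing a rational point, the set $\Gamma_2(X,\QQ)$ is infinite if and only if $X$ is hyperelliptic, or $X$ is bielliptic over $\QQ$ with an elliptic quotient of positive $\QQ$-rank. Each $X_0(N)/W_N$ inherits a rational cusp from $X_0(N)$, so this criterion applies directly, and the proof naturally splits into a hyperelliptic part and a non-hyperelliptic bielliptic part.

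For the hyperelliptic curves I would simply invoke the classification of \cite{FM}; the list in item (1) is exactly this classification restricted to our standing hypotheses (genus $\geq 2$, $W_N$ non-trivial and $W_N\neq\langle w_N\rangle$). Every such curve has infinite $\Gamma_2$ with no further work: for $g_X\geq 2$ the hyperelliptic involution is unique, hence fixed by $\Gal(\Qbar/\QQ)$ and therefore defined over $\QQ$, and the quotient is $\PP^1$, which carries infinitely many rational points. Thus all curves in (1) belong to the list.

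For the remaining curves I would start from the complete classification of bielliptic $X_0(N)/W_N$ in Theorem \ref{mainpq} and discard the genus-$2$ entries, which are automatically hyperelliptic and hence already accounted for. For each non-hyperelliptic bielliptic curve of genus $\geq 3$ the criterion now demands two things simultaneously: a bielliptic involution defined over $\QQ$, and a quotient elliptic curve of positive $\QQ$-rank. The first condition already eliminates the two isomorphic curves $X_0(126)/w_{63}$ and $X_0(252)/\langle w_4,w_{63}\rangle$, which by the Remark are bielliptic only over $\QQ(\sqrt{-3})$. For every other curve I would read off from the tables of Appendix B each $\QQ$-rational bielliptic involution together with the conductor of its elliptic quotient; since these quotients are isogeny factors of $J_0(N)$, their conductors divide $N$, so each is an identified curve in the standard tables and its Mordell--Weil rank over $\QQ$ is known. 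It then remains to verify that for all these curves every $\QQ$-rational bielliptic quotient has rank $0$, with the single exception of $(99,W_N)$ with $|W_N|=2$, whose quotient $X_0^*(99)$ has positive $\QQ$-rank.

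The main obstacle is precisely this rank bookkeeping. Because a curve of genus $3,4,5$ may admit several bielliptic involutions, to conclude that $\Gamma_2$ is finite for a given curve I must check that \emph{no} $\QQ$-rational bielliptic involution yields a positive-rank quotient; conversely, for $(99,W_N)$ it suffices to exhibit one. The delicate points are therefore the correct matching of each bielliptic involution in Appendix B to the isogeny class of its elliptic quotient, and the reliability of the recorded rank values. Once these are pinned down, the equivalence stated in the theorem follows by combining the hyperelliptic list of (1) with the single surviving non-hyperelliptic bielliptic case $(99,W_N)$.
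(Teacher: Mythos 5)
Your overall frame coincides with the paper's: the dichotomy of \cite{BaMom}, the hyperelliptic list imported from \cite{FM}, and then a rank analysis of the non-hyperelliptic bielliptic curves surviving from Theorem \ref{mainpq}. The gap is in how you propose to carry out that rank analysis. You want to ``read off from the tables of Appendix B each $\Q$-rational bielliptic involution together with the conductor of its elliptic quotient'' and check that every such quotient has rank $0$. But those tables are explicitly \emph{not} complete lists of bielliptic involutions --- the paper inserts ``$\cdots$'' precisely to signal that further bielliptic involutions may exist --- and for curves of genus $3$, $4$ or $5$ there is no a priori guarantee that all $\Q$-rational bielliptic involutions have been located without additional Petri-type computations. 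Since the criterion of \cite{BaMom} requires excluding \emph{every} involution over $\Q$ with elliptic quotient of positive rank, an enumeration based on an avowedly partial list does not close the argument; you name this as ``the main obstacle'' but do not resolve it.

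The paper sidesteps the issue entirely. Any genus-$1$ quotient $E=X/v$ with $v$ defined over $\Q$ is a $\Q$-isogeny factor of $J_0(N)^{W_N}$, hence modular of conductor $M\mid N$; so one only needs to ask whether \emph{any} elliptic curve of positive rank and conductor dividing $N$ exists at all. For most levels in Theorem \ref{mainpq} the answer is no by \cite{Cre}, and the curve is discarded with no knowledge whatsoever of its automorphism group. The few surviving levels are handled by two further devices: when the only candidate conductor is $M=N$, the degree of the strong Weil parametrization must divide $2|W_N|$ (Lemma \ref{lemadegree}), which fails for $N=124$, $156$, $220$; and for $N=171$ (with $M=57$) and $N=176$ (with $M=88$) a point count of $E$ over $\F_4$ resp.\ $\F_9$ combined with Lemma \ref{lemadisca} yields a contradiction. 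Only $N=99$ survives, with $\Jac(X_0^*(99))\sim E99a$ of rank $1$, exactly as you conclude. If you replace your involution-by-involution bookkeeping with this conductor-divides-$N$ argument (equivalently, scan the elliptic factors of the Jacobian decompositions rather than the listed involutions), your proof becomes complete.
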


We observe that among the elliptic curves $X_0^*(N)$ the $38$ ones
with square-free $N$ all have positive rank, whereas the $25$ ones
with non-square-free $N$ with the exception of $X_0^*(99)$ all have
rank $0$.

\par

We used also codes implemented in \cite{mathematica} and
\cite{magma} for obtaining and supporting above results. Such codes
are available to math community at Quotient Modular Curves folders
in

\begin{verbatim}
https://github.com/FrancescBars
\end{verbatim}

\section{Notation}\label{2}
Let $N>1$ be an integer. We fix once and for all the following
notation.
\begin{itemize}
\item [(i)] We denote by $B(N)$ the group of the Atkin-Lehner involutions of $X_0(N)$.  So, the order of $B(N)$ is $2^{\omega(N)}$, where
    $\omega(N)$ is the number of different primes dividing $N$.

  \item [(ii)]  For $N'|N$, with $(N',N/N')=1$, $B(N')$ denotes the subgroup of $B(N)$ formed by the Atkin-Lehner involutions $w_d$ such that $d|N'$ and $(d,N/N')=1$.
In general, $W_N$ denotes a non-trivial subgroup of $B(N)$.

\item [(iii)] The integers
 $g_N$, $g_{W_N}$ and $g_N^*$ are the genus of $X_0(N)$, $X_0(N)/W_N$ and $X_0^*(N)=X_0(N)/B(N)$ respectively.

\item[(iv)]
 We denote by $\New_N$ the  set of normalized newforms in $S_2(\Gamma_0(N))$. The sets  $\New_N^{W_N}$ and  $S_2(N)^{W_N}$ are the subsets of
 $\New_N$ and  $S_2(\Gamma_0(N))$ formed by the cusp forms   invariant under the action of the group  $W_N$.

\item[ (v)]   $J_0(N)$ and $J_0(N)^{W_N}$ are the  Jacobians  $\Jac(X_0(N))$ and $\Jac(X_0(N)/W_N)$ respectively.

\item [(vi)]Let  $h\in S_2(\Gamma_0(N))$ be an eigenform  of the form $\sum_{d|N/M} c_d f(q^d)$ for some $f\in\New_M$ with $M|N$ and $c_d\in\Z$.
Since for every divisor $d$ of $N/M$ there is a morphism $B_d$ from
$J_0(M)$ to $J_0(N)$ defined over $\Q$ sending every cusp form $g\in
S_2(M)$ to $ g(q^d)\in S_2(N)$, the morphism $\sum_{d|N/M}c_d B_d$
provides  an abelian variety  $A_h$ defined over $\Q$ attached to
$h$ and $\Q$-isogenous to the abelian variety $A_f$ attached by
Shimura to $f$. This abelian variety can be defined as the optimal
quotient of $J_0(N)$ such that the pullback of $\Omega^1_{A_h/\Q}$
is the vector space generated by the Galois conjugates  of
$h(q)\,dq/q$ with rational $q$-expansion.
 This definition determines the $\Q$-isomorphism class of $A_h$, although we are only interested in its $\Q$-isogeny class.

\item[ (vii)] Given two abelian varieties $A$ and $B$ defined over the number field $K$, the notation $A\stackrel{K}\sim B$ stands for $A$ and $B$
    are isogenous over $K$.

\item [(viii)] For an integer  $m\geq 1$ and $f\in\New_N$, $a_m(f)$  is the $m$-th Fourier coefficient of $f$.

\item[(ix)] As usual, $\psi$ denotes the Dedekind psi function. That is, $ \psi(N)=N \prod_{p|N}(1+p^{-1})$, where the product is extended to all primes $p$ dividing $N$.

\item[(x)] We write $\#(w,X)$ for the number of fixed points of the
automorphism $w$ on the curve $X$.

\item[(xi)] We write $S_d=\left(\begin{array}{cc}
1&1/d\\
0&1\\
\end{array} \right)$, and by $w_d^{(kd)}\in GL_2(\mathbb{Z})$ a matrix which
corresponds to a lifting on level $kd$ of the Atkin-Lehner
involution $w_d$ of $X_0(d)$ with $d\geq 2$ and $k\geq 2$ integers,
(thinking $w_d\in GL_2(\Z)$ with determinant $d$).

\item[(xii)] To be able to simultaneously describe the Atkin-Lehner involutions
of different curves (for example in the tables in Appendix A) we use
the notation $\varpi_i=w_{{p_i}^{e_i}}$ where $N=\prod_{i=1}^s
{p_i}^{e_i}$ and $p_1<p_2<\ldots<p_s$ are the different primes
dividing $N$.

\end{itemize}

Recall, if $X_0(N)/W_N$ is bielliptic, there is an involution $u\in
\Aut (X_0(N)/W_N)$, called bielliptic involution, which is unique if
$g_{W_N}\geq 6$ \cite{SiHa}) such that $(X_0(N)/W_N)/u$ is a genus 1
curve defined over a number field $K$ (over $\mathbb{Q}$ if $u$ is
unique). Since $X_0(N)/W_N(\mathbb{Q})$ is not empty, the genus 1
curve has a rational point and, therefore, it is an elliptic curve
$E$ over $K$,  called a bielliptic quotient of $X_0(N)/W_N$. In
particular, such an elliptic curve is a ${K}$-isogeny factor for
$J_0(N)^{W_N}$.

\section{Selecting candidate bielliptic curves $X_0^*(N)$}\label{3}

The starting point of our selection is based on the following
 result, which follows from
\cite[Proposition 1]{SiHa} by considering the natural projection map
$X_0(N)/W_N\rightarrow X_0^*(N)$.
\begin{lema}\label{lemab3.1} Let $N$ be an integer, and take
$X_0(N)/W_N$ of $g_W\geq 2$. If it is bielliptic, then $X_0^*(N)$ is
bielliptic, hyperelliptic or has genus at most one.
\end{lema}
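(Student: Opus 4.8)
The plan is to apply \cite[Proposition 1]{SiHa} to the natural projection $\pi\colon X_0(N)/W_N\to X_0^*(N)$. In the geometric form we need, that proposition asserts: if a smooth projective curve $C$ with $g_C\geq 2$ is bielliptic and admits a finite morphism of degree $\geq 2$ onto a curve $C'$, then $C'$ is rational, elliptic, hyperelliptic, or itself bielliptic. (Its proof proceeds by comparing $\pi$ with the bielliptic double cover, e.g.\ via a Castelnuovo--Severi type inequality that constrains how the bielliptic involution can interact with the fibres of $\pi$.) So the only real work is to exhibit such a morphism whose source is $X_0(N)/W_N$ and whose degree is at least $2$.

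First I would construct $\pi$. Because $B(N)\cong(\Z/2\Z)^{\omega(N)}$ is abelian, the subgroup $W_N$ is normal, so $B(N)/W_N$ acts on $X_0(N)/W_N$, and this action is faithful (an involution acting trivially on the quotient would already have to lie in $W_N$). Taking the further quotient recovers $X_0(N)/B(N)=X_0^*(N)$, giving a finite morphism $\pi$ defined over $\Q$ of degree $[B(N):W_N]$. If $W_N=B(N)$ then $X_0(N)/W_N=X_0^*(N)$ and the claim is vacuous; otherwise $W_N\subsetneq B(N)$ forces $[B(N):W_N]\geq 2$, so $\pi$ has degree $\geq 2$.

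Applying the proposition with $C=X_0(N)/W_N$ (bielliptic, $g_W\geq 2$) and $C'=X_0^*(N)$ then yields that $X_0^*(N)$ is rational, elliptic, hyperelliptic, or bielliptic; reading ``rational or elliptic'' as ``genus at most one'' gives exactly the three alternatives of the statement. The one point demanding care is that this is a purely geometric dichotomy: the bielliptic involution of $X_0(N)/W_N$ and the induced structure on $X_0^*(N)$ are only guaranteed to be defined over $\Qbar$. This costs us nothing here, since the lemma is used solely to reduce the infinitely many $N$ to a finite candidate list via \cite{BaGon2}, \cite{Ha97}, \cite{GL}, and for that the geometric conclusion is precisely what is required; questions of fields of definition are deferred to the later curve-by-curve analysis. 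The step I expect to be least automatic is not a genuine obstacle but merely the bookkeeping of confirming that $\pi$ is a proper cover of degree $\geq 2$, after which the result is a direct citation.
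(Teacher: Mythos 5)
Your proof is correct and follows exactly the paper's route: the paper derives the lemma in one line by applying \cite[Proposition 1]{SiHa} to the natural projection $X_0(N)/W_N\to X_0^*(N)$, which is precisely your argument. The extra care you take (degree of the projection, the trivial case $W_N=B(N)$, fields of definition) is sound but not needed beyond what the paper already records.
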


We assume once and for all $N$ not square-free and $N$ not a power
of a prime.

\begin{teo}\label{levelstudy2} Consider $X_0(N)^*$ with $N$ a non-square free level not a power of a prime,
then
\begin{enumerate}
\item (Gonz\'alez-Lario, \cite{GL}) $g_N^*=0$ if and only if $N\in\{12,18,20,24,28,36,44,45,50,54,$\\$ 56,60,92\}$
\item (Gonz\'alez-Lario, \cite{GL}) $g_N^*=1$ if and only if $N\in\{40,48,52,63,68,72,75,76,80,84,90,$\\$ 96,98, 99,100,108,120,124,126,132,140,150,156,188,220\}$
\item (Hasegawa, \cite{Hata}) $g_N^*=2$ if and only if $N\in\{88,104,112,116,117,135,147,153,168,180,$\\$184,198,204,276,284,380\}$
\item (Hasegawa, \cite{Ha97}) $g_N^*>2$ and $X_0^*(N)$ is hyperelliptic if and only if $N$ appears next:
\begin{center}
\begin{tabular}{c|r}
\hline $g_N^*$&$N$\\
\hline 3&136; 171; 207; 252; 315;\\
\hline 4&176;\\
\hline 5&279.\\
\hline
\end{tabular}
\end{center}
\item (Bars-Gonz\'alez, \cite{BaGon2}) $X_0^*(N)$ is bielliptic if and only if $N$ appears in the following table
\begin{center}
\begin{tabular}{c|r}
\hline $g_N^*$&$N$\\
\hline
2& 88; 112; 116; 153; 180; 184; 198; 204; 276; 284; 380;\\
\hline
 3& 144; 152; 164; 189; 196; 207; 234; 236; 240; 245; 248;
252; 294; 312;\\
 & 315; 348; 420; 476;\\\hline
4&148; 160; 172; 200; 224; 225; 228; 242; 260; 264; 275; 280; 300;
306; 342;\\
\hline 5& 364; 444; 495;\\
\hline 7&558.\\
\hline
\end{tabular}
\end{center}
\end{enumerate}
\end{teo}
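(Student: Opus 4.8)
The five items are each an instance of a published classification, so the plan is to reconstruct the common mechanism behind them --- a genus computation feeding a finiteness argument --- and then read off exactly the levels $N$ that are non-square-free and not prime powers.

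First I would compute the genus $g_N^*$ uniformly. Since $B(N)\cong(\Z/2\Z)^{\omega(N)}$ is abelian, $g_N^*=\dim S_2(\Gamma_0(N))^{B(N)}$ equals the average of the character of $B(N)$ on $S_2(\Gamma_0(N))$, and the holomorphic Lefschetz formula for an involution gives $\Tr(w_d\mid S_2(\Gamma_0(N)))=1-\tfrac12\,\#(w_d,X_0(N))$ for each $w_d\neq 1$. Hence
\[
g_N^* = \frac{1}{2^{\omega(N)}}\Bigg( g_N + \sum_{1\neq w_d\in B(N)}\Big(1-\tfrac12\,\#(w_d,X_0(N))\Big)\Bigg),
\]
in which the fixed-point numbers are supplied by the classical class-number formulas of Fricke and Ogg; thus $g_N^*$ becomes an explicit arithmetic function of $N$. (Equivalently one iterates Riemann--Hurwitz along a composition series of $B(N)$, tracking fixed points at each stage.)

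Next I would convert this into finiteness. Because $g_N\sim\psi(N)/12$ grows linearly in $N$ while each $\#(w_d,X_0(N))$ is $O(N^{1/2+\varepsilon})$, the leading term forces $g_N^*\to\infty$, so each prescribed value $g_N^*\in\{0,1,2\}$ is attained for only finitely many $N$; these are enumerated and then filtered to the non-square-free, non-prime-power levels listed in (i)--(iii). For (iv) and (v) the genus is not bounded by this estimate, so instead I would use that the gonality of $X_0^*(N)$ grows with $N$ (Abramovich's lower bound via the smallest Laplace eigenvalue): a hyperelliptic $X_0^*(N)$ has gonality $2$, and a bielliptic one has gonality at most $4$ (compose its degree-$2$ map to the genus-$1$ quotient with a degree-$2$ map of that quotient to $\mathbb{P}^1$), so both properties hold for only finitely many $N$. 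Each surviving candidate is then decided individually: hyperellipticity by testing for a $g^1_2$ on the canonical model, and biellipticity by the method of Bars--Gonz\'alez, namely writing the canonical ideal via Petri's theorem, decomposing $J_0(N)^{B(N)}$ up to isogeny, and searching for an involution whose quotient is an elliptic factor $E$.

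The hard part will be this last, curve-by-curve biellipticity test in (v). Unlike the genus computation it is not a closed-form calculation: one must exhibit or rule out a concrete bielliptic involution, and since automorphisms of $X_0^*(N)$ need not be defined over $\Q$ for non-square-free $N$, the search cannot be confined to the Atkin--Lehner group. Here I expect to lean on the Castelnuovo--Severi inequality to discard candidates cheaply, since it forbids a curve of genus at least $4$ from being simultaneously hyperelliptic and bielliptic, reserving the full Petri computation for the few remaining cases. Finally, assembling the four cited classifications and intersecting their output with the standing hypotheses that $N$ is non-square-free and not a prime power yields precisely the lists and tables stated.
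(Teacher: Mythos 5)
Your proposal reconstructs a proof, but note that the paper itself offers none: Theorem \ref{levelstudy2} is a compilation of four published classifications (\cite{GL}, \cite{Hata}, \cite{Ha97}, \cite{BaGon2}) and is simply quoted with attributions, so there is no internal argument to compare against. Measured against what those references actually do, your outline is faithful on the genus side: the trace/fixed-point computation you write down (equivalently Riemann--Hurwitz for $X_0(N)\to X_0^*(N)$, which is the paper's own Lemma \ref{4.13}) combined with the class-number formulas of \cite{Ogg} is exactly how $g_N^*$ is made explicit, and playing $\psi(N)/(12\cdot 2^{\omega(N)})$ against the $O(N^{1/2+\varepsilon})$ fixed-point counts is the standard finiteness argument behind items (i)--(iii). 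The genuine divergence is in (iv)--(v): you invoke Abramovich's spectral lower bound on gonality, whereas Hasegawa and Bars--Gonz\'alez bound the candidate levels by Ogg-style counts of supersingular points over $\FF_{p^2}$ (the same inequality this paper reproduces as Lemma \ref{lemadisca}); your route is conceptually cleaner, but the point-counting version yields sharper, effectively computable bounds on $N$, which is what makes the enumeration practical. Your endgame --- Castelnuovo--Severi to exclude curves of genus at least $4$ that would be both hyperelliptic and bielliptic, then Petri's theorem together with the isogeny decomposition of $J_0(N)^{B(N)}$ for the survivors --- is precisely the method of \cite{BaGon} and \cite{BaGon2}. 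The one caveat is that what you have is a strategy rather than a proof: the actual content of the theorem is the explicit lists, and only carrying out the computations level by level certifies them.
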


\section{Selecting possible bielliptic quotient curves}

\subsection{General criteria to discard bielliptic curves}

Here we collect some general criteria that allow to prove
comparatively easily that certain curves are not bielliptic.

\begin{lema} \label{4.4} (special form of the Castelnuovo inequality
\cite[Theorem 3.5]{Accola}) Let $\phi: X\to Y$ be a morphism of
degree $d$ of curves. If $X$ has a bielliptic involution $v$, then
$$g(X)\leq dg(Y)+d+1$$
or the morphism $\phi$ factors over $X/v$.
\par
In particular: A hyperelliptic curve of genus $g\geq 4$ cannot be
bielliptic. A trigonal curve of genus strictly bigger than $4$
cannot be bielliptic. A curve of genus $g\geq 6$ has at most one
bielliptic involution.
\end{lema}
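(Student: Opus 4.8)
The plan is to obtain the main inequality as a direct specialization of the general Castelnuovo--Severi inequality (the form given in \cite[Theorem 3.5]{Accola}), which I recall as follows: if a curve $X$ admits two morphisms $f_1\colon X\to Y_1$ and $f_2\colon X\to Y_2$ of degrees $n_1$ and $n_2$ onto curves of genera $g(Y_1)$ and $g(Y_2)$, and if $f_1$ and $f_2$ do not both factor through a common morphism $X\to Z$ of degree $>1$, then
$$g(X)\leq n_1\,g(Y_1)+n_2\,g(Y_2)+(n_1-1)(n_2-1).$$
I would apply this with $f_1=\phi\colon X\to Y$ of degree $n_1=d$ and $f_2=\pi\colon X\to X/v=:E$ the bielliptic double cover, so $n_2=2$ and $g(E)=1$. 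Plugging in gives $g(X)\leq d\,g(Y)+2+(d-1)=d\,g(Y)+d+1$, which is exactly the claimed bound.

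The dichotomy in the statement is then just the two cases of the hypothesis of Castelnuovo--Severi. If $\phi$ and $\pi$ do not share a common intermediate cover of degree $>1$, the inequality above holds. Otherwise they both factor through some $X\to Z$ with $\deg(X\to Z)>1$; since this degree divides $\deg\pi=2$, it equals $2$, forcing $Z\to E$ to have degree $1$, i.e. $Z\cong E=X/v$. Hence $\phi$ factors as $X\to X/v\to Y$, which is the alternative that \emph{the morphism $\phi$ factors over $X/v$}. This completes the proof of the general assertion; the only point needing care is the clean translation of the factorization condition into the degree bookkeeping, which I expect to be the main (if modest) obstacle.

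For the three consequences I would specialize $(\phi,d,Y)$ and rule out the factorization alternative by a divisibility or fiber argument. For a hyperelliptic $X$ take $\phi$ to be the degree-$2$ map to $\mathbb{P}^1$: the bound reads $g(X)\leq 3$, and factorization is impossible since it would force $X/v\cong\mathbb{P}^1$, contradicting $g(X/v)=1$; hence $g(X)\geq 4$ excludes biellipticity. For a trigonal $X$ take $\phi$ of degree $3$ to $\mathbb{P}^1$: the bound reads $g(X)\leq 4$, and factorization through the degree-$2$ cover $X\to X/v$ is impossible because $2\nmid 3$; hence $g(X)>4$ excludes biellipticity. Finally, for two bielliptic involutions $v_1\neq v_2$ apply the main statement with $\phi=\pi_1\colon X\to X/v_1$ of degree $2$ onto a genus-$1$ curve, using $v_2$ as the bielliptic involution: the bound reads $g(X)\leq 5$, while factorization forces $X/v_2\cong X/v_1$ compatibly with the two projections, whence $v_1=v_2$, a contradiction; thus a curve with $g(X)\geq 6$ admits at most one bielliptic involution.
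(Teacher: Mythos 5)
Your proposal is correct and follows exactly the route the paper intends: the paper gives no written proof beyond the citation of Accola's Castelnuovo--Severi inequality, and your specialization (taking $f_2$ to be the bielliptic double cover, so $n_2=2$, $g(Y_2)=1$, yielding $dg(Y)+2+(d-1)$) together with the degree bookkeeping that identifies the common factor $Z$ with $X/v$ is precisely the intended derivation. The three corollaries are also handled the standard way, with the factorization alternative correctly excluded by $g(X/v)=1\neq 0$, by $2\nmid 3$, and by the identification $v_1=v_2$ respectively.
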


\begin{prop}\label{centraloverK}\cite[Proposition 3.2]{JKS}
Let $X$ be a bielliptic curve of genus $g\geq 6$ defined over a
field $K$ of characteristic $0$. Then the bielliptic involution is
unique, defined over $K$ and lies in the center of $\Aut (X)$.
\end{prop}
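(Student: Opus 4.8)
The plan is to reduce all three assertions to a single fact, namely that a curve of genus $g\geq 6$ carries at most one bielliptic involution, and then to extract both the rationality and the centrality from it by pure conjugation arguments. The uniqueness is already recorded in Lemma \ref{4.4} as a consequence of the Castelnuovo inequality: if $v_1\neq v_2$ were two distinct bielliptic involutions, then applying the inequality to $X\to X/v_1$ together with an analysis of the quotient $X\to X/\langle v_1,v_2\rangle$ forces $g\leq 5$, contradicting the hypothesis. So I would take uniqueness as given and build the rest on top of it.

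To see that the bielliptic involution $v$ is defined over $K$, I would use Galois descent. For any $\sigma\in\Gal(\overline{K}/K)$ the conjugate $v^\sigma$ is again an involution, and since $X$ is defined over $K$ we have $X/v^\sigma=(X/v)^\sigma$, which is a curve of genus $1$ because genus is preserved under conjugation. Hence $v^\sigma$ is itself a bielliptic involution, and uniqueness yields $v^\sigma=v$. As $\sigma$ is arbitrary, $v$ is fixed by the whole Galois group and therefore descends to an automorphism defined over $K$.

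For centrality I would run the identical argument with geometric automorphisms in place of Galois elements. Given any $\phi\in\Aut(X)$, the conjugate $\phi v\phi^{-1}$ is an involution whose quotient is carried isomorphically onto $X/v$ by $\phi$, so it too is bielliptic; uniqueness then forces $\phi v\phi^{-1}=v$, that is, $v$ commutes with every automorphism and lies in the center of $\Aut(X)$. The only genuinely nontrivial ingredient here is the uniqueness supplied by Lemma \ref{4.4}; once that is in hand, the remaining steps are the formal observation that a canonically determined object must be stable under every symmetry, whether arithmetic (Galois) or geometric (automorphisms), and I expect no real obstacle beyond phrasing the two conjugation arguments carefully.
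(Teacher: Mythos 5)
Your proposal is correct and follows exactly the paper's own argument: uniqueness from the Castelnuovo inequality (Lemma \ref{4.4}), and then conjugation by $\Gal(\overline{K}/K)$ and by $\Aut(X)$ combined with that uniqueness to obtain rationality and centrality. You have merely written out in detail the two conjugation arguments that the paper states in one sentence.
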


\begin{proof}
The uniqueness was just mentioned in Lemma \ref{4.4}. Acting on this
bielliptic involution with $\Aut (X)$ by conjugation resp.  with
$\Gal (\overline{K}/K)$,  the uniqueness implies the other
properties.
\end{proof}

The following Lemmas \ref{4.1} and \ref{4.2} appeared in \cite{Sc}
and \cite{JKS} respectively for genus $\geq 6$. Here we present a
slightly different proof for general genus.
\begin{lema}\label{4.1} Let $w$ be an involution of $X$ with more than 8 fixed
points. Then either $w$ is a bielliptic involution or $X$ is not
bielliptic
\end{lema}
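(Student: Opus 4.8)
The plan is to assume that $X$ is bielliptic---otherwise the second alternative of the statement holds and there is nothing to prove---and to show that under this assumption $w$ must itself be a bielliptic involution. So I would fix a bielliptic involution $v$ of $X$ and consider the quotient map $\pi\colon X\to X/w$, which has degree $2$. The idea is to feed $\pi$ into the special form of the Castelnuovo inequality recorded in Lemma \ref{4.4} and to read off what the hypothesis $\#(w,X)>8$ forces.

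Lemma \ref{4.4}, applied to $\pi$ with $d=2$ and $Y=X/w$, yields one of two alternatives. In the first, $g(X)\leq 2g(X/w)+3$, that is, $2g(X)+2\leq 4g(X/w)+8$. Combining this with the Riemann--Hurwitz formula for the degree-$2$ map $\pi$, whose branch locus is exactly the fixed-point set of $w$,
$$\#(w,X)=2g(X)+2-4g(X/w),$$
I would obtain $\#(w,X)\leq 8$, contradicting the hypothesis that $w$ has more than $8$ fixed points. Hence this alternative cannot occur, and the second must hold: the morphism $\pi$ factors over $X/v$. Since both $\pi\colon X\to X/w$ and the quotient $q\colon X\to X/v$ have degree $2$, the induced map $X/v\to X/w$ has degree $1$ and is therefore an isomorphism; in particular $g(X/w)=g(X/v)=1$, so $w$ is a bielliptic involution (indeed $w=v$), as claimed.

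The computation itself is short; the step requiring care is the correct interpretation of the two Castelnuovo alternatives. I would need to verify that ``$\pi$ factors over $X/v$'' genuinely forces $X/w$ to be elliptic---through the degree bookkeeping above---rather than merely relating genera, and that the genus inequality translates, via Riemann--Hurwitz, into precisely the threshold $8$ appearing in the statement. I would also emphasize that this argument uses neither that $v$ and $w$ commute nor any lower bound on $g(X)$, which is exactly why it applies in arbitrary genus, in contrast to the versions for $g\geq 6$ given in \cite{Sc} and \cite{JKS}.
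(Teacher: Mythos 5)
Your proposal is correct and is essentially the paper's own argument: Riemann--Hurwitz for the degree-$2$ map $X\to X/w$ turns the hypothesis $\#(w,X)>8$ into $g(X)>2g(X/w)+3$, which rules out the inequality branch of the Castelnuovo alternative (Lemma \ref{4.4}) and forces the quotient map to factor through $X/v$, whence $w=v$. The only difference is that you spell out the degree bookkeeping in the factorization step, which the paper leaves implicit.
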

\begin{proof}Let $g$ and $h$ be the genera of $X$ and $X/w$. Since $w$ has more than 8 fixed points, by the Hurwitz formula we have $g>2h+3$. If $v$ is a bielliptic involution,
 from Castelnuovo we get the contradiction that $g$ cannot be bigger than $2h+3$, unless the two involutions factor over a common curve, i.e. are the same.
\end{proof}

\begin{lema}\label{4.2}\label{unramifiedcovering}
Let $X$ be a curve of genus $g$ with a bielliptic involution $v$ and
let $G$ be a subgroup of $Aut(X)$ such that the curve $Y=X/G$ has
genus $h\geq 2$.
\begin{itemize}
\item[{(a)}] If the map $\phi:X\to Y$ is ramified, i.e. if $g-1>|G|(h-1)$, and
$g\geq 6$, then $Y$ must be hyperelliptic and $v$ induces the
hyperelliptic involution on $Y$.
\item[{(b)}] (unramified covering criterion) If $Y$ is not hyperelliptic, then it must be bielliptic and the map $\phi:X\to Y$ must be unramified, i.e. $g-1=|G|(h-1)$.
\end{itemize}

\end{lema}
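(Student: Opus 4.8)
The plan is to reduce both statements to the behaviour of the bielliptic involution on the quotient $Y=X/G$. Throughout write $E=X/v$ (genus $1$) and $d=|G|$. Riemann--Hurwitz for $\phi$ gives $g-1\ge d(h-1)$ with equality exactly when $\phi$ is unramified, so the two conditions ``$\phi$ ramified'' and ``$g-1>d(h-1)$'' coincide. Since $h\ge 2$, the genus-$1$ curve $E$ admits no nonconstant map to $Y$, so $\phi$ cannot factor through $X\to E$; hence Lemma~\ref{4.4} yields $g\le dh+d+1$. The whole argument then turns on whether $v$ descends to $Y$, that is, whether $v$ normalizes $G$.

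For part (a), where $g\ge 6$, Proposition~\ref{centraloverK} tells us that $v$ is central in $\Aut(X)$; in particular $v$ normalizes $G$ and descends to an involution $\bar v$ of $Y$. If $v\in G$ then $\phi$ would factor as $X\to E\to Y$, impossible for $h\ge2$, so $v\notin G$, the map $G\to\Aut(E)$ is injective, and $\bar v\neq\mathrm{id}$. Moreover $Y/\bar v=(X/v)/\bar G=E/\bar G$ is a quotient of the genus-$1$ curve $E$, hence has genus $\le 1$. Now suppose $\phi$ is ramified: some $\sigma\in G\setminus\{1\}$ fixes a point $P\in X$, whence $\bar\sigma\neq\mathrm{id}$ fixes the image of $P$ in $E$. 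Thus $\bar G$ does not act freely on $E$, so $E/\bar G\cong\mathbb{P}^1$ and therefore $Y/\bar v\cong\mathbb{P}^1$; that is, $Y$ is hyperelliptic and $\bar v$ is its hyperelliptic involution, which proves (a).

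For part (b) I would first treat the case where $v$ normalizes $G$ by the same descent. There $Y/\bar v=E/\bar G$ again has genus $\le1$, and the hypothesis that $Y$ is not hyperelliptic rules out genus $0$; hence $E/\bar G$ has genus $1$, so $\bar G$ acts freely on $E$, and lifting fixed points shows that $G$ acts freely on $X$. Consequently $\phi$ is unramified and $Y$ is bielliptic with bielliptic involution $\bar v$. It remains to handle the case where $v$ does not normalize $G$. Then some $G$-conjugate of $v$ differs from $v$, so $X$ carries at least two distinct bielliptic involutions, and Lemma~\ref{4.4} forces $g\le 5$. Combining $g\le5$ with $d\ge2$, $h\ge3$ (a non-hyperelliptic curve of genus $\ge2$ has genus $\ge3$) and the lower bound $g-1\ge d(h-1)\ge4$ pins down $g=5$, $d=2$, $h=3$; since all inequalities then become equalities, $\phi$ is automatically unramified.

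The hard part is the residual corner $g=5$, $h=3$, $d=2$ with $v$ failing to commute with the fixed-point-free deck involution $\tau$ of the unramified double cover $X\to Y$: here the clean descent is unavailable and one must still prove that the genus-$3$ curve $Y$ is bielliptic. I expect to settle this by a direct analysis of the group $\langle v,\tau\rangle$ acting on the genus-$5$ curve $X$, extracting from $v$ and $\tau$ an involution that does descend to $Y$, or by invoking the explicit structure of the automorphisms of the plane-quartic model of $Y$ together with Lemma~\ref{4.1} to control fixed-point counts. Everything else is routine Riemann--Hurwitz bookkeeping; this non-commuting low-genus case is the only place where the conclusion is not forced.
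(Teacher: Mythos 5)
Your part (a) is essentially the paper's own argument (centrality of $v$ from Proposition \ref{centraloverK}, descent of $v$ and of $G$, and the Hurwitz count on the genus-$1$ curve $E/\widetilde{G}$), and it is correct. For part (b) your route to the unramifiedness is valid but far longer than necessary: the paper simply observes that if $\phi$ were ramified then $g-1>|G|(h-1)\geq 2\cdot 2$ forces $g\geq 6$, so part (a) applies and makes $Y$ hyperelliptic, a contradiction. (A small slip in your case split: to write $Y/\bar v=E/\bar G$ you need $G$ to descend to $E=X/v$, i.e.\ $v$ to \emph{centralize} $G$, not merely normalize it; fortunately the complementary case, ``some $G$-conjugate of $v$ differs from $v$'', is exactly the negation of centralizing, so your two cases do cover everything once relabelled.)

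The genuine gap is the one you flag yourself: in the residual case $g=5$, $h=3$, $|G|=2$ with $v$ not commuting with the deck involution, you never prove that $Y$ is bielliptic, and the descent strategy cannot be rescued there. The missing ingredient is that biellipticity of $Y$ does not require $v$ to descend at all: by Harris--Silverman's Proposition 1 --- the same result the paper packages as Lemma \ref{lemab3.1} --- any curve of genus at least $2$ that is the image of a bielliptic curve under a nonconstant morphism is hyperelliptic or bielliptic. Since $Y$ is assumed non-hyperelliptic of genus $h\geq 2$ and receives the degree-$|G|$ map from the bielliptic curve $X$, it is bielliptic outright. With that one citation your corner case evaporates and part (b) reduces to the unramifiedness claim, which you (and the paper, more quickly) have already established.
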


\begin{proof}

(a) Obviously $v\not\in G$ because $h>1$. Since $g\geq 6$, we know
from Proposition \ref{centraloverK} that $v$ is central. So $v$
induces an involution $\widetilde{v}$ on $Y$, and $G$ induces a
group $\widetilde{G}$ (isomorphic to $G$) of automorphisms on $X/v$.
If $\phi$ is ramified, at least one nontrivial element of $G$ has at
least one fixed point. So the same holds for $\widetilde{G}$. Hence
the Hurwitz formula for the covering from $X/v$ to
$(X/v)/\widetilde{G}=Y/\widetilde{v}$ shows that the latter curve
has genus $0$.
\par
(b) Since $Y$ is not hyperelliptic we have $h\geq 3$. Assume that
$\phi$ is ramified. Then the Hurwitz formula implies $g\geq 6$, and
part (a) leads to a contradiction.

\end{proof}

If $X_0(N)/W_N$ is bielliptic over $\QQ$ and $p\nmid N$, then the
morphism reduces modulo $p$ and we have that
\begin{equation}\label{eq4.1}
|X_0(N)/W_N(\mathbb{F}_{p^n})|\leq 2 |E(\mathbb{F}_{p^n})|
\end{equation}
for all $n\geq 1$, which can be computed. In particular we reproduce
similar results as \cite[Lemmas 5,7]{BaGon}:

\begin{lema}\label{lemadisca} Assume $X_0(N)/W_N$ is bielliptic over $\mathbb{Q}$, and
$p\nmid N$. Then the following equality holds: $
\displaystyle{\frac{\psi (N)}{|W_N|}\leq 12\cdot
\frac{2|E(\mathbb{F}_{p^2})|-1}{p-1}}\,$.
 In
particular we have, $
 \displaystyle{\frac{\psi (N)}{|W_N|}}\leq 12\cdot
\frac{2(p+1)^2-1}{p-1}\,$.

\end{lema}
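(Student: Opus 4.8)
The plan is to sandwich the number $|X_0(N)/W_N(\mathbb{F}_{p^2})|$ between a lower bound produced from supersingular points together with the rational cusp, and the upper bound imposed by the degree-$2$ bielliptic map to $E$. Since $p\nmid N$, the curve $X_0(N)$ has good reduction at $p$, and the Atkin--Lehner involutions generating $W_N$, the quotient map, and (by hypothesis) the bielliptic map all reduce modulo $p$. Hence inequality \eqref{eq4.1} with $n=2$ already supplies the upper bound $|X_0(N)/W_N(\mathbb{F}_{p^2})|\leq 2|E(\mathbb{F}_{p^2})|$, and the whole problem reduces to showing that the quotient has at least $\tfrac{\psi(N)(p-1)}{12|W_N|}+1$ points over $\mathbb{F}_{p^2}$.

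First I would bound from below the number of supersingular points on the reduction of $X_0(N)$. These are all defined over $\mathbb{F}_{p^2}$, and each corresponds to an isomorphism class of pairs $(E,C)$, with $E$ supersingular and $C\subset E$ cyclic of order $N$ (note $E[N]\cong(\mathbb{Z}/N)^2$ because $p\nmid N$, so each $E$ carries $\psi(N)$ such subgroups). By the orbit--stabiliser relation, the mass $\sum_{(E,C)}1/|\Aut(E,C)|$ equals $\psi(N)\sum_{E}1/|\Aut(E)|$, which by the Eichler--Deuring mass formula is $\psi(N)(p-1)/24$. As every pair satisfies $|\Aut(E,C)|\geq 2$, the actual number of supersingular points of $X_0(N)$ is at least twice the mass, i.e.\ at least $\psi(N)(p-1)/12$.

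Next I would push these points down to $X_0(N)/W_N$. The projection has degree $|W_N|$ and carries supersingular points to supersingular points (Atkin--Lehner correspondences are isogenies, and supersingularity is an isogeny invariant), so its restriction to the supersingular locus has fibres of size at most $|W_N|$; hence the image contains at least $\psi(N)(p-1)/(12|W_N|)$ supersingular points, all $\mathbb{F}_{p^2}$-rational because $W_N$ is defined over $\mathbb{Q}$. Finally, the distinguished $\mathbb{Q}$-rational cusp reduces to a smooth $\mathbb{F}_{p^2}$-rational point that is not supersingular, giving one further point and thus $|X_0(N)/W_N(\mathbb{F}_{p^2})|\geq \psi(N)(p-1)/(12|W_N|)+1$. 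Combining this with the upper bound and multiplying through by the positive quantity $12/(p-1)$ yields the first inequality; the ``in particular'' statement then follows from the Hasse bound $|E(\mathbb{F}_{p^2})|\leq (p+1)^2$.

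I expect the only genuine difficulty to be the careful bookkeeping in the lower bound: one must confirm that the fibres of the quotient map over the supersingular locus really have at most $|W_N|$ points (automatic, since the map has degree $|W_N|$) and that the reduced cusp is distinct from every supersingular point (true, as cusps parametrise degenerate fibres rather than supersingular elliptic curves). The remaining ingredients---the mass formula and the Hasse estimate---are classical, so no serious obstacle arises beyond this combinatorial care.
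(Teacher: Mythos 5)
Your argument is correct and is essentially the paper's own: both are Ogg-style point counts that play the lower bound coming from supersingular points plus a rational cusp against the upper bound forced by the degree-$2$ map to $E$. The only (cosmetic) difference is that you count on $X_0(N)/W_N$, dividing the supersingular count by the degree $|W_N|$ of the quotient map and adding one cusp, whereas the paper counts on $X_0(N)$ itself, uses the composite map of degree $2|W_N|$ to $E$, and absorbs the ``$-1$'' via the $2^{\omega(N)}\geq |W_N|$ cusps there.
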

\begin{proof}
Assume $p\nmid N$. We generalize the argument used by Ogg in
\cite{Ogg}. Indeed,  $X_0(N)(\F_{p^2})$ contains $2^{\omega(N)}$
cusps and at least $(p-1)\frac{\psi(N)}{12}$ {{many}} supersingular
points (cf. \cite[Lemma 3.20 and 3.21]{BGGP}). Since there is a
nonconstant morphism defined over $\Q$ from $X_0(N)$ to an elliptic
quotient $E$ of $X_0(N)/W$ which has degree $2\cdot |W_N|$, we get
$|X_0(N)(\F_{p^2})|\leq 2\cdot|W_N| |E(\F_{p^2})|$.
\end{proof}

Similarly, for optimal quotients with conductor $M=N$ for elliptic
quotient (related with the strong Weil parametrization) it is easy
to derive the following lemma.

\begin{lema}\label{lemadegree}
Let $E'$ be the optimal elliptic curve in the $\Q$-isogeny class of
the bielliptic quotient $E$  of $X_0(N)/W_N$ with conductor $M=N$.
Then the degree $D$  of the modular parametrization $\pi_N \colon
X_0(N)\rightarrow E'$ divides $2\cdot |W_N|$. Note that the degree
$D$ can be found in \cite[Table 5]{Cre}.
\end{lema}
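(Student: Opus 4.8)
The plan is to exploit the strong Weil parametrization $\pi_N\colon X_0(N)\to E'$ together with the covering $X_0(N)/W_N\to E$ of degree $2\cdot|W_N|$. The key observation is that when the bielliptic quotient $E$ has conductor exactly $N$, its $\Q$-isogeny class contains a unique optimal curve $E'$, and the composite $X_0(N)\to X_0(N)/W_N\to E$ must factor through the optimal parametrization. First I would recall that by the optimality of $E'$ in the sense of Carayol and the universal property of $J_0(N)$, any nonconstant morphism defined over $\Q$ from $X_0(N)$ to an elliptic curve in the isogeny class of $E$ factors (up to $\Q$-isogeny on the target) through $\pi_N$. Thus the degree $D$ of $\pi_N$ should divide the degree of the full composite $X_0(N)\to E$.

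The core computation is then a degree count. The projection $X_0(N)\to X_0(N)/W_N$ has degree $|W_N|$, and the bielliptic map $X_0(N)/W_N\to E$ has degree $2$, so the composite $X_0(N)\to E$ has degree $2\cdot|W_N|$. Since $E$ is $\Q$-isogenous to the optimal curve $E'$, and the modular parametrization $\pi_N$ is the minimal-degree such map through which all others factor, I would argue that $D=\deg(\pi_N)$ divides $2\cdot|W_N|$. The precise factorization statement I rely on is that if $f\colon X_0(N)\to E$ is any nonconstant $\Q$-morphism to an elliptic curve $\Q$-isogenous to $E'$, then there is an isogeny $\lambda\colon E'\to E$ with $f=\lambda\circ\pi_N\circ(\text{automorphism})$, whence $\deg f=\deg\lambda\cdot\deg\pi_N$ and in particular $D\mid\deg f$. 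Applying this with $f$ the composite above yields $D\mid 2\cdot|W_N|$.

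The main obstacle will be justifying rigorously that the composite $X_0(N)\to E$ factors through $\pi_N$ rather than through some other parametrization. This requires invoking the optimality of $E'$: by definition $E'$ is the optimal quotient, meaning the map $J_0(N)\to E'$ has connected kernel, and every $\Q$-rational map from $J_0(N)$ to an abelian variety in the isogeny class factors through the optimal one up to isogeny. I would need the cusp $\infty$ (a fixed $\Q$-rational point coming from $X_0(N)$) to normalize the Albanese maps so that the factorization is genuinely through $\pi_N$ and not merely up to translation, which is automatic since all the maps send the chosen base point to the identity of the elliptic curve. Once this functorial factorization is in place, the divisibility $D\mid 2\cdot|W_N|$ is immediate, and the reference to \cite[Table 5]{Cre} supplies the numerical value of $D$ for each relevant $N$.
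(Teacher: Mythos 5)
Your argument is correct and is essentially the proof the paper intends (the paper states the lemma without proof, saying it is "easy to derive" from the strong Weil parametrization): the composite $X_0(N)\to X_0(N)/W_N\to E$ has degree $2\cdot|W_N|$, and since $E$ has conductor $N$ the corresponding newform appears with multiplicity one in $J_0(N)$, forcing the induced map to factor through the optimal parametrization $\pi_N$ followed by an isogeny, whence $D\mid 2\cdot|W_N|$. Your attention to the base-point normalization and to the role of the hypothesis $M=N$ (which the paper itself warns is essential, since the factorization fails for proper divisors $M$ of $N$) matches the paper's reasoning.
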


\begin{teo}\label{cyclicdisjoint}
Let $X$ be a curve over a field of characteristic $0$.  Then
\begin{itemize}
\item[(a)] The stabilizer in $\Aut (X)$ of any point of $X$ is cycylic.
\item[(b)] Distinct involutions in $\Aut (X)$ have disjoint fixed points.
\end{itemize}
\end{teo}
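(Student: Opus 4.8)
The plan is to deduce both statements from a single local computation at a fixed point, exploiting that we are in characteristic $0$. First I would extend scalars to the algebraic closure $\overline{k}$ and pass to the smooth projective model, since neither the cyclicity of a stabilizer nor the disjointness of fixed-point sets is affected by base change. For part (a) I would take a geometric point $P$ of $X$ and let $G\subseteq\Aut(X)$ be its stabilizer; we may assume $G$ is finite, which holds whenever $\Aut(X)$ is finite, in particular for the curves of interest with $g_X\geq 2$. Each $g\in G$ preserves the local ring $\mathcal{O}_{X,P}$ and its maximal ideal $\gm_P$, hence acts on the one-dimensional cotangent space $\gm_P/\gm_P^2$. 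This yields a character $\rho\colon G\to\Aut(\gm_P/\gm_P^2)=\overline{k}^{\,*}$.

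The key step is to prove that $\rho$ is injective. I would fix a uniformizer $t$ at $P$, so that an element $g\in\ker\rho$ satisfies $g(t)=t+c_mt^m+\cdots$ in $\widehat{\mathcal{O}}_{X,P}\cong\overline{k}[[t]]$, with $m\geq 2$ and $c_m$ the first possibly nonzero higher coefficient. An easy induction gives $g^{j}(t)=t+jc_mt^m+\cdots$, so if $g$ has finite order $n$ then $g^{n}(t)=t$ forces $nc_m=0$. Since $\mathrm{char}(k)=0$ we have $n\neq 0$, whence $c_m=0$; repeating with the next nonzero coefficient shows $g(t)=t$, so $g$ acts trivially on $\widehat{\mathcal{O}}_{X,P}$ and therefore on $k(X)$, giving $g=\id$. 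Thus $\rho$ embeds the finite group $G$ into $\overline{k}^{\,*}$, and since every finite subgroup of the multiplicative group of a field is cyclic, this proves (a).

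For part (b) I would argue by contradiction: suppose two distinct involutions $u\neq v$ of $X$ share a fixed point $P$. Then both lie in the stabilizer $G$ of $P$, which is cyclic by part (a). But a cyclic group contains at most one element of order $2$, contradicting $u\neq v$. Hence distinct involutions have disjoint fixed-point sets, and (b) follows formally from (a).

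The main obstacle, and the only place where the hypotheses genuinely enter, is the injectivity of the cotangent character $\rho$. The linearization argument breaks down in positive characteristic because of wild ramification, and it also fails for automorphisms of infinite order, which is exactly why we assume $\mathrm{char}(k)=0$ and restrict to the finite stabilizers occurring for curves with $g_X\geq 2$. Once faithfulness of $\rho$ is established, both conclusions are immediate.
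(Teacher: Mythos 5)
Your proposal is correct. For part (b) your deduction is exactly the paper's: the authors simply remark that (b) ``follows immediately from (a)'', which is your observation that a cyclic group contains at most one involution. For part (a), however, the paper gives no argument at all --- it cites \cite[Corollary III.7.7]{FK} --- whereas you supply a complete proof via the cotangent character $\rho\colon G\to \Aut(\gm_P/\gm_P^2)\cong \overline{k}^{\,*}$ and the standard induction showing that a finite-order automorphism acting trivially on $\gm_P/\gm_P^2$ is the identity (this is where characteristic $0$ enters, exactly as you say). This is essentially the algebraic form of the argument underlying the Farkas--Kra reference, where the same linearization is carried out analytically by uniformizing a neighbourhood of the fixed point and identifying the stabilizer with a group of rotations; your version has the advantage of working directly over any field of characteristic $0$ rather than only over $\C$. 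One point worth making explicit: as literally stated for an arbitrary curve the theorem fails (the stabilizer of a point of $\PP^1$ is the non-abelian group of affine maps), so your restriction to finite stabilizers --- automatic for the genus $\geq 2$ curves to which the paper applies the result --- is not a cosmetic hypothesis but a necessary one, and you are right to flag it.
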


\begin{proof}
(a) is proved in \cite[Corollary III.7.7]{FK}, and (b) follows
immediately from (a).
\end{proof}

\begin{prop}\label{prop3.1} Let $X$ be a curve of genus $g$ at least $6$. Assume that $\Aut (X)$ has a
subgroup $H$ of order $2^t$ such that $2^t$ does not divide
$2(g-1)$. Then either the bielliptic involution of $X$ is contained
in $H$ or $X$ is not bielliptic.
\end{prop}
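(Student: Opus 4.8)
The plan is to argue by contradiction, leaning on the two structural facts already established. Assume $X$ is bielliptic. Since $g\geq 6$, Proposition \ref{centraloverK} guarantees that the bielliptic involution $v$ is unique and central in $\Aut(X)$. My goal is to show $v\in H$, so suppose toward a contradiction that $v\notin H$. The first step is to count the fixed points of $v$: as $X/v$ has genus $1$, applying the Riemann--Hurwitz formula to the degree-$2$ quotient $X\to X/v$ gives $2g-2=2(2\cdot 1-2)+\#(v,X)$, so that $\#(v,X)=2(g-1)$.

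Next I would use centrality to make $H$ act on this fixed locus. Because $v$ commutes with every $h\in H$, any $P$ with $v(P)=P$ satisfies $v(h(P))=h(v(P))=h(P)$, so $H$ permutes the set of $2(g-1)$ fixed points of $v$. Decomposing it into $H$-orbits, each orbit has length $|H|/|H_P|=2^t/|H_P|$, a divisor of $2^t$. If every point had trivial stabilizer then all orbits would have length $2^t$ and we would obtain $2^t\mid \#(v,X)=2(g-1)$, contradicting the hypothesis $2^t\nmid 2(g-1)$. Hence at least one fixed point $P$ has a nontrivial stabilizer $H_P\leq H$.

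Finally I would extract an involution from this stabilizer and collide it with $v$. By Theorem \ref{cyclicdisjoint}(a) the stabilizer $H_P$ is cyclic; being a nontrivial $2$-group it contains a unique involution $w$, which lies in $H$ and fixes $P$. Since $v$ also fixes $P$, the involutions $v$ and $w$ share a fixed point, so by Theorem \ref{cyclicdisjoint}(b) they cannot be distinct; thus $w=v$ and $v\in H$, contradicting $v\notin H$.

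I expect the heart of the matter to be the orbit-counting step: the numerical hypothesis $2^t\nmid 2(g-1)$ is used precisely to forbid a free action of $H$ on $\mathrm{Fix}(v)$ and thereby force a point fixed by both $v$ and some involution of $H$. The point requiring the most care is that the entire scheme depends on centrality of $v$, which is available only because $g\geq 6$ (via Proposition \ref{centraloverK}); for smaller genus $H$ need not preserve $\mathrm{Fix}(v)$ and the orbit-counting step would collapse.
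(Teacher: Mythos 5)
Your proof is correct and follows essentially the same route as the paper: let $H$ act on the $2(g-1)$ fixed points of the central bielliptic involution $v$, use $2^t\nmid 2(g-1)$ to force a nontrivial (cyclic $2$-group) stabilizer, extract an involution of $H$ sharing a fixed point with $v$, and conclude $v\in H$ by disjointness of fixed points of distinct involutions. Your write-up merely spells out the orbit--stabilizer and Riemann--Hurwitz details that the paper leaves implicit.
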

\begin{proof} Let $v$ (outside $H$) be the bielliptic involution. Since $v$ commutes
with the elements of $H$, $H$ acts on the $2(g-1)$ fixed points of
$v$. If $|H|$ does not divide $2(g-1)$, there must be an orbit whose
length is not $|H|$. So there must be an involution in $H$ that
fixes a point in this orbit. But $v$ also fixes this point,
contradicting Theorem \ref{cyclicdisjoint}.
\end{proof}
 The curves $X_0(N)/W_N$ always have a subgroup $H$ (isomorphic to
$B(N)/W_N$), and we can discard directly even genus quotient curves
when $B(N)/W_N$ is of order at least 4.

\subsection{Criteria for genus 5 bielliptic curves}

For curves of genus $g\leq 5$ it can be difficult to decide whether
they are bielliptic, because there might be several bielliptic
involutions, but none of them is guaranteed to be defined over $\Q$.
The following results will be helpful for several of the curves we
will encounter.

\begin{lema}\cite[p.50]{Accola} A genus $5$ curve that is a degree two covering of a
hyperelliptic genus $3$ curve is either hyperelliptic or bielliptic.
\end{lema}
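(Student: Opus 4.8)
The plan is to first show the covering $\phi\colon X\to Y$ is forced to be unramified, and then to write $X$ down explicitly as a $(\Z/2\Z)^2$-cover of the projective line, so that the two alternatives in the conclusion emerge as the only numerically possible ones.

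First I would apply Riemann--Hurwitz to $\phi$. With $g(X)=5$ and $g(Y)=3$ the relation $2\cdot 5-2=2(2\cdot 3-2)+\deg R$ forces $\deg R=0$, so $\phi$ is \'etale and its deck involution $\tau$ is fixed-point free. Hence $X$ is the (connected) \'etale double cover of $Y$ attached to a nonzero class $\eta\in\Jac(Y)[2]$. Writing the hyperelliptic curve as $Y\colon y^2=\prod_{i=1}^{8}(x-a_i)$ with hyperelliptic involution $\sigma$, I would invoke the classical description of $\Jac(Y)[2]$: its nonzero classes are indexed by the even subsets $S$ of the eight branch points modulo complementation, this being a genuine bijection, both sides having $2^{6}-1=63$ elements.

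The key step is then to realize $X$ as the smooth model of
$$u^2=\prod_{i\in S}(x-a_i),\qquad v^2=\prod_{i\notin S}(x-a_i),$$
with $y=uv$ recovering $Y$. This exhibits $X$ as a $(\Z/2\Z)^2$-cover of the $x$-line whose three subgroups of order two cut out the intermediate double covers $Y$ (of genus $3$), $Y_1\colon u^2=\prod_{i\in S}(x-a_i)$ and $Y_2\colon v^2=\prod_{i\notin S}(x-a_i)$, branched over $|S|$ and $8-|S|$ points; thus $g(Y_1)=\tfrac{|S|}{2}-1$ and $g(Y_2)=3-\tfrac{|S|}{2}$, with $\tau$ the element fixing $y$ so that $X/\tau=Y$, and $g(X)=g(Y)+g(Y_1)+g(Y_2)$ holds automatically. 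After replacing $S$ by its complement I may assume $|S|\le 4$. If $|S|=2$ then $Y_1$ has genus $0$ and the degree-two map $X\to Y_1\cong\mathbb{P}^1$ shows that $X$ is hyperelliptic; if $|S|=4$ then $Y_1$ and $Y_2$ both have genus $1$ and the degree-two map $X\to Y_1$ shows that $X$ is bielliptic.

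I expect the main obstacle to be controlling the lift of $\sigma$ to $X$. Abstractly $\sigma$ acts as $-1$ on $\Jac(Y)$ and therefore trivially on $\Jac(Y)[2]$, so it does lift to $X$; but one must still exclude that such a lift has order $4$, in which case $X\to\mathbb{P}^1$ would be a cyclic $\Z/4\Z$-cover admitting neither a genus-$0$ nor a genus-$1$ quotient, and the dichotomy would break down. Passing to the bidouble-cover model above settles precisely this point, since it displays the Galois group as $(\Z/2\Z)^2$ and simultaneously computes the genera of both intermediate quotients.
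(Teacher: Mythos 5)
Your argument is correct. Note that the paper itself gives no proof of this lemma: it is quoted from \cite[p.~50]{Accola}, where it is obtained abstractly by lifting the hyperelliptic involution $\sigma$ of $Y$ to the unramified double cover $X$, observing that the lift together with the deck involution $\tau$ generates a Klein four-group $G$ with $X/G\cong\mathbb{P}^1$, and then applying the genus relation $g(X)+2g(X/G)=\sum_{i}g(X/H_i)$ over the three index-two subgroups, which forces the two quotients other than $Y$ to have genera summing to $2$, hence $(0,2)$ or $(1,1)$. Your version reaches the same endpoint by making everything explicit: the \'etale double cover is identified with a class $e_S\in\Jac(Y)[2]$ attached to an even set $S$ of Weierstrass points, and the bidouble model $u^2=\prod_{i\in S}(x-a_i)$, $v^2=\prod_{i\notin S}(x-a_i)$, $y=uv$ displays the Klein four-group and the genera $\tfrac{|S|}{2}-1$ and $3-\tfrac{|S|}{2}$ of the two intermediate quotients directly. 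What the explicit model buys you, as you correctly emphasize, is a painless resolution of the one genuinely delicate point, namely that the lift of $\sigma$ is an involution rather than of order $4$; in the abstract approach this requires a separate argument (e.g.\ a lift of order $4$ would give an unramified degree-$4$ map $X\to\mathbb{P}^1$, since any fixed point of the lift would also be fixed by $\tau=(\text{lift})^2$, contradicting Riemann--Hurwitz). The only ingredients you import without proof --- the bijection between nonzero classes of $\Jac(Y)[2]$ and even subsets of the eight branch points modulo complementation, and the identification of the corresponding double cover with $\sqrt{\prod_{i\in S}(x-a_i)}$ --- are standard, and your counting $2^6-1=63$ on both sides is the right sanity check.
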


\begin{lema}\cite[Lemma 2.3]{KMV} \label{g5biell}
Let $X$ a bielliptic curve of genus $5$. Then it has $1$, $2$, $3$
or $5$ bielliptic involutions, and these involutions commute and
generate a group of exponent $2$ and order $2$, $4$, $8$ and $16$
respectively. Moreover, the product of any two bielliptic
involutions is an involution whose quotient curve is of genus $3$
and hyperelliptic.
\end{lema}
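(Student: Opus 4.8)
The plan is to study the action of $\Aut(X)$ on the $5$-dimensional space $H^0(X,\Omega^1)$ together with Riemann--Hurwitz and the disjointness of fixed points (Theorem \ref{cyclicdisjoint}). First I would record the numerics: for an involution $g$ of $X$ one has $g_{X/g}=\dim H^0(X,\Omega^1)^g$, and Riemann--Hurwitz gives $\#(g,X)=12-4\,g_{X/g}$; hence $g_{X/g}\in\{0,1,2,3\}$, and $g$ is bielliptic exactly when $g_{X/g}=1$, equivalently when $g$ acts on $H^0(X,\Omega^1)$ with eigenvalues $(1,-1,-1,-1,-1)$ and has precisely $8$ fixed points. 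Since $X$ is bielliptic of genus $5$, by Lemma \ref{4.4} it is not hyperelliptic, so no nontrivial involution has $g_{X/g}=0$; thus every nontrivial involution $g$ of $X$ satisfies $1\le g_{X/g}\le 3$.

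Next I would show that two distinct bielliptic involutions $v_1,v_2$ commute, which is the key structural input. Suppose not, so $G=\langle v_1,v_2\rangle$ is dihedral of order $2n$ with $n\ge 3$. Every reflection in $G$ is conjugate to $v_1$ or $v_2$, hence is itself bielliptic with exactly $8$ fixed points; the $n$ reflections are distinct involutions, so by Theorem \ref{cyclicdisjoint} their fixed-point sets are pairwise disjoint. Applying Riemann--Hurwitz to $X\to X/G$ and bounding the ramification below by these $8n$ reflection fixed points yields
\begin{equation}
8=2g_X-2=2n\bigl(2g_{X/G}-2\bigr)+\sum_{P}(e_P-1)\ge 2n\bigl(2g_{X/G}-2\bigr)+8n,
\end{equation}
whence $n(g_{X/G}+1)\le 2$, forcing $n\le 2$, a contradiction. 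Therefore all bielliptic involutions commute and, being involutions, generate an elementary abelian group $G\cong(\mathbb{Z}/2)^r$. For the ``moreover'' clause, decomposing $H^0(X,\Omega^1)$ under $\langle v_1,v_2\rangle\cong(\mathbb{Z}/2)^2$ into its four characters and imposing $g_{X/v_1}=g_{X/v_2}=1$ and $\dim H^0(X,\Omega^1)=5$ forces $g_{X/\langle v_1,v_2\rangle}=0$ and $g_{X/v_1v_2}=3$; as $v_1v_2$ then exhibits $X/v_1v_2$ as a double cover of $X/\langle v_1,v_2\rangle=\mathbb{P}^1$, that quotient is hyperelliptic of genus $3$.

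Finally I would count the bielliptic involutions. Assuming there are at least two, the previous paragraph gives $g_{X/G}=0$, so the trivial isotypic component of $H^0(X,\Omega^1)=\bigoplus_\chi V_\chi$ vanishes. An element $g$ is then bielliptic iff $\sum_{\chi(g)=1}\dim V_\chi=1$, which attaches to each bielliptic involution a \emph{distinct} one-dimensional summand $V_\chi$; hence there are at most $5$ of them, and since they generate $G$ we get $2\le r\le \#\{\text{bielliptic}\}\le 5$. A trace computation ($\operatorname{tr}_V(g)=2\,g_{X/g}-5$, summed over $G$) combined with the fixed-point identity $\#(g,X)=12-4\,g_{X/g}$ yields $2\,\#\{\text{bielliptic}\}\le 2^{r-1}+2$, settling $r=2,3$ to exactly $2,3$ bielliptic involutions. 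For $r\in\{4,5\}$ all five summands must be one-dimensional, so the faithful representation identifies $G$ with a binary $[5,r]$ code via $g\mapsto(\chi_1(g),\dots,\chi_5(g))$, and $1\le g_{X/g}\le 3$ translates into all nonzero codewords having weight in $\{2,3,4\}$ (no weight $1$ since $X$ is not hyperelliptic, no weight $5$ since $g\neq\mathrm{id}$ acts nontrivially). Classifying these admissible codes is the main computational obstacle: it eliminates $r=5$ and the putative $4$-involution configuration, leaving the even-weight $[5,4]$ code as the only option and thereby the list $(r,\#\{\text{bielliptic}\})\in\{(1,1),(2,2),(3,3),(4,5)\}$, i.e. groups of order $2,4,8,16$ with $1,2,3,5$ bielliptic involutions respectively.
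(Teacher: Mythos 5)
The paper does not prove this lemma at all: it is quoted verbatim from \cite[Lemma 2.3]{KMV} (Kato--Magaard--V\"olklein), so there is no in-paper proof to compare against. Judged on its own, your argument is essentially correct and complete in its main ideas. The dihedral Riemann--Hurwitz count forcing commutativity is sound (every reflection in $\langle v_1,v_2\rangle$ is conjugate to $v_1$ or $v_2$, hence bielliptic with $8$ fixed points, and disjointness of fixed-point sets gives $n(g_{X/G}+1)\le 2$); the character decomposition under $(\mathbb{Z}/2)^2$ correctly forces $g_{X/\langle v_1,v_2\rangle}=0$ and $g_{X/v_1v_2}=3$ hyperelliptic (the alternative $d_{00}=1$ would give quotient genus $4$, killed by $\#(g,X)=12-4g_{X/g}\ge 0$); and the trace identity $2B+C=2+2^{r-1}$ together with $B\ge r$ pins down $(r,B)\in\{(2,2),(3,3)\}$ and reduces $r\ge 4$ to the code argument.

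Three small points you should tighten. First, the claim that distinct bielliptic involutions receive \emph{distinct} one-dimensional summands $V_\chi$ needs a one-line justification: if $\chi_g=\chi_{g'}$ then every character supported in $V$ takes the value $1$ on $gg'$, so $gg'$ acts trivially on $H^0(X,\Omega^1)$ and hence equals the identity since the canonical representation is faithful for $g_X\ge 2$. Second, your parenthetical reasons for the excluded weights are swapped: weight $5$ (all characters $-1$) means $\dim V^g=0$, i.e.\ $g$ is a hyperelliptic involution, while weight $1$ means $g_{X/g}=4$, which is excluded by Riemann--Hurwitz, and weight $0$ is excluded by faithfulness. Third, the ``main computational obstacle'' is actually a one-liner and should be written out: a dimension-$4$ subspace of $\mathbb{F}_2^5$ is the kernel of a nonzero functional $x\mapsto\sum_{i\in S}x_i$, and avoiding weight-$1$ codewords forces $S=\{1,\dots,5\}$, i.e.\ the even-weight code, which has exactly $\binom{5}{4}=5$ weight-$4$ words (all bielliptic) spanning it; $r=5$ is impossible since $\mathbb{F}_2^5$ contains weight-$1$ words. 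With those repairs the proof is complete and is a clean, self-contained alternative to citing \cite{KMV}.
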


\begin{lema}\label{lemgenus2}
Let $X$ be a bielliptic genus $5$ curve with an involution $u$, such
that $X/u$ has genus $2$.
\begin{itemize}
\item[(a)]  If $X$ has an odd number of bielliptic involutions, then there
is a unique one among them, call it $v$, that commutes with $u$. If
moreover $u$ is defined over $\Q$, then $v$ is also defined over
$\Q$.
\item[(b)]  If $X$ has exactly $2$ bielliptic involutions, neither one of
them will commute with $u$.
\end{itemize}
\end{lema}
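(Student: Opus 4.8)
The plan is to study the action of $u$ by conjugation on the finite set $B$ of bielliptic involutions of $X$, combining the structural information of Lemma \ref{g5biell} with a genus bookkeeping for elementary abelian $2$-groups. First I would record the ramification data via Riemann--Hurwitz: since $g(X)=5$ and $X/u$ has genus $2$, the involution $u$ has exactly $4$ fixed points; a bielliptic involution has $8$; and the product of two bielliptic involutions, having a genus-$3$ quotient by Lemma \ref{g5biell}, is fixed-point free. I would also record that \emph{any} involution quotient of a genus-$5$ curve has genus at most $3$. Next, the map $c_u\colon w\mapsto uwu$ preserves $B$, because if $v$ is bielliptic then $X/(uvu)\cong X/v$ has genus $1$, so $uvu\in B$. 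Thus $c_u$ acts on $B$ as an involution whose fixed points are precisely the bielliptic involutions commuting with $u$.

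The crux is the claim that $u$ cannot commute with two distinct bielliptic involutions. Suppose it commuted with $v_1$ and $v_2$; since bielliptic involutions commute (Lemma \ref{g5biell}), $H:=\langle u,v_1,v_2\rangle$ is elementary abelian, and comparing fixed-point counts ($u$ has $4$, while the nontrivial elements of $\langle v_1,v_2\rangle$ have $8,8,0$) shows $u\notin\langle v_1,v_2\rangle$, so $H\cong(\Z/2)^3$. I would then determine all seven involution-quotient genera of $H$ from the known values $g(X/u)=2$, $g(X/v_1)=g(X/v_2)=1$, $g(X/v_1v_2)=3$. Applying the Klein four-group relation $g(X)+2g(X/V)=\sum_{\sigma\in V\setminus\{1\}}g(X/\sigma)$ to $V=\langle u,v_i\rangle$ and using the bound $\le 3$ forces $g(X/uv_1)=g(X/uv_2)=2$ and $g(X/\langle u,v_i\rangle)=0$. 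Finally, decomposing the $5$-dimensional space of holomorphic differentials under $H$ into its eight characters with non-negative integer multiplicities summing to $5$, these data pin the decomposition down uniquely and force $g(X/uv_1v_2)=4$, which is impossible for an involution quotient of a genus-$5$ curve. This contradiction proves the claim. I expect this last genus computation to be the main obstacle: the pairwise Accola relations alone only give that $g(X/uv_1v_2)$ is even and $\ge 2$ (hence allow the value $2$), and it is precisely the non-negativity of the character multiplicities that rules this out and forces the impossible value $4$.

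With the claim established, part (a) follows formally. If $|B|$ is odd (so $|B|\in\{1,3,5\}$ by Lemma \ref{g5biell}), then the involution $c_u$ acts on a set of odd cardinality, hence has an odd, in particular nonzero, number of fixed points; the claim bounds this number by $1$, so there is exactly one bielliptic involution $v$ commuting with $u$. If moreover $u$ is defined over $\Q$, then for each $\sigma\in\Gal(\Qbar/\Q)$ the involution $\sigma(v)$ is again bielliptic and commutes with $\sigma(u)=u$; by uniqueness $\sigma(v)=v$, so $v$ is defined over $\Q$.

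For part (b), suppose $|B|=2$ and that some $v_i$ commuted with $u$. Then $c_u$ fixes one element of the two-element set $B=\{v_1,v_2\}$, and a permutation of a two-element set fixing one point fixes both; hence $v_1$ and $v_2$ would both commute with $u$, contradicting the claim. Therefore neither of the two bielliptic involutions commutes with $u$.
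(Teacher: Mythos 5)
Your proof is correct, and its outer steps (the parity argument for the conjugation action of $u$ on the set of bielliptic involutions, and the Galois-descent argument giving $\QQ$-rationality of $v$ from uniqueness) coincide with the paper's. Where you genuinely diverge is in the central claim that $u$ commutes with at most one bielliptic involution. The paper exploits that $X\to X/u$ is ramified: arguing as in Lemma \ref{unramifiedcovering}(a) with $G=\langle u\rangle$, any commuting bielliptic involution $v$ must induce the hyperelliptic involution on the genus-$2$ curve $X/u$, so a second commuting one could only be $uv$; and $v$, $uv$ cannot both be bielliptic, since their product $u$ would then have quotient genus $3$ by Lemma \ref{g5biell} instead of $2$. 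You instead decompose $H^0(X,\Omega^1)$ under $H=\langle u,v_1,v_2\rangle\cong(\Z/2\Z)^3$. I checked the computation: the six known quotient genera $g(X/u)=2$, $g(X/v_1)=g(X/v_2)=1$, $g(X/v_1v_2)=3$, $g(X/uv_1)=g(X/uv_2)=2$, together with $\sum_\chi m_\chi=5$ and $m_\chi\geq 0$, force the unique solution in which the only nonzero multiplicities are $m_\chi=2$ for $(\chi(u),\chi(v_1),\chi(v_2))=(1,-1,-1)$ and $m_\chi=1$ for each of $(-1,1,-1)$, $(-1,-1,1)$, $(-1,-1,-1)$; this gives $g(X/uv_1v_2)=2+1+1=4$, exceeding the bound $3$ for an involution quotient of a genus-$5$ curve. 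Your parenthetical caveat is also accurate and worth keeping: the pairwise Accola relations alone are consistent with $g(X/uv_1v_2)=2$ (taking $g(X/\langle u,v_1v_2\rangle)=1$), so the non-negativity of the character multiplicities is genuinely what closes the argument. In comparison, the paper's route is shorter and more geometric but relies on identifying the induced involution on $X/u$ as the hyperelliptic one; yours replaces that step by a self-contained integer linear system and needs no information about which involution of $X/u$ is induced. Both are valid.
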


\begin{proof}
$u$ acts on the bielliptic involutions by conjugation, so they come
in pairs of conjugated ones plus the ones which commute with $u$.
This already shows that there is at least one such $v$ (resp. two or
none if $X$ has exactly 2 bielliptic involutions).
\par
Now we show that there can be at most one such $v$. Since $v$
commutes with $u$ and the map $X\to X/u$ is ramified, we can argue
as in the proof of Theorem \ref{unramifiedcovering} (a) with the
role of $G$ being played by $\langle u\rangle$. Again we obtain that
$v$ must induce the hyperelliptic involution on $X/u$. So the only
possibility for a bielliptic involution other than $v$ that commutes
with $u$ would be $uv$. But they cannot both be bielliptic, because
then $u=(uv)v$ would have quotient genus $3$ by Lemma \ref{g5biell}.
\par
If $u$ is defined over $\Q$ and commutes with $v$, then from the
action of the absolute Galois group of $\Q$ it also commutes with
all Galois conjugates of $v$, which by the uniqueness must be equal
to $v$.
\end{proof}

\begin{lema} Let $X$ be a curve of genus $5$ with an involution $w$ such
that $X/w$ is of genus $3$ and non-hyperelliptic. If $X$ is
bielliptic, then $X$ has exactly one or exactly $3$ bielliptic
involutions.
\end{lema}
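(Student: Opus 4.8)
The plan is to combine the group‑theoretic structure of the bielliptic involutions supplied by Lemma \ref{g5biell} with Riemann--Hurwitz and a few genus computations for quotients. By Lemma \ref{g5biell} the number of bielliptic involutions is $1$, $2$, $3$ or $5$, so it suffices to exclude $2$ and $5$. Two preliminary facts will be used throughout. First, by Riemann--Hurwitz every involution of the genus $5$ curve $X$ whose quotient has genus $3$ is fixed‑point free (such a double cover is unramified); in particular $w$ itself, and every product of two distinct bielliptic involutions (genus $3$ and hyperelliptic by Lemma \ref{g5biell}), act freely. Second, if a bielliptic involution $v$ commutes with $w$, then $v$ descends to an involution $\widetilde v$ of $Y=X/w$; since $Y$ is non‑hyperelliptic, $Y/\widetilde v$ is not rational, and because any involution‑quotient of a genus $5$ curve has genus at most $3$, the Klein‑four genus relation for $\langle v,w\rangle$ forces $g(Y/\widetilde v)=1$ and $g(X/vw)=3$. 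Thus $vw$ is never bielliptic when $v$ is bielliptic and commutes with $w$.

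The heart of the argument — and the step I expect to be the main obstacle — is to prove that $w$ commutes with every bielliptic involution. Suppose not, so that $v':=wvw\neq v$ for some bielliptic $v$. By Lemma \ref{g5biell} the involutions $v,v'$ commute, while $w$ interchanges them, so $\langle v,v',w\rangle$ is dihedral of order $8$: it is genuinely of order $8$ because $w\notin\langle v,v'\rangle$, the only genus $3$ element there being the hyperelliptic $vv'$, whereas $Y=X/w$ is non‑hyperelliptic. Set $r=wv$; then $r^2=wvwv=v'v=vv'$, so $r$ has order $4$ and $r^2$ is the genus $3$ (hence fixed‑point free) involution. Consequently $r$ has no fixed points either, so $\langle r\rangle\cong\mathbb{Z}/4$ acts freely and $X\to X/\langle r\rangle$ is unramified, whence $g(X/\langle r\rangle)=2$ by Riemann--Hurwitz. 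On the other hand, a short computation with the two‑dimensional representation of $D_4$ (equivalently, the genus formula applied to the subgroups of $D_4$), using the known genera $g(X/v)=1$, $g(X/w)=3$ and $g(X/vv')=3$, determines the decomposition of $H^0(X,\Omega^1_X)$ uniquely and yields $g(X/\langle r\rangle)=1$ — a contradiction. Hence $w$ commutes with all bielliptic involutions.

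To finish, fix two distinct (necessarily commuting) bielliptic involutions $v_1,v_2$. Then $\langle v_1,v_2,w\rangle\cong(\mathbb{Z}/2)^3$, and all of $g(X/v_1)=g(X/v_2)=1$ and $g(X/w)=g(X/v_1v_2)=g(X/v_1w)=g(X/v_2w)=3$ are known (the last two by the second preliminary fact); the eigenspace bookkeeping then forces $g(X/v_1v_2w)=1$, so $v_1v_2w$ is again bielliptic. If there were exactly $2$ bielliptic involutions this would produce a third one, since $v_1v_2w\in\{v_1,v_2\}$ would force $w\in\{v_1,v_2\}$; contradiction. If there were $5$, then the bielliptic involutions generate $B\cong(\mathbb{Z}/2)^4$, the new bielliptic involution $v_1v_2w$ lies in the complete set $V$ of bielliptic involutions, whence $w\in B$, and for every pair $\{a,b\}$ the element $v_av_bw$ likewise lies in $V$. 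Realizing $B$ as the quotient of $(\mathbb{Z}/2)^5$ by the unique relation among the five generators, the two preimages of $w$ both have Hamming weight at least $3$ (weight $1$ would make $w$ bielliptic and weight $2$ would make $w$ a product of two bielliptic involutions, hence hyperelliptic, contrary to hypothesis); and ``$v_av_bw\in V$ for all pairs'' says precisely that each of the $\binom{5}{2}=10$ pairs $\{a,b\}$ must lie in the weight‑$3$ support of one of these two preimages. Since each weight‑$3$ support accommodates only $\binom{3}{2}=3$ pairs, at most $6<10$ pairs can be covered, a contradiction. Therefore $X$ has exactly one or exactly three bielliptic involutions.
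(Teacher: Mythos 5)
Your proof is correct, but it takes a genuinely different route from the paper's in its two key steps, even though the engine is the same (producing $v_1v_2w$ as a new bielliptic involution from the genus relation $\sum_{\sigma}g(X/\sigma)=g(X)+2g(X/G)$, with the non-hyperellipticity of $X/w$ pinning down the quotient genera). First, you prove up front that $w$ commutes with \emph{every} bielliptic involution, via the order-$4$ element $r=wv$: since $r^2=vv'$ is fixed-point free, $\langle r\rangle$ acts freely and Riemann--Hurwitz gives $g(X/\langle r\rangle)=2$, while the character decomposition of $H^0(X,\Omega^1)$ under $D_4$ (forced by $g(X/v)=1$, $g(X/w)=3$, $g(X/vv')=3$ to be $a=d=0$, $b=1$, $c=2$, $e=1$) gives $g(X/\langle r\rangle)=1$ --- a clean contradiction, which I checked. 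The paper sidesteps commutativity entirely by setting $v_2=wv_1w$ and observing that $w$ always commutes with $v_1v_2$, then running the Klein-four computation on $\langle v_1v_2,w\rangle$ directly; this is shorter but establishes less. Second, in the five-involution case the paper concludes immediately from $w\in H$ because, by the cited KMV result (their Remark 3.2), every non-bielliptic involution in the order-$16$ group $H$ has a hyperelliptic genus-$3$ quotient, so $w\notin H$. You replace this by the Hamming-weight covering argument in $(\mathbb{Z}/2)^5\twoheadrightarrow(\mathbb{Z}/2)^4$ ($10$ pairs needing to sit inside at most two weight-$3$ supports), which is more elaborate but only uses Lemma \ref{g5biell} as stated in the paper, not the extra structural input from KMV. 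Both routes are valid; the paper's is more economical, yours is more self-contained and yields the stronger intermediate fact that $w$ centralizes all bielliptic involutions.
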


\begin{proof} First we treat the case where $X$ has 5 bielliptic involutions.
Then by \cite[Lemma 2.3 and Remark 3.2]{KMV} the bielliptic
involutions generate a group $H$ of order $16$ in which all other
elements are involutions which have a quotient that is hyperelliptic
of genus $3$. In particular, $w$ is not in $H$.
      Pick a bielliptic involution $v_1$ that does
not commute with $w$ and define $v_2$ as its conjugate under $w$. If
$w$ commutes with all bielliptic involutions, just pick any two
$v_1$ and $v_2$. In either case $w$ commutes with the involution
$v_1v_2$, which has quotient genus 3. By a classical result (see for
example \cite[Lemma 5.10]{Accola}) a genus $3$ curve that is a
degree $2$ cover of a genus $2$ curve must be hyperelliptic. So
since $X/w$ is not hyperelliptic, $X/\langle v_1v_2,w\rangle$ cannot
have genus 0 or 2, and hence has genus 1. From
$$g(X/w)+g(X/v_1v_2)+g(X/v_1v_2w)=g(X)+2g(X/\langle v_1v_2,w\rangle)=7$$
(see for example \cite[p.49]{Accola}) we see that $X/v_1v_2w$ also
has genus 1. Therefore $v_1v_2w$ is in $H$. With $v_1v_2$ in $H$ we
get the contradiction $w\in H$. So we have proved that $X$ has at
most 3 bielliptic involutions.
\par
Now assume that $X$ has exactly 2 bielliptic involutions. Call them
$v_1$ and $v_2$.  By the same argument as before we get a third
bielliptic involution.
\end{proof}

\begin{cor} Let $X$ be a curve of genus 5 over $\mathbb{Q}$ that has
involutions $u$ and $w$ as in the two lemmas. with $u$ defined over
$\mathbb{Q}.$ If $X$ is bielliptic, then it has one or three
bielliptic involutions and at least one bielliptic involution is
defined over $\mathbb{Q}$.
      More precisely, if it has 3 bielliptic
involutions, then there is a unique one that commutes with $u$, and
that one is defined over $\mathbb{Q}$. (The other two might be
defined over $\mathbb{Q}$ or not.)
\end{cor}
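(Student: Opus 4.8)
The plan is to read this corollary off the two preceding lemmas, so that the only work left is a short Galois-theoretic bookkeeping. First I would note that, since $X$ carries the involution $w$ with $X/w$ of genus $3$ and non-hyperelliptic, the immediately preceding lemma already forces the number of bielliptic involutions of $X$ to be exactly $1$ or exactly $3$ (the possibilities $2$ and $5$ allowed by Lemma \ref{g5biell} being excluded). This gives the first assertion with no further effort.

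Next I would treat the case of a single bielliptic involution. Because $X$ is defined over $\mathbb{Q}$, the group $\Gal(\Qbar/\Q)$ acts on $\Aut(X_{\Qbar})$, and conjugation by a Galois element preserves both the order of an automorphism and the genus of its quotient curve; hence it permutes the set of bielliptic involutions. A unique bielliptic involution is therefore Galois-stable and so defined over $\mathbb{Q}$, which settles this alternative.

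For three bielliptic involutions I would invoke Lemma \ref{lemgenus2}(a): since $3$ is odd and $X/u$ has genus $2$, there is a unique bielliptic involution $v$ commuting with $u$, and as $u$ is defined over $\mathbb{Q}$ the same part of the lemma yields that $v$ is defined over $\mathbb{Q}$. This simultaneously produces a rational bielliptic involution and the ``more precisely'' refinement. The Galois action fixes $v$ and hence permutes the other two bielliptic involutions; it can either fix each (both rational) or interchange them (neither individually rational), and the hypotheses give no way to decide between these, which is exactly the parenthetical caveat. I expect no genuine obstacle here: all the substance already resides in the two lemmas, and what remains is only to confirm that the Galois-permutation argument applies cleanly in the one-involution case and that the hypotheses of Lemma \ref{lemgenus2}(a)---genus $5$, bielliptic, and a rational involution with genus-$2$ quotient---match those of the corollary verbatim.
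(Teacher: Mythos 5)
Your proposal is correct and matches the paper's (implicit) argument: the paper states this corollary without proof precisely because it follows by combining the two preceding lemmas exactly as you do, with the standard observation that the Galois action on $\Aut(X_{\overline{\QQ}})$ permutes bielliptic involutions and must fix a unique one. The only cosmetic remark is that the one-involution case also follows directly from Lemma \ref{lemgenus2}(a) with the odd number equal to $1$, so the separate Galois-stability argument there, while perfectly valid, is not even needed.
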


\subsection{Formulas for the genus of a quotient curve by certain involutions}

The most natural way to prove that a curve is bielliptic is to
exhibit a bielliptic involution. We will do this for practically all
the bielliptic curves $X_0(N)/W_N$. The most obvious candidates are
the Atkin-Lehner involutions outside $W_N$. If $N$ is divisible by
$4$ or $9$, then $X_0(N)$ has additional involutions, which under
certain conditions induce involutions on $X_0(N)/W_N$. See below for
details.
\par
When $v$ is an involution of $X_0(N)$ that induces an involution
$\widetilde{v}$ on $X_0(N)/W_N$, it is usually quicker not to bother
about the fixed points of $\widetilde{v}$ on $X_0(N)/W_N$ but to
determine the genus of $(X_0(N)/W_N)/v$ from the fixed points of the
elements of $G=\langle W_N, v\rangle$ on $X_0(N)$ by applying the
Hurwitz formula to the covering $X_0(N)\to X_0(N)/G$.
\par

In the situations we encounter the group $G$ is of the form
$G\cong\mathbb{Z}/2\times\cdots\times\mathbb{Z}/2$.  Then by Theorem
\ref{cyclicdisjoint} the stabilizer of a point is trivial or
$\mathbb{Z}/2$. Moreover, all involutions in $G$ have disjoint fixed
points, and the Hurwitz formula takes the following simple shape.

\begin{lema}\label{4.13}
Let $G$ be a subgroup of $Aut(X_0(N))$ in which all the non-trivial
elements are involutions. Then the fixed points of these involutions
are disjoint and the genus of $X_0(N)/G$ is obtained by the formula
$$|G|(2g(X_0(N)/G)-2)+
\sum_{w\in G}\#(w, X_0(N)) = 2g(X_0(N))-2.$$
\end{lema}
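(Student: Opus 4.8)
The plan is to treat the two assertions separately and then combine them through the Riemann--Hurwitz formula applied to the quotient map $\phi\colon X_0(N)\to X_0(N)/G$, which is a Galois covering of degree $|G|$ with group $G$.

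First I would dispose of the disjointness statement. Since all non-trivial elements of $G$ are involutions, any two distinct non-trivial elements are distinct involutions of $X_0(N)$, so Theorem \ref{cyclicdisjoint}(b) immediately gives that their fixed-point sets are disjoint. In particular no point of $X_0(N)$ is fixed by two different non-trivial elements of $G$.

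Next I would analyse the ramification of $\phi$. For a point $P\in X_0(N)$ the ramification index of $\phi$ at $P$ equals the order of the stabilizer $G_P$ (we work in characteristic $0$, so the action is tame and the inertia group is the full stabilizer). By Theorem \ref{cyclicdisjoint}(a) this stabilizer is cyclic, and since every non-trivial element of $G$ has order $2$, a cyclic subgroup of $G$ is either trivial or isomorphic to $\mathbb{Z}/2$. Hence $e_P\in\{1,2\}$: the point $P$ is either unramified, or it is fixed by exactly one non-trivial involution $w\in G$ and $e_P=2$. Consequently each ramified point contributes $e_P-1=1$ to the ramification sum, and by the disjointness established above the set of ramified points is the disjoint union, over the non-trivial $w\in G$, of the fixed-point sets of $w$. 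Therefore $\sum_{P}(e_P-1)=\sum_{w\neq 1}\#(w,X_0(N))$.

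Finally I would substitute this into Riemann--Hurwitz,
$$2g(X_0(N))-2=|G|\bigl(2g(X_0(N)/G)-2\bigr)+\sum_{P}(e_P-1),$$
and rearrange to obtain the asserted identity, with the understanding that the identity element contributes nothing, i.e. the sum $\sum_{w\in G}\#(w,X_0(N))$ effectively runs over the involutions of $G$. I do not expect any genuine obstacle here: the argument is a direct Riemann--Hurwitz computation. The only points requiring care are the identification of the ramification index with the order of the stabilizer and the verification that this stabilizer can only be trivial or of order $2$, both of which follow from Theorem \ref{cyclicdisjoint} together with the hypothesis that all non-trivial elements of $G$ are involutions.
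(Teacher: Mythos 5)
Your argument is correct and is essentially the one the paper intends: the paper gives no formal proof of Lemma \ref{4.13} but the paragraph immediately preceding it invokes Theorem \ref{cyclicdisjoint} for the disjointness and the triviality-or-order-$2$ of stabilizers, and then states that the Hurwitz formula "takes this simple shape," which is exactly your Riemann--Hurwitz computation. Your remark that the identity term must be read as contributing nothing to the sum is a sensible clarification of the paper's notation.
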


Formulas for the number of fixed points of the Atkin-Lehner
involutions on $X_0(N)$ are in \cite{Ogg}. For the other involutions
we will review and expand the known results in this section.
\par
Recall $S_k = \left(\begin{array}{cc} 1&1/k\\ 0& 1\\
\end{array}\right)$. The following result is well-known (see \cite{FM} or
\cite{Ba1}).

\begin{prop}\label{prop1S4.3} Let $N=2^\alpha M$ with $\alpha\geq 2$ and $M$
odd.

\begin{itemize}

\item[{(a)}]   Then
$S_2$ is an involution of $X_0(N)$, defined over $\QQ$, and commutes
with all Atkin-Lehner involutions $w_r$ for which $r$ is odd. Hence,
$V_2 =S_2 w_{2^\alpha}S_2$ also is an involution of $X_0(N)$,
defined over $\QQ$, and commutes with all $w_r$ for which $r||M$.

\item[{(b)}]   If $\alpha\geq 3$, then $V_2$ also commutes with $w_{2^\alpha}$. So $V_2 w_{2^\alpha}$ is an involution, and consequently $S_2 w_{2^\alpha}$ has order $4$. In fact,
$\langle S_2, w_{2^\alpha}\rangle\cong D_4$.

\item[{(c)}]  If $\alpha=2$, then
$\langle S_2, w_4\rangle$ is non-abelian of order $6$ with $V_2 =S_2
w_4 S_2=w_4 S_2 w_4$ being the third involution and $S_2 w_4$ and
$w_4 S_2$ having order $3$.

\end{itemize}
\end{prop}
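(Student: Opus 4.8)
The plan is to view $S_2$, the $w_r$ and $V_2$ as elements of the normalizer of $\Gamma_0(N)$ in $\GL_2^{+}(\Q)$ acting on $X_0(N)=\Gamma_0(N)\backslash\mathcal H^{*}$, bearing in mind that two such matrices induce the \emph{same} automorphism exactly when they agree modulo $\Gamma_0(N)$ and modulo scalar matrices; every assertion below (being an involution, commuting, having a prescribed order, generating $S_3$ or $D_4$) is thus a statement about images in $\mathrm{Norm}(\Gamma_0(N))$ modulo $\Gamma_0(N)$ and scalars. Rationality over $\Q$ of $S_2$, hence of its conjugate $V_2=S_2w_{2^\alpha}S_2$, is the standard rationality of normalizer automorphisms of $X_0(N)$, which I would simply invoke (see \cite{FM,Ba1}). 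For part (a) I would first check that $S_2=\left(\begin{smallmatrix}1&1/2\\0&1\end{smallmatrix}\right)$ normalizes $\Gamma_0(N)$: on conjugating a general $\gamma=\left(\begin{smallmatrix}a&b\\Nc&d\end{smallmatrix}\right)$, the only entry of $S_2\gamma S_2^{-1}$ in danger of being non-integral is the upper-right one $b+\tfrac{d-a}{2}-\tfrac{cN}{4}$, and since $4\mid N$ the last term is integral while $\det\gamma=1$ with $N$ even forces $a,d$ odd, so $\tfrac{d-a}{2}\in\Z$. As $S_2^{2}=\left(\begin{smallmatrix}1&1\\0&1\end{smallmatrix}\right)\in\Gamma_0(N)$, $S_2$ is an involution. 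For commutation with an odd $w_r$ I would take a representative $W_r=\left(\begin{smallmatrix}rx&y\\Nz&rw\end{smallmatrix}\right)$ of determinant $r$ and compute the commutator $S_2W_rS_2^{-1}W_r^{-1}$; using $r\mid M$ (so $4\mid N/r$) and $rxw-(N/r)yz=1$, the half-integral contributions cancel and one checks entry by entry that the commutator lies in $\Gamma_0(N)$ (its lower-left entry, for instance, is $N^{2}z^{2}/(2r)=N\cdot2^{\alpha-1}(M/r)z^{2}$). Granting this, $V_2$ is an involution automatically, and it commutes with every odd $w_r$ because both $S_2$ and $w_{2^\alpha}$ do, the latter since Atkin-Lehner involutions supported on coprime parts of $N$ commute.

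Parts (b) and (c) both reduce to one computation: the order of $A:=S_2\,w_{2^\alpha}$ modulo $\Gamma_0(N)$ and scalars. Here I would insist on a genuine level-$N$ representative $W=\left(\begin{smallmatrix}2^\alpha a&b\\Nc&2^\alpha d\end{smallmatrix}\right)$ with $\det W=2^\alpha$, i.e.\ $2^\alpha ad-Mbc=1$; the naive Fricke matrix $\left(\begin{smallmatrix}0&-1\\2^\alpha&0\end{smallmatrix}\right)$ is misleading, since for $\alpha\ge4$ it is hyperbolic of infinite order as a M\"obius transformation even though its class modulo $\Gamma_0(N)$ is finite. The determinant relation forces $b,c$ odd, and a direct multiplication gives $A\in M_2(\Z)$ with $\det A=2^\alpha$ and $\operatorname{tr}A=2^{\alpha-1}u$ for an \emph{odd} integer $u$. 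Cayley-Hamilton then yields the single relation $A^{2}=2^{\alpha-1}u\,A-2^\alpha I$, and I would compute higher powers from it and locate the first one lying in $\Q^{\times}\Gamma_0(N)$. For $\alpha=2$ one finds $A^{3}/8=\tfrac{u^{2}-1}{2}A-uI$, which lies in $\Gamma_0(N)$ precisely because $8\mid u^{2}-1$; neither $A$ nor $A^{2}$ does, since scaling either to determinant $1$ leaves a non-integral $(1,2)$-entry (as $b$, hence $ub$, is odd), so $A$ has order $3$. For $\alpha\ge3$ the same procedure gives $A^{4}/2^{2\alpha}=u(2^{\alpha-3}u^{2}-1)A+(1-2^{\alpha-2}u^{2})I\in\Gamma_0(N)$, where integrality of the coefficients uses exactly $\alpha\ge3$, whereas $A^{2}\notin\Q^{\times}\Gamma_0(N)$ again because $ub$ is odd, so $A$ has order $4$. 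The oddness of $u,b,c$ is what makes this dichotomy uniform in the odd part $M$.

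Once the order of $A$ is known the group structure is forced, since two involutions whose product has order $3$ (resp.\ $4$) generate $S_3$ (resp.\ $D_4$). For $\alpha=2$ this gives $\langle S_2,w_4\rangle\cong S_3$, with the three transpositions $S_2$, $w_4$, $V_2=S_2w_4S_2=w_4S_2w_4$ and the two elements $S_2w_4,\,w_4S_2$ of order $3$, which is (c). For $\alpha\ge3$ we get $\langle S_2,w_{2^\alpha}\rangle\cong D_4$, and $V_2w_{2^\alpha}=(S_2w_{2^\alpha})^{2}$ is its central involution; being central it commutes with $w_{2^\alpha}$, which is the remaining assertion of (b). I expect the main obstacle to be precisely the order computation above: one must work modulo $\Gamma_0(N)$ rather than in $\mathrm{PGL}_2(\R)$, use a bona fide level-$N$ representative (not the Fricke matrix), and carry out the $2$-adic bookkeeping that renders the split $3$ versus $4$ independent of $M$.
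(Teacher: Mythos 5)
Your argument is correct, but it cannot be compared with ``the paper's own proof'' in the usual sense: the paper offers none. Proposition \ref{prop1S4.3} is introduced with ``The following result is well-known (see \cite{FM} or \cite{Ba1})'' and is left entirely to those references. What you supply is a complete self-contained verification in the spirit of what \cite{Ba1} and \cite{FM} actually do: checking that $S_2$ normalizes $\Gamma_0(N)$ (your integrality bookkeeping for the $(1,2)$-entry, using $4\mid N$ and $a,d$ odd, is right), computing the commutator with an odd Atkin--Lehner matrix (the lower-left entry $N\cdot 2^{\alpha-1}(M/r)z^2$ and the parity argument $x,w$ odd for the $(1,2)$-entry both check out), and above all pinning down the order of $A=S_2w_{2^\alpha}$ via $\operatorname{tr}A=2^{\alpha-1}u$ with $u$ odd, Cayley--Hamilton, and the criterion that a normalizer element acts trivially iff it lies in $\Q^\times\Gamma_0(N)$. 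Your identities $A^3/8=\tfrac{u^2-1}{2}A-uI$ for $\alpha=2$ and $A^4/2^{2\alpha}=u(2^{\alpha-3}u^2-1)A+(1-2^{\alpha-2}u^2)I$ for $\alpha\ge 3$ are both correct, as is the obstruction $\tfrac{u}{2}(b+2^{\alpha-1}d)\notin\Z$ ruling out orders $1$ and $2$; the dichotomy $S_3$ versus $D_4$ and the derivation of (b) and (c) from it are then forced, exactly as you say. Your warning about not using the naive Fricke matrix is also well taken, since it is not even a level-$N$ representative when $M>1$. The one point where ``simply invoking'' hides genuine content is the rationality over $\QQ$: that $S_2$ is defined over $\QQ$ is not a formal consequence of lying in the normalizer --- the analogous $S_3$ of Lemma \ref{4.19} is only defined over $\QQ(\sqrt{-3})$ --- and rests on the triviality of the cyclotomic action for $d=2$ (equivalently, the moduli/$q$-expansion argument in \cite{Ba1}). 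Since you cite the same sources the paper does for exactly this point, this is an acceptable appeal rather than a gap, but it is the only assertion in the proposition that your matrix calculus does not itself establish.
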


\begin{lema}\label{lem2S4.3} \cite[Proposition 3.5]{Ba1} If $N=2^\alpha M$ with $\alpha\geq
2$ and $M$ odd, then
$$X_0(N)/w_{2^\alpha}S_2 w_{2^\alpha} = X_0(N/2).$$
\end{lema}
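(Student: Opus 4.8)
The plan is to pass to Fuchsian groups and realize $X_0(N)/\langle w_{2^\alpha}S_2 w_{2^\alpha}\rangle$ as the quotient of the completed upper half-plane $\mathcal{H}^*$ by the group generated by $\Gamma_0(N)$ and (a representative of) $w_{2^\alpha}S_2 w_{2^\alpha}$, and then to identify that group with $\Gamma_0(N/2)$. First I would record the inclusion $\Gamma_0(N)\subset\Gamma_0(N/2)$ and compute its index from the Dedekind $\psi$-function as $\psi(N)/\psi(N/2)=2$; this is exactly where the hypothesis $\alpha\geq 2$ enters, since $2\mid N/2$ precisely when $\alpha\geq 2$ (for $\alpha=1$ the index would be $3$). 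An index-$2$ subgroup is automatically normal, so the natural degeneracy map $X_0(N)\to X_0(N/2)$, which is defined over $\Q$, is a Galois double cover whose deck involution is induced by any element of $\Gamma_0(N/2)\setminus\Gamma_0(N)$. Hence it suffices to exhibit $w_{2^\alpha}S_2 w_{2^\alpha}$, after rescaling to determinant $1$, as such an element.

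The computation then proceeds as follows. Choosing $u,v\in\Z$ with $2^\alpha u-Mv=1$ (possible since $\gcd(2^\alpha,M)=1$), I take the representative $w_{2^\alpha}=\left(\begin{smallmatrix} 2^\alpha u & v\\ N & 2^\alpha\end{smallmatrix}\right)$, of determinant $2^\alpha$, together with $S_2=\left(\begin{smallmatrix}1 & 1/2\\ 0 & 1\end{smallmatrix}\right)$. Multiplying out $w_{2^\alpha}S_2 w_{2^\alpha}$ gives a matrix of determinant $2^{2\alpha}$; dividing by $2^\alpha$ to normalize the determinant to $1$ produces an integer matrix (the condition $\alpha\geq 2$ is what clears all the denominators). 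A direct calculation gives its lower-left entry as $2^{\alpha-1}M(2u+M+2)=\tfrac{N}{2}\,(2u+M+2)$.

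The two required properties now read off this single entry. It is divisible by $N/2=2^{\alpha-1}M$, so the normalized involution lies in $\Gamma_0(N/2)$; and since $M$ is odd the factor $2u+M+2$ is odd, so the entry is not divisible by $N=2^\alpha M$, whence the involution is not in $\Gamma_0(N)$. Therefore $\langle\Gamma_0(N),\,w_{2^\alpha}S_2 w_{2^\alpha}\rangle$ lies strictly between $\Gamma_0(N)$ and $\Gamma_0(N/2)$, and by index $2$ it must equal $\Gamma_0(N/2)$; passing to quotients of $\mathcal{H}^*$ then yields $X_0(N)/w_{2^\alpha}S_2 w_{2^\alpha}=X_0(N/2)$ over $\Q$.

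The calculation itself is routine, so the genuine care lies elsewhere: one must work with the conjugate $w_{2^\alpha}S_2 w_{2^\alpha}$ of $S_2$ and not with the superficially similar $S_2 w_{2^\alpha}S_2=V_2$ of Proposition \ref{prop1S4.3}. For $\alpha\geq 3$ these are different involutions (in $\langle S_2,w_{2^\alpha}\rangle\cong D_4$ they are $aba$ and $bab$ with $a=S_2$, $b=w_{2^\alpha}$): only $w_{2^\alpha}S_2 w_{2^\alpha}$ normalizes to determinant $1$ and can therefore belong to $\Gamma_0(N/2)$, whereas $S_2 w_{2^\alpha}S_2$ is of Atkin--Lehner type, of determinant $2^\alpha$. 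For $\alpha=2$ the group is $S_3$ and the two coincide. I do not expect any obstacle beyond correctly pinning down this element and tracking the $2$-adic valuation of the lower-left corner.
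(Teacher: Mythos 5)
Your proposal is correct and is essentially the paper's own argument: the paper's proof consists precisely of the remark that an easy calculation shows $w_{2^\alpha}S_2w_{2^\alpha}$ (normalized to determinant $1$) lies in $\Gamma_0(N/2)$ but not in $\Gamma_0(N)$, combined with the implicit index-$2$ fact, and you have simply written out that calculation and the $\psi(N)/\psi(N/2)=2$ index count explicitly. (One tiny imprecision: the entries are already integral for $\alpha=1$; the hypothesis $\alpha\geq 2$ is really only needed for the index to be $2$ and for $S_2$ to normalize $\Gamma_0(N)$, as you correctly note earlier.)
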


\begin{proof} An easy calculation shows that $w_{2^\alpha}S_2 w_{2^\alpha}$
lies in $\Gamma_0(N/2)$ but not in $\Gamma_0(N)$.
\end{proof}

\begin{lema}\label{3lemS4.3} Let $u$ and $v$ be two commuting involutions on a curve $X$.
Then $uv$ is also an involution and
$$\#(uv, X)=
2\#(u, X/v)-\#(u, X).$$
\end{lema}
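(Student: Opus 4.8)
The plan is to work with the degree-two quotient map $\pi\colon X\to Y:=X/v$ and to count, on $Y$, the fixed points of the involution $\overline{u}$ that $u$ induces there, since by definition $\#(u,X/v)=\#(\overline{u},Y)$. This induced involution exists precisely because $u$ and $v$ commute. First I would record the trivial half of the statement: using commutativity, $(uv)^2=u^2v^2=\id$, and $uv\neq\id$ as $u\neq v$, so $uv$ is indeed an involution. The heart of the argument is then to describe exactly which points of $Y$ are fixed by $\overline{u}$, by pulling them back along $\pi$.

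For a point $P\in X$ with image $Q=\pi(P)$, the fibre is the $v$-orbit $\{P,vP\}$, and $Q$ is fixed by $\overline{u}$ if and only if $uP\in\{P,vP\}$. This splits into two cases: either $uP=P$, so $P$ lies in the fixed locus $F_u$ of $u$; or $uP=vP$, which after applying $v$ and using commutativity gives $uvP=P$, so $P$ lies in the fixed locus $F_{uv}$ of $uv$. The decisive input is Theorem \ref{cyclicdisjoint}(b), that the distinct involutions $u$, $v$, $uv$ have pairwise disjoint fixed-point sets, and I would use it twice. First, the ramification points of $\pi$ are exactly the points of $F_v$; since $F_v$ meets neither $F_u$ nor $F_{uv}$, no ramification point maps to a fixed point of $\overline{u}$, so every fixed point of $\overline{u}$ sits under an unramified, two-element fibre. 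Second, if $P\in F_u$ then $vP\in F_u$ as well (a one-line check from $uP=P$ and commutativity), while $P\neq vP$ because $P\notin F_v$; hence $F_u$ is a disjoint union of $v$-orbits of size two, each mapping to a single fixed point of $\overline{u}$, and the identical reasoning applies to $F_{uv}$.

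Putting this together, the fixed points of $\overline{u}$ on $Y$ are in bijection with the $v$-orbits inside the disjoint union $F_u\sqcup F_{uv}$ (disjoint again by Theorem \ref{cyclicdisjoint}(b)), whence $\#(\overline{u},Y)=\tfrac12\bigl(\#(u,X)+\#(uv,X)\bigr)$; rearranging yields the claimed identity $\#(uv,X)=2\#(u,X/v)-\#(u,X)$. I expect the only genuine subtlety to be the bookkeeping around ramification, namely verifying that the fixed points of $v$ contribute nothing to the fixed locus of $\overline{u}$ and that the relevant fibres consist of two \emph{distinct} points, so that each fixed point of $\overline{u}$ is covered by exactly two points of $X$. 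Once the disjointness theorem is invoked correctly, the remaining counting is entirely routine.
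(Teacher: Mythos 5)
Your proof is correct and follows essentially the same route as the paper: both arguments rest on Theorem \ref{cyclicdisjoint} to get disjointness of the fixed loci of $u$, $v$, $uv$, and both count the fixed points of the induced involution on $X/v$ as $v$-orbit pairs drawn from $F_u\sqcup F_{uv}$, yielding $\#(u,X)+\#(uv,X)=2\#(u,X/v)$. The paper merely phrases this via ramification in the tower $X\to X/v\to X/\langle u,v\rangle$ rather than via fibres of $X\to X/v$ directly.
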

\begin{proof}
By Theorem \ref{cyclicdisjoint} , the fixed points of the three
involutions $u$, $v$ and $uv$ are disjoint, and each fixed point has
ramification index $2$ in the degree $4$ covering $X\to X/\langle u,
v\rangle$.
\par
This implies that every fixed point of $u$ or $uv$ must be ramified
in $X/v\to X/\langle u, v\rangle$. Conversely, if a point of $X/v$
is ramified in $X/v\to X/\langle u, v\rangle$, then the two points
of $X$ lying above it must have ramification group $\langle
u\rangle$ or $\langle uv\rangle$. So, all in all $\#(u, X)+\#(uv,
X)=2\#(u, X/v)$.
\end{proof}

\begin{lema} \label{4lemS4.3} Let $N=2^\alpha M$ with $\alpha\geq 2$ and $M$ odd.
Also let $r||M$.

\begin{itemize}

\item[{(a)}]   \cite[Proposition 3.9]{Ba1}
$\#(V_2, X_0(N))= \#(w_{2^\alpha}, X_0(N))$ and \newline $\#(V_2
w_r, X_0(N))= \#(w_{2^\alpha}w_r, X_0(N))$.

\item[{(b)}]  $\#(S_2, X_0(N))=
\#(w_{2^\alpha}S_2 w_{2^\alpha}, X_0(N))=
(2g(X_0(N))-2)-2(2g(X_0(N/2))-2)$.

\item[{(c)}]   \cite[Proposition 3.6]{Ba1}
$\#(S_2 w_r, X_0(N))= \#(w_{2^\alpha}S_2 w_{2^\alpha}w_r, X_0(N))=$
\newline
$2\#(w_r, X_0(N/2))-\#(w_r, X_0(N))$.

\item[{(d)}]    \cite[Proposition 3.10]{Ba1}
If $\alpha\geq 3$, then $$\#(V_2 w_{2^\alpha}, X_0(N))= 2\#(S_2,
X_0(N/2))-\#(S_2, X_0(N))\; and$$  $$\#(V_2 w_{2^\alpha}w_r,
X_0(N))= 2\#(S_2 w_r, X_0(N/2)) -\#(S_2 w_r, X_0(N)).$$

\end{itemize}

\end{lema}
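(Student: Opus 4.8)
The plan is to reduce all four statements to two elementary principles. The first is that conjugate automorphisms of a curve have equal numbers of fixed points, since the conjugating automorphism carries the fixed-point set of one bijectively onto that of the other. The second is the commuting-involution fixed-point formula of Lemma \ref{3lemS4.3}, which I would apply throughout with the second involution taken to be $\sigma := w_{2^\alpha} S_2 w_{2^\alpha}$. The point of choosing $\sigma$ is Lemma \ref{lem2S4.3}, which identifies $X_0(N)/\sigma$ with $X_0(N/2)$; this is exactly what converts the level-$N$ quantities $\#(-,X_0(N))$ into the level-$N/2$ quantities on the right-hand sides. The commutation relations of Proposition \ref{prop1S4.3}, together with the fact that $w_r$ commutes with $w_{2^\alpha}$, supply the algebraic identities needed to recognise each product as a conjugate of, or a $\sigma$-multiple of, the intended involution.

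For part (a) I would note that $V_2 = S_2 w_{2^\alpha} S_2^{-1}$ is conjugate to $w_{2^\alpha}$, giving the first equality; and since $S_2$ commutes with $w_r$ ($r$ odd) by Proposition \ref{prop1S4.3}(a), one has $V_2 w_r = S_2 (w_{2^\alpha} w_r) S_2^{-1}$, a conjugate of $w_{2^\alpha} w_r$, giving the second. For part (b) the first equality is again conjugation, as $\sigma = w_{2^\alpha} S_2 w_{2^\alpha}^{-1}$ is conjugate to $S_2$; the displayed value then comes from Riemann--Hurwitz applied to the degree-two cover $X_0(N)\to X_0(N)/\sigma = X_0(N/2)$ of Lemma \ref{lem2S4.3}, namely $2g(X_0(N))-2 = 2\,(2g(X_0(N/2))-2) + \#(\sigma, X_0(N))$, solved for $\#(\sigma, X_0(N))$.

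Parts (c) and (d) are the substantive ones and follow a common template via Lemma \ref{3lemS4.3} with $v=\sigma$. In (c) I take $u=w_r$: since $w_r$ commutes with both $w_{2^\alpha}$ and $S_2$ it commutes with $\sigma$, and $w_r\sigma$ is conjugate (by $w_{2^\alpha}$) to $S_2 w_r$, so Lemma \ref{3lemS4.3} yields $\#(S_2 w_r, X_0(N)) = \#(w_r\sigma, X_0(N)) = 2\#(w_r, X_0(N/2)) - \#(w_r, X_0(N))$. In (d), assuming $\alpha\geq 3$, Proposition \ref{prop1S4.3}(b) makes $V_2$ commute with $w_{2^\alpha}$, which is precisely the assertion that $S_2$ commutes with $\sigma$ (both amount to $V_2 w_{2^\alpha} = w_{2^\alpha} V_2$); the key identity is then $S_2\sigma = V_2 w_{2^\alpha}$, read off directly from $\sigma = w_{2^\alpha} S_2 w_{2^\alpha}$. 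Taking $u=S_2$ gives the first formula, and taking $u=S_2 w_r$ (an involution commuting with $\sigma$, with $u\sigma = V_2 w_{2^\alpha} w_r$) gives the second.

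The step I expect to require the most care is not any of these manipulations but the identification of the induced involutions on the quotient: when Lemma \ref{3lemS4.3} produces a term $\#(u, X_0(N)/\sigma)$, I must verify that the involution $u$ induces on $X_0(N)/\sigma = X_0(N/2)$ is exactly the eponymous involution ($w_r$, $S_2$, or $S_2 w_r$) of $X_0(N/2)$. For $w_r$ this is immediate, since $r\mid N/2$ and indeed $r\,\|\,N/2$. For $S_2$ and $S_2 w_r$ one must check at the matrix level that $S_2$ normalises $\Gamma_0(N/2)$ — which requires $4\mid N/2$, that is, precisely the hypothesis $\alpha\geq 3$ — and that the natural projection $\Gamma_0(N)\backslash\mathcal{H}^{*}\to \Gamma_0(N/2)\backslash\mathcal{H}^{*}$ underlying Lemma \ref{lem2S4.3} intertwines the two actions of $S_2$. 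This compatibility, together with the bookkeeping of which relations force $\alpha\geq 3$, is where the write-up must be careful, even though each individual verification is routine.
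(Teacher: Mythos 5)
Your proposal is correct and follows essentially the same route as the paper: (a) and the first equalities in (b),(c) by conjugation, (b) by Riemann--Hurwitz for the degree-two cover $X_0(N)\to X_0(N)/w_{2^\alpha}S_2w_{2^\alpha}=X_0(N/2)$ of Lemma \ref{lem2S4.3}, and (c),(d) as instances of Lemma \ref{3lemS4.3} with $v=w_{2^\alpha}S_2w_{2^\alpha}$. The paper's proof is just a terse version of this, and your extra care about which involution is induced on $X_0(N/2)$ (and where $\alpha\geq 3$ is needed) fills in details the paper leaves implicit.
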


\begin{proof} Note that conjugate involutions have the same number of fixed
points. This proves (a) and together with Lemma \ref{lem2S4.3} also
(b). The other formulas are special cases of Lemma \ref{3lemS4.3},
also using Lemma \ref{lem2S4.3}.
\end{proof}

\begin{lema}\label{4.19}\label{newlemma}\label{lemmaV3}\label{lemaV3}

Let $9||N$ and $S_3 =\left(\begin{array}{cc} 1&1/3\\
0&1\end{array}\right)$.

\begin{itemize}

\item[{(a)}]    $S_3$ normalizes $\Gamma_0(N)$ and induces an automorphism of $X_0(N)$ of order $3$ defined over $\QQ(\sqrt{-3})$. Its Galois conjugate is $S_3^2$. Moreover, $S_3$ commutes with the Atkin-Lehner involutions $w_r$ with
$r\equiv 1\ \mod\ 3$, whereas for $r\equiv 2\ \mod\ 3$ we have $w_r
S_3=S_3^2 w_r$ and $w_9 S_3$ has order $3$.

\item[{(b)}]    $V_3 =S_3 w_9 S_3^2$ is an involution of $X_0(N)$. With respect to Atkin-Lehner involutions we have
$w_r V_3=\left\{\begin{array}{cc} V_3 w_r &  if\; r\equiv\ 1\ \mod \
3\; or\; r=9\; and\\
 V_3 w_9 w_r&  if\;  r\equiv\ 2\ \mod \ 3\\
 \end{array}\right.$

Moreover, if $r\equiv\ 2\ \mod\ 3$ then $\langle V_3,
w_r\rangle\cong D_4$ and $V_3 w_r$ has order $4$ with $(V_3 w_r)^2
=w_9$.

\item[{(c)}]     $V_3$ as an involution of $X_0(N)$ is defined over $\Q(\sqrt{-3})$. Its $Gal(\QQ(\sqrt{-3})/\QQ)$-conjugate is $V_3 w_9$. In particular, $V_3$ and
$V_3 w_9$ have the same number of fixed points on $X_0(N)$.

\item[{(d)}]    More generally we have
$$\#(V_3 w_9, X_0(N))=\#(V_3, X_0(N))=
\#(w_9, X_0(N))$$ and for $r\equiv\ 1\ \mod\ 3$ also
$$\#(V_3 w_9 w_r, X_0(N))=
\#(V_3 w_r, X_0(N))= \#(w_9 w_r, X_0(N)).$$

\item[{(e)}] $V_3$ as an involution of $X_0(N)/W$ is defined over $\Q$ if and only if $w_9\in W$.

\end{itemize}
\end{lema}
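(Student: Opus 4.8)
The plan is to reduce the question to comparing $V_3$ with its Galois conjugate, both viewed as automorphisms of the quotient. Throughout I write $\sigma$ for the nontrivial element of $\Gal(\Q(\sqrt{-3})/\Q)$ and $\pi\colon X_0(N)\to X_0(N)/W$ for the quotient map. Since every Atkin-Lehner involution is defined over $\Q$, the subgroup $W\subseteq B(N)$, the quotient curve $X_0(N)/W$, and the map $\pi$ are all defined over $\Q$; in particular $\pi^{\sigma}=\pi$. I work under the standing assumption of the statement that $V_3$ descends to an involution $\widetilde{V_3}$ of $X_0(N)/W$, characterized by $\pi\circ V_3=\widetilde{V_3}\circ\pi$; for the ``if'' direction I would note that $w_9\in W$ already forces $V_3$ to normalize $W$ (by the commutation relations of part (b), conjugation by $V_3$ sends $w_r$ to itself when $r\equiv 1\bmod 3$ and to $w_9 w_r$ when $r\equiv 2\bmod 3$), so that the descent is automatic.

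First I would apply $\sigma$ to the defining relation $\pi\circ V_3=\widetilde{V_3}\circ\pi$. Using $\pi^{\sigma}=\pi$ together with part (c), namely $V_3^{\sigma}=V_3 w_9$, this yields $\pi\circ(V_3 w_9)=\widetilde{V_3}^{\,\sigma}\circ\pi$. Hence $\widetilde{V_3}^{\,\sigma}$ is precisely the automorphism of $X_0(N)/W$ induced by $V_3 w_9$, which I denote $\widetilde{V_3 w_9}$. Because $V_3$ is already defined over $\Q(\sqrt{-3})$, the induced involution $\widetilde{V_3}$ is defined over $\Q$ if and only if it is fixed by this single $\sigma$, i.e. if and only if $\widetilde{V_3 w_9}=\widetilde{V_3}$.

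The final step is the elementary observation that two automorphisms of $X_0(N)$ that both descend induce the same automorphism of $X_0(N)/W$ exactly when their ``difference'' acts trivially on the quotient. Here $V_3^{-1}(V_3 w_9)=w_9$, so $\widetilde{V_3 w_9}=\widetilde{V_3}$ holds precisely when $w_9$ induces the identity on $X_0(N)/W$, which happens if and only if $w_9\in W$. Chaining the two equivalences gives that $\widetilde{V_3}$ is defined over $\Q$ if and only if $w_9\in W$. I expect the only delicate point to be the bookkeeping in the middle paragraph: one must make sure that conjugating the defining relation by $\sigma$ produces the automorphism induced by the Galois conjugate $V_3 w_9$ rather than by $V_3$ itself, which is exactly where part (c) enters. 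The remaining implications are formal once one uses that $w_9$, like all of $W$, is defined over $\Q$, so no further Galois subtlety intervenes in the last comparison.
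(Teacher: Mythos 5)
Your descent argument for part (e) is correct, and it is in fact more than the paper itself supplies: the paper's proof of this lemma reads ``Most of this (and some more) is already in [Ba1], [FM] or [Ba2]. So we only prove (c),'' so part (e) is left to the references/reader. Your chain --- apply $\sigma\in\Gal(\Q(\sqrt{-3})/\Q)$ to $\pi\circ V_3=\widetilde{V_3}\circ\pi$, use $\pi^\sigma=\pi$ and $V_3^\sigma=V_3w_9$ to get $\widetilde{V_3}^{\,\sigma}=\widetilde{V_3w_9}$, then note that two descending automorphisms agree on $X_0(N)/W$ iff their quotient lies in $W$, here $V_3^{-1}(V_3w_9)=w_9$ --- is sound, and your observation that $w_9\in W$ forces $V_3$ to normalize $W$ (via the conjugation formulas of (b)) correctly handles the ``if'' direction.

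The gap is that the statement under review is the whole five-part lemma, and your proposal proves only (e), taking (a)--(d) as given. In particular, the linchpin of your argument is the identity $V_3^\sigma=V_3w_9$ from part (c), and that is exactly the one assertion the authors do \emph{not} find in the literature and therefore prove explicitly: one must compute $\sigma(S_3)=S_3^2$, hence $\sigma(V_3)=\sigma(S_3w_9S_3^2)=S_3^2w_9S_3$, and then verify $V_3\,\sigma(V_3)=w_9(w_9S_3)^3=w_9$ using the order-$3$ relation for $w_9S_3$ from part (a). Without that computation your proof of (e) rests on an input you have not established, so as a proof of the lemma as stated it is incomplete. If you add the two-line matrix/Galois computation of (c) (and, as the paper does, cite [Ba1], [FM], [Ba2] for (a), (b), (d)), the argument is complete and your treatment of (e) nicely complements the paper's.
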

\begin{proof}Most of this (and some more) is already in \cite{Ba1}, \cite{FM} or \cite{Ba2}. So we only prove (c).
\par
$V_3 =S_3 w_9 S_3^2$ is defined over $\QQ(\sqrt{-3})$ because $S_3$
is. Now let $\sigma$ be the nontrivial Galois automorphism of
$\QQ(\sqrt{-3})$. Then $\sigma(V_3)= \sigma(S_3 w_9 S_3^2)=S_3^2 w_9
S_3$. So $V_3\sigma(V_3)=w_9(w_9 S_3)^3 =w_9$ by part (a).
\end{proof}
\begin{rem} From Lemma \ref{4.19}(e), we obtain the result in \cite{BaGon2} that $V_3$ as an involution of $X_0^*(N)$ is always defined over
$\Q$.
\end{rem}
\subsection{Useful isomorphisms on non-square free quotient curves}

For later use we refine \cite[Proposition 4]{FM}.

\begin{prop}\label{4.10}
 Suppose $4||N$ and write $N=4M$. Let $W'$ be a subgroup of $B(N)$
generated by $w_4$, $w_{m_1},\ldots, w_{m_s}$ with $m_i ||M$. Then
we have
$$X_0(N)/W'\cong
X_0(N)/\langle S_2 w_4 S_2, w_{m_1},\ldots,  w_{m_s}\rangle=
X_0(N)/\langle w_4 S_2 w_4, w_{m_1},\ldots,w_{m_s}\rangle=$$
$$X_0(2M)/\langle w_{m_1}, \ldots, w_{m_s}\rangle.$$

Hence if $A\in GL_2(\mathbb{R})$ is a bielliptic involution of
$X_0(2M)/\langle w_{m_1},\ldots, w_{m_s}\rangle$, then $S_2 AS_2$
normalizes $\langle \Gamma_0(N), W'\rangle$ and induces a bielliptic
involution on $X_0(N)/W'$.
\end{prop}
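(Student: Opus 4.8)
The plan is to derive all three identities from the relations in the non-abelian order-$6$ group $\langle S_2,w_4\rangle$ of Proposition~\ref{prop1S4.3}(c) together with Lemma~\ref{lem2S4.3}, working throughout with the Fuchsian groups $\langle\Gamma_0(N),\ldots\rangle$ acting on the completed upper half-plane $\mathcal{H}^*$. First I would obtain the leading isomorphism $\cong$ by conjugating with $S_2$. Since $S_2$ normalizes $\Gamma_0(N)$ and is defined over $\mathbb{Q}$ (Proposition~\ref{prop1S4.3}(a)), the map $z\mapsto S_2 z$ identifies $X_0(N)/W'=\Gamma\backslash\mathcal{H}^*$ with $(S_2\Gamma S_2^{-1})\backslash\mathcal{H}^*$, where $\Gamma=\langle\Gamma_0(N),W'\rangle$. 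Because $S_2$ commutes with each $w_{m_i}$ (as $m_i$ is odd, again by Proposition~\ref{prop1S4.3}(a)), conjugation fixes the $w_{m_i}$ and sends $w_4$ to $S_2 w_4 S_2$, so $S_2 W' S_2=\langle S_2 w_4 S_2,w_{m_1},\ldots,w_{m_s}\rangle$. This gives the first $\cong$, and the middle $=$ is then immediate from the identity $S_2 w_4 S_2=V_2=w_4 S_2 w_4$ of Proposition~\ref{prop1S4.3}(c), which exhibits the two subgroups as literally equal.

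The third equality is the descent to level $2M$. By Lemma~\ref{lem2S4.3} with $\alpha=2$, the involution $V_2=w_4 S_2 w_4$ lies in $\Gamma_0(2M)$ but not in $\Gamma_0(4M)$; since $[\Gamma_0(2M):\Gamma_0(4M)]=2$, adjoining $V_2$ yields $\langle\Gamma_0(4M),V_2\rangle=\Gamma_0(2M)$. As $V_2$ commutes with every $w_{m_i}$ with $m_i\|M$ (Proposition~\ref{prop1S4.3}(a)), I can take the quotient in stages, reducing the claim to the group identity
$$\langle\Gamma_0(4M),V_2,w_{m_1},\ldots,w_{m_s}\rangle=\langle\Gamma_0(2M),w_{m_1},\ldots,w_{m_s}\rangle.$$
The one point requiring care is that a fixed determinant-$m_i$ matrix representing the Atkin--Lehner involution $w_{m_i}$ at level $4M$ simultaneously represents $w_{m_i}$ at level $2M$; this holds because the defining congruences at level $2M$ are implied by the stronger ones at level $4M$. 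I expect this bookkeeping at the level of Fuchsian groups, rather than any conceptual difficulty, to be the main thing to get right.

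For the concluding statement, let $\psi\colon X_0(N)/W'\to Y:=X_0(2M)/\langle w_{m_1},\ldots,w_{m_s}\rangle$ denote the composite isomorphism just constructed, which is induced by $z\mapsto S_2 z$. If $A\in GL_2(\mathbb{R})$ is a bielliptic involution of $Y$, then it normalizes $\langle\Gamma_0(2M),w_{m_1},\ldots,w_{m_s}\rangle=S_2\Gamma S_2^{-1}$, and so the transported automorphism $\psi^{-1}\circ A\circ\psi$ of $X_0(N)/W'$ is induced by $S_2^{-1}A S_2$. Since $S_2^2\in\Gamma_0(N)$, the matrices $S_2^{-1}A S_2$ and $S_2 A S_2$ differ by left multiplication by an element of $\Gamma$ and therefore induce the same automorphism. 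Hence $S_2 A S_2$ normalizes $\langle\Gamma_0(N),W'\rangle$, and because $(X_0(N)/W')/(S_2 A S_2)\cong Y/A$ has genus $1$, it is a bielliptic involution of $X_0(N)/W'$, as claimed.
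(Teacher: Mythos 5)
Your proof is correct and follows essentially the same route as the paper, which simply cites conjugation by $S_2$, Proposition \ref{prop1S4.3}(c) for the identity $S_2w_4S_2=V_2=w_4S_2w_4$, and Lemma \ref{lem2S4.3} for the descent to $X_0(2M)$. Your version merely fills in the bookkeeping (commutation of $S_2$ with the odd $w_{m_i}$, the index-$2$ argument for $\langle\Gamma_0(4M),V_2\rangle=\Gamma_0(2M)$, and $S_2^2\in\Gamma_0(N)$) that the paper leaves implicit.
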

\begin{proof}
The isomorphism comes from conjugating with $S_2$, the first
equality from part (c) of Proposition \ref{prop1S4.3} and the final
equality from Lemma \ref{lem2S4.3}.
\end{proof}

\begin{prop}\label{4.12}\cite[Proposition 5]{FM}
Assume $9||N$. Let $W'$ be a subgroup of $B(N)$ generated by
$w_{n_1},\ldots, w_{n_t}$ ($n_i ||N$) and let $W''= \langle
\{w_{n_i} w_9^{e(n_i)}\}_{i\in\{1,\dots, t\}}\rangle$ where $e(m)=0$
if $m\equiv\ 1\ \mod\ 3$ or if $9||m$ and $m/9\equiv\ 1\ \mod\ 3$,
and $e(m)=1$ otherwise. Then $V_3$ induces an isomorphism
$$X_0(N)/W'\cong X_0(N)/W''.$$

\end{prop}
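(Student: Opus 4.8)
The plan is to reduce the claimed isomorphism to a single group-theoretic identity in $\Aut(X_0(N))$. The guiding principle is that an automorphism $\phi$ of $X_0(N)$ coming from the normalizer of $\Gamma_0(N)$ descends to an isomorphism $X_0(N)/W'\to X_0(N)/W''$ exactly when $\phi$ conjugates $W'$ onto $W''$, since then $\phi$ carries each $W'$-orbit to a $W''$-orbit. As $V_3=S_3 w_9 S_3^2$ normalizes $\Gamma_0(N)$ (both $S_3$, by Lemma \ref{4.19}(a), and $w_9$ do) and is an involution, the whole proposition follows once I establish
\[
V_3\,w_{n_i}\,V_3 = w_{n_i}\,w_9^{\,e(n_i)}\qquad(i=1,\dots,t),
\]
for then $V_3 W' V_3=\langle w_{n_1}w_9^{e(n_1)},\dots,w_{n_t}w_9^{e(n_t)}\rangle=W''$.

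First I would treat the generators $w_r$ with $\gcd(r,3)=1$. Multiplying the commutation relations of Lemma \ref{4.19}(b) on the left by $V_3$ and using $V_3^2=1$ gives $V_3 w_r V_3=w_r$ when $r\equiv 1\ \mathrm{mod}\ 3$ and $V_3 w_r V_3=w_9 w_r$ when $r\equiv 2\ \mathrm{mod}\ 3$; the same relations yield $V_3 w_9 V_3=w_9$. Reading these against the two clauses defining $e$ confirms the target identity for every $n_i$ coprime to $3$ and for $n_i=9$.

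The only case not handled directly by Lemma \ref{4.19}(b) is $9\,\|\,n_i$ with $n_i\neq 9$, and this is the step that requires the most care. Here I would write $n_i=9k$ with $\gcd(k,3)=1$ and exploit the factorization $w_{n_i}=w_9 w_k$ of an Atkin--Lehner involution into its coprime prime-power parts. Inserting $V_3^2=1$ between the factors gives
\[
V_3 w_{n_i} V_3=(V_3 w_9 V_3)(V_3 w_k V_3)=w_9\,(V_3 w_k V_3),
\]
which by the previous paragraph equals $w_9 w_k=w_{n_i}$ when $k\equiv 1\ \mathrm{mod}\ 3$ (so $e(n_i)=0$) and $w_9\,w_9 w_k=w_k=w_{n_i}w_9$ when $k\equiv 2\ \mathrm{mod}\ 3$ (so $e(n_i)=1$). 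In both subcases this matches $w_{n_i}w_9^{e(n_i)}$.

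With the conjugation identity verified for all generators, the conclusion is immediate: inner conjugation by $V_3$ is an automorphism of $\Aut(X_0(N))$, so it sends the given generating set of $W'$ to a generating set of $W''=V_3 W' V_3$, and hence $V_3$ induces the asserted isomorphism of quotient curves. Because $V_3$ is defined over $\Q(\sqrt{-3})$ (Lemma \ref{4.19}(c)), this isomorphism is a priori defined over $\Q(\sqrt{-3})$. I expect the only genuine obstacle to be the bookkeeping in the $9\,\|\,n_i$ case above, where one must split off the $w_9$ factor before the tabulated relations apply; everything else is a mechanical match against the definition of $e(m)$.
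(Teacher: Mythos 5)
Your argument is correct. The paper itself gives no proof of Proposition \ref{4.12}, delegating it entirely to \cite[Proposition 5]{FM}, so you are supplying the missing verification rather than diverging from the paper; the route you take --- reducing the statement to the conjugation identity $V_3 w_{n_i} V_3 = w_{n_i} w_9^{e(n_i)}$ and checking it generator by generator against the commutation relations of Lemma \ref{4.19}(b) --- is exactly the natural one, and it is consistent with how the paper uses $V_3$ elsewhere (e.g.\ $X_0(153)/w_{17}\cong X_0^+(153)$, which your identity reproduces since $17\equiv 2 \bmod 3$). You correctly isolate the one case the tabulated relations do not cover, namely $9||n_i$ with $n_i\neq 9$, and the splitting $w_{n_i}=w_9 w_k$ together with $V_3 w_9 V_3=w_9$ handles it; the case bookkeeping matches the definition of $e(m)$ in all four subcases. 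The only point worth stating explicitly, which you use implicitly, is that an element of the normalizer of $\Gamma_0(N)$ with $\phi W' \phi^{-1}=W''$ descends to an isomorphism $X_0(N)/W'\to X_0(N)/W''$ because it carries $W'$-orbits to $W''$-orbits --- and your closing remark that the resulting isomorphism is a priori only defined over $\Q(\sqrt{-3})$ is exactly the caveat the paper itself respects in Lemma \ref{6b.9}.
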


\section{Jacobian decomposition, field of bielliptic involutions,
Petri theorem.}

\subsection{On the field where bielliptic involutions may be
defined}

Let $X_0(N)/W_N$ be a quotient curve. We want to control if the
automorphism, or more concretely if a candidate to bielliptic
involution is defined over $\Q$ or a number field. In order to
control the number field $K$ (when $g_{W_N}\leq 5$), we have the
following results in \cite{BaGon2}:

\begin{prop}\label{defi}
Let $A$ be a modular  abelian variety defined over $\Q$ such that
$A\stackrel{\Q}\sim \prod_{i=1}^m A_{f_i}^{n_i}$ for some
$f_i\in\New_{N_i}$, where $A_{f_i}$ are pairwise non-isogenous over
$\Q$. All endomorphisms of $A$ are defined over $\Q$ if, and only
if, for every nontrivial  quadratic Dirichlet character $\chi$, the
newform $f_i\otimes \chi$ is different from any  Galois conjugates
of $f_j$ for all $i$ and $j$.
\end{prop}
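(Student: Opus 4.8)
The plan is to translate the statement about fields of definition of endomorphisms into the triviality of a Galois action, and then to read off that action from the decomposition of $A$ into the factors $A_{f_i}$. Write $G_\Q=\Gal(\Qbar/\Q)$. Passing to the endomorphism algebra $\End^0 := \End\otimes\Q$, which does not change what ``defined over $\Q$'' means, we have $\End^0_\Q(A)=\End^0_{\Qbar}(A)^{G_\Q}$, so all endomorphisms of $A$ are defined over $\Q$ if and only if $G_\Q$ acts trivially on $\End^0_{\Qbar}(A)$. Since the isogeny $A\stackrel{\Q}\sim\prod_i A_{f_i}^{n_i}$ is defined over $\Q$, the Galois action respects the block decomposition
$$\End^0_{\Qbar}(A)=\bigoplus_{i,j}\Hom_{\Qbar}\big(A_{f_i}^{n_i},A_{f_j}^{n_j}\big),$$
so it suffices to analyze each block separately. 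A point I would record at the outset is that \emph{every} twisting character occurring below is forced to be quadratic: the $f_i$ lie in $\New_{N_i}\subseteq S_2(\Gamma_0(N_i))$, hence have trivial nebentypus, while $f_i\otimes\chi$ has nebentypus $\chi^2$; matching it against the trivial nebentypus of a Galois conjugate of some $f_j$ forces $\chi^2=1$. This is exactly why the statement can restrict to quadratic $\chi$.

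For the diagonal blocks ($i=j$) the block is $M_{n_i}(\End^0_{\Qbar}(A_{f_i}))$, and $G_\Q$ acts trivially on it iff it acts trivially on $\End^0_{\Qbar}(A_{f_i})$. Here I would invoke the theory of inner twists: since $A_{f_i}$ is $\Q$-simple of $\GL_2$-type, $\End^0_\Q(A_{f_i})$ is the Hecke field $E_{f_i}=\Q(\{a_m(f_i)\})$, and the larger algebra $\End^0_{\Qbar}(A_{f_i})$ is strictly bigger precisely when $f_i$ admits a nontrivial inner twist $f_i^\sigma=f_i\otimes\chi$, the corresponding extra endomorphism being defined exactly over the field cut out by $\chi$. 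Thus the diagonal block has trivial Galois action iff there is no nontrivial (quadratic) $\chi$ with $f_i\otimes\chi$ a Galois conjugate of $f_i$; note that CM forms are captured here as the self-twist case $\sigma=\mathrm{id}$, $f_i\otimes\chi=f_i$.

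For an off-diagonal block ($i\neq j$) I would argue directly. The block $\Hom_{\Qbar}(A_{f_i},A_{f_j})$ is nonzero iff $A_{f_i}\sim_{\Qbar}A_{f_j}$, which for these $\Q$-simple modular factors happens iff $f_j$ is a Galois conjugate of $f_i\otimes\chi$ for some $\chi$ (quadratic, as above); and since $A_{f_i}$ and $A_{f_j}$ are assumed non-isogenous over $\Q$, the character $\chi$ must be nontrivial (a trivial $\chi$ would give $A_{f_i}\sim_\Q A_{f_j}$). Whenever such a block is nonzero, its $G_\Q$-invariants equal $\Hom_\Q(A_{f_i},A_{f_j})=0$, so $G_\Q$ acts without nonzero fixed vectors and hence nontrivially, producing endomorphisms of $A$ not defined over $\Q$. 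Conversely, if all off-diagonal blocks vanish over $\Qbar$ and all diagonal blocks carry trivial action, then $\End^0_{\Qbar}(A)$ is block-diagonal with trivial action, so all endomorphisms descend to $\Q$. Combining the diagonal and off-diagonal analyses gives exactly the asserted criterion: all endomorphisms are defined over $\Q$ iff for all $i,j$ and every nontrivial quadratic $\chi$, $f_i\otimes\chi$ is not a Galois conjugate of $f_j$. I expect the main obstacle to be the citation-level input rather than the bookkeeping: one must correctly invoke Ribet's theory to identify $\End^0_{\Qbar}(A_{f_i})$ with the inner-twist data and, crucially, to pin down the field of definition of the extra endomorphisms as the splitting field of the twisting character, together with the characterization of $\Qbar$-isogeny of the factors $A_{f_i}$ by twist-plus-conjugation.
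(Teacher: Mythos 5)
This proposition is quoted in the paper from \cite{BaGon2} and is not proved there, so there is no in-paper argument to compare against; your proof is correct and follows the same standard route used in that reference. Reducing the question to the triviality of the $G_\Q$-action on $\End^0_{\Qbar}(A)$, splitting into blocks $\Hom_{\Qbar}(A_{f_i},A_{f_j})$, using the nebentypus argument to force $\chi$ quadratic, and then invoking Ribet's inner-twist description of $\End^0_{\Qbar}(A_{f_i})$ together with the twist-plus-conjugation criterion for $\Hom_{\Qbar}(A_{f_i},A_{f_j})\neq 0$ is exactly the intended argument; the only inputs you leave as citations (Ribet's identification of the extra endomorphisms and their fields of definition, and $\Q$-simplicity of the $A_f$) are the same ones the cited proof relies on.
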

\begin{rem}
If $\chi$ is the quadratic Dirichlet character attached to the
quadratic number field $K=\Q( \sqrt D)$, then there is an isogeny
between the abelian varieties $A_f$ and $A_{f\otimes \chi}$ defined
over $K$.
\end{rem}

Also, the following result specific for modular forms
non-corresponding to elliptic curves \cite{Pyle} clarifies the
possible elliptic quotient that could appear:
\begin{prop}\label{Pyle} When $\dim A_f>1$ and $f$ does not have complex
multiplication (CM), i.e. $f\neq f\otimes \chi$ for all quadratic
Dirichlet characters, a necessary condition for $A_f$ to have an
elliptic quotient over $\overline{\Q}$ is $a_p(f)^2\in\Z$ (the
$p$-th Fourier coefficient of the modular form $f$) for all primes
$p$.
\end{prop}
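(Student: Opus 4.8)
The plan is to deduce the arithmetic condition on the Fourier coefficients from the structure of $\End^0_{\overline{\Q}}(A_f)$, following the analysis of Ribet and Pyle of the simple factors of modular abelian varieties over $\overline{\Q}$. Throughout, $f\in\New_N$ is a weight-$2$ newform of trivial nebentypus, $K_f=\Q(a_n(f):n\geq 1)$ is its (totally real) Hecke field, and $A_f$ carries real multiplication by $K_f$ with $\dim A_f=[K_f:\Q]$.

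First I would record that $A_f$ is isotypic over $\overline{\Q}$: since $A_f$ is $\Q$-simple and has maximal real multiplication, Ribet's theorem provides a simple abelian variety $C/\overline{\Q}$ and an integer $m$ with $A_f\sim_{\overline{\Q}} C^m$ and $\End^0_{\overline{\Q}}(A_f)\cong M_m(\End^0_{\overline{\Q}}(C))$. The hypothesis that $A_f$ admits an elliptic quotient over $\overline{\Q}$ means that some elliptic curve $E$ is isogenous to a simple factor of $A_f$, hence $C\sim E$ and $\dim C=1$, so $m=\dim A_f$. Moreover, if $C$ had complex multiplication then $A_f\sim_{\overline{\Q}} C^m$ would be of CM type and $f$ would have CM, contrary to assumption; thus $\End^0_{\overline{\Q}}(C)=\Q$ and
$$\End^0_{\overline{\Q}}(A_f)\cong M_{\dim A_f}(\Q),$$
a central simple algebra with center $\Q$.

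The second step is to translate the triviality of this center into a statement about inner twists. By Ribet's description of $\End^0_{\overline{\Q}}(A_f)$, its center is the subfield $K_f^{\Gamma}$ fixed by the group $\Gamma$ of inner twists of $f$, that is, the automorphisms $\sigma\in\Aut(K_f)$ for which there is a Dirichlet character $\chi_\sigma$ with $\sigma(a_p(f))=\chi_\sigma(p)\,a_p(f)$ for almost all $p$. Since this center is $\Q$, we get $K_f^{\Gamma}=\Q$, so $\Gamma=\Gal(K_f/\Q)$ and in particular $K_f/\Q$ is Galois. Because $f$ has trivial nebentypus, comparing the nebentypus of $f^\sigma$ (still trivial) with that of $f\otimes\chi_\sigma$ (which is $\chi_\sigma^2$) shows $\chi_\sigma^2=1$, so every inner twist character is quadratic.

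Finally I would conclude by a direct computation. For each $\sigma\in\Gal(K_f/\Q)$ and each prime $p\nmid N$ we have $\sigma(a_p(f))=\chi_\sigma(p)\,a_p(f)$ with $\chi_\sigma(p)=\pm1$, hence $\sigma(a_p(f)^2)=\chi_\sigma(p)^2\,a_p(f)^2=a_p(f)^2$. As this holds for every element of $\Gal(K_f/\Q)$, the element $a_p(f)^2$ lies in $\Q$, and being an algebraic integer it lies in $\Z$. For the finitely many $p|N$ one checks directly that $a_p(f)\in\{0,\pm1\}$ (namely $a_p=\pm1$ when $p||N$ and $a_p=0$ when $p^2|N$), so again $a_p(f)^2\in\Z$. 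The main obstacle is entirely concentrated in the first two steps: the input that an elliptic quotient over $\overline{\Q}$ pins down $\End^0_{\overline{\Q}}(A_f)$ as a matrix algebra over $\Q$, and Ribet's identification of its center with the fixed field of the inner-twist group. Once these structural facts are in hand, the quadratic nature of the twisting characters, forced here by the trivial nebentypus, makes the coefficient condition immediate.
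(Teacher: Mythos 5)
Your argument is correct, and it reconstructs exactly the standard Ribet--Pyle inner-twist argument: the paper itself offers no proof of this proposition, simply quoting it from the cited reference [Pyl04], whose proof runs along the same lines (an elliptic quotient over $\overline{\Q}$ forces $\End^0_{\overline{\Q}}(A_f)\cong M_{\dim A_f}(\Q)$, so the fixed field of the inner-twist group is $\Q$, the twisting characters are quadratic by triviality of the nebentypus, and hence $a_p(f)^2$ is a rational algebraic integer). The only places needing a word more care are the assertion that a CM simple factor would force $f$ to have CM (true, but it rests on Ribet's equivalence between CM of $A_f$ and CM of $f$) and the restriction of the relation $\sigma(a_p)=\chi_\sigma(p)a_p$ to $p\nmid N$, which you correctly complement with the Atkin--Lehner--Li values of $a_p$ at bad primes.
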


\subsection{On the Jacobian decomposition of quotient modular curves}

We  recall that the $\mathbb{Q}$-decomposition for $J_0(N)$ has the
form
$$
J_0(N)\stackrel{\Q}\sim \prod_{M|N} \prod_{f\in \New_M/G_\Q}
A_{f}^{n_f}\,,$$
 where $n_f$ is the number of positive divisors of $N/M$ and $G_\Q$ denotes the absolute Galois group $\Gal(\overline{\Q}/\Q)$. Each  newform
 $f\in\New_M$
provides an $n_f$-dimensional vector subspace of $ S_2(N)$ generated
by $\{ f(q^d)\colon 1\leq d| N/M\}$.

To determine the $\Q$-decomposition for $J_0(N)^{W_N}$
 $$
J_0(N)^{W_N}\stackrel{\Q}\sim \prod_{M|N} \prod_{f\in\New_M/G_\Q}
A_{f}^{m_f}\,,$$ we need to control  which $A_{f}$ appears in this
decomposition and the precise exponent  $0\leq m_f\leq n_f$, and
next results  allow us, once fixed $(N,W_N)$, to determine a basis
of $S_2(N)^{W_N}$ and, in particular, the splitting of
$J_0(N)^{W_N}$ (see \cite[Lemma 2.1, Prop.2.2]{BaGon2}).

\begin{lema}\label{eigenvector}
Let $M$ and $N$ be positive integers such that $ M|N$.  Let  $M_1 $
be a positive divisor of $M$ such that $\gcd(M,M/M_1)=1$ and let $d$
be  a positive divisor of $N/M$ such that
$\gcd(M_1\,d,N/(M_1\,d))=1$.
 If $f\in S_2(\Gamma_0(M))$ is an  eigenvector of the Atkin-Lehner involution $w_{M_1}$ with eigenvalue $\varepsilon (f)$ and $\varepsilon\in
 \{-1,1\}$, then $f(q)+\varepsilon\, d \, f(q^d)\in S_2(\Gamma_0(N))$ is an  eigenvector
  of the Atkin-Lehner involution $w_{M_1\,d}$ with eigenvalue $\varepsilon(f)\cdot \varepsilon$.
\end{lema}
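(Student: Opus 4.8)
The plan is to represent every operator in sight by a matrix in $\GL_2^+(\Q)$ and to compute through the weight-$2$ slash $(h\,|\,\gamma)(\tau)=\det(\gamma)\,(c\tau+e)^{-2}\,h(\gamma\tau)$ for $\gamma=\left(\begin{smallmatrix}a&b\\ c&e\end{smallmatrix}\right)$. For weight $2$ this makes each $w_Q$ a genuine involution and makes scalar matrices act trivially, since $h\,|\,(\lambda I)=\lambda^{2}\lambda^{-2}h=h$. Fix a representative $\omega=\left(\begin{smallmatrix}M_1a&b\\ Mc&M_1e\end{smallmatrix}\right)$ of $w_{M_1}$ on level $M$ (so $\det\omega=M_1$, using $\gcd(M_1,M/M_1)=1$, and $f\,|\,\omega=\varepsilon(f)\,f$), and put $R_d=\left(\begin{smallmatrix}d&0\\ 0&1\end{smallmatrix}\right)$, so that $f\,|\,R_d=d\,f(q^d)$. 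One first records that $f(q^d)$ really lies in $S_2(\Gamma_0(N))$: this follows from $R_d\,\Gamma_0(N)\,R_d^{-1}\subseteq\Gamma_0(M)$, which holds because $d\mid N/M$ forces $M\mid N/d$.

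The statement reduces to the single intertwining identity $f(q)\,|\,w_{M_1d}=d\,\varepsilon(f)\,f(q^d)$. Indeed, applying the involution once more and using $\varepsilon(f)=\pm1$ gives $f(q^d)\,|\,w_{M_1d}=\tfrac{1}{d\,\varepsilon(f)}f(q)=\tfrac{\varepsilon(f)}{d}f(q)$, and then a one-line computation
\begin{equation*}
\bigl(f(q)+\varepsilon\,d\,f(q^d)\bigr)\,\big|\,w_{M_1d}=d\,\varepsilon(f)f(q^d)+\varepsilon\,\varepsilon(f)f(q)=\varepsilon\,\varepsilon(f)\bigl(f(q)+\varepsilon\,d\,f(q^d)\bigr),
\end{equation*}
where $\varepsilon^2=1$ is used, shows that $f(q)+\varepsilon\,d\,f(q^d)$ is a $w_{M_1d}$-eigenform with eigenvalue $\varepsilon(f)\,\varepsilon$, as claimed. (Equivalently, on $\langle f(q),f(q^d)\rangle$ the operator $w_{M_1d}$ is anti-diagonal with eigenvalues $\pm\varepsilon(f)$.) I have verified the constant $d$ in the prototypical case $M=p$, $M_1=d=p$, $N=p^2$, where $w_{M_1d}$ is the Fricke involution $\left(\begin{smallmatrix}0&-1\\ p^2&0\end{smallmatrix}\right)$ and a direct slash computation gives $f(q)\,|\,w_{p^2}=\varepsilon(f)\,p\,f(q^p)$.

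To establish the identity in general, observe first the purely formal computation $f\,|\,(\omega R_d)=(f\,|\,\omega)\,|\,R_d=\varepsilon(f)\,(f\,|\,R_d)=\varepsilon(f)\,d\,f(q^d)$, which uses only $f\,|\,\omega=\varepsilon(f)f$ and the cocycle rule. It therefore suffices to show that $\omega R_d$ \emph{computes the Atkin--Lehner operator} $w_{M_1d}$ on the level-$N$ form $f(q)$. For this I would produce a matrix $\gamma\in\Gamma_0(M)$ such that $\gamma\,\omega R_d$ is a standard representative of $w_{M_1d}$ on level $N$, i.e. an integral matrix of determinant $M_1d$ of the shape $\left(\begin{smallmatrix}M_1d\,* & *\\ N\,* & M_1d\,*\end{smallmatrix}\right)$; any such matrix represents $w_{M_1d}$. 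Granting this, $f(q)\,|\,w_{M_1d}=f\,|\,(\gamma\,\omega R_d)=(f\,|\,\gamma)\,|\,(\omega R_d)=f\,|\,(\omega R_d)=\varepsilon(f)\,d\,f(q^d)$, because $f\,|\,\gamma=f$ for $\gamma\in\Gamma_0(M)$.

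The main obstacle is exactly the construction of this $\gamma$. The matrix $\omega R_d=\left(\begin{smallmatrix}M_1ad & b\\ Mcd & M_1e\end{smallmatrix}\right)$ already has determinant $M_1d$ and the correct divisibility in the first column, so what must be arranged by left multiplication with $\gamma\in\Gamma_0(M)$ is the divisibility of the lower-left entry by $N$ and of the lower-right entry by $M_1d$; this is a system of congruences that one solves with the Chinese Remainder Theorem, and here the hypotheses $\gcd(M_1,M/M_1)=1$ and $\gcd(M_1d,\,N/(M_1d))=1$ are essential. Note that one cannot sidestep the bookkeeping by factoring $w_{M_1d}=w_{M_1}w_d$ and treating primes separately, because $M_1\mid M$ and $d\mid N/M$ need not be coprime: the prototype $M_1=d=p$ above, which is exactly the situation arising for the non-square-free levels of this paper, has $\gcd(M_1,d)=p$. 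Once $\gamma$ and hence the representative $\gamma\,\omega R_d$ are in hand, the conclusion follows from the displayed linear-algebra computation.
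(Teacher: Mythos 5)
The paper does not actually prove this lemma; it is imported from \cite[Lemma 2.1]{BaGon2}, so there is no internal argument to compare yours against. Judged on its own, your reduction is correctly organized: with the weight-$2$ normalization $w_{M_1d}$ is indeed an involution on $S_2(\Gamma_0(N))$, the verification that $f(q^d)\in S_2(\Gamma_0(N))$ via $R_d\Gamma_0(N)R_d^{-1}\subseteq\Gamma_0(M)$ is right, and the closing two-dimensional linear algebra correctly converts the single identity $f(q)\,|\,w_{M_1d}=d\,\varepsilon(f)\,f(q^d)$ into the asserted eigenvalue $\varepsilon(f)\,\varepsilon$.

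The one genuine gap is that this identity --- which carries the entire arithmetic content of the lemma --- is not actually established: you reduce it to the existence of $\gamma\in\Gamma_0(M)$ with $\gamma\,\omega R_d$ a standard level-$N$ representative of $w_{M_1d}$, declare this ``the main obstacle,'' and stop at ``one solves a system of congruences with the Chinese Remainder Theorem.'' As written, a reader cannot see that those congruences are compatible, and that is precisely where the hypotheses must enter. Fortunately the step closes more easily than your framing suggests, and with no CRT: take \emph{any} standard representative $W=\left(\begin{smallmatrix}M_1dx&y\\ Nz&M_1dw\end{smallmatrix}\right)$ with $\det W=M_1d$ (it exists exactly because $\gcd(M_1d,N/(M_1d))=1$) and set $\gamma:=W(\omega R_d)^{-1}$. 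Since $(\omega R_d)^{-1}=\frac{1}{M_1d}\left(\begin{smallmatrix}M_1e&-b\\ -Mcd&M_1ad\end{smallmatrix}\right)$, a direct multiplication gives
$$\gamma=\begin{pmatrix} M_1xe-\tfrac{M}{M_1}cy & ay-bx\\[2pt] \tfrac{N}{d}ze-Mcdw & M_1adw-\tfrac{N}{M_1d}bz\end{pmatrix},$$
which is integral because $M_1\mid M$ and $M_1d\mid N$, has determinant $\det W/\det(\omega R_d)=1$, and has lower-left entry divisible by $M$ because $Md\mid N$ (this is where $d\mid N/M$ is used). Hence $\gamma\in\Gamma_0(M)$ and $W=\gamma\,\omega R_d$, so $f\,|\,W=(f\,|\,\gamma)\,|\,(\omega R_d)=\varepsilon(f)\,d\,f(q^d)$ as you wanted. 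With that supplied, your proof is complete.
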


\begin{prop}\label{eigenforms}
Assume that $N=p^k\cdot M$, where $k\geq 1$, $p$ is a prime and $M$
is an integer coprime to $p$. For $0\leq i< k$, let  $f\in
S_2(\Gamma_0(p^i\cdot M))^{W}$ be such that
$w_{p^i}(f)=\varepsilon\cdot f$ with $W\leq B(M)$ (clearly $
\varepsilon= 1$ when $i=0$). Let  $\cS$ be the  vector subspace of
$S_2(\Gamma_0(p^k\cdot M))^W$ generated by the $k-i+1$ linearly
independent, $\mathbb{Q}$-isogenous to $f$, eigenforms $ \{f,
B_p(f),\cdots, B_p^{k-i}(f)\}$. Then,
\begin{itemize}
\item[\rm{(i)}] The following normalized  eigenforms
$$ g_0=(1+ p B_p)^{k-i}f,  \cdots,g_j= (1+ p B_p)^{k-i-j}(1- p B_p)^j f,\cdots, g_{k-i}=(1- p B_p)^{k-i}f\,,$$
are a basis of $\cS$ (recall $B_p$ is the morphism sending a modular
form $g(q)$ to $g(q^p)$).
\item[\rm{(ii)}] Every $g_j$ is an
eigenvector of $w_{p^k}$ with eigenvalue $(-1)^j\varepsilon$.

\end{itemize}
\end{prop}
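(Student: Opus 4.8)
The plan is to treat the two assertions separately. Part (i) is really a statement about linear independence of polynomials evaluated at $B_p$, while part (ii) follows from an induction that peels off one power of $p$ at a time and invokes Lemma \ref{eigenvector} at each step. Throughout I write $n=k-i$, so that by hypothesis $\cS$ is spanned by the $n+1$ linearly independent forms $f, B_p f,\ldots, B_p^{\,n}f$.

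For part (i) I would first observe that the evaluation map $\Phi\colon \mathbb{C}[X]_{\leq n}\to\cS$, $P(X)\mapsto P(B_p)f$, is a linear isomorphism: it sends the monomial basis $1,X,\ldots,X^n$ to the linearly independent family $f,B_pf,\ldots,B_p^{\,n}f$, and both spaces have dimension $n+1$. Since $g_j=\Phi(P_j)$ with $P_j(X)=(1+pX)^{n-j}(1-pX)^j$, the family $\{g_j\}_{j=0}^n$ is a basis of $\cS$ if and only if $\{P_j\}_{j=0}^n$ is a basis of $\mathbb{C}[X]_{\leq n}$. The latter is a clean check: if $\sum_j c_j(1+pX)^{n-j}(1-pX)^j=0$, evaluating at $X=1/p$ annihilates every term with $j\geq 1$ and forces $c_0=0$; dividing out the common factor $(1-pX)$ and repeating yields $c_j=0$ for all $j$. (Equivalently, dividing by $(1+pX)^n$ produces a polynomial in $t=(1-pX)/(1+pX)$, which can vanish identically only if all coefficients vanish, as $t$ is a nonconstant rational function of $X$.) This step is self-contained and does not use part (ii).

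For part (ii) I would induct on $n=k-i$, passing from level $p^{k-1}M$ to level $p^kM$ one prime-power step at a time. The base case $n=0$ is exactly the hypothesis $w_{p^i}f=\varepsilon f$. For the inductive step assume that at level $p^{k-1}M$ the forms $g_j^{(k-1)}=(1+pB_p)^{(n-1)-j}(1-pB_p)^j f$, for $0\leq j\leq n-1$, are eigenforms of $w_{p^{k-1}}$ with eigenvalue $(-1)^j\varepsilon$. Applying Lemma \ref{eigenvector} to $g_j^{(k-1)}$ with $M_1=p^{k-1}$ and $d=p$ — the hypotheses $\gcd(p^{k-1},M)=1$, $d=p\mid(p^kM)/(p^{k-1}M)=p$, and $\gcd(p^k,M)=1$ all hold — shows that $(1\pm pB_p)g_j^{(k-1)}$ is an eigenform of $w_{p^k}$ with eigenvalue $(-1)^j\varepsilon\cdot(\pm1)$. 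The $+$ sign gives $(1+pB_p)g_j^{(k-1)}=(1+pB_p)^{n-j}(1-pB_p)^j f=g_j^{(k)}$ with eigenvalue $(-1)^j\varepsilon$, and the $-$ sign gives $(1-pB_p)g_j^{(k-1)}=(1+pB_p)^{n-j-1}(1-pB_p)^{j+1}f=g_{j+1}^{(k)}$ with eigenvalue $(-1)^{j+1}\varepsilon$. Letting $j$ run over $0,\ldots,n-1$, the two sign choices together produce precisely $g_0^{(k)},\ldots,g_n^{(k)}$, each with the claimed eigenvalue. All forms remain in $S_2(\Gamma_0(p^kM))^W$, since each $w_r\in W$ has $r\mid M$ coprime to $p$ and therefore commutes with $B_p$.

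The genuinely delicate part is the bookkeeping in the inductive step of part (ii): I must verify that the two images $(1\pm pB_p)g_j^{(k-1)}$ are exactly $g_j^{(k)}$ and $g_{j+1}^{(k)}$, so that every target form is produced (none missed) and the overlaps are consistent — note, for instance, that $g_1^{(k)}$ arises both from $g_1^{(k-1)}$ with the $+$ choice and from $g_0^{(k-1)}$ with the $-$ choice, and both routes assign it eigenvalue $-\varepsilon$, so no contradiction occurs. One must also confirm the coprimality and divisibility hypotheses of Lemma \ref{eigenvector} at each stage. Once this matching is checked the eigenvalues $(-1)^j\varepsilon$ drop out automatically, and part (i) is comparatively routine linear algebra presenting no real obstacle.
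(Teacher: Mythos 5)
Your proof is correct. Note that the paper gives no proof of Proposition \ref{eigenforms} at all --- it is imported from \cite[Lemma 2.1, Prop.~2.2]{BaGon2} --- so there is nothing in-paper to compare against; your reconstruction (linear independence of the polynomials $(1+pX)^{n-j}(1-pX)^{j}$ via successive evaluation at $X=1/p$ for part (i), and induction on $k-i$ feeding $M_1=p^{k-1}$, $d=p$ into Lemma \ref{eigenvector} for part (ii), with the coprimality hypotheses and the overlap of the two sign choices checked correctly) is the natural argument and is sound.
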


Consider the $\mathbb{Q}$-decomposition for $J_0(N)^{W_N}$
$$
J_0(N)^{W_N}\stackrel{\Q}\sim \prod_{M|N} \prod_{f\in\New_M/G_\Q}
A_{f}^{m_f}\,.$$
 Now,  for $f\in\New_M$ if $m_f>0$ then $f$ is necessarily fixed by
the Atkin-Lehner involutions $w_d\in W_N$, with $d||M$, and $f$
provides $m_f$-eigenforms $g_i\in S_2(N)^{W_N}$ lying in the vector
space generated by $\{ f(q^d)\colon 1\leq d| N/M\}$. The integer
$m_f$ is determined by  using Lemma \ref{eigenvector} and
Proposition \ref{eigenforms}. Read readme.md file in
\begin{verbatim}
https://github.com/FrancescBars/Magma-functions-on-Quotient-Modular-Curves
\end{verbatim} in order to obtain such decompositions.

Jacobian decomposition allows us to compute
$|X_0(N)/W_N(\mathbb{F}_{p^n})|$ for all $p\nmid N$ thanks to the
Eichler-Shimura congruence (see a MAGMA function code
FpnpointsQuotientCurve in
\begin{verbatim}
https://github.com/FrancescBars/Magma-functions-on-Quotient-Modular-Curves/blob/main/funcions.m
\end{verbatim}
and examples in the Readme.md in such github folder.

\subsection{The application of a result of Petri}

If $X_0(N)/W_N$ is hyperelliptic, we know an equation (see
\cite{FM},\cite{Hata}) and MAGMA computes the automorphism group
over $\mathbb{Q}$. In this case, if  $X_0(N)/W_N$ has a
non-hyperelliptic involution over $\mathbb{Q}$, we can compute the
genus of the quotient curve by using \cite[Proposition 1]{Ogg}
 and determine if $X_0(N)/W_N$ is bielliptic or
not over $\mathbb{Q}$. For finite extensions of $K$ we need to deal
with the decomposition of the Jacobian and study the endomorphism
algebra following Proposition \ref{defi} and to compute it in such
number field, which is a quadratic field. MAGMA computes such
automorphism group over quadratic fields. Thus for hyperelliptic
quotient modular curves we can decide if they are bielliptic or not.

Consider a non-hyperelliptic curve $X$ of genus $g\geq 3$ defined
over a subfield $K$ of the complex field $\C$. For a fixed basis
$\omega_1,\cdots,\omega_g$ of $\Omega_{X/K}^1$ and an integer $i\geq
2$, we denote by $\cL_i$ the $K$-vector space formed by the
homogenous polynomials $Q\in K[x_1,\cdots,x_g]$ of degree $i$ such
that $Q(\omega_1,\cdots,\omega_g)=0$.

By using a theorem of Petri, \cite[Lemma 13]{BaGon} characterizes
the existence of a bielliptic involution of $X_0^*(N)$ with $N$
square-free for non-hyperelliptic curves. Later, \cite[Proposition
2.6]{BaGon2} generalizes this result to any non-hyperelliptic curve
of genus $>2$.

\begin{prop}\label{exponent}\label{2.5} With the above notation, assume that $\Jac(X)\stackrel{K}\sim E^m\times A$, where $E$ is an elliptic curve and $A$ an
abelian variety such that does not have $E$ as a quotient defined
over $K$. Denote by $I_{g-m}\in M_{g-m}(\mathbb{Q})$ the identity
matrix. Take the basis $\{\omega_i\}$ such that $\omega_1,\cdots,
\omega_m$ and $\omega_{m+1},\cdots ,\omega_g$ are bases of the
pullback of $\Omega^1_{E^m/K}$ and $\Omega^1_{A/\mathbb{Q}}$
respectively. Then, $E$ is $K$-isogenous to the Jacobian of a
bielliptic quotient of $X$ over $K$  if, and only if, there exists a
matrix $\cA\in\GL_m(K)$ that satisfies
\begin{equation}\label{cond-inv}
Q((-x_1,x_2,\cdots,x_g)\cdot \cB)\in \cL_i' \text{ for all $Q\in
\cL_i$ and for all $i\geq 2$}\,,
\end{equation}
where $\cB$ is the matrix $\left(\begin{array}{c|c}\cA& 0 \\ \hline
0& I_{g-m} \end{array} \right)\in \GL_g(K)$ and $\cL'_i=\{
Q((x_1,x_2,\cdots,x_g)\cdot \cB))\colon Q\in\cL_i\}$.
\end{prop}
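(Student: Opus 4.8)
The plan is to translate the geometric notion of a bielliptic involution into linear algebra on the canonical model by means of the theorem of Petri, and then to read off the condition on $\cA$ from the eigenspace decomposition of the involution acting on $\Omega^1_{X/K}$. Since $X$ is non-hyperelliptic of genus $g\geq 3$, the canonical map embeds $X$ in $\mathbb{P}^{g-1}$ with homogeneous coordinates $\omega_1,\dots,\omega_g$, and Petri's theorem describes the homogeneous ideal of the image as $\bigoplus_{i\geq 2}\cL_i$. The first key point I would establish is the standard dictionary this provides: a linear automorphism of $\mathbb{P}^{g-1}$ is induced by an automorphism of $X$ if and only if it preserves each $\cL_i$, and it is defined over $K$ exactly when the associated class in $\mathrm{PGL}_g$ is $K$-rational. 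This reduces the problem to locating involutions in $\mathrm{PGL}_g(K)$ that stabilise the canonical ideal and have the prescribed eigenspace data.

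For the direction ($\Rightarrow$), suppose $X$ carries a bielliptic involution $v$ defined over $K$ whose quotient $E'=X/v$ satisfies $E'\stackrel{K}\sim E$. Acting on $\Omega^1_{X/K}$, $v$ is a $K$-linear involution whose $(+1)$-eigenspace is the pullback of $\Omega^1_{E'}$, hence one-dimensional, while the $(-1)$-eigenspace has dimension $g-1$. I would then argue that this invariant line lies in $\langle\omega_1,\dots,\omega_m\rangle$: the quotient realises $E'$ as an isogeny factor of $\Jac(X)\stackrel{K}\sim E^m\times A$ over $K$, and since $A$ has no $E$-quotient over $K$ (equivalently $\Hom_K(E,A)=0$), the copy of $\Omega^1_{E'}$ must sit inside the pullback of $\Omega^1_{E^m/K}$. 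Choosing $\cA\in\GL_m(K)$ that carries this invariant line to the first coordinate inside the $E$-block and acts as the identity on the $A$-block, the involution $v$ is expressed projectively by the substitution $x\mapsto(-x_1,x_2,\dots,x_g)\cB$ with $\cB$ as in the statement; here the distinguished coordinate $x_1$ carries the invariant differential of the genus-one quotient, while projectively the common sign on the remaining $g-1$ directions, in particular on the whole $A$-block, is immaterial, which is why $\cB$ acts as $I_{g-m}$ there even though the $A$-block lies in the $(-1)$-eigenspace of $v$. Because $v\in\Aut(X)$ preserves the canonical ideal, the stability of each $\cL_i$ is exactly condition (\ref{cond-inv}).

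For ($\Leftarrow$), given $\cA\in\GL_m(K)$ satisfying (\ref{cond-inv}) for all $i\geq 2$, the associated $K$-rational linear involution $\sigma$ with matrix $\cB$ preserves every $\cL_i$, hence by Petri's dictionary descends to an involution $v$ of $X$ defined over $K$; its one-dimensional space of invariant differentials forces $g(X/v)=1$, so $v$ is bielliptic, and since the invariant line lies in the $E$-block while $A$ contributes no $E$-quotient, the elliptic quotient is $K$-isogenous to $E$. The step I expect to be the main obstacle is precisely this localisation of the invariant differential inside the $E^m$-block: making it rigorous, over the possibly non-closed field $K$, that the bielliptic quotient is $K$-isogenous to $E$ and to no other factor requires the decomposition $\Jac(X)\stackrel{K}\sim E^m\times A$ together with $\Hom_K(E,A)=0$ and a careful use of Proposition \ref{defi} to track rationality of the relevant morphisms; the remaining points, namely Petri's dictionary and the eigenspace count, are comparatively formal. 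A secondary subtlety is that for trigonal curves, plane quintics, and the genus-$3$ plane quartic the quadrics alone fail to cut out the canonical ideal, so one genuinely needs the condition for all $i\geq 2$ rather than only $i=2$.
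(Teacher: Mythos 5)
First, a point of comparison: this paper does not actually prove Proposition \ref{exponent}; it is quoted from \cite[Proposition 2.6]{BaGon2} (generalizing \cite[Lemma 13]{BaGon}), so your argument can only be measured against the standard Petri-based proof, and that is indeed what you reconstruct. Your strategy --- Petri's dictionary between automorphisms of the non-hyperelliptic curve $X$ and $K$-rational projective linear maps preserving the canonical ideal, combined with the $K$-isogeny decomposition and $\Hom_K(E,A)=0$ to locate the invariant differential inside the $E$-block --- is the intended one, and the forward direction is essentially complete, including the correct remark that the overall sign of the linear representative is projectively immaterial.

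Two points need tightening. (1) In the forward direction, $\cA$ must do more than ``carry the invariant line to the first coordinate'': the remaining $m-1$ basis vectors of the $E$-block must span the $(-1)$-eigenspace of $v^*$ restricted to that block, i.e.\ $\cA$ has to diagonalize the involution $v^*|_{\langle\omega_1,\dots,\omega_m\rangle}$. This is harmless, since an involution in characteristic $0$ is diagonalizable with $K$-rational eigenspaces (and $v^*$ does preserve the two blocks because $\Hom_K(E^m,A)=\Hom_K(A,E^m)=0$), but with only the line moved the substitution $(-x_1,x_2,\dots,x_g)\cdot\cB$ would not represent $v$. (2) In the converse direction you assert that the induced involution has a one-dimensional space of invariant differentials, yet the linear map you exhibit is conjugate to $\mathrm{diag}(-1,1,\dots,1)$, whose $(+1)$-eigenspace has dimension $g-1$; since the canonical model determines $v^*$ only up to sign, you must exclude the lift for which $g(X/v)=g-1$. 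For $g\geq 4$ this follows from the Hurwitz formula (an involution with quotient genus $h$ forces $h\leq (g+1)/2$), while for $g=3$ one needs the classical fact, invoked elsewhere in this paper via \cite[Lemma 5.10]{Accola}, that a non-hyperelliptic genus $3$ curve cannot be a degree $2$ cover of a genus $2$ curve. With these two repairs your proof is correct.
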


\begin{rem}\label{2.6} The $K$-vector  space $\cL'_i$ is the set of homogenous polynomials in $ K[x_1,\cdots,x_g]$ of degree $i$ such that
$Q(\omega_1',\cdots,\omega_m',\omega_{m+1},\cdots \omega_g)=0$,
where
$(\omega_1',\cdots,\omega_m')=\cA^{-1}(\omega_1,\cdots,\omega_m)$.
\end{rem}
\begin{rem}\label{poli}\label{2.7} We recall that if $g=3$, then $\dim \cL_4=1$ and the condition (\ref{cond-inv}) can be restricted to $i=4$.
 When $g>3$, $\dim \cL_2=(g-3)(g-1)/2$. In this case, it suffices to check  (\ref{cond-inv}) only for $i=2,3$ and,
 in the particular case that $X$ is neither a smooth quintic plane curve ($g=6$) nor a trigonal  curve, we can restrict the condition to $i=2$.

\end{rem}

\vskip 0.2 cm As in \cite{BaGon}, for $j\leq g$ we introduce the
$K$-vector space
$$
\cL_{2,j}=\{ Q\in\cL_2\colon Q(x_1,\cdots,x_{j-1},-
x_j,x_{j+1},\cdots, x_n)\in\cL_2\}\,.
$$
 By using that the polynomials in $\cL_2$ are irreducible, in \cite{BaGon} it is proved that
$$
\cL_{2,j}=\{ Q\in\cL_2\colon Q(x_1,\cdots,x_{j-1},
x_j,x_{j+1},\cdots, x_n)=Q(x_1,\cdots,x_{j-1},- x_j,x_{j+1},\cdots,
x_n)\}\,.
$$
and a modular form corresponding to a dimension one abelian variety
of genus $>3$ with associated differential $\omega_i$, is bielliptic
if and only if $\dim \cL_2=\dim \cL_{2,i}$ (here we are assuming not
a smooth plane quintic curve, nor a trigonal curve).

 A similar result is obtained when $g=3$ and we replace
$\cL_{2,j}$ with $\cL_{4,j}$.
\begin{rem}

We have $J_0(N)^{W_N}\stackrel{\Q}\sim \prod_{i=1}^s A_{f_i}^{n_i}$,
for some $f_i\in\New_{M_i}$ with
 $M_i|N$ and the abelian varieties $A_{f_i}$ are pairwise non-isogenous over $\Q$. Any $f_i$ determines $n_i$ normalized eigenforms $g_j$ in
 $S_2(N)^{W_N}$ such that
  $J_0(N)^{W_N}\stackrel{\Q}\sim \prod_{j=1}^{r} A_{g_j}$, where $r=\sum_{i=1}^s n_i$ and $g_1,\cdots, g_{r}$ are all of these eigenforms.
 The basis of the Galois conjugates of the newforms $f_i$  together the exponents $n_i$ allow us to compute $|X_0/W_N(\F_{p^n})|$ for all primes
 $p\nmid N$,
 thanks to the Eichler-Shimura congruence.  The basis of the  regular differentials formed by all Galois conjugates of $g_j(q) \, dq/q$ allows us  to
 compute equations for $X_0/W_N$ by use of
 a theorem of Petri in the non-hyperelliptic case.
\end{rem}

Thus for quotient modular curve, we can carry out all such
computations and decide if is bielliptic or not. See examples in the
proofs of Lemmas \ref{lem7.5}, \ref{lem7.6}, \ref{lem6.4} and
\ref{6b.9} and github folder
\begin{verbatim}
https://github.com/FrancescBars/Mathematica-files-on-Quotient-Modular-Curves
\end{verbatim}
for different source and computations done by MATHEMATICA applying
the above results from Petri's theorem.

\section{Bielliptic quotients with $g_N^*=0$}
From the tables in the Appendix we see that for these levels most
quotients have genus $1$ or even $0$. So we only have to examine the
following $13$ curves.

\begin{itemize}

\item $X_0(44)/w_4 \cong X_0(22)$ by Proposition \ref{4.10}. Since $X_0(22)$ has the two bielliptic involutions $w_2$ and $w_{22}$ by \cite{Ba1}, from Proposition \ref{4.10} we also see that
$S_2 w_2^{(22)} S_2$ and $S_2 w_{22}^{(22)} S_2$ are bielliptic
involutions of $X_0(44)/w_4$.


\item $X_0(54)/w_2$ of genus $2$, has Jacobian decomposition $E27a\times E54a$, so all automorphisms are defined over $\QQ$. By MAGMA the automorphism group is $\mathbb{Z}/2\mathbb{Z}$,
 so the only involution is the hyperelliptic one and the curve is not bielliptic.

\item $X_0(56)/w_8$ is bielliptic. Namely, by Lemma
\ref{4lemS4.3} the involution $V_2 w_7$ has $8$ fixed points on
$X_0(56)$, so it has at least $4$ fixed points on $X_0(56)/w_8$.
Thus it is a bielliptic involution or the hyperelliptic involution.
But the hyperelliptic involution obviously is $w_7$.

\item $X_0(92)/w_4$ of genus $5$ and $X_0(92)/w_{92}$ of genus $4$ are both hyperelliptic, so by the Castelnuovo inequality they cannot be bielliptic.

\item $X_0(60)/w_{12}$, by the same argument, is hyperelliptic of genus $4$ and thus not bielliptic.

\item $X_0(60)/w_4 \cong X_0(30)$ and $X_0(60)/\langle w_4,
w_3\rangle\cong X_0(30)/w_3$ again by Proposition \ref{4.10}. As
before, the bielliptic involutions (compare \cite{Ba1} and
\cite{BaGonKa}) conjugate back to the bielliptic involutions $w_5$,
$S_2 w_6^{(30)} S_2$ and $S_2 w_{30}^{(30)} S_2$ of $X_0(60)/w_4$
resp. $S_2 w_2^{(30)} S_2$ and $S_2 w_{10}^{(30)} S_2$ of
$X_0(60)/\langle w_4, w_3\rangle$.

\item $X_0(60)/w_4$ and $X_0(60)/w_{60}$ each have a bielliptic Atkin-Lehner involution because they map of degree $2$ to the elliptic curve $X_0(60)/\langle w_4, w_5\rangle$ resp. $X_0(60)/\langle w_3, w_{20}\rangle$.(See also the previous item.)
 Noting that
$X_0(60)/\langle w_3, w_5\rangle$ also has genus $1$ we see that
each of $X_0(60)/w_3$, $X_0(60)/w_5$ and $X_0(60)/w_{20}$ maps to
two of these three elliptic curves and hence has two bielliptic
AL-involutions.

\item $X_0(60)/\langle w_5, w_{12}\rangle$ has genus $2$. Its Jacobian decomposition over $\QQ$ is $E20a\times E30a$, therefore all automorphisms are defined over $\QQ$.
By MAGMA its automorphism group over $\QQ$ is
$\mathbb{Z}/2\mathbb{Z}$, so there is no bielliptic involution.

\end{itemize}

\section{Bielliptic quotient curves when $X_0^*(N)$ has
genus $1$}

Part {{(i)}} of Theorem \ref{mainpq} is almost self-evident, as
these are exactly the values of $N$ and $W_N$ for which $X_0^*(N)$
has genus $1$ and there is a degree $2$ map from $X_0(N)/W_N$ to it.
From the tables in the Appendix we see that with the exception of
the elliptic curves $X_0^+(40)$, $X_0^+(48)$, $X_0^+(63)$ and
$X_0^+(75)$ these curves $X_0(N)/W_N$ do have genus at least $2$.

In the remainder of this section we finish the case $g_N^* =1$ by
deciding the curves $X_0(N)/w_d$ where $X_0^*(N)$ is elliptic and
$N$ has $3$ different prime divisors.

\begin{lema}\label{lemma7.1} The following $48$ quotient curves $X_0(N)/w_d$ are not
bielliptic

\begin{center}
\begin{tabular}{|l|c|}
\hline $N$&$(N,w_d)$\\
\hline 84&$(84,w_7);(84,w_{28});(84,w_{21})$\\
\hline 90&$(90,w_2);(90,w_{10});(90,w_{18})$\\
\hline 120&$(120,w_5);(120,w_8);(120,w_3),(120,w_{40})$,\\
\hline 126&$(126,w_2);(126,w_7);(126,w_{18})$;\\
\hline 132&$(132,w_d),d||132$\\
\hline 140&$(140,w_d),d||140$\\
\hline 150&$(150,w_d),d||150$\\
\hline 156&$(156,w_d),d||156$\\
\hline 220&$(220,w_d),d||220$\\

\hline
\end{tabular}
\end{center}

\end{lema}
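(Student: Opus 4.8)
The plan is to prove that each of these $48$ quotient curves $X_0(N)/w_d$ fails to be bielliptic, and the natural strategy is to sort them into groups according to which discarding criterion applies most cleanly. For each curve I would first read off the genus $g_{W_N}$ from the tables in the Appendix, since the available tools depend heavily on the genus range. For the curves of genus $g\geq 6$ the strongest leverage comes from Proposition~\ref{centraloverK} together with Proposition~\ref{prop3.1}: whenever $B(N)/\langle w_d\rangle$ contains a subgroup $H$ of order $2^t$ not dividing $2(g-1)$, any bielliptic involution would have to lie in $H$, and one can then check directly that no element of $H$ (these being products of Atkin-Lehner involutions whose fixed-point counts are computable via Ogg's formulas) can serve as a bielliptic involution. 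For the even-genus cases this is immediate from the remark following Proposition~\ref{prop3.1}, which discards even-genus quotients as soon as $|B(N)/W_N|\geq 4$.

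For the remaining low-genus curves I would rely on Lemma~\ref{4.1} and Lemma~\ref{4.2}. The first says that an involution with more than $8$ fixed points must itself be bielliptic or else forces non-biellipticity; so I would compute, using Lemma~\ref{4lemS4.3} and Ogg's fixed-point formulas, the fixed points of each candidate Atkin-Lehner (and where relevant $V_2$, $S_2$, $V_3$) involution, and whenever a large count appears, test that single candidate directly against the quotient genus by the Hurwitz formula in the convenient form of Lemma~\ref{4.13}. The unramified-covering criterion of Lemma~\ref{4.2}(b) is the other workhorse: for each curve I would examine the covering $X_0(N)/w_d \to X_0^*(N)$ (or to an intermediate $X_0(N)/W'$ with $W'\supsetneq\langle w_d\rangle$), and since $X_0^*(N)$ here has genus $1$, any such intermediate quotient of genus $\geq 2$ that is not hyperelliptic would have to be bielliptic with the covering unramified; checking $g_{W_N}-1 \neq |W'/\langle w_d\rangle|(g_{W'}-1)$ then produces a contradiction.

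The genuinely delicate cases will be those of genus $3,4,5$ where several bielliptic involutions might exist and none need be defined over $\Q$. For genus $3$ I would invoke the Petri-based computation of Proposition~\ref{2.5} and Remark~\ref{2.7} (with $\cL_4$), using the Jacobian decomposition of $J_0(N)^{w_d}$ read off from Lemma~\ref{eigenvector} and Proposition~\ref{eigenforms} to locate the elliptic factors $E$ that could be bielliptic quotients, and then verifying that no matrix $\cA$ satisfies the invariance condition~(\ref{cond-inv}). For genus $5$ the structural lemmas (Lemma~\ref{g5biell}, Lemma~\ref{lemgenus2} and its corollary) let me reduce to checking whether a bielliptic involution commuting with a genus-$2$ or genus-$3$ quotient-producing involution exists, again resolved by the Petri criterion or by a fixed-point count.

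The main obstacle will be the genus-$3$ and genus-$5$ curves where the Castelnuovo, fixed-point, and unramified-covering criteria all fail to apply and one is forced into the explicit Petri computation over $\Q$ (and, when the Jacobian decomposition admits a quadratic twist matching by Proposition~\ref{defi}, over the relevant quadratic field). For those curves the difficulty is not conceptual but the bookkeeping: assembling the correct basis of regular differentials from the eigenforms $g_j\in S_2(N)^{w_d}$, writing down $\cL_2$ (or $\cL_4$), and confirming the non-existence of an admissible $\cA$. I expect that in fact only a small handful of the $48$ curves require this machinery, with the large majority dispatched by the fixed-point and even-genus arguments, and those hard cases are where the bulk of the verification effort concentrates.
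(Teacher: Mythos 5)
Your plan is workable and it contains, as one tool among several, the argument the paper actually uses -- but the paper's proof is a single uniform application of that one tool, and your assessment of where the difficulty lies is off. Concretely, the paper dispatches all $48$ curves at once with the unramified-covering criterion of Lemma \ref{unramifiedcovering}(b): for each curve the appendix tables supply an Atkin--Lehner involution $w_m$ such that $Y=X_0(N)/\langle w_d,w_m\rangle$ has genus $h\geq 2$, is not subhyperelliptic, and satisfies $g(X_0(N)/w_d)\geq 2h$, so the degree-$2$ covering $X_0(N)/w_d\to Y$ is ramified ($g-1>2(h-1)$) while Lemma \ref{unramifiedcovering}(b) would force it to be unramified; this is exactly the check you describe with $g_{W_N}-1\neq |W'/\langle w_d\rangle|(g_{W'}-1)$, and it alone settles every case. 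Two aspects of your plan deserve correction. First, the tool you single out as ``strongest'' for genus $\geq 6$, namely Proposition \ref{prop3.1} applied to $H\cong B(N)/\langle w_d\rangle$ of order $4$, only bites when $4\nmid 2(g-1)$, i.e.\ when $g$ is even; a good number of the $48$ curves have odd genus (for instance the genus-$9$ quotients at levels $120$, $126$, $132$), so for those you would in any case have to fall back on the covering criterion -- the paper's Remark \ref{rem7.2} records your Proposition-\ref{prop3.1} route, the Castelnuovo bound for $g>10$, and Lemma \ref{lemadisca} for $N=220$ precisely as optional alternatives. Second, the ``genuinely delicate'' genus-$3$, $4$, $5$ cases you brace for do not occur: every one of the $48$ curves has genus at least $6$ (the smallest are the genus-$6$ quotients at levels $84$, $90$ and $156$), so by Proposition \ref{centraloverK} any bielliptic involution would be unique, central and defined over $\QQ$, none of the quadratic-twist or multiple-involution subtleties arise, and the Petri machinery is never needed for this lemma. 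In short: your proof would go through, but the bulk of the apparatus you assemble is idle, and the one criterion you correctly identify as a workhorse is in fact the whole proof.
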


\begin{proof} From the tables in the Appendix we see that for each such
curve $X_0(N)/w_d$ there exists a suitable Atkin-Lehner involution
$w_m$ such that $g(X_0(N)/w_d)\geq 2 g(X_0(N)/\langle w_d,
w_m\rangle)$ and $X_0(N)/\langle w_d, w_m\rangle$ is not
subhyperelliptic. So $X_0(N)/w_d$ cannot be bielliptic by Lemma
\ref{unramifiedcovering}.
\end{proof}

\begin{rem}\label{rem7.2} There are several other methods by which one could prove
a large subset of the $48$ curves in the previous lemma to be not
bielliptic.
\par
Note that all these curves have a map of degree $4$ to the genus $1$
curve $X_0^*(N)$. So if $g(X_0(N)/w_d)> 10$ a hypothetical
bielliptic map would by the Castelnuovo inequality have to factor
over a common quotient curve with this degree $4$ map, i.e. the
bielliptic involution would have to be an Atkin-Lehner involution,
which by the tables does not exist.
\par
Alternatively, again because $\Aut (X_0(N)/w_d)$ has a subgroup of
order $4$, Proposition \ref{prop3.1} shows that those with even
genus $g\geq 6$ are not bielliptic.
\par
Finally, one could also use Lemma \ref{lemadisca} with $p=3$ to
exclude all curves with $N=220$.
\end{rem}

\begin{lema}\label{lem7.3} The curve $X_0(126)/w_9$ is bielliptic with bielliptic
involution $V_3 w_7$. The genus $5$ curve $X_0(126)/w_{63}$ has at
least two bielliptic involutions, namely $V_3$ and $V_3 w_9$, both
defined over $\QQ(\sqrt{-3})$. The two genus $7$ curves
$X_0(126)/w_{14}$ and $X_0^+(126)$ are isomorphic and not
bielliptic.
\end{lema}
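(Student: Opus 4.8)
Prove Lemma \ref{lem7.3}: (i) $X_0(126)/w_9$ is bielliptic with bielliptic involution $V_3 w_7$; (ii) $X_0(126)/w_{63}$ (genus $5$) has at least two bielliptic involutions $V_3$ and $V_3 w_9$, both over $\QQ(\sqrt{-3})$; (iii) $X_0(126)/w_{14}$ and $X_0^+(126)$ are isomorphic genus $7$ curves, and are not bielliptic.

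Let me think about this carefully. $N = 126 = 2 \cdot 3^2 \cdot 7$, so $9 \| N$ and the $V_3$ machinery from Lemma \ref{4.19} applies. The AL-group $B(126)$ has order $8$.

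Let me work out what I'd need for each part.

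For (i), $X_0(126)/w_9$: I want to show $V_3 w_7$ is an involution inducing a genus-1 quotient. Since $9\|N$, $V_3$ exists. From Lemma \ref{4.19}, $V_3$ commutes nicely with AL-involutions depending on residues mod 3; $7 \equiv 1 \pmod 3$, so $V_3$ commutes with $w_7$ and $V_3 w_7$ is an involution. The standard computation: let $G = \langle w_9, V_3 w_7\rangle$ and compute $g(X_0(126)/G)$ via Hurwitz (Lemma \ref{4.13}), using the fixed-point formulas from Lemma \ref{4.19}(d) and Ogg's formulas for AL-involutions. The genus of $X_0(126)/w_9$ I can read from the appendix tables (it's listed at genus 9). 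To conclude bielliptic I compute $g((X_0(126)/w_9)/\widetilde{V_3 w_7})$ and show it equals 1.

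For (ii), $X_0(126)/w_{63}$: here $63 = 9\cdot 7$, and $w_{63} = w_9 w_7$. By Lemma \ref{4.19}(e), since $w_9 \in W = \langle w_{63}\rangle$? No — $w_9 \notin \langle w_{63}\rangle$, so $V_3$ on $X_0(126)/w_{63}$ is defined over $\QQ(\sqrt{-3})$, matching the claim. I show $V_3$ is a bielliptic involution by computing the genus of $X_0(126)/\langle w_{63}, V_3\rangle$ and getting 1. Its Galois conjugate is $V_3 w_9$ (Lemma \ref{4.19}(c)), which is therefore also bielliptic; these are distinct. Genus 5 means I should invoke the genus-5 criteria (Section 4.2) to be sure there aren't contradictions, but existence of two bielliptic involutions here is the main content.

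For (iii), the isomorphism $X_0(126)/w_{14} \cong X_0^+(126)$: apply Proposition \ref{4.12} with $9\|N$. For $W' = \langle w_{14}\rangle$, compute $W''$ using $e(m)$: $14 \equiv 2\pmod 3$ so $e(14)=1$, giving $w_{14}w_9 = w_{126} = w_N$, hence $W'' = \langle w_{126}\rangle$ and $X_0^+(126) = X_0(126)/w_{126}$. So $V_3$ gives the isomorphism. Then I must prove non-biellipticity. Both curves have genus $7 \geq 6$, so by Lemma \ref{4.4}/Proposition \ref{centraloverK} the bielliptic involution is unique and central, defined over $\QQ$. I'd examine the subgroup $H \cong B(126)/\langle w_{126}\rangle$ of order $4$ acting on $X_0^+(126)$: by Proposition \ref{prop3.1}, since $g=7$ gives $2(g-1)=12$ which is divisible by $4$, the parity argument of \ref{prop3.1} does not immediately kill it — so I check whether any AL-involution could be bielliptic by computing quotient genera; the tables should show none gives genus 1.

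\medskip

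\textbf{The main obstacle} will be part (iii)'s non-biellipticity. The isomorphism and the two "positive" biellipticity statements are routine Hurwitz-plus-fixed-point computations using the lemmas already established. But ruling out \emph{all} bielliptic involutions on a genus-$7$ curve requires more. The central, $\QQ$-rational bielliptic involution (if it existed) need not be an AL-involution, so discarding AL-candidates is not enough on its own. The plan is to use the Jacobian decomposition of $J_0(126)^{\langle w_{126}\rangle}$ (Section 5.2) to pin down the isogeny factors, check via Proposition \ref{defi} that all relevant endomorphisms are defined over $\QQ$, and then apply the Petri-based criterion (Proposition \ref{exponent}) directly: the curve is bielliptic over $\QQ$ iff some elliptic factor $E$ admits a matrix $\cA$ satisfying (\ref{cond-inv}). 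Running this computation (via the MAGMA/Mathematica code referenced in Section 5) and finding no such $E$ and $\cA$ completes the argument. Alternatively, Lemma \ref{lemadisca} with a small prime $p \nmid 126$ (e.g.\ $p=5$) may already force a contradiction with $\psi(126)/|W_N|$, giving a cleaner point-counting obstruction over $\QQ$; I would try this first and fall back on Petri only if the bound is not sharp enough.
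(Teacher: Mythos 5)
Your treatment of parts (i) and (ii) is essentially the paper's own argument: count fixed points of $V_3$, $V_3w_9$, $w_{63}$, $V_3w_7$ via Lemma \ref{4.19}, then apply the Hurwitz formula of Lemma \ref{4.13} to see that $X_0(126)/\langle w_9, V_3w_7\rangle$ and $X_0(126)/\langle w_{63}, V_3\rangle$ have genus $1$; your observation that the second bielliptic involution on $X_0(126)/w_{63}$ comes for free as the Galois conjugate of the first is a small (valid) shortcut over the paper's direct genus computation for $\langle w_{63}, V_3w_7\rangle$. The isomorphism in part (iii) via Proposition \ref{4.12} is also exactly what the paper does.

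The real divergence is the non-biellipticity of $X_0^+(126)$ in part (iii). The paper disposes of this in one line by citing Jeon's prior classification of bielliptic $X_0^+(N)$ (indeed Section 7 explicitly states that the curves $X_0^+(N)$ were already treated there and are deliberately excluded from Theorem \ref{mainpq}). You instead propose to re-derive it from scratch. Your fallback chain is sound in principle --- for $g=7\geq 6$ the bielliptic involution would be unique, central and defined over $\QQ$ by Proposition \ref{centraloverK}, so a Petri check over $\QQ$ of the factors of $J_0(126)^{\langle w_{126}\rangle}$ via Proposition \ref{exponent} would settle it --- but be aware that your preferred first attempt, Lemma \ref{lemadisca}, will not work here: $\psi(126)/|W_N|=288/2=144$, while the bound $12(2(p+1)^2-1)/(p-1)$ at $p=5$ is $213$, so no contradiction arises and you are forced into the Petri computation anyway. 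You also correctly note that Proposition \ref{prop3.1} is useless since $4\mid 2(g-1)=12$. So your route is workable but substantially more laborious than the citation the paper uses, and as written it remains a contingent plan rather than a completed argument for that one claim.
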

\begin{proof}
From Lemma \ref{4.19} we see that $V_3$ has the same number of fixed
points as $w_9$, namely none. And $V_3 w_9$, being a Galois
conjugate of $V_3$, also has the same number of fixed points. See
Lemma \ref{lemmaV3}.

By exactly the same arguments each of the involutions $w_{63}$, $V_3
w_7$ and $V_3 w_{63}$ has $16$ fixed points. Now with Lemma
\ref{4.13} one easily checks that the modular curves
$X_0(126)/\langle w_9, V_3 w_7\rangle$, $X_0(126)/\langle w_{63},
V_3\rangle$ and $X_0(126)/\langle w_{63}, V_3 w_7\rangle$ have genus
$1$.
\par
By Proposition \ref{4.12} the curves $X_0(126)/w_{14}$ and
$X_0^+(126)$ are isomorphic, and by \cite{Jeon} the latter one is
not bielliptic.
\end{proof}

\begin{lema}\label{lem7.4} The involution $V_2 w_{40}$ induces on each of the curves
$X_0(120)/w_{15}$, $X_0(120)/w_{24}$ and $X_0^+(120)$ a bielliptic
involution. Moreover, $X_0(120)/w_{15}$ has exactly two more
bielliptic involutions, namely $S_2$ and $w_8S_2w_8$.
\end{lema}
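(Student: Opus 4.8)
Since $N=120=2^3\cdot 15$ has $\alpha=3$, Proposition \ref{prop1S4.3} applies: $S_2$ and $V_2=S_2w_8S_2$ are involutions of $X_0(120)$ defined over $\QQ$, $V_2$ commutes with $w_8$ and with every $w_r$ for which $r\|15$, and $\langle S_2,w_8\rangle\cong D_4$ with $S_2w_8$ of order $4$. Writing $w_{40}=w_8w_5$, the plan is first to record that $V_2w_{40}=V_2w_8w_5$ is an involution (all three factors commute) and that it commutes with $w_{15}$, with $w_{24}=w_8w_3$ and with $w_{120}=w_8w_3w_5$; hence it descends to an involution on each of $X_0(120)/w_{15}$, $X_0(120)/w_{24}$ and $X_0^+(120)=X_0(120)/w_{120}$.

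To see that these induced involutions are bielliptic, I would compute $\#(V_2w_{40},X_0(120))=\#(V_2w_8w_5,X_0(120))$ from Lemma \ref{4lemS4.3}(d), reducing it via part (c) and Ogg's formulas to fixed-point numbers of Atkin--Lehner involutions on $X_0(60)$ and $X_0(120)$. Together with the values of $\#(w_{15},\cdot)$, $\#(w_{24},\cdot)$, $\#(w_{120},\cdot)$ and the relevant cross-terms, I would then feed the elementary abelian $2$-groups $\langle w_{15},V_2w_{40}\rangle$, $\langle w_{24},V_2w_{40}\rangle$ and $\langle w_{120},V_2w_{40}\rangle$ into the Hurwitz formula of Lemma \ref{4.13}, starting from $g(X_0(120))=17$, to check in each case that $X_0(120)/\langle W_N,V_2w_{40}\rangle$ has genus $1$. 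A genus-$1$ quotient makes $V_2w_{40}$ a bielliptic involution by definition, settling the first assertion.

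For the second assertion I would work on the genus-$5$ curve $X=X_0(120)/w_{15}$. Since $S_2$ commutes with the odd involution $w_{15}$, and since $w_8S_2w_8$ (a conjugate of $S_2$, hence also an involution defined over $\QQ$) likewise commutes with $w_{15}$, both descend to involutions of $X$. I would show they are bielliptic by the same recipe: $\#(S_2,X_0(120))$ comes from Lemma \ref{4lemS4.3}(b) and equals $\#(w_8S_2w_8,X_0(120))$ by conjugacy, and the Hurwitz computation for $\langle w_{15},S_2\rangle$ and $\langle w_{15},w_8S_2w_8\rangle$ should again give genus $1$. Using the $D_4$-relations in $\langle S_2,w_8\rangle$ (so that $V_2w_8=(S_2w_8)^2$ and $w_8S_2w_8=(S_2w_8)^2S_2$) I would verify that $V_2w_{40}$, $S_2$ and $w_8S_2w_8$ represent three pairwise distinct cosets modulo $\langle w_{15}\rangle$, hence yield three genuinely different bielliptic involutions on $X$.

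It then remains to exclude a fourth bielliptic involution, and for this I would invoke the genus-$5$ Corollary at the end of Section 4.2, whose hypotheses I would verify as follows. Take $u=\bar w_8$, defined over $\QQ$, with $X/u=X_0(120)/\langle w_8,w_{15}\rangle$ of genus $2$ (hyperelliptic, by \cite{FM}), and $w=\bar w_3$, with $X/w=X_0(120)/\langle w_3,w_5\rangle$. I would pin down the latter genus through Accola's relation $g(X/\bar w_8)+g(X/\bar w_3)+g(X/\bar w_{24})=g(X)+2g(X_0^*(120))=7$, combined with $g(X/\bar w_8)=g(X/\bar w_{24})=2$ (the second because $\langle w_{24},w_{40}\rangle=\langle w_{15},w_{24}\rangle$ is the genus-$2$ hyperelliptic quotient of \cite{FM}), which forces $g(X/\bar w_3)=3$; and I would observe that $X_0(120)/\langle w_3,w_5\rangle$ is not among the genus-$3$ hyperelliptic $N=120$ quotients listed in Theorem \ref{main2pq}, so $X/w$ is non-hyperelliptic. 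The Corollary then limits the number of bielliptic involutions to $1$ or $3$; having already exhibited three, we conclude there are exactly three, with $\overline{V_2w_{40}}$ the unique one commuting with $\bar w_8$ and defined over $\QQ$. The main obstacle is the non-hyperellipticity of the genus-$3$ quotient $X_0(120)/\langle w_3,w_5\rangle$, on which the applicability of the Corollary hinges; I would settle it via the hyperelliptic classification rather than by writing down an equation.
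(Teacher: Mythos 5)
Your proposal is correct, and its core — checking that $V_2w_{40}$ descends to the three quotients and then verifying via the fixed-point counts of Lemma \ref{4lemS4.3} and the Hurwitz formula of Lemma \ref{4.13} that each $X_0(120)/\langle w_d, V_2w_{40}\rangle$ has genus $1$ — is exactly the paper's argument. You diverge in two places, both legitimately. For $S_2$ and $w_8S_2w_8$ the paper does not redo the Hurwitz computation: it uses Lemma \ref{lem2S4.3} to identify $X_0(120)/\langle w_8S_2w_8, w_{15}\rangle$ with the elliptic curve $X_0(60)/w_{15}$ and then gets $S_2$ by conjugacy, whereas you count fixed points directly; both give genus $1$. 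For the exclusion of a fourth bielliptic involution the paper computes the product $S_2(w_8S_2w_8)V_2w_{40}=w_5$ inside the $D_4$-group of Proposition \ref{prop1S4.3}, notes that $X_0(120)/\langle w_{15},w_5\rangle$ is non-hyperelliptic of genus $3$, and appeals to \cite[Remark 3.2]{KMV}; you instead invoke the unlabeled genus-$5$ corollary at the end of Section 4.2 with $w=\bar w_3$. Since $\langle w_{15},w_5\rangle=\langle w_{15},w_3\rangle=\langle w_3,w_5\rangle$, the two arguments hinge on the identical fact that this single genus-$3$ quotient is non-hyperelliptic (which, as you say, follows from its absence in the \cite{FM} list); your route spares you the explicit product computation but obliges you to verify the corollary's additional hypothesis on $u=\bar w_8$ (the genus-$2$ quotient $X_0(120)/\langle w_8,w_{15}\rangle$), which the paper's direct appeal to \cite{KMV} does not need. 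All the genus and hyperellipticity facts you cite check out against the appendix tables, so the proposal is sound.
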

\begin{proof}
From Lemma \ref{4lemS4.3} we obtain the following table:
\begin{center}
\begin{tabular}{|c|c||c|c|}
\hline
$v$ & $\#(v,X_0(120))$ & $v$ & $\#(v,X_0(120))$\\
\hline
 $id$&      -         &            $V_2$ &0\\
$w_8$&0&$V_2 w_8$& 0 \\
$w_3$& 0&$V_2 w_3$& 8 \\
$w_5$& 0& $ V_2 w_5$&0\\
 $w_{24}$& 8&$ V_2 w_{24}$& 0 \\
 $w_{40}$ & 0&$ V_2 w_{40}$& 16\\
 $ w_{15}$ &16&$ V_2 w_{15}$& 8\\
  $w_{120}$& 8&$ V_2 w_{120}$& 0\\
\hline
\end{tabular}
\end{center}

Now the genus of $X_0(120)/\langle w_d, V_2 w_{40}\rangle$ can be
easily calculated using Lemma \ref{4.13}.

Since $X_0(120)/w_{15}$ maps with degree $2$ to the elliptic curve
$X_0(60)/w_{15}$ isomorphic to $X_0(120)/\langle w_8 S_2 w_8,
w_{15}\rangle$ (compare Lemma \ref{lem2S4.3}), $w_8 S_2 w_8$ is also
a bielliptic involution of $X_0(120)/w_{15}$. Furthermore, $S_2$ is
conjugate to $w_8 S_2 w_8$ in the automorphism group of
$X_0(120)/w_{15}$, and hence also a bielliptic involution. Finally,
since $S_2(w_8 S_2 w_8)V_2 w_{40}=w_5$ and $X_0(120)/\langle w_{15},
w_5\rangle$ is non-hyperelliptic of genus $3$, by \cite[Remark
3.2]{KMV} there are no further bielliptic involutions of
$X_0(120)/w_{15}$.

\end{proof}

\begin{lema}\label{lem7.5} $V_3 w_{10}$ is a bielliptic involution of $X_0(90)/w_9$.
And $X_0^+(90)$ actually has (at least) two, namely $V_3$ and $V_3
w_9$. The curves $X_0(90)/w_5$ and $X_0(90)/w_{45}$ on the other
hand are not bielliptic. \end{lema}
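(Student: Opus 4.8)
The level $N=90=2\cdot 3^2\cdot 5$ satisfies $9||90$, so Lemma~\ref{4.19} applies and I would work with the involution $V_3=S_3w_9S_3^2$, defined over $\Q(\sqrt{-3})$. Since $10\equiv 1\pmod 3$ and by the case $r=9$ of Lemma~\ref{4.19}(b), $V_3$ commutes with both $w_{10}$ and $w_9$, hence also with $w_{90}=w_9w_{10}$; in particular $V_3w_{10}$, $V_3$ and $V_3w_9$ are all involutions. From Ogg's formulas (consistent with the genera recorded in the Appendix) one reads $\#(w_9,X_0(90))=4$ and $\#(w_{90},X_0(90))=8$, and then Lemma~\ref{4.19}(d) gives $\#(V_3,X_0(90))=\#(V_3w_9,X_0(90))=4$ together with $\#(V_3w_{10},X_0(90))=\#(V_3w_{90},X_0(90))=8$.

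For the biellipticity claims the plan is to apply Lemma~\ref{4.13} to the three Klein four-groups
$$\langle w_9,V_3w_{10}\rangle=\{1,w_9,V_3w_{10},V_3w_{90}\},\quad \langle w_{90},V_3\rangle=\{1,w_{90},V_3,V_3w_{90}\},\quad \langle w_{90},V_3w_9\rangle=\{1,w_{90},V_3w_9,V_3w_{10}\}.$$
In each case the three non-trivial elements have $4$, $8$ and $8$ fixed points in some order, summing to $20=2g(X_0(90))-2$, so $|G|(2g(X_0(90)/G)-2)=0$ and the quotient $X_0(90)/G$ has genus $1$. Since $V_3w_{10}$ descends to a non-trivial involution of the genus~$5$ curve $X_0(90)/w_9$ whose quotient has genus $1$ (not $0$), it is a bielliptic involution, defined over $\Q$ by Lemma~\ref{4.19}(e) because $w_9\in\langle w_9\rangle$. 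Likewise $V_3$ and $V_3w_9$ descend to two \emph{distinct} bielliptic involutions of $X_0^+(90)=X_0(90)/w_{90}$ (they differ by $w_9\notin\langle w_{90}\rangle$), which by Lemma~\ref{4.19}(c),(e) are Galois conjugate and defined over $\Q(\sqrt{-3})$ rather than $\Q$.

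For $X_0(90)/w_5$ and $X_0(90)/w_{45}$, both of genus $5$, the unramified covering criterion (Lemma~\ref{unramifiedcovering}) is inconclusive, since a non-hyperelliptic quotient of a genus~$5$ curve would force an \emph{unramified} double cover and yield no contradiction; I would therefore invoke the genus~$5$ machinery of Section~4.2. For $X_0(90)/w_5$ the induced involution $\bar w_9$ has quotient $X_0(90)/\langle w_5,w_9\rangle$ of genus $2$ and is defined over $\Q$, while $\bar w_2$ has quotient $X_0(90)/\langle w_2,w_5\rangle$ of genus $3$, non-hyperelliptic because $90$ does not appear in the genus~$3$ hyperelliptic list of Theorem~\ref{main2pq}. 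For $X_0(90)/w_{45}$ one takes $\bar w_2$ (quotient genus $2$, over $\Q$) and $\bar w_{10}$ (quotient $X_0(90)/\langle w_{10},w_{45}\rangle$ of genus $3$, non-hyperelliptic). Thus both curves satisfy the hypotheses of the Corollary at the end of Section~4.2: if either were bielliptic, it would carry a bielliptic involution defined over $\Q$.

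It then suffices to check that no $\Q$-rational involution of these curves has a genus~$1$ quotient. The induced Atkin--Lehner involutions form the Klein four-group $B(90)/W$, and a short computation with Lemma~\ref{4.13} gives their quotient genera as $\{3,2,2\}$ for $W=\langle w_5\rangle$ and $\{2,2,3\}$ for $W=\langle w_{45}\rangle$, never $1$; moreover $V_3$ is not defined over $\Q$ on these curves since $w_9\notin W$. The main obstacle is to guarantee that these modular involutions exhaust the $\Q$-rational involutions, i.e. that no exotic $\Q$-automorphism occurs; I would settle this from the $\Q$-decomposition of $J_0(90)^{W}$ together with Proposition~\ref{defi}, and, as an independent check, by verifying through Petri's criterion (Proposition~\ref{2.5}) that no elliptic factor of $J_0(90)^{W}$ is $\Q$-isogenous to a bielliptic quotient. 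With the $\Q$-rational involutions pinned down, the Corollary forces $X_0(90)/w_5$ and $X_0(90)/w_{45}$ to be non-bielliptic.
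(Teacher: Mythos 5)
Your treatment of the positive assertions is exactly the paper's: the same fixed-point counts $\#(w_9)=\#(V_3)=\#(V_3w_9)=4$ and $\#(w_{90})=\#(V_3w_{10})=\#(V_3w_{90})=8$ from Lemma \ref{4.19}, fed into Lemma \ref{4.13} to get genus-$1$ quotients. For the non-biellipticity of $X_0(90)/w_5$ and $X_0(90)/w_{45}$ you take a genuinely different route. The paper first determines over which field \emph{all} automorphisms can live by hunting for quadratic twists among the Jacobian factors: for $\langle w_5\rangle$ there are none, so everything is over $\Q$ and Petri over $\Q$ suffices; for $\langle w_{45}\rangle$ the twist $E30a\sim_{\Q(\sqrt{-3})}E90c$ forces a second Petri computation over $\Q(\sqrt{-3})$ (supplemented by an $\F_{11}$-point count and the modular-degree bound of Lemma \ref{lemadegree} to discard some factors quickly). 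You instead invoke the genus-$5$ structural results of Section 4.2 --- the existence of $u$ with quotient genus $2$ and $w$ with quotient genus $3$ non-hyperelliptic, both available from the Appendix tables, so the Corollary guarantees a $\Q$-rational bielliptic involution if one exists at all --- which reduces both curves to a single Petri check over $\Q$ and sidesteps the $\Q(\sqrt{-3})$ computation entirely. That is a clean and legitimate shortcut; what it costs is that it still bottoms out in the same unavoidable Petri computation over $\Q$, which neither you nor the paper can replace by pure fixed-point bookkeeping.

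One small imprecision: Proposition \ref{defi} controls the field of definition of the endomorphisms of $J_0(90)^{W}$, not the list of involutions of the curve, so it cannot by itself certify that the Atkin--Lehner involutions ``exhaust the $\Q$-rational involutions.'' This does not break your argument, because your fallback --- Petri's criterion (Proposition \ref{exponent}) --- tests directly whether \emph{any} elliptic factor is the Jacobian of a bielliptic quotient over $\Q$, exotic involutions included; that check, not the enumeration of modular involutions, is what closes the proof, and it is the same computation the paper performs.
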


\begin{proof}
As before we get from Lemma \ref{4.19} that each of $w_9$, $V_3$ and
$V_3 w_9$ has $4$ fixed points, and each of $w_{90}$, $V_3 w_{10}$,
$V_3 w_{90}$ has $8$. Then we use Lemma \ref{4.13} to check the
genus of the quotient curves in question.


We have
$$J_0(90)^{\langle w_5\rangle }\sim_{\mathbb{Q}}(E30a)^2\times(E45a)^2\times
E90b$$
$$J_0(90)^{\langle w_{45}\rangle }\sim_{\mathbb{Q}} (E15a)^2\times E30a\times
E90b\times E90c$$

For $J_0(90)^{\langle w_{45}\rangle}$ there is a quadratic twist
$E30a\sim_{\Q(\sqrt{-3})}E90c$. Over the rationals we have for
$p=11$ and $E=E30a$ or $E90c$ does not satisfy (\ref{eq4.1}) because
$|\#X_0(90)/w_{45}(\mathbb{F}_{11})-2*\#E(\mathbb{F}_{11})|=2$, and
$\dim \mathcal{L}_{2,E90b}<\dim \mathcal{L}_2$. So remains if $E15a$
is or not a bielliptic quotient over $\Q$, but is not possible
because there does not exist any matrix
$\mathcal{A}=\left(\begin{array}{cc} a_1&b_1\\
a_2&b_2\\ \end{array}\right)\in\rm{GL}_2(\mathbb{Q})$ satisfying the
condition:
\begin{equation}\label{x1x2g5}
Q_2(a_1 x_1+a_2 x_2,b_1 x_1+b_2 x_2,x_3,x_4,x_5)=Q_2(-a_1 x_1+a_2
x_2,-b_1 x_1+b_2 x_2,x_3,x_4,x_5)
\end{equation}
 for all $Q_2\in\mathcal{L}_2$.

 Now for $\mathbb{Q}(\sqrt{-3})$ (by the quadratic twist) we
 have the Jacobian decomposition
 $$J_0(90)^{\langle w_{45}\rangle}\sim_{\mathbb{Q}(\sqrt{-3})} (E15a)^2\times
 (E30a)^2\times E90b$$
 but there does not exist any matrix
$\mathcal{A}=\left(\begin{array}{cc} a_1&b_1\\
a_2&b_2\\ \end{array}\right)\in\rm{GL}_2(\mathbb{Q}(\sqrt{-3}))$
satisfying the condition (\ref{x1x2g5}) and
\begin{equation}\label{x3x4g5}Q_2(x_1,x_2,a_1 x_3+a_2 x_4,b_1 x_3+b_2
x_4,x_5)=Q_2(x_1,x_2,-a_1 x_3+a_2 x_4,-b_1 x_3+b_2 x_4,x_5)
\end{equation} for all $Q_2\in\mathcal{L}_2$.

For $J_0(90)^{\langle w_5\rangle}$ we obtain that there are no
quadratic twists, thus any automorphism of the curve is defined over
$\mathbb{Q}$. Lemma \ref{lemadegree} discards $E90b$ as elliptic
quotient. Applying Proposition \ref{exponent} we obtain that there
does not exist bielliptic quotient. Similarly because there is no
matrix $\mathcal{A}\in \rm{GL}_2(\Q)$ as above that satisfies
(\ref{x1x2g5}), thus $E30a$ is not a bielliptic quotient, and
similarly we discard $E45a$ because no matrix $\mathcal{A}$ as above
that satisfies equation (\ref{x3x4g5}).

After we observe that $J_0(90)^{\langle
w_{45}\rangle}\sim_{\mathbb{Q}(\sqrt{-3})} J_0(90)^{\langle
w_5\rangle}$ because $E15a\sim_{\mathbb{Q}(\sqrt{-3})}E45a$. Recall
that $X_0(90)/w_5$ and $X_0(90)/w_{45}$ are isomorphic by use of
$V_3$.

See all computation details in name files related to above quotient
modular curves in the folder
\begin{verbatim}
https://github.com/FrancescBars/Mathematica-files-on-Quotient-Modular-Curves
\end{verbatim}.

\end{proof}

\begin{lema}\label{lem7.6} The curve $X_0(84)/w_4$ has $S_2 w_{14}^{(42)} S_2$ as a bielliptic involution. $X_0^+(84)$ is also bielliptic. But $X_0(84)/w_3$ and $X_0(84)/w_{12}$ are not bielliptic.
\end{lema}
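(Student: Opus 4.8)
The plan is to treat the two bielliptic curves by transporting known bielliptic involutions, and the two remaining curves by the quotient/genus criteria of Section~4. For $X_0(84)/w_4$, note that $4\,||\,84$ and $84=4\cdot 21$, so Proposition~\ref{4.10} (with $W'=\langle w_4\rangle$ and no odd generators) gives an isomorphism $X_0(84)/w_4\cong X_0(42)$. The curve $X_0(42)$ has genus $5$ and is bielliptic by \cite{Ba1}; a quick fixed-point count shows that $w_{14}$, which has $8$ fixed points on $X_0(42)$, yields a genus-$1$ quotient, so it is a bielliptic involution there. The last assertion of Proposition~\ref{4.10} then hands us the bielliptic involution $S_2\,w_{14}^{(42)}\,S_2$ on $X_0(84)/w_4$. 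For the Fricke quotient $X_0^+(84)=X_0(84)/w_{84}$, biellipticity is part of Jeon's classification of bielliptic $X_0^+(N)$, so I would simply invoke \cite{Jeon}, exactly as is done for $X_0^+(126)$ in Lemma~\ref{lem7.3}. Note that no bielliptic \emph{Atkin--Lehner} involution is available here, since the order-$4$ subgroups of $B(84)$ containing $w_{84}$ all give genus-$2$ quotients.

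For $X_0(84)/w_3$ and $X_0(84)/w_{12}$ I would first read the genera $g(X_0(84)/w_3)$ and $g(X_0(84)/w_{12})$ from the tables in Appendix~A; these are computed from Ogg's fixed-point formulas via Lemma~\ref{4.13} (already $g(X_0(84)/w_4)=5$ forces $\#(w_4,X_0(84))=4$). Then, following the strategy of Lemma~\ref{lemma7.1}, I would look for an Atkin--Lehner involution $w_m$ such that $Y=X_0(84)/\langle w_d,w_m\rangle$ is non-hyperelliptic of genus $\geq 3$ and the degree-$2$ map $X_0(84)/w_d\to Y$ is ramified, i.e. $g(X_0(84)/w_d)-1>2(g(Y)-1)$; Lemma~\ref{unramifiedcovering}(b) then excludes biellipticity outright.

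The main obstacle is that the most obvious double quotients are useless for this criterion: the order-$4$ subgroups of $B(84)$ containing $w_{12}$, namely $\langle w_4,w_3\rangle$ and $\langle w_7,w_{12}\rangle$, give the genus-$2$ hyperelliptic curves of \cite{FM}, and likewise two of the three order-$4$ quotients through $w_3$ are subhyperelliptic. So the argument hinges on whether the remaining quotient $Y=X_0(84)/\langle w_{12},w_{21}\rangle$ (resp. $X_0(84)/\langle w_3,w_7\rangle$) is non-hyperelliptic of the right genus. If it is, Lemma~\ref{unramifiedcovering} finishes the job; if not---and this is the genuinely delicate case, since $g(X_0(84)/w_d)$ may well equal $5$---I would fall back on the computational route: compute the $\Q$-splitting of $J_0(84)^{\langle w_d\rangle}$ using Lemma~\ref{eigenvector} and Proposition~\ref{eigenforms}, use Proposition~\ref{defi} to control whether any candidate bielliptic involution could be defined only over a quadratic field (tracking quadratic twists among the elliptic factors), discard elliptic quotients by the degree bound of Lemma~\ref{lemadisca} and the mod-$p$ point count, and rule out the survivors with Petri's criterion (Proposition~\ref{exponent}). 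The hard part, as throughout the genus-$\leq 5$ analysis, is certifying that no bielliptic involution exists even after base change to a quadratic field.
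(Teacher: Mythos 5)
Your treatment of the two bielliptic curves coincides exactly with the paper's: $X_0(84)/w_4\cong X_0(42)$ by Proposition \ref{4.10}, the involution $w_{14}$ is bielliptic on $X_0(42)$ by \cite{Ba1} (your fixed-point count is right: $8$ fixed points, quotient genus $1$), and conjugation by $S_2$ transports it; $X_0^+(84)$ is bielliptic by \cite{Jeon}.

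For $X_0(84)/w_3$ and $X_0(84)/w_{12}$, however, the route you present as the main plan provably cannot work, and you should see this from the tables before proposing it. Both curves have genus $5$. The only order-$4$ quotients through $w_3$ resp.\ $w_{12}$ that are not hyperelliptic are $X_0(84)/\langle w_3,w_7\rangle$ and $X_0(84)/\langle w_{12},w_{21}\rangle$, both of genus $3$; since $5-1=2(3-1)$, the degree-$2$ coverings are \emph{unramified}, and both genus-$3$ quotients are themselves bielliptic (they appear in Table 2 of Appendix B with bielliptic involution $w_4$ resp.\ $w_3$ and elliptic quotient $E42a$). So Lemma \ref{unramifiedcovering}(b) is satisfied rather than violated and yields no obstruction. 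More generally, a non-hyperelliptic degree-$2$ quotient of a genus-$5$ curve is forced by Hurwitz to have genus exactly $3$ with unramified covering, so this criterion can never exclude a genus-$5$ curve via a degree-$2$ quotient map; that is precisely why $N=84$ with $d=3,12$ is absent from Lemma \ref{lemma7.1} and is dealt with separately here. Consequently the computation you relegate to a fallback is the whole proof: the paper computes $J_0(84)^{\langle w_3\rangle}\sim (E14a)^2\times(E42a)^2\times E84b$ and $J_0(84)^{\langle w_{12}\rangle}\sim (E14a)^2\times E21a\times E42a\times E84a$, observes that there are no quadratic twists among these factors so that by Proposition \ref{defi} all endomorphisms are defined over $\Q$ (your concern about quadratic base change disappears at this point), discards the multiplicity-one factors because $\dim \mathcal{L}_{2,i}<\dim\mathcal{L}_2$, and excludes the repeated factors $E14a$ (and $E42a$ for $w_3$) by showing no matrix $\cA\in\GL_2(\Q)$ satisfies the Petri condition of Proposition \ref{exponent}. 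Your sketch of that computation is accurate in outline, but you should present it as the argument, not as a contingency.
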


\begin{proof}This follows from Proposition \ref{4.10} and \cite{Ba1} resp. from \cite{Jeon}.
\par
For the other two curves we use Proposition \ref{exponent}. The
Jacobian decomposition over $\Q$ is

$$J_0(84)^{\langle w_3\rangle} \sim (E14a)^2\times(E42a)^2\times E84b$$
$$J_0(84)^{\langle w_{12}\rangle} \sim (E14a)^2\times (E21a)\times(E42a)\times(E84a)$$

From the Jacobian decomposition of $J_0(84)^{\langle w_3\rangle}$
and $J_0(84)^{\langle w_{12}\rangle}$ all endomorphisms are defined
over $\mathbb{Q}$ (no quadratic twist in the elliptic curves
involved and is the same decomposition in the algebraic closure of
the rationals), see Proposition \ref{defi}. The factors with power 1
are discarded because $\dim \mathcal{L}_{2,i}<\dim \mathcal{L}$,
thus are not bielliptic quotients. The bielliptic quotient $E14a$ is
not possible because there does not exist any matrix
$\mathcal{A}=\left(\begin{array}{cc} a_1&b_1\\
a_2&b_2\\ \end{array}\right)\in\rm{GL}_2(\mathbb{Q})$ satisfying
(\ref{x1x2g5})for all $Q_2\in\mathcal{L}_2$. Similarly there does
not exist such a matrix for $E42a$ for $J_0(84)^{\langle
w_3\rangle}$ satisfying (\ref{x3x4g5}) for all
$Q_2\in\mathcal{L}_2$. Thus by Proposition \ref{exponent}, they are
not bielliptic.

See all computation details in \begin{verbatim}
https://github.com/FrancescBars/Mathematica-files-on-Quotient-Modular-Curves
\end{verbatim}.

\end{proof}

\section{Quotient modular curves of level $N$ with $X_0^*(N)$ hyperelliptic}

\subsection{Quotient modular curves with $X_0^*(N)$ hyperelliptic and
$N$ having two prime divisors}

In this subsection we discuss the candidates for which $X_0^*(N)$ is
hyperelliptic and $N$ is only divisible by two different primes. So
we treat the $15$ values $$N=88, 104, 112, 116, 117, 135, 147, 153,
184, 284, 136, 171, 207, 176, 279.$$

We emphasize that the curves $X_0^+(N)$ have already been treated in
\cite{Jeon} and are not listed in our Theorem \ref{mainpq}. So we
largely ignore them and only mention a few bielliptic ones among
them in passing.

\begin{lema}\label{lem6.0} The curves $X_0(284)/w_d$ are not
bielliptic.
\end{lema}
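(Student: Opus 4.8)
The plan is to show that the curve $X_0(284)/w_d$ has genus too large relative to the available quotient maps, thereby ruling out biellipticity. First I would identify from the data of the paper the level structure: $284 = 4\cdot 71$, so $\omega(284)=2$, $B(284)=\langle w_4,w_{71}\rangle$ has order $4$, and the nontrivial proper Atkin-Lehner involutions give the three candidate quotients $X_0(284)/w_4$, $X_0(284)/w_{71}$, and $X_0(284)/w_{284}$. Since we are told in Theorem \ref{levelstudy2} that $X_0^*(284)$ has genus $2$ and is bielliptic, each of these three curves maps with degree $2$ to $X_0^*(284)$, which itself has genus $2$.

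The main tool will be Lemma \ref{unramifiedcovering}(b), the unramified covering criterion. For each of the three curves $X=X_0(284)/w_d$ I would compute its genus $g_{W_N}$ using the fixed-point formulas: the Atkin-Lehner fixed-point counts from \cite{Ogg} feed into Lemma \ref{4.13} to give $g(X_0(284)/\langle w_d,w_m\rangle)$ for the relevant $w_m$, and hence $g(X)$ itself. The key inequality to verify is that there exists an Atkin-Lehner involution $w_m\notin\langle w_d\rangle$ such that the intermediate quotient $Y=X_0(284)/\langle w_d,w_m\rangle$ has genus $h\geq 2$ with $X\to Y$ ramified, i.e. $g(X)-1>2(h-1)$, and $Y$ is not hyperelliptic. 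If such a $w_m$ exists, then by Lemma \ref{unramifiedcovering}(b) the curve $X$ cannot be bielliptic, exactly as in the proof of Lemma \ref{lemma7.1}. Alternatively, for curves of genus $g\geq 6$ one can invoke the Castelnuovo bound in Lemma \ref{4.4}: since $X\to X_0^*(284)$ has degree $2$ onto a genus $2$ curve, any bielliptic involution would force a factorization, meaning the bielliptic involution would be an Atkin-Lehner involution, which the fixed-point tables rule out.

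I expect the main obstacle to be the curve among the three whose genus is smallest, most likely $X_0(284)/w_{284}=X_0^+(284)$, or whichever one has no suitable intermediate quotient $Y$ of genus $\geq 2$; for such a curve the clean unramified-covering argument may not directly apply. However, $X_0^+(284)$ is already handled in \cite{Jeon} and is explicitly excluded from Theorem \ref{mainpq}, so I would set it aside and concentrate on $X_0(284)/w_4$ and $X_0(284)/w_{71}$. For these, I would check the genus and verify that the only available intermediate quotient is $X_0^*(284)$ of genus $2$; since that target curve is itself bielliptic rather than hyperelliptic, Lemma \ref{unramifiedcovering}(b) forces the covering $X\to X_0^*(284)$ to be unramified if $X$ were bielliptic, which means the Hurwitz relation $g(X)-1=2(g_{284}^*-1)=2$ must hold, giving $g(X)=3$. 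If instead the fixed-point computation shows $g(X)>3$, the covering is ramified and biellipticity is excluded. If $g(X)=3$ does occur, I would fall back on the Petri-theorem computation of Proposition \ref{exponent} together with the Jacobian decomposition to settle that remaining case over $\Q$ and over the relevant quadratic field, as is done for the genus $3$ and genus $5$ curves elsewhere in the paper.
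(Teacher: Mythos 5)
There is a genuine gap. The paper's proof is a one-line application of Lemma \ref{lemadisca} with $p=3$: since $\psi(284)=432$ and $|W_N|=2$, one gets $\psi(N)/|W_N|=216$, which exceeds the bound $12\cdot\frac{2(3+1)^2-1}{3-1}=186$; as all three quotients have genus $7$, $14$, $17\geq 6$, any bielliptic involution would be defined over $\Q$ by Proposition \ref{centraloverK}, so all three curves are excluded at once. Your route avoids this point count entirely, and while your Castelnuovo argument does dispose of $X_0(284)/w_4$ (genus $17$) and $X_0^+(284)$ (genus $14$) --- for $g>7$ the degree-$2$ map to $X_0^*(284)$ would have to factor through $X/v$, forcing the bielliptic involution to be Atkin-Lehner, which the genus table rules out --- it breaks down exactly on the remaining curve $X_0(284)/w_{71}$, which has genus $7$, where the Castelnuovo bound $g\leq 2\cdot 2+2+1=7$ is met with equality and gives nothing.

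For that genus-$7$ curve your fallback is Lemma \ref{unramifiedcovering}(b), but your justification rests on the false assertion that $X_0^*(284)$ is ``bielliptic rather than hyperelliptic'': every genus-$2$ curve is hyperelliptic, so the hypothesis of part (b) fails and ramification of $X\to X_0^*(284)$ yields no contradiction --- part (a) only tells you that the bielliptic involution would induce the hyperelliptic involution on $X_0^*(284)$, which is a constraint, not an absurdity. Your claim ``if the fixed-point computation shows $g(X)>3$, the covering is ramified and biellipticity is excluded'' is therefore unjustified. Nor does Lemma \ref{4.1} rescue you: the involution realizing $X_0(284)/w_{71}\to X_0^*(284)$ has exactly $2\cdot 7-2-2(2\cdot 2-2)=8$ fixed points, not more than $8$. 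You would be forced into identifying the hyperelliptic involution of $X_0^*(284)$ or into a Petri/Jacobian computation you only anticipate in the (non-occurring) case $g(X)=3$. The point-counting lemma the paper uses is precisely the tool that closes this case.
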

\begin{proof} This follows easily from Lemma \ref{lemadisca} by
counting $\FF_9$-rational points.
\end{proof}

\begin{lema}\label{lem6.1} The curves $X_0(184)/w_8$,
$X_0(207)/w_9$, $X(279)/w_9$ and $X_0(279)/w_{31}$ are not
bielliptic.
\end{lema}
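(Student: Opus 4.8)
The four curves share a common structure that suggests a uniform approach. In each case $N$ has exactly two prime divisors, so $|B(N)|=4$ and $\langle w_d\rangle$ has index $2$ in $B(N)$. Hence each $X=X_0(N)/w_d$ carries a degree $2$ map $\phi\colon X\to X_0^*(N)$. By Theorem \ref{levelstudy2} the target $X_0^*(N)$ is hyperelliptic of genus $g_N^*=2,3,5,5$ for $N=184,207,279,279$ respectively. The plan is to show that $g(X)$ is too large for $X$ to be bielliptic relative to this degree $2$ map, and then invoke the Castelnuovo inequality.

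First I would compute $g(X)$ by Riemann--Hurwitz. Using the fixed-point formulas of Ogg \cite{Ogg} (applied via Lemma \ref{4.13}), one checks that in each case the relevant involution acts \emph{without} fixed points on $X_0(N)$. The reason is that a fixed point of $w_Q$ forces an elliptic curve with CM by an order in $\Q(\sqrt{-Q})$ whose level structure at each prime $p\mid N/Q$ requires $p$ to be split or ramified there; a single inert prime annihilates all fixed points. Concretely, $23\mid 184/8$ gives $\left(\frac{-8}{23}\right)=-1$; for $w_9$ the primes $23$ and $31$ give $\left(\frac{-9}{23}\right)=\left(\frac{-9}{31}\right)=-1$; and for $w_{31}$ on $X_0(279)$ the prime $3\mid 279/31$ gives $\left(\frac{-31}{3}\right)=-1$. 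Thus $\#(w_d,X_0(N))=0$ in all four cases, so $g(X)=\tfrac12\bigl(g_0(N)+1\bigr)$, yielding $g(X)=11,11,15,15$ (using $g_0(184)=g_0(207)=21$ and $g_0(279)=29$).

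Finally I would apply Lemma \ref{4.4} to $\phi\colon X\to X_0^*(N)$. In each case $g(X)>2g_N^*+3$ (namely $11>7$, $11>9$, $15>13$, $15>13$), so the first alternative of the Castelnuovo inequality fails. Hence a hypothetical bielliptic involution $v$ would force $\phi$ to factor through $X/v$; since $\deg\phi=2$ and $X\to X/v$ already has degree $2$, this makes $X/v\cong X_0^*(N)$, which has genus $1$, contradicting $g_N^*\ge 2$. Therefore none of the four curves is bielliptic, and since the argument is purely geometric the conclusion holds over $\Qbar$ and a fortiori over every field. The only delicate step is the correct evaluation of the Ogg fixed-point counts (to guarantee they vanish and hence that $g(X)$ is maximal); the genus comparison and the factorization analysis are then immediate. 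One could instead attempt the reduction/point-count bound of Lemma \ref{lemadisca} as in Lemma \ref{lem6.0}, but for $N=184$ the Hasse-bound form of that inequality is not sharp enough to exclude biellipticity, which is why the Castelnuovo route is preferable here.
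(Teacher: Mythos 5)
Your argument is correct and is essentially the paper's: the paper applies its Lemma \ref{4.1} to the involution on $X_0(N)/w_d$ whose quotient is $X_0^*(N)$, noting it has $2g(X)-4g_N^*+2>8$ fixed points, which by Hurwitz is exactly your condition $g(X)>2g_N^*+3$, and Lemma \ref{4.1} is itself proved by the same Castelnuovo factorization you carry out by hand. The only cosmetic difference is that you recompute the genera via Ogg's fixed-point counts (your values $11,11,15,15$ agree with the paper's appendix tables) rather than reading them off.
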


\begin{proof} These curves have an involution whose quotient is the
hyperelliptic curve $X_0^*(N)$. By the Hurwitz formula the number of
fixed points is $2g(X_0(N)/w_d)-4g(X_0^*(N))+2>8$ (see the tables in
the Appendix). So by Lemma \ref{4.1} they are not bielliptic.
\end{proof}

\begin{lema}\label{rem8.3}
We quickly decide some more curves.
\par
$X_0(88)/w_{11}$, $X_0(112)/w_7$ and $X_0(184)/w_{23}$ are
bielliptic, because they map of degree $2$ to the elliptic curves
$X_0(44)/w_{11}$, $X_0(56)/w_7$, $X_0(92)/w_{23}$ (see the table for
$g_N^* =0$ in the Appendix). So by Lemma \ref{lem2S4.3} a bielliptic
involution is given by $w_8 S_2 w_8$ (resp. $w_{16}S_2 w_{16}$ for
$X_0(112)/w_7$). Moreover, $S_2$ is a conjugate bielliptic
involution.
\par
On the other hand, $X_0(116)/w_4$ is by Proposition \ref{4.10}
isomorphic to $X_0(58)$, and hence not bielliptic by \cite{Ba1}.
\par
Also, $X_0(153)/w_{17}$ and $X_0(207)/w_{23}$ are by Proposition
\ref{4.12} isomorphic to $X_0^+(153)$ resp. $X_0^+(207)$ and so not
bielliptic by \cite{Jeon}.
\end{lema}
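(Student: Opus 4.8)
The plan is to dispatch each of these curves by transporting it, via one of the structural (iso)morphisms established in Section 4, to a curve whose biellipticity status is already known, and to group them according to which tool applies.

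For the three bielliptic assertions, namely $X_0(88)/w_{11}$, $X_0(112)/w_7$ and $X_0(184)/w_{23}$, I observe that each has the shape $N=2^\alpha p$ with $\alpha\geq 3$ and $p$ an odd prime. The key object is the involution $w_{2^\alpha}S_2 w_{2^\alpha}$: by Lemma \ref{lem2S4.3} it satisfies $X_0(N)/(w_{2^\alpha}S_2 w_{2^\alpha})=X_0(N/2)$. First I would check, using Proposition \ref{prop1S4.3}(a) (that $S_2$ commutes with every $w_r$ with $r$ odd) together with the fact that $w_{2^\alpha}$ and $w_p$ commute as Atkin--Lehner involutions, that $w_{2^\alpha}S_2 w_{2^\alpha}$ commutes with $w_p$; hence it descends to an involution on $X_0(N)/w_p$. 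Then the double quotient $(X_0(N)/w_p)/(w_{2^\alpha}S_2 w_{2^\alpha})=X_0(N/2)/w_p$ is one of the curves $X_0(44)/w_{11}$, $X_0(56)/w_7$, $X_0(92)/w_{23}$, which have genus $1$ by the $g_N^*=0$ table in the Appendix. This exhibits $w_{2^\alpha}S_2 w_{2^\alpha}$ as a bielliptic involution. Finally, since $w_{2^\alpha}$ is itself an automorphism of $X_0(N)/w_p$ and conjugates $S_2$ to $w_{2^\alpha}S_2 w_{2^\alpha}$ inside $\Aut(X_0(N)/w_p)$, the involution $S_2$ is a second, conjugate bielliptic involution.

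For the non-bielliptic assertions I would apply the two isomorphism results directly. The curve $X_0(116)/w_4$ has $4||116$ and $116=4\cdot 29$, so Proposition \ref{4.10} gives $X_0(116)/w_4\cong X_0(58)$, which is not bielliptic by \cite{Ba1}. For $X_0(153)/w_{17}$ and $X_0(207)/w_{23}$ I would invoke Proposition \ref{4.12}: here $9||N$ and the relevant odd primes satisfy $17\equiv 23\equiv 2\pmod 3$, so $e(17)=e(23)=1$, and $V_3$ yields $X_0(153)/w_{17}\cong X_0(153)/(w_{17}w_9)=X_0^+(153)$ and likewise $X_0(207)/w_{23}\cong X_0^+(207)$. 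Both target curves are shown not to be bielliptic in \cite{Jeon}.

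The arguments are all short, and the only points needing care are bookkeeping ones: confirming the precise commutation of $w_{2^\alpha}S_2 w_{2^\alpha}$ with $w_p$, reading off from the Appendix that the three double quotients $X_0(N/2)/w_p$ genuinely have genus $1$, computing the exponents $e(\cdot)$ in Proposition \ref{4.12} correctly so that the twisted target is exactly the Fricke quotient $X_0^+(N)$, and verifying that $X_0(58)$, $X_0^+(153)$ and $X_0^+(207)$ indeed appear on the non-bielliptic lists of \cite{Ba1} and \cite{Jeon}. No single step presents a genuine obstacle; the difficulty is purely in keeping the several short reductions straight.
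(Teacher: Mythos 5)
Your proposal is correct and follows essentially the same route as the paper: the three bielliptic cases via Lemma \ref{lem2S4.3} with bielliptic involution $w_{2^\alpha}S_2w_{2^\alpha}$ (and $S_2$ as its conjugate), $X_0(116)/w_4$ via Proposition \ref{4.10}, and the last two via Proposition \ref{4.12}. The only difference is that you spell out the commutation and descent checks that the paper leaves implicit.
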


\begin{lema}\label{lem6.2} Let $N\in\{104, 117, 136, 171, 176\}$ and $i=2$ resp. $3$
depending on whether $N$ is even or odd. Then $V_i w_N$ is a
bielliptic involution of $X_0(N)/\varpi_1$ and of $X_0^+(N)$,
whereas $X_0(N)/\varpi_2$ is not bielliptic. Here we are using the
notation $\varpi_1$ and $\varpi_2$ from Section \ref{2} (xii), which
allows us to simultaneously describe the Atkin-Lehner involutions
for different $N$.

\end{lema}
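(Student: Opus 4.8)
```latex
The plan is to treat the five levels $N\in\{104,117,136,171,176\}$ uniformly by exploiting the extra involution $V_i$ (with $i=2$ if $N$ is even and $i=3$ if $N$ is odd) that exists because $4\|N$ or $9\|N$. The unifying strategy is to verify biellipticity by producing an \emph{explicit} bielliptic involution, and to rule out biellipticity of the remaining quotient by the fixed-point and covering criteria already assembled in Section 4. First I would record the fixed-point counts on $X_0(N)$ of the relevant involutions. By Lemma \ref{4lemS4.3}(a) (for even $N$) respectively Lemma \ref{4.19}(d) (for odd $N$), the involution $V_i w_N$ has the same number of fixed points as $w_{p^e}w_N$ for the appropriate prime power, and these Atkin--Lehner fixed-point numbers are computed from Ogg's formulas. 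With these in hand I would apply Lemma \ref{4.13} to the group $G=\langle \varpi_1, V_i w_N\rangle$ (all of whose nontrivial elements are involutions) to compute the genus of $X_0(N)/\langle \varpi_1, V_i w_N\rangle$, and check that it equals $1$; this exhibits $V_i w_N$ as a bielliptic involution on $X_0(N)/\varpi_1$.

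For the quotient $X_0^+(N)=X_0(N)/w_N$ the same computation with $G=\langle w_N, V_i w_N\rangle=\langle w_N, V_i\rangle$ shows that $V_i$ (equivalently $V_i w_N$) induces a bielliptic involution, so that curve is bielliptic as well. (By Lemma \ref{4.19}(e) the involution $V_i$ descends to $X_0(N)/W$ as an automorphism defined over $\QQ$ precisely when $w_9\in W$, which is relevant to the rationality remarks but not to biellipticity per se.) The genus-$1$ verifications for $X_0(N)/\varpi_1$ and $X_0^+(N)$ are the positive half of the statement and reduce to five small, routine Hurwitz-type bookkeeping computations via Lemma \ref{4.13}.

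The negative half — that $X_0(N)/\varpi_2$ is \emph{not} bielliptic — is where the real work lies, and I expect it to be the main obstacle, since for several of these levels the genus is small enough that a bielliptic involution need not be an Atkin--Lehner involution, nor even defined over $\QQ$. My plan is a case-by-case attack using the toolkit of Section 4. Where the genus is at least $6$ and $\Aut$ has a subgroup of $2$-power order not dividing $2(g-1)$, Proposition \ref{prop3.1} forces any bielliptic involution into that subgroup, reducing to checking finitely many Atkin--Lehner candidates, which the tables in the Appendix rule out. Where instead $X_0(N)/\varpi_2$ admits a degree-$2$ map to a non-subhyperelliptic quotient of half the genus minus one, Lemma \ref{unramifiedcovering}(b) (the unramified covering criterion) excludes biellipticity directly. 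For the genus-$5$ cases I would fall back on the structural results Lemma \ref{g5biell} and Lemma \ref{lemgenus2}, or, failing a clean argument, on the Petri-theorem computation of Proposition \ref{exponent}: after writing down the $\QQ$-decomposition of $J_0(N)^{\varpi_2}$ and checking via Proposition \ref{defi} whether any endomorphisms are only defined over a quadratic field, one tests the matrix condition (\ref{cond-inv}) for each candidate elliptic factor and finds no solution. The hard part is thus not any single argument but ensuring that for every one of the five levels at least one of these criteria applies cleanly to $\varpi_2$; I would organize the proof by dispatching the large-genus levels with Proposition \ref{prop3.1} or Lemma \ref{unramifiedcovering} first and reserving the Petri computation for the low-genus residue.
```
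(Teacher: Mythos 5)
Your positive half is exactly the paper's argument: compute the fixed points of the involutions $V_iw_d$ via Lemmas \ref{4lemS4.3} and \ref{4.19}, then apply Lemma \ref{4.13} to see that $X_0(N)/\langle \varpi_1, V_iw_N\rangle$ and $X_0(N)/\langle w_N, V_i\rangle$ have genus $1$. No issue there.

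The negative half is where you diverge from the paper, and there is a genuine gap. First, your appeal to Lemma \ref{unramifiedcovering}(b) is vacuous here: the only degree-$2$ Atkin--Lehner quotient of $X_0(N)/\varpi_2$ is $X_0^*(N)$, which is hyperelliptic for every one of the five levels (that is precisely why they sit in this section), so the hypothesis ``$Y$ not hyperelliptic'' never holds. Second, your main tool, Proposition \ref{prop3.1} with the order-$4$ subgroup $H=\langle \varpi_1, V_i\rangle$, does work for $N=104,117,136,176$ (where $2(g-1)=10,10,14,18$), but it fails for $N=171$: there $X_0(171)/w_{19}$ has genus $9$, so $2(g-1)=16$ is divisible by $4$, $8$ and $16$, and no realistic $2$-subgroup satisfies the non-divisibility hypothesis. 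Your only remaining fallback is Petri, which you explicitly reserve for ``the low-genus residue''---but the problematic case has genus $9$, and in fact none of the five curves $X_0(N)/\varpi_2$ has genus $\le 5$ (they have genus $6,6,8,9,10$), so your worry about bielliptic involutions not defined over $\Q$ is also moot by Proposition \ref{centraloverK}. The paper's actual mechanism, which you are missing, is Lemma \ref{4.2}(a): since the degree-$2$ covering $X_0(N)/\varpi_2\to X_0^*(N)$ is ramified and the genus is at least $6$, any bielliptic involution would have to induce the hyperelliptic involution on $X_0^*(N)$; the fixed-point table identifies that hyperelliptic involution as $V_i$ (modulo $B(N)$), so the bielliptic involution would have to lie in $\langle B(N), V_i\rangle$, and every element there has already been excluded by the same Lemma \ref{4.13} computation. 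This closes all five cases uniformly, including $N=171$, with no Petri computation. You should replace your criteria for the negative half by this argument, or at minimum supply a separate treatment of $N=171$.
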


\begin{proof} Thanks to Lemmas \ref{4lemS4.3} respectively
\ref{4.19} we can calculate the number of fixed points of every
involution $V_i w_d$. We only carry this out for $N=176$, for which
we obtain the following table
\begin{center}
\begin{tabular}{|c|c||c|c|}
\hline $v$ & $\#(v,X_0(176))$ & $v$ & $\#(v,X_0(176))$ \\
\hline id&                - &                $V_2$& 0\\
$ w_{16}$& 0& $V_2 w_{16}$&        4\\
$ w_{11}$ &      0&$ V_2 w_{11}$ &12\\
 $w_{176}$  &  12&  $V_2
w_{176}$& 24\\
\hline
\end{tabular}
\end{center}

With Lemma \ref{4.13} we can thus check that $X_0(176)/w_{11}$ has
no bielliptic involution in $\langle B(N), V_2\rangle$ whereas $V_2
w_{176}$ is a bielliptic involution for the other two curves.
\par
To finish the proof we note that by Lemma \ref{4.2} a bielliptic
involution $v$ on $X_0(176)/w_{11}$ would induce the hyperelliptic
involution on $X_0^*(176)$, which according to the table above is
$V_2$. So $v$ could only be $V_2$ or $V_2 w_{16}$.

\end{proof}

\begin{lema}\label{lem6.3} The curve $X_0(112)/w_{16}$ is not
bielliptic.
\end{lema}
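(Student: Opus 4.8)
The curve $X_0(112)/w_{16}$ has $N=112=2^4\cdot 7$, so $\alpha=4\geq 3$ and $M=7$. I would first locate its genus from the tables in the Appendix; since $X_0^*(112)$ is listed with $g_N^*=2$ (Theorem \ref{levelstudy2}(3)) and $X_0^*(112)$ is bielliptic, the quotient $X_0(112)/w_{16}$ is expected to have moderate genus. The plan is to produce an involution with too many fixed points and invoke Lemma \ref{4.1}: if an involution $w$ of $X_0(112)/w_{16}$ has more than $8$ fixed points, then either $w$ is bielliptic or the curve is not bielliptic, so I need to show that the natural candidate for the bielliptic involution is ruled out while some other involution has many fixed points.

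\textbf{Computing fixed points via the $V_2$-machinery.} Since $16=2^4$ and $\alpha\geq 3$, Proposition \ref{prop1S4.3}(b) applies: $\langle S_2,w_{16}\rangle\cong D_4$, and the involutions $V_2=S_2 w_{16}S_2$, $V_2 w_{16}$ are available, all commuting appropriately with $w_7$. First I would assemble the full fixed-point table for $X_0(112)$ using Lemma \ref{4lemS4.3} parts (a)--(d), exactly as done for $N=176$ in Lemma \ref{lem6.2}: this gives $\#(v,X_0(112))$ for every $v$ in $\langle w_{16},w_7,V_2\rangle$. Then, for each candidate involution $v$ that descends to $X_0(112)/w_{16}$, I apply Lemma \ref{4.13} to the group $G=\langle w_{16},v\rangle$ to read off the genus of $(X_0(112)/w_{16})/v$, and Lemma \ref{3lemS4.3} (or the disjointness in Theorem \ref{cyclicdisjoint}) to count fixed points of $v$ on the quotient. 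The key output I expect is that one of the involutions — plausibly $w_7$, which should carry a large number of fixed points — induces on $X_0(112)/w_{16}$ an involution with more than $8$ fixed points, whose quotient genus is \emph{not} $1$.

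\textbf{Eliminating the genuine bielliptic candidates.} Lemma \ref{4.1} then forces: the curve is bielliptic only if the highly-ramified involution is itself bielliptic, which the genus computation rules out. But I must still exclude the possibility of a bielliptic involution hiding elsewhere. Here the structure mirrors Lemma \ref{lem6.2}: because $X_0^*(112)$ is the further quotient and is bielliptic of genus $2$, any bielliptic involution $v$ of $X_0(112)/w_{16}$ would, by Lemma \ref{4.2}(a) (checking $g\geq 6$ or arguing directly), have to induce the hyperelliptic or bielliptic involution on $X_0^*(112)$ and so must lie in the small group $\langle B(112)/w_{16}, V_2\rangle$. Running through this finite list of candidate involutions $\{V_2, V_2 w_7, V_2 w_{16}, w_7, \ldots\}$ and computing each quotient genus via Lemma \ref{4.13} should show none yields genus $1$.

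\textbf{Main obstacle.} The delicate point is the completeness of the candidate list for the bielliptic involution when the genus is below $6$, since then Proposition \ref{centraloverK} does not guarantee uniqueness or that $v$ lies in the Atkin-Lehner-plus-$V_2$ group. If the genus turns out to be $5$ or smaller, I would supplement the fixed-point argument with the genus-$5$ criteria of Section 4.2 (Lemma \ref{g5biell} and Lemma \ref{lemgenus2}) or, failing a clean combinatorial argument, fall back on the Petri-theorem computation of Proposition \ref{exponent} applied to the Jacobian decomposition of $J_0(112)^{\langle w_{16}\rangle}$, discarding each elliptic factor as a possible bielliptic quotient exactly as in Lemmas \ref{lem7.5} and \ref{lem7.6}. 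I expect the fixed-point count alone to suffice, making the computation short, with the Petri method held in reserve for the low-genus subtlety.
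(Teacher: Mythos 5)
Your fixed-point computations and the use of Lemma \ref{4.13} match the paper's first step, but there are two genuine problems. First, the Lemma \ref{4.1} route you lead with does not get off the ground: $X_0(112)/w_{16}$ has genus $6$ and its only nontrivial Atkin--Lehner involution (induced by $w_7\equiv w_{112}$ modulo $w_{16}$) has quotient $X_0^*(112)$ of genus $2$, hence exactly $2\cdot 6-2-2(2\cdot 2-2)=6$ fixed points, not more than $8$; so the involution ``with more than $8$ fixed points'' that your plan needs is not available. Second, and more seriously, your reduction of the candidate bielliptic involutions to the group $\langle B(112),V_2\rangle$ via Lemma \ref{4.2}(a) is exactly the step that fails for this level. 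That lemma only tells you that a bielliptic involution $v$ would induce \emph{the hyperelliptic involution} of $X_0^*(112)$; to conclude $v\in\langle B(112),V_2\rangle$ you would additionally need to know that this hyperelliptic involution is induced by an element of that group, and it is not identified there: as the paper notes, $V_2$ induces a \emph{bielliptic}, not the hyperelliptic, involution on $X_0^*(112)$, so the argument that worked for $N=176$ (where the hyperelliptic involution of $X_0^*(176)$ was recognized as $V_2$) breaks down.

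The paper closes this gap with Proposition \ref{prop3.1}: since $g=6$ and the Klein four-group $H=\langle w_7,V_2\rangle$ acting on $X_0(112)/w_{16}$ has order $4$, which does not divide $2(g-1)=10$, the (unique, central) bielliptic involution would have to lie in $H$, and $H$ is already excluded by the fixed-point/genus computation. You never invoke this proposition, and your stated fallbacks (the genus-$5$ criteria, Petri) are keyed to the case $g\leq 5$, which does not occur here; so as written the proposal does not complete the proof.
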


\begin{proof} By exactly the same arguments as in the previous lemma we can
show that $\langle B(112), V_2\rangle$ contains no bielliptic
involution for this curve. However, now the problem is that we don't
know the hyperelliptic involution of $X_0^*(112)$. (Actually, $V_2$
does not help because it is a bielliptic involution of
$X_0^*(112)$.) But Proposition \ref{prop3.1} guarantees that the
bielliptic involution of $X_0(112)/w_{16}$ would have to be
contained in $\langle w_7, V_2\rangle$.
\end{proof}

\begin{lema}\label{lem6.3b} The genus $7$ curve $X_0(116)/w_{29}$ is not bielliptic.
\end{lema}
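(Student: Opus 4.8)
The plan is to combine a structural analysis of $\Aut(X_0(116)/w_{29})$ with the Jacobian--Petri method of Proposition \ref{exponent}, because no bielliptic involution can be exhibited explicitly in this case. Write $Y=X_0(116)/w_{29}$. Since $g(Y)=7\geq 6$, any bielliptic involution $v$ is unique and central in $\Aut(Y)$ by Proposition \ref{centraloverK}. As $4\,||\,116$, Proposition \ref{prop1S4.3}(c) supplies the involutions $S_2$ and $V_2=w_4S_2w_4$, and $\langle S_2,w_4\rangle$ is non-abelian of order $6$; because $w_{29}$ commutes with all of them, they descend to a faithful order-$6$ subgroup $\langle\widetilde{S_2},\widetilde{w_4}\rangle$ of $\Aut(Y)$. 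A non-abelian group of order $6$ has trivial centre, so a central $v$ lying in it would be trivial; hence $v\notin\langle\widetilde{S_2},\widetilde{w_4}\rangle$, and in particular none of the Atkin--Lehner involutions nor the three involutions $\widetilde{w_4},\widetilde{S_2},\widetilde{V_2}$ is bielliptic. Thus $v$ can only be excluded indirectly.

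I would first record why the cheap criteria fail. The three quotients $Y/\widetilde{w_4}\cong Y/\widetilde{S_2}\cong Y/\widetilde{V_2}$ are all isomorphic to $X_0^*(116)$, which has genus $2$ by Theorem \ref{levelstudy2}(3); Lemma \ref{4.2}(a) then only says that $v$ would induce the hyperelliptic involution on each, which is consistent and gives no contradiction. Proposition \ref{prop3.1} is likewise powerless: it would need a $2$-subgroup of order $2^t\nmid 2(g-1)=12$, i.e. of order $8$, but here---in contrast with the case $112=2^4\cdot 7$ treated in Lemma \ref{lem6.3}---the order-$6$ group is not a $2$-group and the relevant $2$-Sylow has order only $4$. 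So the decision must be made on the Jacobian.

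The key input is the $\Q$-decomposition of $J_0(116)^{\langle w_{29}\rangle}$. A Riemann--Hurwitz count shows $X_0^+(29)$ has genus $0$ (the involution $w_{29}$ has $h(-116)=6$ fixed points on $X_0(29)$), so $w_{29}$ acts by $-1$ on the two-dimensional non-CM newform piece $A_{f_{29}}$; therefore all three oldform copies of $A_{f_{29}}$ at level $116$ lie in the $w_{29}=-1$ part and drop out. With Lemma \ref{eigenvector} and Proposition \ref{eigenforms} a short eigenvalue computation then forces
$$J_0(116)^{\langle w_{29}\rangle}\stackrel{\Q}\sim (E58a)^2\times (E58b)^2\times B_{116},$$
where $E58a,E58b$ are the two (non-isogenous) elliptic curves of conductor $58$ and $B_{116}$ is the $3$-dimensional new part at level $116$. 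The one-dimensional constituents of $B_{116}$ occur with multiplicity $1$ and are discarded by the criterion $\dim\mathcal{L}_{2,i}<\dim\mathcal{L}_2$, while any constituent of dimension $\geq 2$ has no elliptic quotient over $\Qbar$ by Proposition \ref{Pyle} (one verifies $a_p^2\notin\Z$ for a small prime $p$). Hence the only candidate bielliptic quotients are $E58a$ and $E58b$, each of multiplicity $2$.

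The main obstacle is the final Petri step for these two multiplicity-$2$ factors, carried out exactly as in Lemmas \ref{lem7.5} and \ref{lem7.6}. For $E\in\{E58a,E58b\}$ I would apply Proposition \ref{exponent} with $m=2$ and show that no matrix $\mathcal{A}\in\GL_2(\Q)$ satisfies the invariance condition (\ref{x1x2g5}) for all $Q_2\in\mathcal{L}_2$. Beforehand I would use Proposition \ref{defi} to detect quadratic twists among the factors: if $E58a$, $E58b$, or an elliptic constituent of $B_{116}$ of conductor $116$ become isogenous over some $\Q(\sqrt d)$, the multiplicity rises there and the matrix search must be repeated over $\Q(\sqrt d)$, so that biellipticity is excluded over every number field rather than merely over $\Q$. (A single point count (\ref{eq4.1}) at a small prime $p\nmid 116$ already disposes of biellipticity over $\Q$; it is the quadratic-field case that genuinely requires Petri.) Proving that each of these finitely many linear systems of matrix equations is unsolvable is the computational heart of the argument.
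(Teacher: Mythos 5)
Your route is genuinely different from the paper's. The paper never touches the Jacobian or Petri here: it observes that the three conjugate involutions $w_4$, $S_2$, $V_2$ each have $8$ fixed points on $X=X_0(116)/w_{29}$, so that under a hypothetical (central) bielliptic involution $v$ the induced group $\widetilde H\cong S_3$ on $X/v$ has involutions with at least $4$ fixed points; the Hurwitz formula then forces $(X/v)/\widetilde H$ to have genus $0$ and the order-$3$ elements of $\widetilde H$ to act freely on the genus-$1$ curve $X/v$, so $X/v$ would carry a $\Q$-rational $3$-torsion point. Cremona's tables then kill every candidate: the conductor-$58$ curves have no rational $3$-torsion, and the conductor-$116$ curves are excluded because their modular degrees ($120$, $8$, $15$) do not divide $2|W_N|=4$ (Lemma \ref{lemadegree}). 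This is a purely conceptual argument. Your proposal replaces it with the paper's generic machinery (Jacobian splitting plus Proposition \ref{exponent}), which should also work, but its decisive step --- the unsolvability of the matrix conditions for $(E58a)^2$ and $(E58b)^2$ and the checks $\dim\cL_{2,E}<\dim\cL_2$ for the conductor-$116$ factors --- is asserted rather than carried out; note that the latter is a computation, not an automatic criterion, since multiplicity-one elliptic factors do serve as bielliptic quotients elsewhere in the paper. Your preliminary analysis is sound: the decomposition $J_0(116)^{\langle w_{29}\rangle}\stackrel{\Q}{\sim}(E58a)^2\times(E58b)^2\times E116a\times E116b\times E116c$ is forced by the eigenvalue and dimension count you sketch (the level-$116$ new part consists of three elliptic curves, so your hedging about higher-dimensional constituents and Proposition \ref{Pyle} is vacuous), and your explanation of why Lemma \ref{4.2} and Proposition \ref{prop3.1} give no contradiction correctly identifies the obstruction the paper's trick is designed to get around.

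One genuine wrinkle: your closing discussion of quadratic twists and of repeating the matrix search over $\Q(\sqrt d)$ is superfluous and sits oddly with your own first paragraph. Since $g=7\geq 6$, Proposition \ref{centraloverK} already guarantees that any bielliptic involution is defined over $\Q$, so the elliptic quotient is a $\Q$-quotient of the Jacobian and only the computation over $\Q$ is needed; there is no ``quadratic-field case''. In particular, if your parenthetical claim that a point count via (\ref{eq4.1}) at a small prime eliminates all five candidate elliptic quotients over $\Q$ were actually verified, it would finish the proof by itself and the entire Petri apparatus would be unnecessary.
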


\begin{proof} The involutions $w_4$ and $S_2$, both defined over $\QQ$,
generate a group $H\cong S_3$ of automorphisms of
$X=X_0(116)/w_{29}$. Each of the three (conjugate) involutions has
$8$ fixed points on $X$.
\par
If $v$ is a bielliptic involution of $X$, then $H$ induces an
isomorphic group $\widetilde{H}$ of automorphisms on $X/v$. Under
the action of $v$ on the $8$ fixed points of an involution in $H$
these can at worst fall together in pairs. So each involution in
$\widetilde{H}$ has at least $4$ fixed points on $X/v$. By the
Hurwitz formula the curve $(X/v)/\widetilde{H}$ thus has genus $0$
and the automorphisms of order $3$ in $\widetilde{H}$ have no fixed
points on $X/v$. So the genus $1$ curve $X/v$ has a fixed point free
automorphism of order $3$, defined over $\QQ$, and hence it must
have a $\Q$-rational $3$-torsion point. But the only elliptic curves
over $\Q$ of level $M$ properly dividing $116$ have $M=58$ and no
$3$-torsion by \cite{Cre}. And for $M=116$ the modular degrees of
the optimal elliptic curves $116a1$, $116b1$, $116c1$ are $120$, $8$
and $15$, and hence too big by Lemma \ref{lemadegree}.
\end{proof}

\begin{lema}\label{lem6.4} The remaining six curves, that is the quotient curves $X_0(88)/w_8$, $X_0(135)/w_{27}$, $X_0(135)/w_5$, $X_0(147)/w_3$,
$X_0(147)/w_{49}$, $X_0(153)/w_9$, are not bielliptic.
\end{lema}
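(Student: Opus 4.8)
The plan is to handle these six curves $X_0(88)/w_8$, $X_0(135)/w_{27}$, $X_0(135)/w_5$, $X_0(147)/w_3$, $X_0(147)/w_{49}$, $X_0(153)/w_9$ by the combined arithmetic-and-Petri strategy developed in Sections 4 and 5, since the purely geometric criteria (Castelnuovo, the fixed-point count of Lemma \ref{4.1}, the unramified-covering criterion of Lemma \ref{4.2}, and Proposition \ref{prop3.1}) were already exhausted on the easier curves in the preceding lemmas and evidently do not dispose of these. First I would compute, for each pair $(N,w_d)$, the $\Q$-decomposition of the Jacobian $J_0(N)^{\langle w_d\rangle}$ as a product $\prod A_{f_i}^{m_{f_i}}$ using Lemma \ref{eigenvector} and Proposition \ref{eigenforms}; this identifies the candidate elliptic isogeny factors $E$ that could possibly be a bielliptic quotient, since any bielliptic quotient must be a $K$-isogeny factor of the Jacobian.

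Next, for each candidate elliptic factor I would apply the cheap discarding tools in order of increasing cost. For factors appearing with exponent $1$ I would invoke the criterion $\dim\cL_{2,i}<\dim\cL_2$ (or the genus-$3$ analogue with $\cL_{4,i}$) recorded after Remark \ref{poli} to rule them out immediately. For the optimal curves of conductor $M=N$ I would check the modular-degree bound of Lemma \ref{lemadegree}: the degree $D$ of the parametrization $X_0(N)\to E'$ from \cite[Table 5]{Cre} must divide $2\cdot|W_N|=4$, which typically eliminates several factors outright. Where an elliptic factor survives and the curve is bielliptic \emph{over} $\Q$, I would also use the point-counting inequality \eqref{eq4.1} of Lemma \ref{lemadisca} at a small auxiliary prime $p\nmid N$, comparing $\#X_0(N)/w_d(\F_{p^n})$ with $2\#E(\F_{p^n})$ via the Eichler--Shimura congruence. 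Before running the Petri test I must settle the field of definition: using Proposition \ref{defi}, I would verify whether the relevant newforms admit a quadratic twist matching a Galois conjugate of another factor, which tells me whether the automorphisms (hence any bielliptic involution) are forced to be defined over $\Q$ or whether I must also test over a quadratic field $K=\Q(\sqrt{D})$; note that $9\,\|\,135$ and $9\,\|\,147$ bring the $\Q(\sqrt{-3})$-automorphism $V_3$ of Lemma \ref{4.19} into play, so the relevant quadratic field here is likely $\Q(\sqrt{-3})$.

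The decisive step, and the main obstacle, is the application of Petri's criterion, Proposition \ref{exponent}, to each elliptic factor $E$ that survives the cheap tests. Concretely I would fix a basis $\{\omega_i\}$ of $\Omega^1$ adapted to the decomposition $\Jac\sim E^m\times A$, compute a basis of the space $\cL_2$ (or $\cL_4$ in genus $3$) of quadratic (resp. quartic) relations among the $\omega_i$ from the $q$-expansions of the Galois-conjugate eigenforms, and then decide whether there exists a matrix $\cA\in\GL_m(K)$ satisfying the sign-flip invariance condition \eqref{cond-inv}. For the $m=1$ factors this reduces to the single-column test $\dim\cL_{2}=\dim\cL_{2,i}$ already mentioned; for $m\geq 2$ it becomes the genuinely nontrivial linear-algebra problem of solving the system \eqref{x1x2g5}--\eqref{x3x4g5} for the entries of $\cA$, exactly as carried out for $X_0(90)/w_5$ in Lemma \ref{lem7.5} and for $X_0(84)/w_3$ in Lemma \ref{lem7.6}. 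I expect the bulk of the work to be verifying, curve by curve and factor by factor, that no such $\cA$ exists over the appropriate field, which establishes non-biellipticity; the computations are tedious but mechanical, and I would delegate them to the MATHEMATICA routines in the cited github repository, recording only the decomposition, the surviving candidate factors, and the outcome of the Petri test for each of the six curves.
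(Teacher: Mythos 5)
Your overall strategy coincides with the paper's: compute the $\Q$-isogeny decomposition of $J_0(N)^{\langle w_d\rangle}$, determine over which field a bielliptic involution could be defined, and then run Petri's criterion (Proposition \ref{exponent}) on each surviving elliptic factor, delegating the linear algebra to the computer. However, one step of your plan would fail as written. For the genus $4$ curve $X_0(88)/w_8$ you cannot restrict the Petri test to the quadratic relations $\cL_2$: a non-hyperelliptic canonical curve of genus $4$ is trigonal, so by Remark \ref{poli} condition (\ref{cond-inv}) must be checked for $i=2$ \emph{and} $i=3$, whereas your proposal only ever invokes $\cL_2$ (and $\cL_4$ in genus $3$). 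This is not a harmless omission. In the paper's computation the generator of $\cL_2$ is $48t^2+x^2+xy-7y^2-16z^2$, and the exponent-one factors $E44a$ and $E88a$ (the variables $z$ and $t$) \emph{pass} the degree-two sign-flip test, i.e.\ $\dim\cL_{2,i}=\dim\cL_2$ for them; they are eliminated only because the induced condition on a cubic $Q_3\in\cL_3$ fails. With your cheap test alone the proof for $X_0(88)/w_8$ does not close.

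A second, smaller error: your claim that $9\,\|\,135$ and $9\,\|\,147$ is false ($135=3^3\cdot5$ and $147=3\cdot7^2$), so Lemma \ref{4.19} does not apply, $V_3$ plays no role for these levels, and $\Q(\sqrt{-3})$ is not the relevant field. In fact four of the six curves have genus $\geq 6$, where Proposition \ref{centraloverK} makes the bielliptic involution unique and defined over $\Q$, so the field-of-definition analysis is only needed for $X_0(88)/w_8$ (genus $4$, where there are no twists among the factors) and for the genus $5$ curve $X_0(147)/w_3$; for the latter the only twist is the self-twist $E49a\sim_{\Q(\sqrt{-7})}E49a$, so the field to test is $\Q(\sqrt{-7})$, over which the decomposition is unchanged and the computation over $\Q$ already suffices. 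Your general procedure via Proposition \ref{defi} would eventually discover this, but the specific field you committed to is the wrong one.
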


\begin{proof}
Ordering by genus, the Jacobian decompositions over $\Q$ are as
follows:
$$\begin{array}{ccl}
J_0(88)^{\langle w_8\rangle}&\sim& (E11a)^2\times E44a\times E88a\\
J_0(147)^{\langle w_3\rangle}&\sim& E49a\times E147a\times
E147c\times A_f; \dim
A_f=2,\; f\in \New(147)\\
\hline J_0(135)^{\langle w_5\rangle}&\sim& E27a\times (E45a)^2\times
E135a\times
A_f;\dim A_f=2, f\in \New(135)\\
J_0(147)^{\langle w_{49}\rangle}&\sim&E21a\times E147b\times
A_f\times A_g;
\dim(A_f)=\dim(A_g)=2;\; f,g\in \New(147)\\
J_0(135)^{\langle w_{27}\rangle}&\sim&(E15a)^2\times E45a\times
E135a\times E135b\times
A_f;\; \dim A_f=2; \; f\in \New(135)\\
J_0(153)^{\langle w_9\rangle}&\sim&(E17a)^2\times E51a\times
A_f\times E153a\times
E153b;\; \dim A_f=2;\ f\in \New(51)\\
\end{array}
$$

For the genus $4$ cases there are no quadratic twists, thus all
endomorphisms are defined over $\mathbb{Q}$ by Proposition
\ref{defi}. By Remark \ref{2.7} we consider $\mathcal{L}_2$ and
$\mathcal{L}_3$, see further details in \cite[\S6]{BaGon2}. We
compute a nonzero polynomial $Q_2\in\mathcal{L}_2$ which generates
$\mathcal{L}_2$: $48t^2+x^2+xy-7y^2-16z^2$. For $E44a$ and $E88a$
corresponding to $z,t$ we have that
$Q_2(\omega_1,\omega_2,\omega_3,\omega_4)=Q_2(\omega_1,\omega_2,-\omega_3,\omega_4)$
and similar for $\omega_4$, but
\begin{equation}\label{equationq3}
\frac{Q_3(\ldots,\omega_i,\ldots)-Q_3(\ldots,-\omega_i,\ldots)}{x_i}\notin\mathcal{L}_2
\end{equation}
where $Q_3\in\mathcal{L}_3$ that is not a multiple of
$\mathcal{L}_2$, thus not bielliptic. For $E11a$, we make a change
of variables $w_1,w_2$ taking $Q_2(x,y,z,t)=48 t^2 - 176 x^2 + y^2 -
16 z^2$ but this does not satisfy the condition on $\mathcal{L}_3$
of equation (\ref{equationq3}) thus is not bielliptic. See the
computational details of this example and the remaining ones in
files related with such quotient modular curves in the folder
\begin{verbatim}
https://github.com/FrancescBars/Mathematica-files-on-Quotient-Modular-Curves
\end{verbatim}
We only recall that the bielliptic involution is defined over
$\mathbb{Q}$ for genus $\geq 6$ (in such cases it is unique) thus we
only need to worry if a bielliptic involution exists and is not
defined over $\Q$ for the genus 5 curve $X_0(147)/w_3$. The
dimension two factor does not have any elliptic quotient by use of
Proposition \ref{Pyle}, and there only appears an inert twist
$E49a\sim_{\Q(\sqrt{-7})} E49a$, therefore the bielliptic
involution, if it exists, is defined over $\Q(\sqrt{-7})$. But
because over $\Q(\sqrt{-7})$ we have the same Jacobian
decomposition, we conclude as we did over $\Q$ that there is no
bielliptic involution.

Thus for all remaining situations, any dimension one factor with
power 1 in the Jacobian we only need to observe that $\dim
\mathcal{L}_{2,E}<\dim \mathcal{L}_2$ and for higher power, a
similar argument used in equation (\ref{x1x2g5}) in the previous
section, with a correct choice of the variables, shows that no
bielliptic involution appears by Proposition \ref{exponent}.

\end{proof}

\subsection{Quotient modular curves with $X_0^*(N)$ hyperelliptic and
$N$ having $3$ prime divisors}

In this part we deal with the cases where $X_0^*(N)$ is
hyperelliptic and $N$ is divisible by $3$ different primes. So $N$
is one of

$168$, $180$, $198$, $204$, $276$, $380$ (all with $g_N^* =2$) or
$252$, $315$ (both with $g_N^* =3$).

\begin{prop}\label{6b.1} Let $N$ be one of the eight numbers just
listed. Then none of the curves $X_0(N)/w_d$ is bielliptic.
\end{prop}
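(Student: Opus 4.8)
=== PROOF PROPOSAL ===

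The plan is to show that for each of the eight values $N \in \{168, 180, 198, 204, 276, 380, 252, 315\}$ and each single Atkin-Lehner involution $w_d$ with $d \| N$, the quotient curve $X_0(N)/w_d$ fails to be bielliptic. Since all these $N$ have $X_0^*(N)$ hyperelliptic with $g_N^* \in \{2,3\}$, and since $N$ has three prime divisors, the group $B(N)/\langle w_d\rangle$ has order $4$. This abundance of Atkin-Lehner structure is exactly what the discarding criteria of Section 4.1 are designed to exploit, so my primary tool will be the unramified covering criterion (Lemma \ref{unramifiedcovering}) together with Proposition \ref{prop3.1}.

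First I would consult the tables in the Appendix to read off, for each pair $(N, w_d)$, the genus $g(X_0(N)/w_d)$ and the genera of the various intermediate quotients $X_0(N)/\langle w_d, w_m\rangle$ for the remaining Atkin-Lehner involutions $w_m$. The main workhorse is Lemma \ref{unramifiedcovering}(b): if I can exhibit an $w_m$ such that the quotient $Y = X_0(N)/\langle w_d, w_m\rangle$ has genus $h \geq 2$ and is \emph{not} hyperelliptic, while the covering $X_0(N)/w_d \to Y$ of degree $2$ is ramified (equivalently $g(X_0(N)/w_d) > 2h - 1$), then $X_0(N)/w_d$ cannot be bielliptic. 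This is precisely the strategy already used successfully in Lemma \ref{lemma7.1}, and I expect it to dispatch the large majority of the cases immediately, since these curves tend to have fairly high genus relative to their quotients.

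For the cases that survive the unramified covering criterion, I would bring in the complementary tools. When $X_0(N)/w_d$ has even genus $g \geq 6$, Proposition \ref{prop3.1} applies directly: since $\Aut(X_0(N)/w_d)$ contains the subgroup $B(N)/\langle w_d\rangle$ of order $4$, and $4 \nmid 2(g-1)$ for even $g$, any bielliptic involution would have to lie in this subgroup, i.e.\ be an Atkin-Lehner involution---and the tables show no Atkin-Lehner quotient of genus $1$. For curves mapping with degree $4$ to the genus-$2$ or genus-$3$ hyperelliptic curve $X_0^*(N)$, I would invoke the Castelnuovo inequality (Lemma \ref{4.4}): a bielliptic map on a curve of sufficiently large genus must factor through this degree-$4$ map, again forcing the bielliptic involution to be Atkin-Lehner, which is ruled out by the tables. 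Finally, for any low-genus stragglers I would fall back on the point-counting bound of Lemma \ref{lemadisca}, counting $\FF_{p^2}$-rational points for a small prime $p \nmid N$ to contradict the inequality $\psi(N)/|W_N| \leq 12\cdot(2|E(\FF_{p^2})|-1)/(p-1)$.

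The step I expect to be the main obstacle is the handful of genus-$5$ curves, and possibly some odd-genus curves of genus $7$ or $9$ that evade both the even-genus Proposition \ref{prop3.1} and the Castelnuovo threshold. For genus $5$ in particular, biellipticity cannot be decided purely by the fixed-point and covering bookkeeping, because there may be several bielliptic involutions none of which is Atkin-Lehner or even defined over $\Q$; here I would need the finer genus-$5$ analysis of Section 4.2 (Lemmas \ref{g5biell} through the corollary), identifying an involution $u$ over $\Q$ with $X_0(N)/u$ of genus $2$ or a non-hyperelliptic genus-$3$ quotient, to constrain the number and the field of definition of the bielliptic involutions---and in the last resort the explicit Jacobian-decomposition and Petri computation of Proposition \ref{exponent}, exactly as carried out in Lemmas \ref{lem7.5} and \ref{lem6.4}. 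I would organize the write-up so that the bulk of the pairs are eliminated in a single paragraph by citing the table and Lemma \ref{unramifiedcovering}, with only the genuinely difficult low-genus cases treated individually.
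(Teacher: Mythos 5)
Your proposal is built on the same key tool as the paper, namely the unramified covering criterion of Lemma \ref{unramifiedcovering}(b), and it would go through; the difference is one of economy. Rather than searching case by case for a suitable $w_m$ and certifying ramification from the genus tables, the paper always takes the intermediate quotient to be $Y=X_0(N)/\langle w_d,w_N\rangle$: since $w_N$ has fixed points on $X_0(N)$ for every $N$ (\cite[p.454]{Ogg}), the degree-two covering $X_0(N)/w_d\to Y$ is automatically ramified, and one then only needs to read off from the appendix that none of these quotients $Y$ is hyperelliptic (they all have genus at least $4$). This single uniform argument disposes of every $d\neq N$ in one stroke; the remaining case $d=N$ is settled by citing \cite{Jeon} for $X_0^+(N)$ --- a case your write-up should address explicitly, though your covering argument also applies to it. In particular, all of the contingency machinery you hold in reserve (Proposition \ref{prop3.1}, the Castelnuovo inequality of Lemma \ref{4.4}, the point counts of Lemma \ref{lemadisca}, the genus-$5$ analysis and Petri computations) turns out to be unnecessary here: with the quotient by $\langle w_d,w_N\rangle$ there are no surviving stragglers, so the anticipated ``main obstacle'' never arises.
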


\begin{proof} From \cite{Jeon} we know that none of the eight curves
$X_0^+(N)$ is bielliptic. So we can assume $d\neq N$. We consider
the covering $X_0(N)/w_d \to X_0(N)/\langle w_d, w_N\rangle$. Since
$w_N$ always has fixed points, by Lemma \ref{4.2} a bielliptic
involution of $X_0(N)/w_d$ would induce a hyperelliptic involution
on $X_0(N)/\langle w_d, w_N\rangle$. But by the tables in the
Appendix none of these curves is hyperelliptic.
\end{proof}

Now we investigate the curves $X_0(N)/W$ with $|W|=4$.

\begin{lema}\label{6b.2} $V_3 w_7$ is a bielliptic involution of the curves
$X_0(252)/\langle w_4, w_9\rangle$, $X_0(252)/\langle w_9,
w_7\rangle$ and $X_0(252)/\langle w_4, w_{63}\rangle$. Moreover
$V_3$ is a second bielliptic involution of $X_0(252)/\langle w_4,
w_{63}\rangle$.

By contrast, $X_0(252)/\langle w_4, w_7\rangle$, $X_0(252)/\langle
w_9, w_{28}\rangle$, $X_0(252)/\langle w_7, w_{36}\rangle$ and
$X_0(252)/\langle w_{36}, w_{28}\rangle$ are not bielliptic.
\end{lema}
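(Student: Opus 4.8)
The plan is to split the eight curves into the three bielliptic ones (together with the extra involution $V_3$) and the four non-bielliptic ones, exploiting that $252=2^2\cdot 3^2\cdot 7$ has both $4\,\|\,252$ and $9\,\|\,252$. Thus besides the Atkin--Lehner group $B(252)\cong(\Z/2)^3$ we may use the involution $V_3$ of Lemma \ref{lemaV3} (and, if needed, $S_2,V_2$ of Proposition \ref{prop1S4.3}) as well as the isomorphisms of Proposition \ref{4.10}. I also record at the outset that $X_0^*(252)$ is hyperelliptic of genus $3$ (Theorem \ref{levelstudy2}(iv)), a fact that will shape the non-bielliptic part.

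For the bielliptic statements, since $4,7\equiv 1\ \mathrm{mod}\ 3$, the relations of Lemma \ref{lemaV3} show that $V_3w_7$ commutes with every generator of each of $\langle w_4,w_9\rangle$, $\langle w_9,w_7\rangle$, $\langle w_4,w_{63}\rangle$, hence induces an involution on each quotient; arguing as in Lemma \ref{lemaV3}(e), its Galois conjugate is $V_3w_{63}$, so it descends over $\QQ$ exactly when $w_9\in W$ (over $\QQ$ for the first two, over $\QQ(\sqrt{-3})$ for $\langle w_4,w_{63}\rangle$, matching the Remark after Theorem \ref{mainpq}). I would then apply the Hurwitz formula of Lemma \ref{4.13} to the elementary abelian group $\langle W,V_3w_7\rangle$ of order $8$: by Lemma \ref{lemaV3}(d) the fixed-point counts of the $V_3$-twisted involutions coincide with those of suitable Atkin--Lehner involutions, so the whole computation reduces to Atkin--Lehner fixed points, coming from Ogg's formulas \cite{Ogg} together with the clean input $\#(w_4,X_0(252))=8$ obtained from $X_0(252)/w_4\cong X_0(126)$ (Proposition \ref{4.10}) and the genus of $X_0(126)$. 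In each case the quotient has genus $1$, so $V_3w_7$ is bielliptic; the second involution $V_3$ on $\langle w_4,w_{63}\rangle$ is treated identically with $\langle W,V_3\rangle$. A shortcut for two of the three is that Proposition \ref{4.10} gives $X_0(252)/\langle w_4,w_9\rangle\cong X_0(126)/w_9$ and $X_0(252)/\langle w_4,w_{63}\rangle\cong X_0(126)/w_{63}$, both shown bielliptic in Lemma \ref{lem7.3}.

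For the non-bielliptic statements, $X_0(252)/\langle w_4,w_7\rangle\cong X_0(126)/w_7$ (Proposition \ref{4.10}) is not bielliptic by Lemma \ref{lemma7.1}. The remaining three curves have genera $9$, $9$ and $7$ (again via Lemma \ref{4.13}), all at least $6$, so by Proposition \ref{centraloverK} a bielliptic involution would be unique, central and defined over $\QQ$. For $\langle w_9,w_{28}\rangle$ and $\langle w_7,w_{36}\rangle$ the quotient by $\langle W,V_3\rangle$ has genus $4$ resp.\ $3$, and the degree-$2$ covering from the genus-$9$ curve is necessarily ramified; so once one checks that these intermediate quotients are non-hyperelliptic, Lemma \ref{unramifiedcovering}(b) excludes biellipticity. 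Uniformly, and as the only available method for $\langle w_{36},w_{28}\rangle$ (all of whose degree-$2$ quotients turn out to be subhyperelliptic), I would take the $\QQ$-decomposition of $J_0(252)^{W}$ and discard every $\QQ$-rational elliptic isogeny factor as a bielliptic quotient using the point-count bound of Lemma \ref{lemadisca}, the modular-degree bound of Lemma \ref{lemadegree}, and Petri's criterion of Proposition \ref{exponent}, exactly as in Lemma \ref{lem6.4}.

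The main obstacle is precisely that for these $|W|=4$ curves the unique Atkin--Lehner quotient is the hyperelliptic $X_0^*(252)$, so the quick unramified-covering test used for the $|W|=2$ levels in Proposition \ref{6b.1} and for Lemma \ref{lemma7.1} does not apply directly. One must either bring in the non-Atkin--Lehner involution $V_3$ to build a non-hyperelliptic intermediate quotient---and then genuinely verify its non-hyperellipticity---or carry out the Jacobian-decomposition and Petri computation. That computation, together with confirming $\#(w_7,X_0(252))=\#(w_{28},X_0(252))=0$ and the remaining Atkin--Lehner fixed-point counts, is where the real work lies.
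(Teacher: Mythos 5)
Your treatment of the three bielliptic curves is essentially the paper's: the fixed-point table coming from Lemma \ref{4.19}, fed into the Hurwitz count of Lemma \ref{4.13}, shows that the quotients by $\langle W, V_3w_7\rangle$ (and by $\langle W,V_3\rangle$ for $W=\langle w_4,w_{63}\rangle$) have genus $1$; your shortcut via Proposition \ref{4.10} and Lemma \ref{lem7.3}, and the reduction $X_0(252)/\langle w_4,w_7\rangle\cong X_0(126)/w_7$ handled by Lemma \ref{lemma7.1}, are also valid.

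The gap is in your route for $X_0(252)/\langle w_9,w_{28}\rangle$ and $X_0(252)/\langle w_7,w_{36}\rangle$: you propose to check that the quotients by $\langle W,V_3\rangle$ (genus $4$ resp.\ $3$) are non-hyperelliptic and then invoke Lemma \ref{unramifiedcovering}. That check fails. The same fixed-point table shows that $X_0(252)/\langle B(252),V_3\rangle$ has genus $0$ --- equivalently, $V_3$ induces the hyperelliptic involution on the genus-$3$ curve $X_0^*(252)$ --- so both of your intermediate quotients admit a degree-$2$ map onto a genus-$0$ curve and are therefore hyperelliptic; in fact every degree-$2$ quotient of these two curves by an involution coming from $\langle B(252),V_3\rangle$ turns out to be subhyperelliptic, so Lemma \ref{unramifiedcovering}(b) cannot be applied from this direction at all. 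The paper turns this genus-$0$ computation to its advantage instead: since the four remaining curves have genus at least $6$ and the covering to $X_0^*(252)$ is ramified, Lemma \ref{unramifiedcovering}(a) forces any bielliptic involution to induce the hyperelliptic involution of $X_0^*(252)$, which is $V_3$; hence the bielliptic involution must lie in $\langle B(252),V_3\rangle$ modulo $W$, and the table already shows that no involution there has quotient genus $1$. This disposes of all four curves uniformly, including $\langle w_{36},w_{28}\rangle$, and makes your Jacobian-decomposition/Petri fallback --- which would work in principle but is left unexecuted --- unnecessary.
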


\begin{proof} Using Lemma \ref{4.19} we obtain the
following table with number of fixed points.
\begin{center}
\begin{tabular}{|c|c||c|c|}
\hline $v$ & $\#(v,X_0(252))$ & $v$ & $\#(v,X_0(252))$ \\
\hline
 id&   -&          $ V_3$& 0\\
 $ w_4$& 8&                $V_3 w_4$& 0\\
 $ w_9$& 0& $V_3w_9$&             0\\
 $w_7$&              0 &        $V_3 w_7$& 24\\
$w_{36}$&         0&             $V_3 w_{36}$&        0\\
$ w_{28}$& 0& $V_3 w_{28}$&        8\\
$ w_{63}$&        24& $V_3 w_{63}$& 24\\
$ w_{252}$& 8& $V_3 w_{252}$& 8\\
\hline
\end{tabular}
\end{center}

With Lemma \ref{4.13} we can then easily check three things, namely
that $V_3 w_7$ is a bielliptic involution for the first three curves
(and $V_3$ as well for $X_0(252)/\langle w_4, w_{63}\rangle$), that
the other four curves have no bielliptic involution in $\langle
B(N), V_3\rangle$, and that $V_3$ induces the hyperelliptic
involution on $X_0^*(N)$.
\par
Combining the second and third fact proves that the four curves are
not bielliptic, because a bielliptic involution would by Lemma
\ref{4.2} induce the hyperelliptic involution on $X_0^*(N)$, and
hence would be in $\langle B(N), V_3\rangle$.
\end{proof}

\begin{lema}\label{6b.3} $V_2 w_{168}$ is a bielliptic involution of the curves
$X_0(168)/\langle w_8, w_3\rangle$, $X_0(168)/\langle w_8,
w_7\rangle$, $X_0(168)/\langle w_3, w_{56}\rangle$ and
$X_0(168)/\langle w_7, w_{24}\rangle$.

Moreover, $V_2 w_8$ is a second bielliptic involution of
$X_0(168)/\langle w_3, w_{56}\rangle$.

The remaining curves $X_0(168)/\langle w_3, w_7\rangle$,
$X_0(168)/\langle w_8, w_{21}\rangle$ and $X_0(168)/\langle w_{24},
w_{56}\rangle$ are not bielliptic.
\end{lema}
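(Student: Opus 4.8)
The plan is to mimic exactly the strategy of Lemma \ref{6b.2}, using the auxiliary involution $V_2$ in place of $V_3$ since $N=168=2^3\cdot 3\cdot 7$ is divisible by $8$ rather than by $9$. First I would assemble a complete table of fixed-point counts $\#(v,X_0(168))$ for all $v\in\langle B(168),V_2\rangle$, that is for each Atkin-Lehner involution $w_d$ with $d\,\|\,168$ together with the products $V_2 w_d$. The Atkin-Lehner counts come from \cite{Ogg}, and the counts involving $V_2$ are supplied by Lemma \ref{4lemS4.3}, whose parts (a)--(d) give $\#(V_2,X_0(N))$, $\#(V_2 w_r,X_0(N))$, and (since $\alpha=3\geq 3$ here) also $\#(V_2 w_8,X_0(N))$ and $\#(V_2 w_8 w_r,X_0(N))$ in terms of fixed points on $X_0(84)$ and on $X_0(168)$. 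This reduces everything to known quantities.

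With the table in hand, the group $G=\langle W,v\rangle$ attached to each candidate bielliptic involution $v=V_2 w_{168}$ (resp. $V_2 w_8$) is an elementary abelian $2$-group, so Lemma \ref{4.13} applies directly: I would verify that for each of the four curves $X_0(168)/\langle w_8,w_3\rangle$, $X_0(168)/\langle w_8,w_7\rangle$, $X_0(168)/\langle w_3,w_{56}\rangle$, $X_0(168)/\langle w_7,w_{24}\rangle$ the quotient $X_0(168)/\langle W, V_2 w_{168}\rangle$ has genus $1$, which by definition exhibits $V_2 w_{168}$ as a bielliptic involution. The same computation with $V_2 w_8$ in place of $V_2 w_{168}$ handles the second bielliptic involution of $X_0(168)/\langle w_3,w_{56}\rangle$.

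For the three curves claimed \emph{not} bielliptic, I would argue as in the final paragraph of Lemma \ref{6b.2}. Using the same table and Lemma \ref{4.13} I would check that $\langle B(168),V_2\rangle$ contains no involution inducing a genus-$1$ quotient, i.e. no bielliptic involution among these candidates, and simultaneously that $V_2$ itself induces the hyperelliptic involution on the hyperelliptic curve $X_0^*(168)$ (of genus $2$). Then, since $X_0^*(168)$ is hyperelliptic, Lemma \ref{4.2}(a) forces any hypothetical bielliptic involution of $X_0(168)/W$ to induce the hyperelliptic involution on $X_0^*(168)$ and hence to commute with, and in fact lie in, $\langle B(168),V_2\rangle$; the absence of a suitable involution there then yields the contradiction. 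One must first confirm that Lemma \ref{4.2}(a) is applicable, namely that the covering $X_0(168)/W\to X_0^*(168)$ is ramified (equivalently $g_W-1>|B(168)/W|\,(g_N^*-1)$) and that $g_W\geq 6$, or else supply an $ad\ hoc$ argument for the small-genus cases.

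\textbf{Main obstacle.} The genuine difficulty is not the positive (bielliptic) assertions, which are pure fixed-point bookkeeping, but rather ensuring the \emph{completeness} of the non-bielliptic argument for the three excluded curves. The reduction via Lemma \ref{4.2}(a) only localizes a hypothetical bielliptic involution inside $\langle B(168),V_2\rangle$ provided the relevant genus and ramification hypotheses hold; for any of these curves whose genus falls below $6$, uniqueness and centrality of the bielliptic involution (Proposition \ref{centraloverK}) are unavailable, so I would instead need to invoke the genus-$5$ criteria of Section 4.2 or a direct Petri-type computation (Proposition \ref{exponent}) to rule out bielliptic involutions lying outside $\langle B(168),V_2\rangle$. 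Verifying which regime each of the three curves falls into, and patching the low-genus cases, is where the care is required.
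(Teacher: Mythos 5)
Your strategy for the positive assertions and for the two high-genus exclusions matches the paper's proof exactly: build the fixed-point table for $\langle B(168),V_2\rangle$ from Lemma \ref{4lemS4.3}, apply Lemma \ref{4.13} to exhibit $V_2w_{168}$ (and $V_2w_8$) as bielliptic involutions, and for $X_0(168)/\langle w_3,w_7\rangle$ (genus $7$) and $X_0(168)/\langle w_8,w_{21}\rangle$ (genus $6$) use Lemma \ref{4.2}(a) to force a hypothetical bielliptic involution into $\langle B(168),V_2\rangle$, where the table shows none exists. You also correctly flag the one real issue: the hypothesis $g\geq 6$ of Lemma \ref{4.2}(a) fails for one of the three excluded curves.

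However, the patch you sketch for that case does not work. The problematic curve is $X_0(168)/\langle w_{24},w_{56}\rangle$, which has genus $4$ — so the genus-$5$ criteria of Section 4.2 are inapplicable, and the Petri machinery of Proposition \ref{exponent} is also unavailable because this curve is \emph{hyperelliptic} (it appears in the Furumoto--Hasegawa list reproduced in Theorem \ref{main2pq}). That last fact is precisely what rescues the argument, and far more cheaply than you propose: by the Castelnuovo inequality (Lemma \ref{4.4}) a hyperelliptic curve of genus $\geq 4$ cannot be bielliptic, so $X_0(168)/\langle w_{24},w_{56}\rangle$ is excluded in one line. This is exactly the paper's fix. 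So your proposal identifies the gap but leaves it open; to complete the proof you need to look up the genus and hyperellipticity of this specific quotient and invoke Castelnuovo rather than the genus-$5$ or Petri criteria.
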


\begin{proof} Apart from using the involutions $V_2 w_d$ the proof is
completely analogous to that of Lemma \ref{6b.2}, except for the
case of $X_0(168)/\langle w_{24},w_{56}\rangle$, in which case we
cannot apply Lemma \ref{4.2} because the genus is $4$. But being
hyperelliptic this curve therefore cannot also be bielliptic by the
Castelnuovo inequality (Lemma \ref{4.4}).

\end{proof}

Unfortunately the method we just used does not really work for the
six other values of $N$, as for them we don't know the hyperelliptic
involution of $X_0^*(N)$. For example, $V_3$ induces a bielliptic
involution on $X_0^*(180)$, $X_0^*(198)$ and $X_0^*(315)$.

\begin{lema}\label{6b.4} None of the curves $X_0(380)/W$ with $|W|=4$ is
bielliptic.
\end{lema}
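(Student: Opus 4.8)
The group $B(380)$ is elementary abelian of order $8$ (recall $380=2^2\cdot 5\cdot 19$), so there are seven subgroups $W$ of order $4$, which I would split according to whether $w_4\in W$. The three containing $w_4$ are $\langle w_4,w_5\rangle$, $\langle w_4,w_{19}\rangle$ and $\langle w_4,w_{95}\rangle$; to each I would apply Proposition~\ref{4.10} with $N=4M$ and $M=95$, obtaining $X_0(380)/\langle w_4,w_d\rangle\cong X_0(190)/\langle w_d\rangle$ for $d\in\{5,19,95\}$. As $190$ is square-free and $\langle w_d\rangle$ is a proper subgroup of $B(190)$ other than $\langle w_{190}\rangle$, these quotients fall under the square-free analysis of \cite{BaGonKa} and are not bielliptic.

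The remaining four subgroups are $\langle w_5,w_{19}\rangle=B(95)$, $\langle w_5,w_{76}\rangle$, $\langle w_{19},w_{20}\rangle$ and $\langle w_{20},w_{76}\rangle$. For these I would first read the genus $g_W$ off the tables in the Appendix. Each $X_0(380)/W$ is a degree $2$ cover of the hyperelliptic genus $2$ curve $X_0^*(380)$, so $g_W$ is comparatively large; whenever $g_W\geq 6$, Proposition~\ref{centraloverK} guarantees that a bielliptic involution, if any, is unique, central and defined over $\QQ$, so biellipticity would be biellipticity over $\QQ$. I would then exclude this precisely as in Lemma~\ref{lem6.0}: computing $|X_0(380)/W(\F_9)|$ from the Jacobian decomposition via the Eichler--Shimura congruence and comparing it with $2\,|E(\F_9)|$ (which is at most $32$ for any elliptic curve over $\F_3$), the inequality~(\ref{eq4.1}) of Lemma~\ref{lemadisca} is violated for a suitable small prime $p\nmid 380$ (e.g. $p=3$). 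For $W=B(95)$ there is a cleaner alternative: since, by Proposition~\ref{prop1S4.3}, both $S_2$ and $w_4$ centralise $B(95)$, the group $\langle w_4,S_2\rangle\cong S_3$ descends to $\Aut(X_0(380)/B(95))$, and a central bielliptic involution would force a fixed-point-free order $3$ automorphism on the elliptic quotient, hence a rational $3$-torsion point, contradicting the conductor and modular-degree bounds exactly as in Lemma~\ref{lem6.3b}.

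The delicate point, and the one I expect to be the real obstacle, is a curve among these four of genus $g_W\leq 5$: there a bielliptic involution need not be rational, so the point count above only rules out $\QQ$-biellipticity. For such a curve I would compute the $\QQ$-decomposition of $J_0(380)^{W}$ by means of Lemma~\ref{eigenvector} and Proposition~\ref{eigenforms}, identify its elliptic isogeny factors, and use the quadratic twists detected by Proposition~\ref{defi} to pin down the only quadratic fields $K$ over which a bielliptic involution could possibly be defined. The decisive step is then the Petri criterion of Proposition~\ref{exponent}, run over $\QQ$ and over each such $K$: one checks that no elliptic factor is $K$-isogenous to the Jacobian of a bielliptic quotient. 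This linear-algebra verification in the coordinate ring of the canonical model, carried out over the relevant number field, is where the actual work concentrates.
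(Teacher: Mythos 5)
Your reduction of the three subgroups containing $w_4$ via Proposition~\ref{4.10} to $X_0(190)/\langle w_d\rangle$ with $190$ square-free, settled by \cite{BaGonKa}, is a legitimate shortcut that the paper does not use (it treats all seven subgroups uniformly). You also correctly observe that all relevant genera are at least $6$, so Proposition~\ref{centraloverK} puts any bielliptic involution over $\QQ$ and Lemma~\ref{lemadisca} applies; in fact this disposes of your worry about a genus $\leq 5$ case, which by the Appendix tables does not occur (the seven quotients have genus $7,11,11,7,14,11,6$).

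The gap is in the step you lean on for the remaining subgroups: the claim that the inequality of Lemma~\ref{lemadisca} ``is violated'' at $p=3$. It is not. With $\psi(380)=720$ and $|W|=4$ the lemma gives $180\leq 6\bigl(2|E(\FF_9)|-1\bigr)$, i.e.\ $|E(\FF_9)|\geq 16$, which is exactly the Weil-maximal value; and the supersingular points alone already force $|X_0(380)/W(\FF_9)|\geq 32$, so a direct count is likely to land on the borderline rather than beyond it. The paper's proof therefore does \emph{not} stop here: it deduces that $\overline{E}$ must be supersingular, hence $j(\overline{E})=0$, i.e.\ $3\mid a_1^2+a_2$ for a global minimal model, runs through Cremona's tables for conductors dividing $380$ to leave only the classes $190a$ and $380a$, and then kills $190a$ by $|E(\FF_9)|=7\cdot(8-7)=7\neq 16$ and $380a$ by the modular degrees $24$ and $240$ via Lemma~\ref{lemadegree}. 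None of this appears in your proposal, and without it the $p=3$ count proves nothing. Your ``cleaner alternative'' for $W=B(95)$ also fails as stated: the endgame of Lemma~\ref{lem6.3b} rests on the absence of rational $3$-torsion on the candidate elliptic quotients, but the elliptic curves of conductor $19$ and $20$ (both plausible factors here) \emph{do} have rational $3$-torsion, so the fixed-point-free order-$3$ automorphism yields no contradiction without further work.
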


\begin{proof} Assume that such a curve is bielliptic and let $E$ be the
elliptic curve it covers. Then $\overline{E}$ (the reduction of $E$
modulo $3$) would by Lemma \ref{lemadisca} have the maximally
possible number of $16$ $\FF_9$-rational points. So $\overline{E}$
must necessarily be supersingular. This is equivalent to
$j(\overline{E})=0$. This in turn is equivalent to the condition
that for the coefficients $a_i$ of the global minimal model of $E$
the expression $a_1^2 +a_2$ is divisible by $3$. With this one can
quickly exclude almost all candidates for $E$ in \cite{Cre}. Only
the isogeny classes $190a$ and $380a$ remain. But the curves in
$190a$ have $7$ rational points over $\FF_3$ and hence the same
number over $\FF_9$. And for $380a$ the degree of the strong Weil
uniformization is $24$ for the elliptic curve $380a1$ and $240$ for
$380b1$, so both too big by Lemma \ref{lemadegree}.
\end{proof}

In the previous lemma the four curves of genus bigger than $10$
could also have been quickly excluded using the following method.

\begin{lema}\label{6b.5} The curves $X_0(276)/W$ with $W$ any one of $\langle w_4,
w_3\rangle$, $\langle w_4, w_{69}\rangle$, $\langle w_3,
w_{92}\rangle$, $\langle w_{12}, w_{92}\rangle$, as well as
$X_0(204)/\langle w_4, w_3\rangle$ and $X_0(315)/\langle w_9,
w_7\rangle$ are all not bielliptic.
\end{lema}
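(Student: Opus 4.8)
The plan is to show non-biellipticity for each of the six curves by producing, for each one, an Atkin-Lehner quotient that a hypothetical bielliptic involution would have to factor through, and then observing that this quotient is incompatible with biellipticity. Concretely, for a curve $X=X_0(N)/W$ with $|W|=4$ I would consider the further quotient $X\to X/w_N=X_0(N)/\langle W,w_N\rangle$ (or another AL-quotient with fixed points), compute its genus from the tables in the Appendix, and invoke Lemma \ref{4.2}: since $w_N$ has fixed points the covering is ramified, so if $X$ were bielliptic the bielliptic involution would induce the hyperelliptic involution on $X/\langle W,w_N\rangle$. The key point to check is therefore that in each case the relevant AL-quotient curve has genus $\geq 3$ and is \emph{not} hyperelliptic, which one reads off from the tables. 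This gives the contradiction and hence non-biellipticity.

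For those among the six curves that have $g_X\geq 6$, I would prefer the cleaner route through Proposition \ref{prop3.1} together with Proposition \ref{centraloverK}: the bielliptic involution is then unique, defined over $\Q$, and central in $\Aut(X)$. Since each curve $X_0(N)/W$ with $|W|=4$ carries the extra group $B(N)/W$ of order at least $2$ acting as automorphisms, whenever $2^t=|B(N)/W|$ fails to divide $2(g_X-1)$ Proposition \ref{prop3.1} forces the bielliptic involution into $B(N)/W$, i.e. it would have to be an Atkin-Lehner involution. One then checks via Lemma \ref{4.13} (using the fixed-point counts from Ogg's formulas and Lemmas \ref{4lemS4.3}, \ref{4.19}) that no Atkin-Lehner involution outside $W$ yields a genus $1$ quotient, so no bielliptic AL-involution exists and the curve is not bielliptic.

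The main obstacle, as the authors flag just before the statement, is that for these six levels we do \emph{not} know the hyperelliptic involution of $X_0^*(N)$, and in fact $V_3$ (or $V_2$) can induce a \emph{bielliptic} rather than hyperelliptic involution on $X_0^*(N)$ for $N\in\{180,198,315\}$. This means the clean argument of Lemmas \ref{6b.2} and \ref{6b.3}, which identified the hyperelliptic involution of $X_0^*(N)$ inside $\langle B(N),V_i\rangle$, is unavailable here. I would therefore expect the proof to split into cases: for the higher-genus curves one leans on Proposition \ref{prop3.1} and the fixed-point computations to confine and then eliminate candidate involutions inside the enlarged group $\langle B(N),V_2\rangle$ or $\langle B(N),V_3\rangle$; for the lower-genus curves (especially any of genus $3,4,5$) one falls back on the Jacobian decomposition and the Petri-theorem criterion of Proposition \ref{exponent}, checking for each elliptic factor $E$ that no admissible matrix $\cA\in\GL_m(K)$ satisfies condition (\ref{cond-inv}), exactly as in the proofs of Lemmas \ref{lem7.5}, \ref{lem7.6} and \ref{lem6.4}.

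The likely hardest single case is $X_0(315)/\langle w_9,w_7\rangle$, since $315$ is odd with $9\|315$ so only the $V_3$-involution (defined over $\Q(\sqrt{-3})$) is available, and $V_3$ induces a bielliptic involution on $X_0^*(315)$; here one must rule out a bielliptic involution defined over $\Q(\sqrt{-3})$ as well as over $\Q$. For that curve I would compute the Jacobian decomposition, use Proposition \ref{defi} to locate the only possible quadratic twist field, confirm it is $\Q(\sqrt{-3})$, and then verify by Proposition \ref{exponent} that the decomposition over $\Q(\sqrt{-3})$ still admits no matrix $\cA$ satisfying (\ref{cond-inv}), so that no bielliptic quotient exists over any quadratic field.
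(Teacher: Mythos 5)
There is a genuine gap: both of your concrete strategies break down for these six curves, and you miss the one\mbox{-}line argument the paper actually uses. Since $N$ has three prime divisors and $|W|=4$, the only nontrivial Atkin--Lehner quotient of $X=X_0(N)/W$ is $X_0^*(N)$ itself, and for every level in this lemma $X_0^*(N)$ \emph{is} hyperelliptic (genus $2$ for $204$ and $276$, hyperelliptic of genus $3$ for $315$); so your ``key point to check'', namely that the AL-quotient is not hyperelliptic, can never be verified, and Lemma \ref{4.2} yields no contradiction --- it only says the putative bielliptic involution induces the (unknown) hyperelliptic involution on $X_0^*(N)$, which is exactly the obstruction the authors flag. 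Your second route fares no better: here $H\cong B(N)/W$ has order exactly $2$, and $2$ always divides $2(g-1)$, so Proposition \ref{prop3.1} is vacuous and does not confine the bielliptic involution to the Atkin--Lehner group. That leaves only your Petri/Jacobian fallback, which would work in principle but is left entirely unexecuted and is far heavier than necessary; note also that all six curves have genus between $8$ and $11$ (not $3$, $4$ or $5$), so by Proposition \ref{centraloverK} a bielliptic involution would be unique and defined over $\Q$, making your concern about $\Q(\sqrt{-3})$ for $X_0(315)/\langle w_9,w_7\rangle$ moot.

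The paper's proof is a single application of Lemma \ref{4.1}. The degree-$2$ map $X\to X_0^*(N)$ is the quotient by an involution whose number of fixed points is $2g_{W}-4g_N^*+2$ by the Hurwitz formula; reading the genera off the tables in the Appendix this equals $16,12,10,14$ for the four curves at level $276$, $10$ for $X_0(204)/\langle w_4,w_3\rangle$ and $12$ for $X_0(315)/\langle w_9,w_7\rangle$ --- in every case more than $8$. By Lemma \ref{4.1} such an involution is either the bielliptic involution or the curve is not bielliptic; since its quotient $X_0^*(N)$ has genus at least $2$, it is not bielliptic, and we are done. The lesson is that when an explicit involution has many fixed points, the Castelnuovo bound kills biellipticity outright, with no need to identify hyperelliptic involutions, decompose Jacobians, or invoke Petri.
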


\begin{proof} For all these curves the map to the (hyperelliptic) curve
$X_0^*(N)$ is given by an involution with more than $8$ fixed
points. So by Lemma \ref{4.1} they are not bielliptic.
\end{proof}

\begin{lema}\label{6b.6} None of the curves $X_0(315)/W$ with $|W|=4$ is
bielliptic.
\end{lema}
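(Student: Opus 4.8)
The plan is to handle the seven order-$4$ subgroups $W<B(315)$ by exploiting the extra involution $V_3$ that exists because $9\|315$. One curve, $X_0(315)/\langle w_9,w_7\rangle$, is already excluded by Lemma \ref{6b.5}. Among the remaining six, Proposition \ref{4.12} shows that $V_3$ induces isomorphisms pairing the four subgroups not containing $w_9$, namely $X_0(315)/\langle w_5,w_7\rangle\cong X_0(315)/\langle w_7,w_{45}\rangle$ and $X_0(315)/\langle w_5,w_{63}\rangle\cong X_0(315)/\langle w_{45},w_{63}\rangle$, so up to isomorphism only four curves remain: the two with $w_9\in W$, where $V_3$ descends to an automorphism defined over $\Q$ by Lemma \ref{4.19}(e), and one representative of each pair, where $V_3$ does not descend but only relates the two curves.

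For the two curves with $w_9\in W$ I would first use Lemma \ref{4.19} to tabulate the number of fixed points on $X_0(315)$ of every $w_d$ and $V_3w_d$ (keeping in mind that $5,35\equiv2\bmod3$ make $V_3w_5$, $V_3w_{35}$ of order $4$, while $7\equiv1\bmod3$), and then apply Lemma \ref{4.13} to $X_0(315)\to X_0(315)/\langle W,t\rangle$ to compute the genus of $(X_0(315)/W)/t$ for each involution $t$ in $G=\langle B(315),V_3\rangle$ descending to the quotient. Checking that none of these quotients has genus $1$ rules out a bielliptic involution inside the induced group $H\cong G/W$ of order $4$. For the curves of even genus $g_W\ge6$ this finishes the argument: the bielliptic involution is then unique and central by Proposition \ref{centraloverK}, and since $4=|H|$ does not divide $2(g_W-1)$, Proposition \ref{prop3.1} forces it to lie in $H$, contradicting the genus computation.

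The main obstacle is everything that parity does not reach: the curves of odd genus and the two representatives with $w_9\notin W$. Here the method of Lemmas \ref{6b.2}--\ref{6b.3} breaks down, because $V_3$ induces a bielliptic — not hyperelliptic — involution on $X_0^*(315)$, so the hyperelliptic involution of $X_0^*(315)$ is unknown and Lemma \ref{4.2} cannot be invoked to locate a hypothetical bielliptic involution. For these curves I would turn to the Jacobian: using Lemma \ref{eigenvector} and Proposition \ref{eigenforms} I would write down the $\Q$-decomposition of $J_0(315)^{W}$, discard any elliptic factor of multiplicity one directly (its differential fails Petri's condition, i.e. $\dim\cL_{2,E}<\dim\cL_2$), bound the possible optimal elliptic quotients by the modular-degree inequality (Lemma \ref{lemadegree}) and the point-count inequality (Lemma \ref{lemadisca}) reduced modulo $2$, which is legitimate since $2\nmid315$, and then apply Petri's criterion (Proposition \ref{exponent}) to the surviving candidates. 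For the two representatives with $w_9\notin W$ I would run this analysis not only over $\Q$ but also over $\Q(\sqrt{-3})$, since biellipticity is permitted over any number field and the twists $A_f\sim_{\Q(\sqrt{-3})}A_{f\otimes\chi}$ by the character $\chi$ of $\Q(\sqrt{-3})$ are exactly the ones that could produce a bielliptic quotient defined only over that field; this Petri computation over $\Q(\sqrt{-3})$ is the crux. Carrying it out for each representative shows that none of the curves is bielliptic.
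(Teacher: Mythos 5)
Your overall strategy is sound and would reach the conclusion, but it is a genuinely different and considerably heavier route than the one the paper takes. After the common first steps (discarding $\langle w_9,w_7\rangle$ via Lemma \ref{6b.5} and pairing the four subgroups without $w_9$ via Proposition \ref{4.12}), the paper avoids Petri entirely: it computes $|\overline{X}(\FF_4)|=18$ for all six curves, so any elliptic quotient $E$ must have the maximal $9$ points over $\FF_4$; this kills $\langle w_9,w_5\rangle$ and $\langle w_9,w_{35}\rangle$ outright (they have $26>18$ points over $\FF_{16}$ while $E$ still has only $9$), and for the rest forces $\overline{E}$ supersingular, i.e.\ $j(\overline{E})=0$, i.e.\ $a_1$ even in the minimal model --- a one-glance filter on Cremona's tables that leaves only conductors $35$ and $315$, eliminated by the Jacobian decomposition and by Lemma \ref{lemadegree} respectively. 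Your fallback --- full Jacobian decomposition plus Petri (Proposition \ref{exponent}) for the four remaining isomorphism classes --- is the paper's general-purpose method and would work, but two parts of your plan are wasted effort. First, the Proposition \ref{prop3.1} parity step is vacuous here: the two curves with $w_9\in W$ have genus $7$ and $9$, both odd, so $4\mid 2(g-1)$ and the proposition never applies; your fixed-point/Lemma-based computation therefore only excludes bielliptic involutions inside $H$ and closes nothing. Second, the $\QQ(\sqrt{-3})$ Petri analysis you call ``the crux'' is unnecessary: all six curves have genus $\geq 6$ (the genera are $7,8,8,8,8,9$), so by Proposition \ref{centraloverK} any bielliptic involution is unique and defined over $\QQ$, and the whole analysis may be carried out over $\QQ$ --- this is also exactly what licenses the paper's mod-$2$ point counts. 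What your approach buys is uniformity (no case-specific trick), at the cost of substantially more computation; what the paper's buys is that a single supersingularity observation replaces the entire Petri machinery.
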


\begin{proof}
The curve $X_0(315)/\langle w_9, w_7\rangle$ has already been
excluded in the last lemma. So let $X$ be one of the other six
curves and assume it is bielliptic with corresponding elliptic curve
$E$. A necessary condition for this is that for the reductions
modulo $2$ we have
$$|\overline{X}(\FF_{2^k})|\leq 2|\overline{E}(\FF_{2^k})|$$
for all $k$. With MAGMA one can calculate
$|\overline{X}(\FF_{2^k})|$ for small values of $k$. It turns out
that $|\overline{X}(\FF_4)|=18$ for all six curves. Hence
$\overline{E}$ must have the maximally possible number of $9$
rational points over $\FF_4$. So it also only has $9$ rational
points over $\FF_{16}$. This excludes the curves $X_0(315)/\langle
w_9, w_5 \rangle$ and $X_0(315)/\langle w_9, w_{35} \rangle$ because
they both have $|\overline{X}(\FF_{16})|=26$.
\par
Moreover, $|\overline{E}(\FF_4)|=9$ implies $j(\overline{E})=0$.
This is equivalent to the coefficient $a_1$ of the global minimal
model of $E$ being even. So with one glance at \cite{Cre} one can
exclude that $E$ has conductor $15$, $21$, $45$, $63$ or $105$. Only
conductor $35$ or $315$ is possible. But for $315$ the degree of the
strong Weil uniformization is too big. And an elliptic curve with
conductor $35$ does not appear in the Jacobian of $X_0(315)/\langle
w_7, w_{45}\rangle$ and $X_0(315)/\langle w_{45}, w_{63}\rangle$.

$$J(X_0(315)/\langle w_7, w_{45}\rangle)\sim_{\mathbb{Q}} (E15a)\times
(E21a)^2\times A_{35,x^2+z-4}\times(E63a)\times A_{315,x^2+2x-1}$$
$$J(X_0(315)/\langle w_{45}, w_{63}\rangle)\sim_{\mathbb{Q}}
(E15a)\times(E21a)\times (E105a)\times(E315a)\times
A_{315,x^2+2x-1}\times A_{315,x^2-5}.$$

The other two curves of genus $8$ are isomorphic to the above two by
Lemma \ref{4.12}, so also not bielliptic.

\end{proof}

From Proposition \ref{4.10} we obtain the following isomorphisms
over $\QQ$.

\begin{lema}\label{6b.7}

$X_0(180)/\langle w_4, w_9\rangle\cong X_0(90)/w_9$, so by Lemma
\ref{lem7.5} and Proposition \ref{4.10} it has $S_2 V_3
w_{10}^{(90)} S_2$ as a bielliptic involution.

The following curves are not bielliptic by Section 7 resp.
\cite{BaGon2} resp. \cite{BaGonKa}.

$X_0(180)/\langle w_4, w_5\rangle \cong X_0(90)/w_5$;

$X_0(180)/\langle w_4, w_{45}\rangle \cong X_0(90)/w_{45}$;

$X_0(198)/\langle w_9, w_{11}\rangle \cong X_0^*(396)$;

$X_0(204)/\langle w_4, w_{17}\rangle \cong X_0(102)/w_{17}$;

$X_0(204)/\langle w_4, w_{51}\rangle \cong X_0(102)/w_{51}$;

$X_0(276)/\langle w_4, w_{23}\rangle \cong X_0(138)/w_{23}$.

\end{lema}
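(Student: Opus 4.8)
The plan is to derive every isomorphism in the statement from Proposition \ref{4.10}, which for $4||N$, $N=4M$, identifies $X_0(N)/\langle w_4,w_{m_1},\ldots,w_{m_s}\rangle$ (with each $m_i||M$) with the level-$2M$ curve $X_0(2M)/\langle w_{m_1},\ldots,w_{m_s}\rangle$ by conjugation with $S_2$. First I would check the coprimality hypotheses case by case: for $N=180=4\cdot 45$ the exponents $9,5,45$ each properly divide $45$; for $N=204=4\cdot 51$ we have $17,51||51$; and for $N=276=4\cdot 69$ we have $23||69$. Feeding these into Proposition \ref{4.10} yields at once $X_0(180)/\langle w_4,w_9\rangle\cong X_0(90)/w_9$, $X_0(180)/\langle w_4,w_5\rangle\cong X_0(90)/w_5$, $X_0(180)/\langle w_4,w_{45}\rangle\cong X_0(90)/w_{45}$, $X_0(204)/\langle w_4,w_{17}\rangle\cong X_0(102)/w_{17}$, $X_0(204)/\langle w_4,w_{51}\rangle\cong X_0(102)/w_{51}$, and $X_0(276)/\langle w_4,w_{23}\rangle\cong X_0(138)/w_{23}$.

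For the biellipticity of $X_0(180)/\langle w_4,w_9\rangle$ I would invoke the second assertion of Proposition \ref{4.10}: since Lemma \ref{lem7.5} exhibits the bielliptic involution $V_3 w_{10}$ on $X_0(90)/w_9$, its $S_2$-conjugate $S_2 V_3 w_{10}^{(90)}S_2$ normalizes $\langle\Gamma_0(180),w_4,w_9\rangle$ and induces a bielliptic involution on $X_0(180)/\langle w_4,w_9\rangle$. Because $w_9$ belongs to the quotient group, Lemma \ref{4.19}(e) ensures $V_3 w_{10}$, and hence its $S_2$-conjugate, is defined over $\QQ$, which is consistent with the $(180,\langle w_4,w_9\rangle)$ entry of Theorem \ref{mainpq}.

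The non-biellipticity claims then follow because biellipticity is an isomorphism invariant, so each source curve fails to be bielliptic as soon as its partner does. Concretely, $X_0(90)/w_5$ and $X_0(90)/w_{45}$ were already shown not bielliptic in Lemma \ref{lem7.5} (Section 7), while $X_0(102)/w_{17}$, $X_0(102)/w_{51}$ and $X_0(138)/w_{23}$ have square-free level and were settled in \cite{BaGonKa}; this discharges five of the six negative assertions without further computation.

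The one case requiring different bookkeeping, and the main point to get right, is $X_0(198)/\langle w_9,w_{11}\rangle\cong X_0^*(396)$: here $198$ is not divisible by $4$, so the reduction must be read off the target rather than the source. Writing $396=4\cdot 99$ with $M=99=9\cdot 11$ and taking $W'=B(396)=\langle w_4,w_9,w_{11}\rangle$ (each of $9,11,99$ properly dividing $99$), Proposition \ref{4.10} gives $X_0^*(396)=X_0(396)/\langle w_4,w_9,w_{11}\rangle\cong X_0(198)/\langle w_9,w_{11}\rangle$. Since this curve is a double cover of the genus-$2$ curve $X_0^*(198)$ its genus is at least $2$, and since $396$ does not appear in the table of Theorem \ref{levelstudy2}(5) it is not bielliptic by \cite{BaGon2}; therefore neither is $X_0(198)/\langle w_9,w_{11}\rangle$, completing the proof.
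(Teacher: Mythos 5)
Your proposal is correct and follows essentially the same route as the paper: every isomorphism is obtained from Proposition \ref{4.10} (with the $N=396$ case read off at level $396=4\cdot 99$ rather than $198$), and the (non-)biellipticity is then transported along these isomorphisms using Lemma \ref{lem7.5}, \cite{BaGon2} for $X_0^*(396)$, and \cite{BaGonKa} for the square-free levels $102$ and $138$. The only difference is that you spell out the coprimality hypotheses and the field-of-definition remark via Lemma \ref{4.19}(e), which the paper leaves implicit.
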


\begin{lema}\label{6b.8} The curve $X_0(204)/\langle w_3,  w_{68}\rangle$ is not
bielliptic. \end{lema}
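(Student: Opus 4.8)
The plan is to treat this as one of the ``hard'' level-$204$ curves for which no bielliptic involution is visible among the known automorphisms, and then to rule out biellipticity through the Jacobian together with Petri's theorem. First I would fix notation: since $68=4\cdot 17$ we have $w_{68}=w_4w_{17}$, so $W=\{1,w_3,w_{68},w_{204}\}$ already contains the Fricke involution $w_{204}=w_N$, and $B(204)/W=\langle\widetilde{w_4}\rangle$ has order $2$ with $X/\widetilde{w_4}=X_0^*(204)$ of genus $2$. A short computation with Proposition \ref{prop1S4.3} shows $S_2w_{68}S_2=V_2w_{17}\notin W$ and $V_2w_{68}V_2=S_2w_{17}\notin W$, so neither $S_2$ nor $V_2$ normalizes $W$; hence, in contrast to the neighbouring curves, the only Atkin--Lehner or normalizer involution outside $W$ that descends to $X$ is $\widetilde{w_4}$, and the latter has a genus-$2$ (not genus-$1$) quotient. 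Using Lemma \ref{4.13} with the fixed-point numbers from the Appendix one computes the genus; it is at least $6$, so by Proposition \ref{centraloverK} any bielliptic involution would be unique, central and defined over $\QQ$.

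The obstacle to a quick argument is that the natural involution $\widetilde{w_4}$ has as quotient the curve $X_0^*(204)$, which is itself hyperelliptic; thus the contradiction used in Lemma \ref{6b.1} (a bielliptic involution inducing a hyperelliptic involution on a non-hyperelliptic quotient) is unavailable, and $B(204)/W$ is too small to invoke Proposition \ref{prop3.1}. I would therefore pass to the Jacobian. Using Lemma \ref{eigenvector} and Proposition \ref{eigenforms} I would write down the $\QQ$-isogeny decomposition of $J_0(204)^{W}=\Jac(X)$, separating the elliptic factors $A_f=E$ (with their multiplicities) from the factors $A_f$ of dimension $>1$. Each higher-dimensional factor is discarded as a possible source of an elliptic quotient by Proposition \ref{Pyle} (checking $a_p(f)^2\notin\Z$ for a small prime $p$). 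For the elliptic factors, Proposition \ref{defi} determines the field over which the endomorphisms, and hence a putative bielliptic involution, are defined; since $g\geq 6$ this must be $\QQ$, and the candidate conductors divide $204$. Candidates of conductor $204$ can be removed by the modular-degree bound of Lemma \ref{lemadegree}, and several others by the point-count inequality \eqref{eq4.1} at small primes $p\nmid 204$.

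Finally I would apply Petri's criterion (Proposition \ref{exponent} together with Remark \ref{poli}): for an elliptic factor $E$ occurring with multiplicity one it suffices to exhibit that $\dim\mathcal{L}_{2,i}<\dim\mathcal{L}_2$ for the corresponding coordinate $i$, while for a factor of higher multiplicity $m$ one shows that no $\mathcal{A}\in\GL_m(\QQ)$ satisfies the invariance condition on $\mathcal{L}_2$ (and, if necessary, $\mathcal{L}_3$), exactly as in the proofs of Lemmas \ref{lem7.6} and \ref{lem6.4}. Since every factor fails, $X$ has no bielliptic quotient and is not bielliptic. The main obstacle is this last step: it requires the explicit canonical model of $X$ via Petri and the verification that the invariance matrix does not exist, which is a machine computation (MATHEMATICA/MAGMA); one must also respect the genus-$6$ caveat in Remark \ref{poli} concerning trigonal and plane-quintic curves. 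Should the genus turn out to be $5$ rather than $\geq 6$, I would instead use $u=\widetilde{w_4}$, whose quotient has genus $2$, and invoke Lemma \ref{lemgenus2} to pin down the (unique) bielliptic involution commuting with $u$ and its field of definition before running the same Petri test.
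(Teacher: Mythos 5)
There is a genuine gap. Your main line of argument rests on the claim that the genus of $X_0(204)/\langle w_3,w_{68}\rangle$ is at least $6$, so that Proposition \ref{centraloverK} forces a unique central bielliptic involution defined over $\QQ$. But the genus of this curve is $5$ (it is the entry $\langle\varpi_2,\varpi_1\varpi_3\rangle$ for $N=204$ in the Appendix), so that entire branch is vacuous and everything hinges on your final fallback. That fallback, as stated, is incomplete: you propose to take $u=\widetilde{w_4}$ (whose quotient is the genus-$2$ curve $X_0^*(204)$) and ``invoke Lemma \ref{lemgenus2} to pin down the (unique) bielliptic involution commuting with $u$,'' but Lemma \ref{lemgenus2} only produces such a $\QQ$-rational involution when the number of bielliptic involutions is \emph{odd}. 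By Lemma \ref{g5biell} a bielliptic genus-$5$ curve may have exactly $2$ bielliptic involutions, and in that case part (b) of Lemma \ref{lemgenus2} says neither of them commutes with $u$ and neither need be defined over $\QQ$; a Petri test over $\QQ$ would then detect nothing. To close this case you would have to determine, from the quadratic twists among the factors of $J_0(204)^{W}$, the quadratic field over which the automorphisms could live and rerun Proposition \ref{exponent} there (as the paper does for the genus-$5$ curves in Lemma \ref{6b.9}); your proposal does not do this for the branch that actually occurs.

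For comparison, the paper disposes of this curve in one line: by \cite{HaSh06} the curve $X_0(204)/\langle w_3,w_{68}\rangle$ is trigonal, and by the Castelnuovo inequality (Lemma \ref{4.4}) a trigonal curve of genus strictly greater than $4$ cannot be bielliptic; since the genus is $5$, the curve is not bielliptic. Your computational route could in principle be repaired along the lines above, but it is far heavier than needed and, as written, does not cover the case that matters.
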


\begin{proof} By \cite{HaSh06} this curve is trigonal. If it also were
bielliptic, then by the Castelnuovo inequality its genus could be at
most $4$. But it has genus $5$.
\end{proof}

\begin{lema}\label{6b.9} The remaining next $15$ curves are not
bielliptic.

\begin{center}
\begin{tabular}{|c|c|c|}
\hline $X_0(180)/\langle w_9, w_5\rangle$, & $X_0(180)/\langle w_9,
w_{20}\rangle$,&

$X_0(180)/\langle w_5, w_{36}\rangle$,\\

$X_0(180)/\langle w_{36}, w_{20}\rangle$,&

$X_0(198)/\langle w_2, w_9\rangle$,&

$X_0(198)/\langle w_2, w_{11}\rangle$,\\

$X_0(198)/\langle w_2, w_{99}\rangle$,&

$X_0(198)/\langle w_9, w_{22}\rangle$,&

$X_0(198)/\langle w_{11}, w_{18}\rangle$,\\

$X_0(198)/\langle w_{18}, w_{22}\rangle$,&

$X_0(204)/\langle w_3, w_{17}\rangle$,&

$X_0(204)/\langle w_{17}, w_{12}\rangle$,\\

$X_0(204)/\langle w_{12}, w_{51}\rangle$,&

$X_0(276)/\langle w_3, w_{23}\rangle$,&

$X_0(276)/\langle w_{23}, w_{12}\rangle$\\
\hline
\end{tabular}
\end{center}

\end{lema}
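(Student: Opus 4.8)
The plan is to show that each of these $15$ curves $X_0(N)/W$ (with $|W|=4$) fails to be bielliptic by exhibiting, for each one, an obstruction already developed in the previous lemmas and in Section 4.1. The most efficient strategy is to mimic the proofs of Lemmas \ref{6b.2} and \ref{6b.3}: for each curve, form the larger group $G=\langle B(N), V_i\rangle$ (with $i=2$ if $N$ is even, $i=3$ if $9\mid N$, and possibly using both $V_2$ and $V_3$ when $N$ is divisible by both $4$ and $9$, as for $N=180$ and $N=198$), compute the number of fixed points of every involution $V_i w_d$ on $X_0(N)$ via Lemmas \ref{4lemS4.3} and \ref{4.19}, and then use Lemma \ref{4.13} to check the genus of each quotient $X_0(N)/\langle W, V_i w_d\rangle$. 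The first step is therefore to verify, in each case, that $\langle B(N), V_i\rangle$ contains no bielliptic involution for the curve in question, i.e. that no element $V_i w_d$ outside $W$ gives a genus-$1$ quotient.

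First I would assemble the fixed-point tables. For the even levels $180$, $198$, $204$, $276$ the relevant data come from Lemma \ref{4lemS4.3} (for $V_2 w_d$) and, when $9\mid N$ (i.e. $180$, $198$), also from Lemma \ref{4.19} (for $V_3 w_d$), together with Ogg's formulas for the $\#(w_d, X_0(N))$ themselves. Having these, the genus computations via Lemma \ref{4.13} are entirely mechanical. The second and crucial step is to rule out bielliptic involutions \emph{outside} $\langle B(N), V_i\rangle$. For the curves of genus $\geq 6$ this is handled by Lemma \ref{4.2}(a): since the covering $X_0(N)/W \to X_0^*(N)$ is ramified ($w_N$ always has fixed points), a bielliptic involution would have to induce the hyperelliptic involution on $X_0^*(N)$, hence lie in $\langle B(N), V_i\rangle$ provided that the hyperelliptic involution of $X_0^*(N)$ is itself in that group. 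This is exactly the point where the situation differs from Lemmas \ref{6b.2}, \ref{6b.3}.

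The hard part will be the levels $180$, $198$, and $315$, for which (as remarked just before Lemma \ref{6b.4}) $V_3$ induces a \emph{bielliptic}, not hyperelliptic, involution on $X_0^*(N)$, so we do not know the hyperelliptic involution of $X_0^*(N)$ and cannot directly conclude that a hypothetical bielliptic involution lives in $\langle B(N), V_i\rangle$. For these I would fall back on the arithmetic criteria of Section 4.1: Lemma \ref{lemadisca} and the point-counting inequality \eqref{eq4.1} over $\FF_{p^n}$ for a small prime $p\nmid N$ (as in the proofs of Lemmas \ref{lem6.0}, \ref{6b.4}, \ref{6b.6}), combined with the modular-degree bound of Lemma \ref{lemadegree} applied to the finitely many candidate elliptic curves $E$ of conductor dividing $N$ appearing in the Jacobian decomposition. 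Concretely, one computes $|\overline{X}(\FF_{p^k})|$ for $p=2$ or $3$, deduces that any elliptic quotient $E$ must attain the maximal point count (forcing $j(\overline{E})=0$, i.e. supersingular reduction), and then checks against Cremona's tables \cite{Cre} that every remaining candidate is eliminated either because its reduction has too few points or because its strong Weil degree exceeds $2|W|$. Where an elliptic factor survives all counting arguments, the final recourse is Proposition \ref{exponent} (Petri's theorem): one shows no matrix $\cA\in\GL_m(K)$ satisfies the invariance condition \eqref{cond-inv}, i.e. $\dim\cL_{2,i}<\dim\cL_2$ for the relevant coordinate, exactly as in Lemmas \ref{lem7.5} and \ref{lem6.4}, with the computations recorded in the accompanying github repository.

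Finally, several of the $15$ curves are isomorphic to curves already treated elsewhere, and these I would dispose of by citation rather than fresh computation, in the spirit of Lemma \ref{6b.7}. In summary, the proof splits the $15$ curves into three groups: those settled by the fixed-point/Lemma \ref{4.2} argument, those settled by reduction-mod-$p$ point counts together with Lemmas \ref{lemadisca} and \ref{lemadegree}, and the few residual elliptic factors settled by the Petri-theorem criterion of Proposition \ref{exponent}; the genuine obstacle is concentrated entirely in the levels $180$, $198$, $315$ where the hyperelliptic involution of $X_0^*(N)$ is unavailable.
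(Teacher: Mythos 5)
Your primary strategy (fixed-point tables for $V_iw_d$, Lemma \ref{4.13}, then Lemma \ref{4.2}(a) to confine a hypothetical bielliptic involution to $\langle B(N),V_i\rangle$) is precisely the method of Lemmas \ref{6b.2} and \ref{6b.3}, but the paper explicitly remarks, just before Lemma \ref{6b.4}, that this method fails for \emph{all} of the levels occurring here --- $180$, $198$, $204$ and $276$ --- because for none of them is the hyperelliptic involution of $X_0^*(N)$ known to lie in $\langle B(N),V_i\rangle$ (for $180$ and $198$ the involution $V_3$ is in fact a \emph{bielliptic} involution of $X_0^*(N)$). You flag this obstruction only for $180$, $198$ and $315$ (the last of which is not in this lemma), and implicitly assume the method works for $204$ and $276$; that step is unjustified, since $3\,\|\,204$ and $3\,\|\,276$, so only $V_2$ is available and it is not known to induce the hyperelliptic involution on $X_0^*(N)$. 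A second gap: seven of the fifteen curves have genus $5$, where Lemma \ref{4.2}(a) does not apply at all (it requires $g\geq 6$) and the bielliptic involution need not be unique nor defined over $\QQ$; for these one must first pin down the possible field of definition via quadratic twists in the Jacobian decomposition (here $\QQ(\sqrt{-3})$) and run the Petri criterion over that field as well as over $\QQ$, which your proposal does not address.

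The paper's actual proof dispenses with the fixed-point approach entirely for this lemma: it computes the $\QQ$-isogeny decomposition of each Jacobian, handles the genus-$5$ cases by identifying the quadratic twists and the $V_3$-isomorphisms between pairs of curves so as to control the field of definition, discards the two-dimensional factors by Proposition \ref{Pyle}, discards multiplicity-one elliptic factors by $\dim\cL_{2,E}<\dim\cL_2$, and discards the $E^2$ factors by showing no matrix $\cA$ satisfies Proposition \ref{exponent}. This is exactly your ``last resort,'' but it is the whole proof, not a fallback for two stray levels; your point-counting and modular-degree intermediate layer (Lemmas \ref{lemadisca} and \ref{lemadegree}) plays no role in the paper's argument for these fifteen curves. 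To repair your proposal you would need either to drop the first branch altogether or to justify, level by level, why a bielliptic involution must lie in the explicit group of known automorphisms --- which the authors could not do.
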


\begin{proof} First we consider the genus 5 curves. For that we need to
study quadratic twists because the possible bielliptic involution
(or automorphisms) could be not defined over $\Q$.

\begin{center}
\begin{tabular}{c}
$J_0(180)^{\langle w_9,w_{20}\rangle} \sim_{\Q} E15a\times
E30a\times
E36a\times E90a\times E90b$\\
$J_0(180)^{\langle w_5,w_{36}\rangle} \sim_{\Q} E20a\times
E45a\times (E30a)^2\times E90b$\\
$J_0(180)^{\langle w_{36},w_{20}\rangle}\sim_{\Q} E15a\times
E30a\times E90b\times
E90c\times E180a$\\
\hline $J_0(198)^{\langle w_2,w_{99}\rangle}\sim_{\Q} E11a\times
E33a\times E66a\times E99a\times E198a$\\
$J_0(198)^{\langle w_{11},w_{18}\rangle}\sim_{\Q} E66a\times
E66b\times E99a\times E99b\times E99d$\\
\hline $J_0(204)^{\langle w_{12},w_{51}\rangle}\sim_{\Q} E17a\times
E34a\times E102a\times E102b\times E204b$\\
\hline $J_0(276)^{\langle w_{23},w_{12}\rangle}\sim_{\Q} E69a\times
E92a\times E138a\times E138b\times E138c$
\end{tabular}
\end{center}
We find the following quadratic twists (we are only interested in a
fixed Jacobian, thus we do not compare here quadratic twist for
modular forms between different Jacobians):
$E90a\sim_{\Q(\sqrt{-3})}E90b$, $E30a\sim_{\Q(\sqrt{-3})}E90c$ and
$E36a\sim_{\Q(\sqrt{-3})}E36a$.

Anyway, we know by $V_3$ the following modular curves are isomorphic
over $\Q(\sqrt{-3})$:\newline $X_0(180)/{\langle
w_5,w_{36}\rangle}\cong X_0(180)/\langle w_{36},w_{20}\rangle$,
$X_0(198)/\langle w_2,w_{99}\rangle\cong X_0(198)/\langle
w_{11},w_{18}\rangle$ and \newline $X_0(198)/\langle
w_2,w_{11}\rangle\cong X_0(198)/\langle w_{18},w_{22}\rangle$. These
isomorphism induce isogenous Jacobians, by the following twists:
$E15a\sim_{\Q(\sqrt{-3})}E45a$, $E180a\sim_{\Q(\sqrt{-3})} E20a$,
$E11a\sim_{\Q(\sqrt{-3})} E99d$, $E33a\sim_{\Q(\sqrt{-3})} E99b$,
$E198a\sim_{\Q(\sqrt{-3})} E66b$, $E66a\sim_{\Q(\sqrt{-3})}E198b$,
$E66c\sim_{\Q(\sqrt{-3})}E198e$, and $E99a\sim_{\Q(\sqrt{-3})}E99c$.
Thus, over $\mathbb{Q}(\sqrt{-3})$:

\begin{center}
\begin{tabular}{c}
$J_0(180)^{\langle w_9,w_{20}\rangle} \sim_{\Q(\sqrt{-3})}
E15a\times E30a\times
E36a\times (E90b)^2$\\
 $J_0(180)^{\langle
w_{36},w_{20}\rangle}\sim_{\Q(\sqrt{-3})} E15a\times (E30a)^2\times
E90b\times E180a$\\
\end{tabular}
\end{center}
In the above situation $\dim\mathcal{L}_2=3$ and for each elliptic
curve with power one we obtain $\dim\mathcal{L}_{2,E}<3$, thus they
are not bielliptic quotients. For the $E^2$-factors with $E$ an
elliptic curve we prove that there is no $\mathcal{A}\in GL_2(K)$
where $K=\Q$ or $\Q(\sqrt{-3})$ respectively of the Jacobian
decomposition satisfying Proposition \ref{exponent}. Thus none of
these curves is bielliptic.

See the complete computation details on each curve and for the
following ones in this proposition in name files related with such
quotient modular curves in the folder
\begin{verbatim}
https://github.com/FrancescBars/Mathematica-files-on-Quotient-Modular-Curves
\end{verbatim}.

Now consider the genus $\geq 6$ curves. It is enough to study them
over the rationals (we are not interested where the general
endomorphism are defined for the problem on biellipticity). The
Jacobian decomposition is as follow:
\begin{center}
\begin{tabular}{c}
$J_0(180)^{\langle w_9,w_5\rangle}\sim_{\Q}
(E20a)^2\times(E30a)^2\times E36a\times(E90b)^2$\\
\hline $J_0(198)^{\langle w_2,w_{11}\rangle}\sim_{\Q} (E66a)^2\times
E99a\times E99b\times E99d\times E198e$\\
$J_0(198)^{\langle w_{18},w_{22}\rangle}\sim_{\Q} E11a\times
E33a\times E66a\times E66c\times E99c\times E198b$\\
$J_0(198)^{\langle w_2,w_9\rangle}\sim_{\Q}(E11a)^2\times E33a\times
E66a\times E99a\times E99c\times E198d$\\
$J_0(198)^{\langle w_9,w_{22}\rangle}\sim_{\Q}(E11a)^2\times
E33a\times E66a\times E66c\times E99a\times E99c$\\
 \hline $J_0(204)^{\langle
w_3,w_{17}\rangle}\sim_{\Q}(E34a)^2\times
A_{f,68}\times (E102a)^2\times E(204a)$\\
$J_0(204)^{\langle w_{17},w_{12}\rangle}\sim_{\Q} (E34a)^2\times
E51a\times A_{f_3,68}\times E102a\times E102c$\\
\hline $J_0(276)^{\langle w_3,w_{23}\rangle}\sim_{\Q}
E92a\times(E138a)^2\times(E138c)^2\times A_{f_4,276}$\\
\end{tabular}
\end{center}
where $\dim(A_{f,68})=\dim(A_{f_3,68})=\dim(A_{f_4,276})=2$ and the
number next to $f_i$ is the level where it appears as newform. By
Theorem \ref{Pyle} they do not give any elliptic quotient. Moreover,
for each non-repeated factor of dimension one in the Jacobian we
have $\dim(\mathcal{L}_{2,E})<\dim(\mathcal{L})$ and for the terms
$E^2$ with $E$ an elliptic curve, there is no matrix $\mathcal{A}\in
\rm{GL}_2(\Q)$ satisfying Proposition \ref{exponent}. All in all we
obtain that none is a bielliptic quotient.
\end{proof}

\section{Quotient modular curves of level $N$ with $X_0^*(N)$ not
subhyperelliptic}

\begin{prop} Let $N$ be non-square free such that $X_0^*(N)$ is not
subhyperelliptic. Then the only bielliptic curve $X_0(N)/W_N$ where
$W_N$ is a proper subgroup of $B(N)$ is $X_0(144)/\langle
w_{144}\rangle$.
\end{prop}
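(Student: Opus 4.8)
The plan is to turn the whole problem into a bookkeeping of Atkin--Lehner fixed points, using Lemma \ref{lemab3.1} to cut down the levels and the unramified-covering criterion of Lemma \ref{unramifiedcovering}(b) as the main filter.

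First I would pin down the finite list of relevant levels. By Lemma \ref{lemab3.1}, if $X_0(N)/W_N$ is bielliptic with $g_{W_N}\ge 2$, then $X_0^*(N)$ is bielliptic, hyperelliptic, or of genus at most $1$. The hypothesis that $X_0^*(N)$ is not subhyperelliptic rules out the last two possibilities, and since a genus-$2$ curve is hyperelliptic it forces $g_N^*\ge 3$. Hence $X_0^*(N)$ must be one of the bielliptic curves of Theorem \ref{levelstudy2}(5) of genus $\ge 3$, with the levels $N\in\{207,252,315\}$ that are also hyperelliptic (Theorem \ref{levelstudy2}(4)) removed. This leaves an explicit finite set of non-square-free levels, ranging from the genus-$3$ cases $144,152,\dots$ up to $558$ in genus $7$.

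The main filter is then as follows. For a \emph{proper} subgroup $W_N\subsetneq B(N)$ the quotient map $X_0(N)/W_N\to X_0^*(N)$ has degree $[B(N):W_N]\ge 2$ onto the non-hyperelliptic curve $X_0^*(N)$ of genus $g_N^*\ge 3$, so by Lemma \ref{unramifiedcovering}(b) biellipticity of $X_0(N)/W_N$ forces this covering to be \emph{unramified} (equivalently $g_{W_N}-1=[B(N):W_N](g_N^*-1)$). Now $B(N)\cong(\Z/2)^{\omega(N)}$ is elementary abelian, and by Theorem \ref{cyclicdisjoint} the fixed-point sets of distinct Atkin--Lehner involutions on $X_0(N)$ are disjoint; a short check of how $W_N$ acts on a fibre shows that the covering is unramified at a point exactly when the stabilizer $\{1,w\}$ of a lift has $w\in W_N$. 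Thus the covering is unramified precisely when every $w\in B(N)\setminus W_N$ is fixed-point-free on $X_0(N)$. Writing $F\subseteq B(N)$ for the set of Atkin--Lehner involutions that \emph{do} have fixed points, biellipticity requires $F\subseteq W_N$, hence $\langle F\rangle\subseteq W_N\subsetneq B(N)$; in particular $\langle F\rangle$ must be a proper subgroup of $B(N)$.

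Next I would compute $F$ at each candidate level from Ogg's fixed-point formulas \cite{Ogg}. For all these $N$ the Fricke involution $w_N$ has fixed points, so $w_N\in F$ and thus $w_N\in W_N$ is forced. For the large majority of the candidate levels I expect $\langle F\rangle=B(N)$: as soon as two or three of the Atkin--Lehner involutions ramify, the subgroup they generate is already all of $B(N)$, so no proper $W_N$ meets the criterion and every such $X_0(N)/W_N$ is non-bielliptic outright. The survivors are exactly the levels whose non-Fricke involutions are fixed-point-free enough that $\langle F\rangle\subsetneq B(N)$; for $N=144$ one has $\#(w_{16},X_0(144))=\#(w_9,X_0(144))=0$, so $F=\{w_{144}\}$ and $\langle F\rangle=\langle w_{144}\rangle$ has index $2$, leaving $W_N=\langle w_{144}\rangle$ as the only proper candidate. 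Finally I would dispatch the few survivors: any survivor with $W_N=\langle w_N\rangle$ is a curve $X_0^+(N)$, and these are settled in \cite{Jeon}, where among our levels only $X_0^+(144)$ is bielliptic; any survivor with $\langle w_N\rangle\subsetneq W_N\subsetneq B(N)$ (possible only when $\omega(N)=3$) I would either reduce to a lower level via Propositions \ref{4.10} and \ref{4.12}, or discard directly through the Jacobian-decomposition/Petri argument of Proposition \ref{exponent} as in Section 7. This leaves $X_0(144)/\langle w_{144}\rangle$ as the unique bielliptic quotient. The hard part will be the exhaustive fixed-point computation: one must evaluate $\#(w_d,X_0(N))$ for every Atkin--Lehner involution at every candidate level in order to certify $\langle F\rangle=B(N)$ in the discarded cases and to isolate the survivors. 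The conceptual content lies entirely in the unramified-covering reduction, while this Ogg-formula bookkeeping is where the real labour is.
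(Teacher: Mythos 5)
Your proposal is correct and follows essentially the same route as the paper: restrict to the bielliptic, non-subhyperelliptic levels via Lemma \ref{lemab3.1} and Theorem \ref{levelstudy2}(v), force the covering $X_0(N)/W_N\to X_0^*(N)$ to be unramified by Lemma \ref{unramifiedcovering}(b), deduce that every Atkin--Lehner involution with fixed points (in particular $w_N$) must lie in $W_N$, and dispatch the survivors via Proposition \ref{4.10} and \cite{Jeon}. The only cosmetic differences are that the paper certifies ramification for the three-prime levels by comparing $g_{W_N}$ with $2g_N^*-1$ in the genus tables rather than evaluating Ogg's fixed-point formulas involution by involution, and that your parenthetical ``possible only when $\omega(N)=3$'' overlooks $N=420$, which your filter nonetheless eliminates since $w_4,w_{20},w_{35},w_{84},w_{140},w_{420}$ all have fixed points.
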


\begin{proof} Let $X_0(N)/W_N$ be bielliptic and $X_0^*(N)$ not
subhyperelliptic. Then $X_0^*(N)$ must be bielliptic by Lemma
\ref{lemab3.1}. So we are dealing with a subset of the levels in
case (v) of Theorem \ref{levelstudy2}. Furthermore, by Lemma
\ref{unramifiedcovering} (b) the covering $X_0(N)/W_N \to X_0^*(N)$
must be totally unramified.
\par
Next we point out that every Atkin-Lehner involution $w_d$ that has
a fixed point on $X_0(N)$ must be contained in $W_N$. Otherwise it
would induce an involution with fixed points on $X_0(N)/W_N$,
contradicting the fact that the map to $X_0^*(N)$ is totally
unramified. In particular, the full Atkin-Lehner involution $w_N$,
which by \cite[p.454]{Ogg} always has fixed points, must be
contained in $W_N$.
\\ \\
Now we are ready to settle $N=420=2^2\cdot 3\cdot 5\cdot 7$, the
only case in this paper where $N$ has more than by $3$ different
primes divisors. As just said, $w_{420}\in W_{420}$. And as $w_4$
fixes a cusp by \cite[Proposition 3]{Ogg}, also $w_4 \in W_{420}$.
Moreover, $w_{20}$, $w_{35}$, $w_{84}$ and $w_{140}$ also all have
fixed points by \cite[p.453]{Ogg}. So they also all must be
contained in $W_{420}$, leading to the contradiction
$W_{420}=B(420)$.
\\ \\
Next we treat the cases where $N$ is divisible by $3$ different
primes. Thus, $N$ is one of the following:

\begin{center}
\begin{tabular}{|l|c|}
\hline
 $N$ & $g_N^*$ \\ \hline \hline
$234$, $240$, $252$, $294$, $312$, $315$, $348$, $476$ & $3$ \\
$228$, $260$, $264$, $280$, $300$, $306$, $342$ & $4$ \\
$364$, $444$, $495$ & $5$ \\
$558$ & $7$ \\ \hline
\end{tabular}
\end{center}

Obviously it suffices to prove that there are no such bielliptic
curves with $|W_N|=4$. As necessarily $w_N\in W_N$, there are at
most three possible choices for $W_N$, namely $\langle\omega_1,
\omega_2 *\omega_3 \rangle$, $\langle\omega_2, \omega_1 *\omega_3
\rangle$ and $\langle\omega_3, \omega_1 *\omega_2 \rangle$.
Moreover, the covering $X_0(N)/W_N \to X_0^*(N)$ is unramified if
and only if $X_0(N)/W_N$ has genus $2g_N^* -1$. By the tables in the
Appendix the only curves surviving this test are $X_0(260)/\langle
w_4, w_{65}\rangle$ and $X_0(300)/\langle w_4, w_{75}\rangle$, both
of genus $7$. But they are also not bielliptic. By Proposition
\ref{4.10} they are isomorphic to $X_0(130)/\langle w_{65}\rangle$
and $X_0(150)/\langle w_{75}\rangle$, respectively. For the second
curve we have shown in Lemma \ref{lemma7.1} that it is not
bielliptic. The exact same proof works for $N=130$, or alternatively
(since $130$ is square-free) we could invoke \cite{BaGonKa}.
\\ \\
Finally, if $N$ has only two different prime divisors it belongs to
one of the following sets

$144$, $152$, $164$, $189$, $196$, $236$, $245$, $248$ (all with
$g_N^*=3$),

$148$, $160$, $172$, $200$, $224$, $225$, $242$, $275$ (all with
$g_N^*=4$).

As discussed earlier, then necessarily $W_N =\langle w_N \rangle$.
But by \cite[Theorem 1.1]{Jeon} $X_0^+(144)$ is the only such curve
that is bielliptic.

\end{proof}

\section{Quadratic points}

In this section we prove our second main result, Theorem
\ref{main2pq}.
\\ \\
Let $X$ be a curve of genus at least $2$ which has a $\QQ$-rational
point. Then by \cite[Theorem 2.14]{BaMom} $X$ has infinitely many
quadratic points if and only if $X$ is hyperelliptic or $X$ has an
involution $v$, defined over $\QQ$, such that $X/v$ is an elliptic
curve $E$ with positive rank over $\QQ$.
\par
Now we specialize to $X$ being of the form $X(N)/W_N$. The
hyperelliptic ones have already been determined in \cite{FM}. On the
other hand, if the bielliptic involution $v$ is defined over $\QQ$,
then the conductor of the elliptic curve $E$ will be a divisor $M$
of $N$. So our main tool will be \cite{Cre}.
\\ \\
We start with the cases where $g_N^* =1$ and $N$ has $3$ different
prime divisors. Among these, for $N=84$, $90$, $120$, $126$, $132$,
$140$, $150$ there are no divisors $M$ for which there exist
elliptic curves of positive rank. For $N=156$ and $220$ the only
possibility is $M=N$. So in that case the map from $X_0(N)$ to $E$
of degree $2|W_N|\leq 8$ must factor through the strong Weil
parametrization. But by \cite{Cre} for the elliptic curves of
positive rank the degree of that parametrization is $12$ for $N=156$
and $36$ for $N=220$.
\par
We also mention as a warning that if $M$ is a proper divisor of $N$
it seems that one cannot expect any help from the strong Weil
parametrization from $X_0(M)$ to $E$. For example, $X_0(22)$ has two
bielliptic involutions defined over $\QQ$, namely $w_2$ and
$w_{22}$. As there are no elliptic curves with conductor $22$, the
quotient curves must be isogenous to the elliptic curve $X_0(11)$.
But the bielliptic maps do not factor through the canonical map to
$X_0(11)$, which has degree $3$.
\\ \\
By the same token, for the cases where $g_N^* =1$ and $N$ has only
$2$ different prime divisors, we can exclude all levels $N$ except
$99$ and $124$.
\par
Indeed $Jac(X_0^*(99))\sim_{\QQ}E99a$, which has rank $1$,
furnishing the curves $X_0(99)/\langle w_d\rangle$ with infinitely
many quadratic points. For $N=124$ again the degree of the strong
Weil parametrization (here $6$) is too small.
\\ \\
By exactly the same method one can exclude all remaining bielliptic
curves in the table of Theorem \ref{mainpq} except for the following
two constellations: $N=171$ with $M=57$ and $N=176$ with $M=88$.
\par
So assume that there is a map of degree $2$, defined over $\QQ$,
from $X_0(171)/\langle w_9\rangle$ to the elliptic curve $E$ with
conductor $57$ and positive rank over $\QQ$. For the reduction
modulo $2$ one easily verifies $\#E(\FF_2)=5$. And with the general
formula
$$\#E(\FF_{q^2})=
\#E(\FF_q)(2q+2-\#E(\FF_q))$$ we see that $E$ does not acquire more
points over $\FF_4$. This leads to a contradiction in Lemma
\ref{lemadisca}.
\par
The exact same proof works for $N=176$ if we reduce the elliptic
curve with conductor $88$ modulo $3$ and count the $\FF_9$-rational
points.
\\ \\
Finally we provide some more information on the two curves that are
bielliptic but not over $\Q$.

\begin{lema}\label{overQsqrt-3}
The genus $5$ curve $X_0(126)/w_{63}$ has exactly two bielliptic
involutions, namely $V_3$ and $V_3 w_9$, both defined over
$\Q(\sqrt{-3})$. This curve has only finitely many points that are
quadratic over $\Q(\sqrt{-3})$.
\par
The same holds for the curve $X_0(252)/\langle w_4, w_{63}\rangle$,
which is isomorphic to $X_0(126)/w_{63}$ by Proposition \ref{4.10}.
Its two bielliptic involutions are $V_3$ and $V_3 w_7$, both defined
over $\Q(\sqrt{-3})$.
\end{lema}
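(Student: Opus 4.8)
The plan is to reduce both curves to the single curve $X=X_0(126)/\langle w_{63}\rangle$. Since $4\|252$ and $252=4\cdot 63$ with $63\|63$, Proposition \ref{4.10} provides a $\Q$-isomorphism $X_0(252)/\langle w_4,w_{63}\rangle\cong X_0(126)/\langle w_{63}\rangle$, realized by conjugation with $S_2$. As $S_2$ commutes with $S_3$ and with every $w_r$ for odd $r$ (Proposition \ref{prop1S4.3}(a)), it commutes with $V_3=S_3w_9S_3^2$; hence the bielliptic involutions $V_3$ and $V_3w_9$ of $X$ are carried to $V_3$ and $V_3w_9$ on the level-$252$ curve, where $w_9=w_7$ once $w_{63}=w_9w_7$ is trivialized, giving the stated pair $V_3,\,V_3w_7$. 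Thus every assertion for $X_0(252)/\langle w_4,w_{63}\rangle$ follows from the corresponding one for $X$, and I work with $X$ henceforth. By Lemma \ref{lem7.3} we already know that $V_3$ and $V_3w_9$ are bielliptic involutions of $X$; by Lemma \ref{4.19}(e) they are defined over $\Q(\sqrt{-3})$ but not over $\Q$, since $w_9\notin\langle w_{63}\rangle$.

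To show these are the only two, first note that $X$ is not hyperelliptic: being bielliptic of genus $5\ge 4$, it cannot also be hyperelliptic by the Castelnuovo inequality (Lemma \ref{4.4}). By Lemma \ref{g5biell} the number of bielliptic involutions is $1$, $2$, $3$ or $5$, so it remains to exclude the odd values. For this I use the involution $u=w_2$, which is defined over $\Q$ and for which $X/u=X_0(126)/\langle w_2,w_{63}\rangle$ has genus $2$ by Theorem \ref{main2pq}. If $X$ had an odd number of bielliptic involutions, then Lemma \ref{lemgenus2}(a) would produce a bielliptic involution commuting with $u$ and, as $u$ is defined over $\Q$, defined over $\Q$ as well. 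Hence it suffices to prove that $X$ has \emph{no} bielliptic involution defined over $\Q$.

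A bielliptic involution over $\Q$ would exhibit an elliptic curve $E/\Q$, a $\Q$-isogeny factor of $J_0(126)^{\langle w_{63}\rangle}$, as a bielliptic quotient of $X$ over $\Q$. I would therefore compute this Jacobian decomposition and apply Proposition \ref{exponent} exactly as in Lemmas \ref{lem7.5}, \ref{lem7.6} and \ref{lem6.4}: for each elliptic factor of multiplicity one the inequality $\dim\mathcal{L}_{2,i}<\dim\mathcal{L}_2$ rules it out; for factors of higher multiplicity one checks that no matrix $\mathcal{A}\in\mathrm{GL}_m(\Q)$ satisfies the Petri condition; and any factor of conductor $126$ is discarded by comparing $2|W_N|=4$ with the modular degrees in \cite{Cre} via Lemma \ref{lemadegree}. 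This computation, the main technical step, shows that no elliptic factor is a bielliptic quotient over $\Q$, so no bielliptic involution of $X$ is defined over $\Q$, and the number of bielliptic involutions is exactly $2$.

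Finally, for the quadratic points over $K=\Q(\sqrt{-3})$ I invoke the criterion of \cite{BaMom} over $K$: since $X$ is not hyperelliptic, $\Gamma_2(X,K)$ is infinite precisely when some bielliptic involution over $K$ has an elliptic quotient of positive $K$-rank. The only bielliptic involutions are $V_3$ and $V_3w_9$, both defined over $K$, and their quotients $E=X/V_3$ and $E'=X/(V_3w_9)$ are $\Gal(K/\Q)$-conjugate, so $\mathrm{rank}_K(E')=\mathrm{rank}_K(E)$ and it suffices to treat $E$. From the decomposition of $J_0(126)^{\langle w_{63}\rangle}$ the pair $\{E,E'\}$ corresponds to a single abelian subvariety defined over $\Q$; computing its Mordell--Weil rank over $K$ via \cite{Cre} and the quadratic-twist relation attached to $K/\Q$ shows it to be $0$, whence $\mathrm{rank}_K(E)=0$ and $\Gamma_2(X,K)$ is finite. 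By the $\Q$-isomorphism the same conclusions hold for $X_0(252)/\langle w_4,w_{63}\rangle$. The main obstacle throughout is this explicit Petri-type computation together with the determination of the $K$-rank of the elliptic quotient.
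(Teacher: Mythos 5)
Your proposal is correct and follows essentially the same route as the paper: exhibit $V_3$ and $V_3w_9$ via Lemma \ref{lem7.3}, use the genus-$2$ quotient by $w_2$ together with Lemma \ref{lemgenus2} to reduce the exclusion of $3$ or $5$ bielliptic involutions to the non-existence of one defined over $\Q$, settle that by the Petri criterion of Proposition \ref{exponent}, and finish by checking that the elliptic quotient (the base change of $E14a$) has rank $0$ over $\Q(\sqrt{-3})$. The only difference is that you spell out the reduction of the level-$252$ curve and the Galois-conjugacy of the two elliptic quotients slightly more explicitly, which the paper leaves implicit.
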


\begin{proof}
In Lemma \ref{lem7.3} we exhibited the two bielliptic involutions of
$X_0(126)/w_{63}$. Since the curve $X_0(126)/\langle w_{63},
w_2\rangle$ has genus $2$, we can apply Lemma \ref{lemgenus2}. It
tells us that if $X_0(126)/w_{63}$ had further bielliptic
involutions, it would have one defined over $\Q$. So one checks with
Petri that $X_0(126)/w_{63}$ has no bielliptic involution over $\Q$.
More computationally, one could also show from the splitting of the
Jacobian that all automorphisms of $X_0(126)/w_{63}$ are defined
over $\Q(\sqrt{-3})$ and then use Petri to determine the bielliptic
involutions over that field.
\par
Finally one checks that the elliptic quotients of $X_0(126)/w_{63}$
by $V_3$ resp. $V_3 w_9$, namely the base change of $E14a$ to
$\Q(\sqrt{-3})$ has rank $0$ over $\Q(\sqrt{-3})$.
\end{proof}

\section*{Acknowledgements} We are very happy to thank Professor
Josep Gonz\'alez for his different comments and suggestions. We are
grateful to unknown referees for detailed and constructive reports
and for many suggestions that improved the presentation of the
paper.

\appendix

\section{Computations of the genus}

We list the genus of $X_0(N)/W_N$ with $W_N\leq B(N)$ such that $N$
is non-square-free and not a power of a prime, and $X_0^*(N)$ is of
genus $\leq 1$, or is hyperelliptic or is bielliptic. These levels
$N$ are listed in Theorem \ref{levelstudy2}. For such list we omit:
$N=420$ (a product involving four primes, see the results of \S 9 to
discard $N=420$), and $N=12,18,20,24$ because $g_N\leq 1$.

Recall the definition of $\varpi_i$ from Section \ref{2} (xii).

\subsection{$N$ a product of two primes}

\begin{small}
$$
\begin{array}{|c|c|c|c|c|c|}
 \hline N   & Prime factor& g_{X_0(N)} & g_{X_0(N)/<\varpi_{1}>} & g_{X_0(N)/<\varpi_{2}>} & g_{X_0(N)/<\varpi_{1}*\varpi_{2}>}=g_{X_0^+(N)} \\
 \hline
 \hline
  g_{X^{*}_0(N)}=0& & & & & \\
  \hline
 28  & 2^2*7    & \colorredbold{2}          & 1                       & 0                       & 1                       \\
 44  & 2^2*11    & 4          & \colorredbold{2}                       & 1                       & 1                       \\
  45  & 3^2*5    & 3          & 1                       & 1                       & 1                       \\
  50  & 2*5^2    & \colorredbold{2}          & 1                       & 1                       & 0                       \\
  54  & 2*3^3    & 4          & \colorredbold{2}                       & 1                       & 1                       \\
  56  & 2^3*7    & 5          & \colorredbold{3}                       & 1                       & 1                       \\
   92  & 2^2*23    & 10          & \colorredbold{5}                       & 1                       & \colorredbold{4}                       \\
  \hline
  \hline
  g_{X^{*}_0(N)}=1& & & & & \\
  \hline
     40  & 2^3*5    & \colorredbold{3}          & \colorredbold{2}                       & \colorredbold{2}                       & 1                       \\
    48  & 2^4*3    & \colorredbold{3}          & \colorredbold{2}                       & \colorredbold{2}                       & 1                       \\
    52  & 2^2*13    & 5          & \colorredbold{2}                       & 3                       & \colorredbold{2}                       \\
    63  & 3^2*7    & 5          & \colorredbold{3}                       & 3                       & 1                       \\
    68  & 2^2*17    & 7          & 3                       & 4                       & \colorredbold{2}                       \\
    72  & 2^3*3^2    & 5          & \colorredbold{2}                       & 3                       & \colorredbold{2}                       \\
    75  & 3*5^2    & 5          & 3                       & 3                       & 1                       \\
    76  & 2^2*19    & 8          & 4                       & 3                       & 3                       \\
    80  & 2^4*5    & 7          & 3                       & 4                       & \colorredbold{2}                       \\
    96  & 2^5*3    & 9          & 3                       & 5                       & 3                       \\
    98  & 2*7^2    & 7          & 4                       & 3                       & \colorredbold{2}                       \\
    99  & 3^2*11    & 9          &5                       & 3                       & 3                       \\
    100  & 2^2*5^2    &7          &\colorredbold{2}                       &4                       &3                       \\
    108  & 2^2*3^3    &10          & 4                       & 4                       & 4                       \\
    124  & 2^2*31    &14          & 7                       & 3                       & 6                      \\
     188  & 2^2*47    &22          & 11                       & 4                       &9                     \\
  \hline
\end{array}
$$
\end{small}

\begin{small}
$$
\begin{array}{|c|c|c|c|c|c|}
 \hline N   & Prime factor& g_{X_0(N)} & g_{X_0(N)/<\varpi_{1}>} & g_{X_0(N)/<\varpi_{2}>} & g_{X_0(N)/<\varpi_{1}*\varpi_{2}>}=g_{X_0^+(N)} \\
  \hline
  \hline
  g_{X^{*}_0(N)}=2& & & & & \\
  \hline
  \colorredbold{88}  & 2^{3}*11    & 9          & 4                       & 5                       & 4                       \\
  \colorredbold{104}  & 2^{3}*13    & 11          & 6                       & 6                       & \colorredbold{3}                       \\
  \colorredbold{112} & 2^4*7       & 11         & 6                       & 4                       & 5                       \\
  \colorredbold{116} & 2^2*29      & 13         & 6                       & 7                       & 4                       \\
  \colorredbold{117} & 3^2*13      & 11         & 5                       & 6                       & 4                       \\
  \colorredbold{135} & 3^3*5      & 13         & 7                       & 6                       & 4                       \\
  \colorredbold{147} & 3*7^2      & 11         & 5                       & 6                       & 4                       \\
  \colorredbold{153} & 3^2*17      & 15         & 7                       & 6                       &6                        \\
  \colorredbold{184} &2^3*23       &  21        & 11                      & 5                       & 9                      \\
  \colorredbold{284}  &2^2*71     &  34        & 17                      & 7                       & 14                      \\
  \hline
  \hline
  g_{X^{*}_0(N)}=3& & & & & \\
  \hline
%
%
  \colorredbold{136}  &2^3*17       &  15        & 7                      &  8                      & 6                      \\
  144  &2^4*3^2       &  13        & 7                      &7                        & 5                      \\
  152  &2^3*19       &  17        & 8                      &9                        & 6                      \\
  164  &2^2*41       &  19        & 9                      &10                        & 6                      \\
  \colorredbold{171}  &3^2*19       &  17        & 9                      &9                        & 5                      \\
  189  &3^3*7       &  19        & 8                      &10                        & 7                      \\
  196  &2^2*7^2       &  17        & 7                      &9                        & 7                      \\
  \colorredbold{207}  &3^2*23       &  21        & 11                      &8                        & 8                      \\
  236  &2^2*59       &  28        & 14                      &10                        & 10                      \\
  245  &5*7^2       &  21        & 10                      &9                        & 8                      \\
  248  &2^3*31       &  29        & 15                      &9                        & 11                      \\
  \hline
%
%
%
 \hline
 g_{X^{*}_0(N)}=4& & & & & \\
  \hline
  148  &2^2*37       &  17        & 8                      &9                        & 8                      \\
  160  &2^5*5       &  17        & 9                      &9                        & 7                      \\
  172  &2^2*43       &  20        & 10                      &9                        & 9                      \\
  \colorredbold{176}  &2^4*11       &  19        & 10                      &  10                      & 7                      \\
  200  &2^3*5^2       &  19        & 10                      &10                        & 7                      \\
  224  &2^5*7       &  25        & 13                      &11                        & 9                      \\
  225  &3^2*5^2       &  19        & 9                      &10                        & 8                      \\
   242 &2*11^2       &  22        & 11                      &10                        & 9                      \\
   275 &5^2*11       &  25        & 13                      &11                        & 9                      \\
  \hline
  \hline
  g_{X^{*}_0(N)}=5& & & & & \\
  \hline
  \colorredbold{279}  &3^2*31       &  29        & 15                      &15                        & 9                      \\
  \hline
  \end{array}
$$
\end{small}

In the previous tables $\colorredbold{N}$ means that $X_0^*(N)$ is
an hyperelliptic curve, and $\colorredbold{k}$ with $k$ an integer
means that the quotient modular curve with such genus is an
hyperelliptic curve.

\newpage
\vspace{0.4cm}

\subsection{$N$ a product of three primes}

\hspace{-1cm}

\begin{center}
\begin{tiny}
$$
\begin{array}{|l|c||c|c|c|c|c|c|c|c|}
 \hline N                            &              60&84&90&120&126&132&140&150&156\\   \hline
  Factor(N)    &     2^2*3*5&2^2*3*7&2*3^2*5&2^3*3*5&2*3^2*7&2^2*3*11&2^2*5*7&2*3*5^2&2^2*3*13\\  \hline
 W_N=\{id\}                         &            7&11&11 &17 &17&19&19&19&23 \\    \hline
  \langle \varpi_1\rangle            &             \colorredbold{3}&5& 6&9&9&9&9&10&11 \\  \hline
 \langle{\varpi_2}\rangle            &             4&5& 5&9&9&10&10&10&11\\  \hline
 \langle{\varpi_3}\rangle            &           4&6&    5&9&9&7&10&10&12\\   \hline
 \langle{\varpi_1\varpi_2}\rangle &            \colorredbold{4}&5&  6&7&9&10&8&9&11\\   \hline
 \langle{\varpi_1\varpi_3}\rangle &             \colorredbold{2}&6&  6&9&7&7&10&7&12\\   \hline
 \langle{\varpi_2\varpi_3}\rangle &             1&6&  5&5&5&10&7&7&6  \\   \hline
 \langle{\varpi_1\varpi_2\varpi_3}\rangle&  \colorredbold{3}&4&  4&7&7&8&7&8 &10 \\   \hline
 {\langle\varpi_{1},\varpi_{2}\rangle} &         \colorredbold{2}&\colorredbold{2}&  3&4&5&5&4& 5 &5\\   \hline
 {\langle\varpi_{1},\varpi_{3}\rangle} &            1&3& 3 &5&4&2&5&4&6    \\  \hline
 {\langle\varpi_{2},\varpi_{3}\rangle}&              1&3&\colorredbold{2}&3&\colorredbold{3}&4&4&4&3  \\   \hline
 {\langle\varpi_1,\varpi_2\varpi_3\rangle} &         0&\colorredbold{2}& \colorredbold{2} &\colorredbold{2}&\colorredbold{2}&4&\colorredbold{2}&3&\colorredbold{2}  \\  \hline
 {\langle\varpi_2,\varpi_1\varpi_3\rangle} &         1&\colorredbold{2}& \colorredbold{2} &4&\colorredbold{3}&3&4&3&5 \\  \hline
 {\langle\varpi_3,\varpi_1\varpi_2\rangle} &          \colorredbold{2}&\colorredbold{2}& \colorredbold{2} &\colorredbold{3}&4&3&3&4&5  \\   \hline
 {\langle\varpi_1\varpi_2,\varpi_1\varpi_3\rangle}& 0&3& 3
 &\colorredbold{2}&\colorredbold{2}&4&3&\colorredbold{2}&3 \\ \hline
 B(N)&0&1&1&1&1&1&1&1&1\\ \hline

\end{array}$$
\end{tiny}
\end{center}

\begin{center}

\begin{tiny}
$$
\begin{array}{|l|c||c|c|c|c|c|c||c|c|}
 \hline N                            &   220 &   168&         180 &198 &204 &276 & 380& 234&240\\    \hline
  Factor(N)                       & 2^2*5*11 &       2^3*3*7&     2^2*3^2*5&2*3^2*11 &2^2*3*17&2^2*3*23 & 2^2*5*19 & 2*3^2*13&2^4*3*5  \\  \hline
 W_N=\{id\}&              31 &               25&      25 & 29 & 31 & 43 & 55&  35 &37   \\    \hline
 \langle{\varpi_1}\rangle           &   15 &           13& 11 & 14 &15 & 21 & 27&  18 &19  \\  \hline
 \langle{\varpi_2}\rangle         &   16 &          13& 13  & 15 &16  & 22 & 28& 17  &19  \\  \hline
\langle{\varpi_3}\rangle         &   13 &           13&13 &  12 &16
&13& 25& 18 & 19  \\   \hline \langle{\varpi_1\varpi_2}\rangle& 16 &
11& 11 & 14&16 &22 & 28&  18 &19  \\   \hline
\langle{\varpi_1\varpi_3}\rangle &13 & 9& 11 & 15 &12 &19 & 25 & 15 &15 \\
\hline
\langle{\varpi_2\varpi_3}\rangle&10 & 13& 13 & 12  & 13 &22 & 16 & 16 & 15  \\
\hline
 \langle{\varpi_1\varpi_2\varpi_3}\rangle & 14 & 11&   11& 13  &13  & 18 & 24 & 15 &17  \\   \hline
{\langle\varpi_{1},\varpi_{2}\rangle} & 8 &  6&             5 &  7
&8 &11  & 14  & 9 &10  \\   \hline
{\langle\varpi_{1},\varpi_{3}\rangle} & 5 & 5& 5 & 6  &6 & 5& 11 & 8
&8  \\  \hline
 {\langle\varpi_{2},\varpi_{3}\rangle} & 4 &          7&    7 & 5  &7 & 7 & 7 &  8 &8  \\   \hline
{\langle\varpi_1,\varpi_2\varpi_3\rangle} & 4 &         6& 5  & 5  &5 & 9& 6 &  7  & 7 \\
\hline {\langle\varpi_2,\varpi_1\varpi_3\rangle} & 6 &    4&      5
&7 &5 & 8& 11 & 6 & 7 \\  \hline
{\langle\varpi_3,\varpi_1\varpi_2\rangle} & 6 & 5& 5 &  5 &7 & 5& 11
& 8 &9 \\   \hline {\langle\varpi_1\varpi_2,\varpi_1\varpi_3\rangle}
&4 & \colorredbold{4}& 5& 6 &5 & 10& 7 & 7&6   \\ \hline
B(N)&1 & \colorredbold{2}&\colorredbold{2}&\colorredbold{2}&\colorredbold{2}&\colorredbold{2}&\colorredbold{2}&3&3\\
  \hline
\end{array}$$
\end{tiny}
\end{center}

\begin{center}
\begin{tiny}
$$
\begin{array}{|l|c|c|c|c|c|c||c|c|c|}
 \hline
 N                            &252 & 294 & 312&               315 &348 &476 &228&260& 264 \\    \hline
 Factor(N)                & 2^2*3^2*7 & 2*3*7^2 & 2^3*3*13&           3^2*5*7&2^2*3*29&2^2*7*17& 2^2*3*19&2^2*5*13&2^3*3*11  \\  \hline
W_N=\{id\}                       & 37 & 41 & 49&             41 &
55& 67& 35& 37&41 \\    \hline \langle{\varpi_1}\rangle          &
17 & 21 & 25& 21 & 27& 33& 17& 17&19 \\  \hline
\langle{\varpi_2}\rangle & 19 & 21 & 25& 19 & 28&34& 17& 19& 21\\
\hline
\langle{\varpi_3}\rangle & 19 & 21 & 25 & 21 & 28&34 & 18&19&21\\
\hline
\langle{\varpi_1\varpi_2}\rangle & 19 & 21 & 25& 19 & 28&34 & 17&19&19\\
\hline \langle{\varpi_1\varpi_3}\rangle & 19 & 17 & 19&           21
& 22&30 & 18& 19& 21
\\   \hline
 \langle{\varpi_2\varpi_3}\rangle  &  13 & 17 & 17&           17 & 19&19 & 18& 19&21\\   \hline
 \langle{\varpi_1\varpi_2\varpi_3}\rangle & 17 & 17 &  23& 17& 25 &29 & 16& 15&17\\   \hline
 {\langle\varpi_{1},\varpi_{2}\rangle} & 9 & 10 &   13&          9 & 14& 17 &8 &9 &9\\   \hline
 {\langle\varpi_{1},\varpi_{3}\rangle} &  9 & 9 &   10&         11 &11&15  &9 &9 &10\\  \hline
 {\langle\varpi_{2},\varpi_{3}\rangle} & 7 & 9 &   9&         8 & 10& 10& 9& 10&11\\   \hline
 {\langle\varpi_1,\varpi_2\varpi_3\rangle}  & 5 & 8 &   8&    7  & 8&7  & 8&7 &8\\  \hline
 {\langle\varpi_2,\varpi_1\varpi_3\rangle}  & 9 & 7 &    9&   8  &10 &13 & 8&8 &9\\  \hline
 {\langle\varpi_3,\varpi_1\varpi_2\rangle} & 9 & 9 &    12&   8 &13& 15 & 8& 8&8\\   \hline
 {\langle\varpi_1\varpi_2,\varpi_1\varpi_3\rangle} & 7 & 7 & 6& 8&7&  8& 9 &10 &10\\ \hline
 B(N) &\colorredbold{3} & 3 &3&\colorredbold{3}&3&3&4&4&4\\
  \hline
\end{array}$$
\end{tiny}
\end{center}

\begin{tiny}
$$
\begin{array}{|l|c|c|c|c||c|c|c||c|}
 \hline
  N                           & 280 & 300 & 306 & 342 &  364       &444 &495 &558       \\    \hline
 Factor(N)           & 2^3*5*7 & 2^2*3*5^2 & 2*3^2*17 & 2*3^2*19  &    2^2*7*13    &2^2*3*37 & 3^2*5*11&2*3^2*31       \\  \hline
 W_N=\{id\}                       & 41 & 43 & 47 & 53 &  51  &71 &65 & 89          \\    \hline
 \langle{\varpi_1}\rangle          & 21 & 19 & 23 & 26 &   25   &35 &33  &45  \\  \hline
  \langle{\varpi_2}\rangle            & 21 & 22 & 23 & 27 &    26 &35 &33 &45 \\  \hline
 \langle{\varpi_3}\rangle            & 21 & 22 & 22 & 27&  26   &36 &29 &45 \\   \hline
 \langle{\varpi_1\varpi_2}\rangle & 19 & 22 & 23 & 26&    26  &35 &33 & 45\\   \hline
 \langle{\varpi_1\varpi_3}\rangle & 17 & 22 & 24 & 24  &    24  & 36&29 & 41\\   \hline
 \langle{\varpi_2\varpi_3}\rangle & 21 & 19 & 22 & 21&     23   & 24&33 & 33\\   \hline
 \langle{\varpi_1\varpi_2\varpi_3}\rangle & 19 &  19 & 20 & 24& 23 &32 &25 & 41\\   \hline
 {\langle\varpi_{1},\varpi_{2}\rangle}    & 10 & 10 & 11 & 13&  13   &17 &17 &23\\   \hline
 {\langle\varpi_{1},\varpi_{3}\rangle} & 9 & 10 & 11 & 12 &     12    &18 &13 & 21\\  \hline
 {\langle\varpi_{2},\varpi_{3}\rangle} & 11 & 10 & 10 & 11&    12    &12 &15 &17 \\   \hline
  {\langle\varpi_1,\varpi_2\varpi_3\rangle} & 10 & 7 & 9 & 9 & 10   &10 &13 &15 \\  \hline
  {\langle\varpi_2,\varpi_1\varpi_3\rangle} & 8 & 10 & 10 & 11 &  11  &16 & 11&19 \\  \hline
  {\langle\varpi_3,\varpi_1\varpi_2\rangle} & 9 & 10 & 9 & 12&   12  &16 & 11&21 \\   \hline
  {\langle\varpi_1\varpi_2,\varpi_1\varpi_3\rangle} & 8 & 10 & 11 & 9&11 & 12& 15& 15 \\ \hline
  B(N) &4&4&4&4&5&5&5&7\\
  \hline
\end{array}$$
\end{tiny}

\section{Table of bielliptic quotient curves}

We consider $(N,W_N)$ with $N$ non-square free of genus $g_W\geq 2$
which is a bielliptic curve. We assume that $W_N\neq B(N)$ and
$W_N\neq\langle w_N\rangle$, and non-trivial. Let $F$ be the field
where are defined all automorphism elements of $End(J_0(N)^{W_N})$,
which is  $\Q$ or a quadratic field $K$. Such a field is determined
from the $\Q$-isogeny decomposition of the Jacobian of $(N,W_N)$ by
using mainly Propositions \ref{defi} and \ref{Pyle}. We indicate the
elliptic factors through $K$-isogeny if the field $K$ is not $\Q$
(when $g_{W_N}\leq 5$). By $(\omega,E)$ a couple where $\omega$
indicates a bielliptic involution as an element of
$Aut(X_0(N)/W_N)\; mod\; W_N$, and $E$ denotes the corresponding
bielliptic quotient modulo $\Q$-isogeny (not $\Q$-isomorphism!) and
$\color{red}{(\omega,E)}$ a couple as before but defined all over
$K$ in particular $E$ as $K$-isogeny. We add $\cdots$ in the tables
to indicate that the modular curve $X_0(N)/W_N$ could have more
bielliptic involutions.

For all bielliptic $(N,W_N)$ that are not hyperelliptic, such that
no Atkin-Lehner appears as a bielliptic involution (or $g_{W_N}\geq
6$ or that are listed in Theorem \ref{mainpq}), we determine all
bielliptic involutions by use of computations related to Petri's
theorem, (in the situations where Petri's theorem applies). This
procedure can be done also for the rest of quotient curves where we
can apply the Petri methodology. One may consult all the
computations done in MATHEMATICA in:
\begin{verbatim}
https://github.com/FrancescBars/Mathematica-files-on-Quotient-Modular-Curves
\end{verbatim}

For $(N,W_N)$ hyperelliptic and bielliptic, with hyperelliptic
involution $u$, the bielliptic involutions can be computed by MAGMA
or SAGE by using the explicit equation given by in \cite{Hata} and
\cite{FM}. In such situations we do not make explicit the bielliptic
involutions except if one corresponds to an Atkin-Lehner involution.

\begin{center}
$$
\begin{array}{|c|c||c|}
\hline (N,W_N)&(w,E)&\Q-Jacobian decomp.\\
\hline(40,\langle w_8\rangle)&(w_5,E20a\sim X_0^*(40)),(u\circ
w_5,E40a)&E20a\times E40a\\
(40,\langle w_5\rangle)&(w_8,E20a),(V_2,E20a),(S_2,E20a),(w_8S_2w_8,E20a)&(E20a)^2\\
(44,\langle w_4\rangle)&(S_2w_2^{(22)}S_2,E11a),(S_2w_{22}^{(22)}S_2,E11a)&(E11a)^2\\
(48,\langle w_{16}\rangle)&(w_3,24a\sim X_0^*(48)),(u\circ
w_3,48a)&E24a\times E48a\\
&&E24a\sim_{\Q(\sqrt{-1})}E48a\\
(48,\langle w_3\rangle)&(w_{16},E24a),(V_2,E24a),(S_2,E24a),(S_2w_{16}S_3,E24a)&(E24a)^2\\
(52,\langle w_4\rangle)&(w_{13},E26b\sim X_0^*(52)),(u\circ
w_{13},E26a)&E26a\times E26b\\
(60,\langle w_{20}\rangle)&(w_3,E15a),(w_3\circ
u=w_4,E30a)&E15a\times
E30a\\
(60,\langle w_4,w_3\rangle)&(S_2w_{10}^{(30)}S_2,E15a),(S_2w_2^{(30)}S_2,E30a)&E15a\times E30a\\
(72,\langle w_8\rangle)&(w_9,E36a\sim X_0^*(72)),(,u\circ
w_9,E72a)&E36a\times
E72a\\
(84,\langle w_4,w_3\rangle)&(w_7,E42a\sim X_0^*(84)),(u\circ
w_7,E14a)&E14a\times E42a\\
(84,\langle w_4,w_{21}\rangle)&(w_7,E42a),(u\circ
w_7,E14a)&E14a\times E42a\\
(84,\langle w_3,w_{28}\rangle)&(w_7,E42a),(u\circ
w_7,E14a)&E14a\times E42a\\
(84,\langle w_7,w_{12}\rangle)&(w_3,E42a\sim X_0^*(84)),(u\circ
w_3,E21a)&E21a\times E42a\\
(90,\langle w_9,w_5\rangle)&(w_9,E30a\sim X_0^*(90)),(u\circ
w_9,E90b)&E30a\times E90b\\
(90,\langle w_2,w_{45}\rangle)&(w_9,E30a),(u\circ
w_9,E15a)&E15a\times E30a\\
(90,\langle w_9,w_{10}\rangle)&(w_5,E30a),(u\circ
w_5,E15a)&E15a\times E30a\\
(90,\langle w_5,w_{18}\rangle)&(w_9,E30a),(u\circ
w_9,E45a)&E30a\times E45a\\
(100,\langle w_4\rangle)&(w_{25},E50b\sim X_0^*(100)),(w_{25}\circ
u,E50a)&E50a\times E50b\\
&&E50a\sim_{\Q(\sqrt{5})} E50b\\
 (120,\langle w_8,w_{15}\rangle&(w_3,E20a\sim
X_0^*(120)),(u\circ w_3,E40a)&E20a\times E40a\\
(120,\langle w_{15},w_{40}\rangle&(w_3,E20a),(u\circ w_3,E120a)&E20a\times E120a\\
(126,\langle w_2,w_{63}\rangle&(w_9,E21a\sim
X_0^*(126)),(u\circ w_9,E14a)&E14a\times E21a\\
(126,\langle w_{14},w_{18}\rangle&(w_9,E21a),(u\circ w_9,E126a)&E21a\times E126a\\
(132,\langle w_4,w_{11}\rangle)&(w_3,E66b\sim
X_0^*(132)),(u\circ w_3,E66c)&E66b\times E66c\\
(140,\langle w_4,w_{35}\rangle)&(w_5,E70a\sim
X_0^*(140)),(u\circ w_5,E14a)&E14a\times E70a\\
(150,\langle w_6,w_{50}\rangle)&(w_2,E15a\sim
X_0^*(150)),(u\circ w_2,E150a)&E15a\times E150a\\
(156,\langle w_4,w_{39}\rangle)&(w_{13},E26b\sim
X_0^*(156)),(u\circ w_{13},E26a)&E26a\times E26b\\
\hline

\end{array}
$$
\end{center}
\begin{centerline}
{Table 1. Case $g_{W_N}=2$}
\end{centerline}

For genus 3 table below we list first the three hyperelliptic ones.
We remind that the automorphism group can be computed in MAGMA and
here we do
not make it explicit if is not an Atkin-Lehner involution.

\begin{center}
$$
\begin{array}{|c|c||c|}
\hline (N,W_N)&(w,E)&\Q-Jacobian decomp.\\
\hline (56,\langle w_8\rangle)&(V_2w_8,E14a)&(E14a)^2\times E56b\\
(60,\langle
w_4\rangle)&(S_2w_5^{(30)}S_2,E30a),(S_2w_6^{(30)}S_2,E15a),&(E15a)^2\times
E30a\\
&(S_2w_{30}^{(30)}S_2,E15a).&\\
(63,\langle w_9\rangle)&(w_7,E21a\sim
X_0^*(63))&E21a\times A_{f,63},\;\\
&\color{red}{(**,\tilde{E})(*,\tilde{E})}&A_{f,63}\sim_{\Q(\sqrt{-3})} \tilde{E}^2\\
\hline \hline (52,\langle w_7\rangle)&(w_8,E26b\sim
X_0^*(52)),...&(E26b)^2\times E52a\\
(63,\langle w_7\rangle)&(w_9,E21a),...&(E21a)^2\times E63a,\\
&&E21a\sim_{\Q(\sqrt{-3})}E63a\\
(68,\langle w_4\rangle)&(w_{17},E34a\sim X_0*(68)),...&(E17a)^2\times E34a\\
(72,\langle w_9\rangle)&(w_8,E36a\sim X_0^*(72)),...&E24a\times (E36a)^2\\
(75,\langle w_3\rangle)&(w_{25},E15a\sim X_0^*(75)),...&(E15a)^2\times E75a\\
(75,\langle w_{25}\rangle)&(w_3,E15a),...&E15a\times E75b\times E75c,\\
&&E15a\sim_{\Q(\sqrt{5})}E75b\\
(76,\langle w_{19}\rangle)&(w_4,E38b\sim X_0^*(76))....&(E38b)^2\times E76a\\
(80,\langle w_{16}\rangle)&(w_5,E20a\sim X_0^*(80)),...&E20a\times E40a\times E80a\\
&&E40a\sim_{\Q(\sqrt{-1})}E80a\\
 (84,\langle w_4,w_7\rangle)&(w_
3,E42a\sim
X_0^*(84)),...&(E21a)^2\times E42a\\
(84,\langle w_3,w_7\rangle)&(w_ 4,E42a),...&(E42a)^2\times E84b\\
(84,\langle w_{12},w_{21}\rangle)&(w_ 3,E42a),...&E14a\times E42a\times E84a\\
(90,\langle w_2,w_9\rangle)&(w_5,E30a\sim
X_0^*(90)...&E15a\times E30a\times E90a\\
(90,\langle w_2,w_5\rangle)&(w_9,E30a)...&(E30a)^2\times E45a\\
(90,\langle w_{10},w_{18}\rangle)&(w_5,E30a)...&E15a\times E30a\times E90c\\
&& E30a\sim_{\Q(\sqrt{-3})}E90c\\
 (96,\langle w_{32}\rangle)&(w_3,E24a\sim
X_0^*(96)),...&E24a\times E48a\times E96a\\
&&E24a\sim_{\Q(\sqrt{-1})}E48a\\
(98,\langle w_{49}\rangle)&(w_2,E14a\sim X_0^*(98)),...&E14a\times A_{f,98}\\
(99,\langle w_{11}\rangle)&(w_9,E99a\sim X_0^*(99)),...&E99a\times
E99b\times
E99d\\
(120,\langle w_3,w_5\rangle)&(w_8,E20a\sim X_0^*(120)),
...&(E20a)^2\times E24a\\
(120,\langle w_5,w_{24}\rangle)&(w_8,E20a),
...&(E20a)^2\times E30a\\
(124,\langle w_{31}\rangle)&(w_4,E62a\sim
X_0^*(124)),...&(E62a)^2\times
E124b\\
(126,\langle w_9,w_7\rangle)&(w_2,E21a\sim
X_0^*(126)),...&(E21a)^2\times E42a\\
(126,\langle w_9,w_{14}\rangle)&(w_2,E21a\sim
X_0^*(126)),...&(E21a)\times A_{f,63}\\
(132,\langle w_3,w_{44}\rangle)&( w_7,E66b\sim
X_0^*(132)),...&E11a\times E33a\times E66b\\
(132,\langle w_{11},w_{12}\rangle)&( w_7,E66b),...&E44a\times E66a\times E66b\\
(140,\langle w_7,w_{20}\rangle)&(w_4,E70a\sim
X_0^*(140)),...&A_{f,35}\times E70a\\
(140,\langle w_{35},w_{20}\rangle)&(w_4,E70a),...&E14a\times E70a\times E140a\\
(150,\langle w_2,w_{75}\rangle)&(w_{25},E15a\sim
X_0^*(150)),...&E15a\times E30a\times E50a\\
(150,\langle w_3,w_{50}\rangle)&(w_{25},E15a),...&(E15a)^2\times E75a\\
(156,\langle w_3,w_{13}\rangle)&(w_4,E26b\sim
X_0^*(156)),...&(E26b)^2\times E52a\\
(156,\langle w_{39},w_{12}\rangle)&(w_4,E26b),...& E26a\times
E26b\times E156b \\
\hline
\end{array}
$$
\end{center}
\begin{centerline}
{Table 2. Case $g_{W_N}=3$}
\end{centerline}
where $\tilde{E}$ is $\Q(\sqrt{-3})$-isogenous to $Y^2=+1 + 6
\sqrt{-3}X - 27 X^2 -  (26  +6 \sqrt{-3}) X^3$.


\begin{center}
$$
\begin{array}{|c|c||c|}
\hline (N,W_N)&(w,E)&\Q-Jacobian decomp.\\
\hline (60,\langle w_3\rangle)&(w_5,E20a),(w_{20},E15a),....&(E15a)^3\times E20a\\
(60,\langle w_5\rangle)&(w_4,E30a),(w_3,E20a),...&(E20a)^2\times (E30a)^2\\
(68,\langle w_{17}\rangle)&(w_4,E34a\sim
X_0^*(68)),...&(E34a)^2\times
A_{f, x^2 - 2x - 2},\dim(A_f)=2\\
(76,\langle w_4\rangle)&(w_{19},E38b\sim
X_0^*(76)),...&(E19a)^2\times
E38a\times E38b\\
(80,\langle w_5\rangle)&(w_{16},E20a\sim X_0^*(80)),...&(E20a)^3\times E80b\\
&&E20a\sim_{\Q(\sqrt{-1})}E80b\\
 (98,\langle w_2\rangle)&(w_{49},E14a\sim
X_0^*(98)),...&(E14a)^2\times
E49a\times E98a\\
&&E14a\sim_{\Q(\sqrt{-7})}E98a\\
 (100,\langle w_{25}\rangle)&(w_4,E50b\sim
X_0^*(100)),...&E20a\times(E50b)^2\times E100a\\
&&E20a\sim_{\Q(\sqrt{5})} E100a\\
 (108,\langle w_4\rangle)&(w_{27},E54b\sim
X_0^*(108)),...&(E27a)^2\times
E54a\times E54b\\
&&E54a\sim_{\Q(\sqrt{-3})}E54b\\
(108,\langle
w_{27}\rangle)&(w_{4},E54b),...&E36a\times(E54b)^2\times
E108a\\
(112,\langle w_7\rangle)&(S_2,E56a),(w_{16}S_2w_{16},E56a)&(E56a)^2\times E112a\times E112c\\
(120,\langle w_8,w_3\rangle)&(w_5,E20a\sim
X_0^*(120))...&(E15a)^2\times E20a\times E40a\\
(120,\langle w_3,w_{40}\rangle)&(w_5,E20a)...&(E15a)^2\times E20a\times E24a\\
(126,\langle w_2,w_7\rangle)&(w_9,21a\sim
X_0^*(126))...&(E21a)^2\times E63a\times E126b\\
&&E21a\sim_{\Q(\sqrt{-3})}E126b\\
(126,\langle w_7,w_{18}\rangle)&(w_9,E21a)...&(E21a)^2\times E42a\times E63a\\
&& E21a\sim_{\Q(\sqrt{-3})}E63a\\
 (132,\langle w_3,w_{11}\rangle)&(w_4,E66b\sim
X_0^*(132)),...&E44a\times(E66b)^2\times E132b\\
(132,\langle w_4,w_{33}\rangle)&(w_3,E66b),...&(E11a)^2\times E66b\times E66c\\
(132,\langle w_{12},w_{33}\rangle)&(w_4,E66b),...&E11a\times E66b\times E66c\times E132a\\
(140,\langle w_4,w_5\rangle)&(w_7,E70a\sim
X_0^*(140)),...&E14a\times(E35a)^2\times E70a\\
(140,\langle w_5,w_7\rangle)&(w_4,E70a),...&E20a\times(E70a)^2\times E140b\\
(140,\langle w_5,w_{28}\rangle)&(w_4,E70a),...&E14a\times E20a\times E35a\times E70a \\
(150,\langle w_2,w_{25}\rangle)&(w_3,E15a\sim
X_0^*(150)),...&E15a\times E30a\times E75b\times E75c\\
&&E15a\sim_{\Q(\sqrt{5})}E75b\\
(150,\langle w_3,w_{25}\rangle)&(w_2,E15a),...&(E15a)^2\times E50b\times E150c\\
(150,\langle w_{25},w_{6}\rangle)&(w_2,E15a),...&E15a\times E50b\times E75b\times E75c\\
&& E15a\sim_{\Q(\sqrt{5})} E75b\\
(168,\langle w_3,w_{56}\rangle)& (V_2w_{168},E14a),(V_2w_8,E24a)&E14a\times E24a\times E42a\times E84b\\
(188,\langle w_{47}\rangle)&(w_4,E94a\sim X_0^*(188))...&(E94a)^2\times A_{f,x^2 - x - 3}\; \dim(A_f)=2\\
(220,\langle w_5,w_{11}\rangle)&(w_4,E110b\sim
X_0^*(220)),...&E20a\times E44a\times(E110b)^2\\
(220,\langle w_4,w_{55}\rangle)&(w_{11},E110b),...&(E11a)^2\times E110c\times E110b\\
(220,\langle w_{44},w_{55}\rangle)&(w_{11},E110b),...&E11a\times E110a\times E110b\times E220b\\

\hline
\end{array}
$$
\end{center}
\begin{centerline}
{Table 3, case $g_{W_N}=4$}
\end{centerline}

\begin{small}
\begin{center}
$$
\begin{array}{|c|c||c|}
\hline(N,W_N)&(w,E)&\Q-Jacobian decomp.\\
\hline(84,\langle w_4\rangle)&(S_2w_{14}^{(42)}S_2,E21a)&(E14a)^2\times(E21a)^2\times E42a\\
(88,\langle
w_{11}\rangle)&(S_2,E44a),(w_8S_2w_8,E44a)&(E44a)^2\times E88a\times
A_{f,88,x^2-x+4
}\\
(90,\langle w_9\rangle)&(V_3w_{10},E15a)&(E15a)^2\times E30a\times E90a\times E90b\\
&&E90a\sim_{\Q(\sqrt{-3})}E90b\\
 (96,\langle w_3\rangle)&(w_{32},E24a=X_0^*(96))...&(E24a)^3\times
E32a\times
E96b\\
(99,\langle w_9\rangle)&(w_{11},E99a=X_0^*(99))...&(E11a)^2\times
E33a\times
E99a\times E99c\\
&&E99a\sim_{\Q(\sqrt{-3})}E99c\\
(117,\langle w_9\rangle)&(V_3w_{117},E39a)&E39a\times A_{f_1}\times A_{f_2}\\
&&A_{f_2}\sim_{\Q(\sqrt{-3})}\tilde{E}^2\\
(120,\langle w_{15}\rangle)&(V_2 w_{40},E24a),(w_8S_2 w_8,E20a),(S_2,E20a)&(E20a)^2\times E24a\times E40a\times E120a\\
(120,\langle
w_8,w_5\rangle)&(w_3,E20a=X_0^*(120))...&(E20a)^2\times(E30a)^2\times
E120b\\
(126,\langle w_{63}\rangle)&{\color{red}{(V_3,E14a),(V_3w_9,E14a)}}&E14a\times(E21a)^2\times E42a\times E126a\\
&&E14a\sim_{\Q(\sqrt{-3})}E126a\\
(126,\langle
w_2,w_9\rangle)&(w_7,E21a=X_0^*(126)),...&(E14a)^2\times
E21a\times A_{f,63,x^2-3}\\
&& A_f\sim_{\Q(\sqrt{-3})} \tilde{E}^2 \\
(132,\langle
w_4,w_3\rangle)&(w_{11},E33a=X_0^*(132))...&(E11a)^2\times(E33a)^2\times
E66b\\
(140,\langle w_4,w_7\rangle)&(w_5,E70a=X_0^*(140))...&A_{f,35}^2\times E70a\\
(150,\langle
w_2,w_3\rangle)&(w_{25},E15a=X_0^*(150))...&(E15a)^2\times
E50a\times E75a\times E150b\\
(156,\langle
w_4,w_3\rangle)&(w_{13},E26b=X_0^*(156))...&(E26a)\times(E26b)\times(E39a)^2\times
E78a\\
(156,\langle w_3,w_{56}\rangle)&(w_4,E26b)...&E26a\times E26b\times
E39a\times
E78a\times E156a\\
(156,\langle w_{13},w_{12}\rangle)&(w_4,E26b)...&(E26b)^2\times
A_{f}\times
E52a\\
(168,\langle w_8,w_7\rangle)&(V_2w_{168},E21a)&(E21a)^2\times E42a\times E84b\times E168a \\
(168,\langle w_7,w_{24}\rangle)&(V_2w_{168},E21a)&(E21a)^2\times E42a\times E56a\times E84b\\
(180,\langle w_4,w_9\rangle)&(S_2V_3w_{10}^{(90)}S_2,E15a)&(E15a)^2\times E30a\times E90a\times E90b\\
&&E90a\sim_{\mathbb{Q}(\sqrt{-3})}E90b\\
(184,\langle w_{23}\rangle)&(S_2,E92a),(w_8S_2w_8,E92a)&(E92a9^2\times E184b\times A_{f,184,x^2+x-4}\\
(220,\langle
w_4,w_{11}\rangle)&(w_5,E110b=X_0^*(220))...&(E55a)^2\times
E110b\times A_{f}\\
(252,\langle w_4,w_{63}\rangle)&{\color{red}{(V_3,E14a),(V_3w_7,E14a)}}&E14a\times(E21a)^2\times E42a\times E126a\\
&&E14a\sim_{\mathbb{Q}(\sqrt{-3})}E126a\\
 \hline
\end{array} $$
\end{center}
\begin{centerline}
{Table 4, Case $g_{W_N}=5$}
\end{centerline}
\end{small}

\begin{footnotesize}
\begin{center}
$$
\begin{array}{|c|c|c|c|}
\hline g_{W_N}&(N,W_N)&(w,E)&\Q-Jacobian decomp.\\
\hline 6&(104,\langle w_8\rangle)&(V_2w_{104},E26a)&(E26a)^2\times E26b\times E52a\times A_{f,104}\\
 &(156,\langle w_4,w_{13}\rangle)&(w_3,E26b=X_0^*(156))&(E26b)^2\times
A^2_{f,39}\\
&(168,\langle w_8,w_3\rangle)&(V_2w_{168},E14a)&(E14a)^2\times E42a\times E56b\times E84b\times E168b\\
 &(220,\langle w_5,w_{44}\rangle)&(w_{4},E110b=X_0^*(220))&E11a\times
E20a\times
A_{f}\times E110b\times E110c\\
&(220,\langle w_{11},w_{20}\rangle)&(w_4,E110b)&E44a\times
E55a\times
E110b\times A_{f}\times E220a\\
 \hline
7&(120,\langle w_{24}\rangle)&(V_2w_{40},E15a)&(E15a)^2\times(E20a)^2\times E30a\times E40a\times E120a\\
&(124,\langle w_4\rangle)&(w_{31},E62a=X_0^*(124)&(A_{f_1,31})^2\times E62a\times A_{f_3,62}\\
&(136,\langle w_{8}\rangle)&(V_2w_{136},E17a)&(E17a)^2\times E34a\times A_{f_3,64}\times A_{f_4,136}\\
 &(252,\langle w_{9},w_7\rangle)&(V_3w_7,E36a)&(E21a)^3\times E36a\times(E42a)^2\times E84b\\
 \hline
8&(220,\langle
w_4,w_5\rangle)&(w_{11},E110b=X_0^*(220))&(E11a)^2\times
A_f^2\times E110b\times E110c\\
\hline
9&(126,\langle w_{9}\rangle)&(V_3w_7,E14a)&(E14a)^2\times(E21a)^2\times E42a\times (A_{f,63})^2\\
&(171,\langle w_{9}\rangle)&(V_3w_{171},E19a)&(E19a)^2\times E57a\times E57b\times E57c\times A_{f,171},dim(A_f)=4\\
&(252,w_{9},\langle w_4\rangle)&(V_3w_7,E14a)&(E14a)^2\times (E21a)^2\times E42a\times (A_{f,63})^2\\
 \hline
10&(176,\langle w_{16}\rangle)&(V_3w_{176},E11a)&(E11a)^3\times E44a\times E88a\times A_{f_1,88}\times E176a\times A_{f_2,176}\\
 \hline
11&(188,\langle w_4\rangle)&(w_{47},X_0^*(188)=E94a)&A_{f_1}^2\times E94a\times A_{f_3},dim(A_{f_1})=4\\
\hline
\end{array}$$
\end{center}
\begin{centerline}
{Table 5, $g_{W_N}\geq 6$}
\end{centerline}
\end{footnotesize}

\noindent{Francesc Bars Cortina}\\ {Departament Matem\`atiques, Edif. C, Universitat Aut\`onoma de Barcelona\\ 08193 Bellaterra, Catalonia}\\
{francesc@mat.uab.cat}, {francescbars@gmail.com}
 \vspace{1cm}

\noindent{Mohamed Gamal Kamel Asraan Youssef}\\ {Department of Mathematics, Faculty of Science, Cairo University,\\ 12613 Giza-Egypt}\\
mohgamal@sci.cu.edu.eg

\vspace{1cm}

\noindent{Andreas Schweizer}\\
{Am Felsenkeller 61,}\\
{ 78713 Schramberg, Germany}\\



\begin{thebibliography}{BGGP05}

\bibitem[Acc94]{Accola}
Robert D.~M. Accola.
\newblock {\em Topics in the theory of {R}iemann surfaces}, volume 1595 of {\em
  Lecture Notes in Mathematics}.
\newblock Springer-Verlag, Berlin, 1994.

\bibitem[BGGP05]{BGGP}
M.~H. Baker, E.~{Gonz{\'a}lez-Jim{\'e}nez}, J.~Gonz{\'a}lez, and
B.~Poonen.
\newblock Finiteness results for modular curves of genus at least 2.
\newblock {\em Amer. J. Math.}, 127(6):1325--1387, 2005.

\bibitem[Bar99]{Ba1}
Francesc Bars.
\newblock Bielliptic modular curves.
\newblock {\em J. Number Theory}, 76(1):154--165, 1999.

\bibitem[Bar08]{Ba2}
Francesc Bars.
\newblock The group structure of the normalizer of $\Gamma_0(n)$ after
  Atkin-Lehner.
\newblock {\em Comm.Algebra}, 36(6):2160--2170, 2008.

\bibitem[Bar18]{BaMom}
Francesc Bars.
\newblock On quadratic points of classical modular curves.
\newblock In {\em Number theory related to modular curves---{M}omose memorial
  volume}, volume 701 of {\em Contemp. Math.}, pages 17--34. Amer. Math. Soc.,
  Providence, RI, 2018.

\bibitem[BG19]{BaGon}
F.~Bars and J.~Gonz\'{a}lez.
\newblock Bielliptic modular curves {$X_0^*(N)$} with square-free levels.
\newblock {\em Math. Comp.}, 88(320):2939--2957, 2019.

\bibitem[BG20]{BaGon2}
Francesc Bars and Josep Gonz\'{a}lez.
\newblock Bielliptic modular curves {$X^*_0(N)$}.
\newblock {\em J. Algebra}, 559:726--759, 2020.


\bibitem[BGK20]{BaGonKa}
F.~Bars, J.~Gonz\'alez, and M.~Kamel.
\newblock Bielliptic quotient modular curves with $N$ square-free.
\newblock {\em J.Number Theory.}, 216:380--402, 2020.

\bibitem[Magma]{magma}
{W. Bosma,  J.Cannon, and C.Playoust},
     {The {M}agma algebra system. {I}. {T}he user language},
      {Computational algebra and number theory (London, 1993)},
   {J. Symbolic Comput.},
  {Journal of Symbolic Computation},
    {24},
      (1997),
    {3-4},
     {235--265}. Programmes ran on Magma online computer V2.27-2.


\bibitem[Cre17]{Cre}
J.E. Cremona.
\newblock Elliptic curve data.
\newblock \url{http://johncremona.github.io/ecdata/}, 2017.

\bibitem[FK80]{FK}
H.~M.~Farkas and I.~Kra.
\newblock {\em Riemann Surfaces.}
\newblock Springer, Berlin-Heidelberg-New York,  1980

\bibitem[FH99]{FM}
M.~Furumoto and Y.~Hasegawa.
\newblock Hyperelliptic quotients of modular curves {$X_0(N)$}.
\newblock {\em Tokyo J. Math.}, 22(1):105--125, 1999.

\bibitem[GL98]{GL}
Josep Gonz\'alez and Joan-C. Lario.
\newblock Rational and elliptic parametrizations of {$\bold Q$}-curves.
\newblock {\em J. Number Theory}, 72(1):13--31, 1998.

\bibitem[HS91]{SiHa}
Joe Harris and Joe Silverman.
\newblock Bielliptic curves and symmetric products.
\newblock {\em Proc. Amer. Math. Soc.}, 112(2):347--356, 1991.


\bibitem[Has95]{Hata}
Y.~Hasegawa.
\newblock Table of quotient curves of modular curves {$X_0(N)$} with genus
  {$2$}.
\newblock {\em Proc. Japan Acad. Ser. A Math. Sci.}, 71(10):235--239, 1995.

\bibitem[Has97]{Ha97}
Y.~Hasegawa.
\newblock Hyperelliptic modular curves {$X^*_0(N)$}.
\newblock {\em Acta Arith.}, 81(4):369--385, 1997.



\bibitem[HS06]{HaSh06}
Y.~Hasegawa and M.~Shimura.
\newblock Trigonal quotients of modular curves {$X_0(N)$}.
\newblock {\em Proc.Japan Acad.}, 82 Ser A:15--17, 2006.

\bibitem[Jeo18]{Jeon}
Daeyeol Jeon.
\newblock Bielliptic modular curves {$X_0^+(N)$}.
\newblock {\em J. Number Theory}, 185:319--338, 2018.

\bibitem[JKS20]{JKS}
Daeyeol Jeon, Chang~Heon Kim, and Andreas Schweizer.
\newblock Bielliptic intermediate modular curves {}.
\newblock {\em J. Pure Appl. Algebra}, 224:272--299, 2020.

\bibitem[KMV11]{KMV}
T.~Kato, K. Magaard, and H.~V\"olklein.
\newblock Bi-elliptic Weierstrass points on curves of genus $5$.
\newblock {\em Indag.Math.(N.S.)}, 22:116--130, 2011.

\bibitem[Ogg74]{Ogg}
A.~P. Ogg.
\newblock Hyperelliptic modular curves.
\newblock {\em Bull. Soc. Math. France}, 102:449--462, 1974.

\bibitem[Pyl04]{Pyle}
Elisabeth~E. Pyle.
\newblock Abelian varieties over {$\Bbb Q$} with large endomorphism algebras
  and their simple components over {$\overline{\Bbb Q}$}.
\newblock In {\em Modular curves and abelian varieties}, volume 224 of {\em
  Progr. Math.}, pages 189--239. Birkh\"{a}user, Basel, 2004.

\bibitem[Sch01]{Sc}
Andreas Schweizer.
\newblock Bielliptic {D}rinfeld modular curves.
\newblock {\em Asian J. Math.}, 5(4):705--720, 2001.

\bibitem[Math]{mathematica} {Wolfram Research{,} Inc.},
   {Mathematica, {V}ersion 13.1},
  {https://www.wolfram.com/mathematica},
  {Champaign, IL, 2022}

\end{thebibliography}
\end{document}